\documentclass{amsart}
\usepackage{graphicx} 
\usepackage{amsmath,amsfonts,euscript,amscd,amsthm,amssymb,upref,graphics,color,verbatim, float, placeins,mathrsfs, mathtools,comment,tikz, multirow, makecell}

\newcommand{\ba}{\textbf{a}}
\newcommand{\bb}{\textbf{b}}
\usepackage{tikz-cd}

\usepackage{longtable,array}

\usepackage{ulem}  

\usepackage[margin=1in]{geometry}
\usepackage[all]{xy}
\usepackage{hyperref}
\usepackage[shortlabels]{enumitem}

{\begin{enumerate}\setlength{\itemsep}{#1}}{\end{enumerate}}

\newcommand{\lnd}{\operatorname{{\rm LND}}}

\newcommand{\Integ}{\ensuremath{\mathbb{Z}}}
\newcommand{\Nat}{\ensuremath{\mathbb{N}}}
\newcommand{\Rat}{\ensuremath{\mathbb{Q}}}
\newcommand{\Comp}{\ensuremath{\mathbb{C}}}
\newcommand{\Reals}
{\ensuremath{\mathbb{R}}}

\newcommand{\aff}{\ensuremath{\mathbb{A}}}
\newcommand{\bk}{{\ensuremath{\rm \bf k}}}

\newcommand{\lb}{\langle}
\newcommand{\rb}{\rangle}
\newcommand{\trdeg}{	\operatorname{{\rm trdeg}}}
\newcommand{\Frac}{		\operatorname{{\rm Frac}}}

\newcommand{\lcm}{		\operatorname{{\rm lcm}}}
\newcommand{\Pic}{		\operatorname{{\rm Pic}}}

\newcommand{\cotype}{		\operatorname{{\rm cotype}}}
\newcommand{\setspec}[2]{\big\{\,#1\, \mid \,#2\, \big\}}
\newcommand{\isom}{\cong}
\newcommand{\Sing}{		\operatorname{{\rm Sing}}}
\newcommand{\PPP}{\mathbb{P}}

\newcommand{\height}{		\operatorname{{\rm ht}}}

\newcommand{\Spec}{		\operatorname{{\rm Spec}}}
\newcommand{\Proj}{		\operatorname{{\rm Proj}}}

\newcommand{\Supp}{		\operatorname{{\rm Supp}}}

\newcommand{\OSheaf}{\operatorname{\mathcal O}}

\newtheorem{theorem}[subsection]{Theorem}
\newtheorem*{theorem*}{Theorem}
\newtheorem{proposition}[subsection]{Proposition}
\newtheorem*{proposition*}{Proposition}
\newtheorem{lemma}[subsection]{Lemma}
\newtheorem*{lemma*}{Lemma}
\newtheorem{corollary}[subsection]{Corollary}
\newtheorem{conjecture}[subsection]{Conjecture}

\theoremstyle{definition}

\newtheorem{remark}[subsection]{Remark}

\newtheorem{definition}[subsection]{Definition}
\newtheorem{definitions}[subsection]{Definitions}

\newtheorem{nothing}[subsection]{}
\newtheorem{nothing*}[subsection]{}
\newtheorem{example}[subsection]{Example}
\newtheorem{question}[subsection]{Question}
\newtheorem{problem}[subsection]{Problem}
\newtheorem{assumption}[subsection]{Assumption}
\newtheorem{notation}[subsection]{Notation}
\newtheorem{remarks}[subsection]{Remarks}

\newcommand{\codim}{		\operatorname{{\rm codim}}}

\newcommand{\Id}{		\operatorname{{\rm Id}}}

\newcommand{\pgoth}{\mathfrak{p}}

\newcommand{\qgoth}{\mathfrak{q}}
\newcommand{\ggoth}{\mathfrak{g}}

\newcommand{\mgoth}{\mathfrak{m}}

\newcommand{\Div}{		\operatorname{{\rm Div}}}

\renewcommand{\div}{	\operatorname{{\rm div}}}
\newcommand{\Cl}{		\operatorname{{\rm Cl}}}
\newcommand{\depth}{		\operatorname{{\rm depth}}}

\newcommand{\CaCl}{		\operatorname{{\rm CaCl}}}

\newcommand{\Aff}{\mathbb{A}}

\title{A Survey on Pham-Brieskorn Varieties}
\author{Michael Chitayat}
\date{September 17th 2026}

\address{Dipartimento di Matematica ``Tullio Levi-Civita'', 
Universit\`a di Padova, Via Trieste 63, I-35121 Padova}
\email{michael.chitayat@unipd.it, mikechitayat@gmail.com}

\begin{document}

\maketitle

\section{Introduction}
Throughout my Ph.D. thesis, I studied a conjecture posed by Kaliman and Zaidenberg on the rigidity of a certain class of rings, called Pham-Brieskorn rings. While working on that conjecture, I learned many things about this family of rings and their corresponding varieties, mostly as applications of other theoretical results. Here, I collect some of what I learned so that others who are interested in this family of rings (or varieties) can find useful information in a single place. 

I try to make this survey self-contained by giving definitions and references for anything that can't be found in a standard textbook. The algebraic (resp. geometric) statements should be accessible to a student with a graduate course in commutative algebra (resp. algebraic geometry). Note that most of what is described in this survey is valid much more generally than in the special cases considered here. The goal is simply to expose the reader to some algebraic and geometric ideas using the family of Pham-Brieskorn rings as special examples.  

Note that there are not so many complete proofs in these notes and many of the given proofs strongly depend on other foundational results. The proofs that I do include will generally appear for one of the following reasons:
\begin{itemize}
\item I couldn't find the suitable statement in the literature in the form that I wanted,
\item I offer a (generally minor) generalization of the statement in the literature,
\item I want to demonstrate an idea or technique,
\item I want to write something down that is usually treated as folklore. 
\end{itemize}
Hopefully the text will be useful for graduate students as they explore some of these topics in their own coursework and research. 
\\

\noindent \textbf{Use of artificial intelligence.} I made use of Claude Opus 5 to find errors in, review, clarify and improve the text, to produce various diagrams and to produce the tuples in Section \ref{threefoldList}. I have checked the final version, and I believe it to be correct.

\section{Notation and Conventions}
Throughout this article we assume the following notation and conventions:

\begin{itemize}
    \item The set of natural numbers is denoted by $\Nat$ and contains $0$. The set of positive integers is denoted by $\Nat^+$ and the set of integers is denoted by $\Integ$.
    \item All rings are assumed to be commutative, associative and unital. 
    \item The symbol $\bk$ is always a field of characteristic zero and we use the notation $R = \bk^{[n]}$ to mean that $R$ is a polynomial ring in $n$ variables over $\bk$.  
    \item If $X$ is a smooth projective variety of dimension $n$, we let $p_g(X)$ denote the geometric genus of $X$ and we note that $p_g(X) = \dim_\bk H^0(X, \omega_X) = \dim_\bk H^n(X, \OSheaf_X)$.
    \item Unless stated otherwise, all curves and surfaces are assumed to be irreducible and reduced over an algebraically closed field $\bk$. 
    \item If $X$ is a normal variety, we let $\Cl(X)$ denote the divisor class group of $X$ and $\CaCl(X)$ denote the Cartier class group of $X$.  
\end{itemize}

\section{Definition and Basic Properties}

\begin{notation}
    Let $\bk$ be a field of characteristic zero. Let $(a_0, \dots, a_n) \in (\Nat^+)^{n+1}$ where $n \geq 0$. Define the ring   
    $$B_{\bk; a_0, \dots, a_n} = \bk[x_0, \dots, x_n] / \lb x_0^{a_0} + \dots + x_n^{a_n} \rb.$$
    Any ring of the above form is called a Pham-Brieskorn ring over $\bk$. We may drop $\bk$ from the notation if the result is valid for any field of characteristic zero.   
\end{notation}

\begin{remarks} \label{firstRemarks} \ 
    \begin{itemize}
    
    \item If $a_i = 1$ for some $i$, then $B_{a_0, \dots, a_n}$ is isomorphic to a polynomial ring. 
    \item If $n = 0$, then $B_{a_0} = \bk[x_0]/\lb x_0^{a_0} \rb$ is a reduced ring if and only if $a_0 = 1$. 
    \item It is a general fact that if $\gcd(a,b) = 1$, then for all $\lambda \in \bk^*$, $x^a + \lambda y^b$ is an irreducible element of the polynomial ring $\bk[x,y]$. Consequently, if $n = 1$ and $\bk = \bar{\bk}$, then $B_{a_0, a_1}$ is an integral domain if and only if $\gcd(a_0, a_1) = 1$. Note that if $\bk$ is not algebraically closed, we can have $\gcd(a_0, a_1) \neq 1$ but $B_{\bk; a_0, a_1}$ is still a domain. For example $B_{\Rat; 2,2}$ is a domain, but $B_{\Rat; 3,3}$ is not.
    \item The Krull dimension of $B_{a_0, \dots, a_n}$ is $n$. 
    \end{itemize}
\end{remarks}

While there are occasionally non-trivial things to say about Pham-Brieskorn rings when $n \leq 1$, we will often leave those cases to the reader and henceforth assume tacitly that $n \geq 2$.

\subsection{Integral Domains}

The first natural question we want to answer is the following one.

\begin{question}
When is $B_{a_0, \dots, a_n}$ an integral domain?
\end{question}

Pham-Brieskorn rings are always integral domains when $n \geq 2$. To prove this, we require the following:

\begin{theorem}[\cite{lang2012algebra}, Theorem VI.9.1]\label{langThm}
    Let $m \geq 2$ and let $f \in \bk^*$. Assume that
    \begin{enumerate}[\rm(i)]
    \item for all primes $p$ such that $p \mid m$, $f \notin \bk^p$, 
    \item if $4 \mid m$ then $f \notin -4 \bk^4$.
    \end{enumerate}
    Then $x^m - f$ is irreducible in $\bk[x]$. 
\end{theorem}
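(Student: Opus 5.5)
The plan is the classical Capelli argument. First reduce to prime powers, then handle a single prime $p$ by induction on the exponent, using norms from a simple radical extension to carry the hypotheses upward.

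\textbf{Reduction to prime powers.} Write $m=\prod_j p_j^{r_j}$ and let $\alpha$ be a root of $x^m-f$ in an algebraic closure. Then $\beta_j=\alpha^{m/p_j^{r_j}}$ is a root of $x^{p_j^{r_j}}-f$. Hypotheses (i) and (ii) restrict to hypotheses of the same form for the exponent $p_j^{r_j}$; for (ii), $4\mid p_j^{r_j}$ forces $p_j=2$ and $4\mid m$. So, assuming the prime-power case, $[\bk(\beta_j):\bk]=p_j^{r_j}$ divides $[\bk(\alpha):\bk]\le m$. The $p_j^{r_j}$ are pairwise coprime, so their product $m$ divides $[\bk(\alpha):\bk]$. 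Hence $x^m-f$ is the minimal polynomial of $\alpha$, and it is irreducible.

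\textbf{Prime exponent.} Suppose $f\notin\bk^p$ and that $x^p-f$ has a monic factor $g$ of degree $0<d<p$. The roots of $g$ are $\zeta^{i}\alpha$ with $\zeta$ a $p$-th root of unity, so $\pm g(0)=\zeta'\alpha^d\in\bk$ for some root of unity $\zeta'$, and therefore $c:=(\zeta'\alpha^d)^p=\alpha^{dp}=f^d\in\bk^{p}$. Choose $u,v$ with $ud+vp=1$. Then $f=f^{ud}f^{vp}=(g(0)^{\pm u}f^{v})^p$ up to sign, hence $f\in\bk^p$, a contradiction. The sign is absorbed when $p$ is odd; when $p=2$ one checks it directly. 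Thus $[\bk(\alpha):\bk]=p$.

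\textbf{Induction on the exponent $r$.} Let $\alpha$ be a root of $x^p-f$ and $K=\bk(\alpha)$. We have $x^{p^r}-f=\prod(x^{p^{r-1}}-\zeta^i\alpha)$, and any root of $x^{p^r}-f$ is a root of $x^{p^{r-1}}-\alpha'$ for some conjugate $\alpha'$ of $\alpha$. It therefore suffices to show that $x^{p^{r-1}}-\alpha$ is irreducible over $K$, which gives total degree $p\cdot p^{r-1}$. By induction, applied over $K$, this needs two facts:
\begin{enumerate}[\rm(a)]
\item $\alpha\notin K^p$;
\item $\alpha\notin -4K^4$ when $4\mid p^{r-1}$.
\end{enumerate}
For (a), $N_{K/\bk}(\alpha)=(-1)^{p+1}f$. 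If $p$ is odd and $\alpha=\gamma^p$, then $f=N(\gamma)^p$, a contradiction. If $p=2$, write $\alpha=(a+b\alpha)^2$ with $a,b\in\bk$. Comparing coefficients gives $a^2+b^2f=0$ and $2ab=1$, so $f=-a^2/b^2=-4a^4$, which contradicts (ii) because $4\mid m$ whenever $r\ge2$.

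\textbf{Main obstacle: condition (b) for $p=2$.} The delicate step is showing that $\alpha\notin-4K^4$ when $p=2$ and $r\ge3$, where $K=\bk(\sqrt f)$. Suppose $\alpha=-4\gamma^4$. Then $-\alpha=(2\gamma^2)^2$ is a square in $K$. Taking norms, $N(-\alpha)=-f$ is a square in $\bk$, so $f=-c^2$ and $K=\bk(ic)=\bk(i)$ with $i^2=-1$.

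I would then use $-1=i^2$ to get $\alpha=-4\gamma^4=(1+i)^4\gamma^4$, since $(1+i)^4=-4$. This makes $\alpha$ a square in $K$, contradicting (a) as already proved for $p=2$. The care needed here is confirming that $i\in K$ in this case, and that the $p=2$ computation in (a) really used only hypothesis (ii). I expect this bookkeeping for the prime $2$ to be where the proof needs the most attention; everything else is formal.
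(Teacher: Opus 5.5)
The paper does not prove this statement; it quotes it from Lang. Your argument is correct and complete, and it is essentially Lang's proof. The prime case goes through the constant term of a putative factor and B\'ezout. The induction on $r$ takes place over $K=\bk(\alpha)$ with $\alpha^p=f$, using $N_{K/\bk}(\alpha)=(-1)^{p+1}f$ and, for $p=2$, the computation $\alpha=(a+b\alpha)^2\Rightarrow f=-4a^4$.

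The one genuine difference is the reduction to prime powers. Lang splits off an odd prime power $p^r$ of $m$. He then shows by a norm argument that a root of $x^{m/p^r}-f$ is not a $p$-th power in the field it generates, and applies the prime-power case over that extension. You instead observe that $\alpha^{m/p_j^{r_j}}$ has degree $p_j^{r_j}$ over $\bk$, and that these pairwise coprime degrees all divide $[\bk(\alpha):\bk]\le m$. This forces $[\bk(\alpha):\bk]=m$ directly, with no norms and no hypotheses to check over an extension field. That is cleaner, and it confines the transfer of hypotheses to the prime-power induction.

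Two cosmetic points:
\begin{itemize}
\item In the prime case there is no sign issue for any $p$. Put $e=(-1)^d g(0)=\zeta'\alpha^d\in\bk^*$; then $e^p=f^d$, so $f=(e^u f^v)^p$ exactly.
\item The worries in your last paragraph are already settled by what you wrote. From $(\alpha/c)^2=f/c^2=-1$ you get $i\in K$. Part (a) for $p=2$ used only hypothesis (ii), which applies since $r\ge 3$.
\end{itemize}
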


\begin{proposition}\label{integralDomain}
    For $n \geq 2$, $B_{a_0, \dots, a_n}$ is an integral domain. 
\end{proposition}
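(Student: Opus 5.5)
We will show that the polynomial $F = x_0^{a_0} + \dots + x_n^{a_n}$ is irreducible in $\bk[x_0,\dots,x_n]$. Since a polynomial ring over a field is a UFD, an irreducible element generates a prime ideal, and so $B_{a_0,\dots,a_n}$ is then a domain. We may assume $a_0 \geq 2$; otherwise $a_0 = 1$ for every index after permuting, and $F$ is linear, hence irreducible.

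The plan is to regard $F$ as a polynomial in $x_0$ over the field $K = \bk(x_1,\dots,x_n)$, that is,
\[
F = x_0^{a_0} - f, \qquad f = -(x_1^{a_1} + \dots + x_n^{a_n}) \in K^*,
\]
and to apply Theorem \ref{langThm} with $m = a_0$. Since $F$ is monic in $x_0$, it is primitive over the UFD $\bk[x_1,\dots,x_n]$. By Gauss's lemma, irreducibility of $F$ in $K[x_0]$ then gives irreducibility in $\bk[x_1,\dots,x_n][x_0]$.

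Verifying the hypotheses of Theorem \ref{langThm} is the main step, and it reduces to a single claim: $g = x_1^{a_1} + \dots + x_n^{a_n}$ is not a $p$-th power in $K$ up to a constant, for any prime $p$. More precisely, we want to show that $f \notin K^p$ for all primes $p$, and that $f \notin -4K^4$. A $p$-th power in $K$ that lies in the UFD $A = \bk[x_1,\dots,x_n]$ must be $c\,h^p$ with $c \in \bk^*$ and $h \in A$. The same holds for $-4u^4$. So in every case it suffices to show that $g$ is not of the form $c\,h^k$ with $k \geq 2$ and $h \in A$, and for this it is enough that $g$ is squarefree.

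To show that $g$ is squarefree, we use that $n \geq 2$, so $g$ involves at least two variables. If $h^2 \mid g$ with $h$ nonconstant, then $h$ divides every partial derivative $\partial g/\partial x_i = a_i x_i^{a_i-1}$. In characteristic zero, each such partial derivative is a nonzero monomial in $x_i$ alone, so $h$ is a nonzero constant multiple of a power of $x_i$ for each $i$. Taking $i = 1$ and $i = 2$ forces $h$ to be a unit, which is a contradiction.

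An alternative route to this step is to compare degrees. The polynomial $g$ has the monomials $x_1^{a_1}$ and $x_2^{a_2}$ in distinct variables, so if $g = c\,h^k$, then $h$ must involve both variables. One then specializes $x_2 = \dots = x_n = 0$ and checks the factorization. I expect the derivative argument to be the cleanest, and the main care needed is in the reduction from $K$ to $A$ via unique factorization. Conditions (i) and (ii) of Theorem \ref{langThm} then hold, so $F$ is irreducible and the proposition follows.
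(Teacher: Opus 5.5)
Your argument is correct. Its skeleton matches the paper's: Gauss's lemma over $\bk[x_1,\dots,x_n]$ and Theorem~\ref{langThm} over $K=\bk(x_1,\dots,x_n)$. The key step, however, is settled differently.

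The paper proceeds as follows:
\begin{enumerate}[\rm(1)]
\item It first passes to $\bar{\bk}$.
\item It uses $\sqrt{-1}\in\bk$ to turn a failure of condition (ii) into a failure of (i) at $p=2$.
\item It absorbs the sign with a $p$-th root of $-1$, so that $g=x_1^{a_1}+\dots+x_n^{a_n}$ becomes an honest $p$-th power in $\bk[x_1,\dots,x_n]$.
\item It sets $x_3=\dots=x_n=0$ and rules out $x_1^{a_1}+x_2^{a_2}$ being a power by factoring it as $\prod_i(x_1^{\alpha_1}+\mu_i x_2^{\alpha_2})$ into pairwise non-associate irreducibles. That factorization rests on the irreducibility of $x^a+\lambda y^b$ for coprime $a,b$.
\end{enumerate}

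You instead keep the unit $c\in\bk^*$ and treat both conditions of Lang's theorem uniformly as ``$g\neq c\,h^k$ with $k\geq 2$''. You then prove the stronger fact that $g$ is squarefree. A repeated factor $h$ would divide each $\partial g/\partial x_i=a_ix_i^{a_i-1}$, which is a nonzero monomial since $\bk$ has characteristic zero. Because $n\geq 2$, there are two such monomials in different variables, so $h$ is a unit.

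This gives a self-contained proof valid directly over any field of characteristic zero. It needs no passage to $\bar{\bk}$, no roots of unity, no special treatment of $a_i=1$ for $i\geq 1$, and no two-variable factorization. It is essentially a divisibility version of the Jacobian computation behind Remark~\ref{uniqueSingularPoint}. What the paper's route offers in exchange is an explicit description of how $x_1^{a_1}+x_2^{a_2}$ factors, which your argument does not need.

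Two small remarks. First, the ``alternative route'' you sketch does not work as written. Setting $x_2=\dots=x_n=0$ leaves $x_1^{a_1}=c\,h(x_1,0,\dots,0)^k$, which is no contradiction. You have to keep two variables (set $x_3=\dots=x_n=0$) and then analyze $x_1^{a_1}+x_2^{a_2}$, as the paper does. Your main argument does not use this, so it is harmless. Second, your opening reduction is cleaner stated as: permute so that $a_0=\max_i a_i$; if this maximum is $1$, then $F$ is linear.
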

\begin{proof}
    This is clear if some $a_i = 1$, so assume $a_i \geq 2$ for all $i$. Without loss of generality, we may assume $\bk$ is algebraically closed since irreducibility over $\bar{\bk}$ implies irreducibility over $\bk$. It suffices to prove that $x_0^{a_0} + \dots + x_n^{a_n}$ is irreducible in $\bk[x_0, \dots, x_n]$. Suppose that $x_0^{a_0} + \dots + x_n^{a_n}$ is reducible. Then by Gauss' Lemma $x_0^{a_0} + \dots + x_n^{a_n}$ is reducible in  $K[x_0]$, where $K = \bk(x_1, \dots, x_n)$. Write $x_0^{a_0} + \dots + x_n^{a_n} = x_0^{a_0} - f$ where $f = -(x_1^{a_1} + \dots + x_n^{a_n}) \in K^*$. By Theorem \ref{langThm} (over $K$) with $m = a_0 \geq 2$, the reducibility of $x_0^{a_0} - f$ implies that either (i) or (ii) of that theorem fails. If (ii) fails, then $4 \mid a_0$ and $f = -4q^4$ for some $q \in K$. Since $\sqrt{-1} \in \bk$ we may write $f = (2\sqrt{-1}\,q^2)^2 \in K^2$, and $4 \mid a_0$ gives $2 \mid a_0 = m$; so (i) fails as well, with the prime $2$. In either case, there exist $q' \in \bk(x_1, \dots, x_n)$ and a prime $p$  dividing $a_0$ such that $(q')^p = -x_1^{a_1} - \dots - x_n^{a_n}$. Letting $\mu$ be a $p^{th}$ root of $-1$ and letting $q = \mu q'$, we obtain $q^p = x_1^{a_1} + \dots + x_n^{a_n}$. Writing $q = \frac{u}{v}$, where $u, v \in \bk[x_1, \dots, x_n]  \setminus\{0\}$, we obtain $u^p = v^p(x_1^{a_1} + \dots + x_n^{a_n})$. It follows that $x_1^{a_1} + \dots + x_n^{a_n}$ is a $p^{th}$ power in $\bk[x_1, \dots, x_n]$ i.e. there exists some $g = g(x_1, \dots, x_n) \in \bk[x_1, \dots, x_n]$ such that $g^p = x_1^{a_1} + \dots + x_n^{a_n}$. Setting $x_3 = x_4 = \dots = x_n = 0$, it follows that $g(x_1, x_2, 0,\dots, 0)^p = x_1^{a_1} + x_2^{a_2}$. Thus, to prove the proposition, it suffices to show:
    \begin{equation}
        x_1^{a_1} + x_2^{a_2} \text{ is not an $m^{th}$ power in $\bk[x_1, x_2]$ for any $m > 1$}.
    \end{equation}
    This follows from Remark \ref{firstRemarks} if $\gcd(a_1, a_2) = 1$, so we may assume that $\gcd(a_1,a_2) = d > 1$. Let $\alpha_1 = a_1 / d$ and let $\alpha_2 = a_2 / d$. Then $$x_1^{a_1} + x_2^{a_2} = (x_1^{\alpha_1})^d + (x_2^{\alpha_2})^d = \prod_{i=1}^d(x_1^{\alpha_1} + \mu_i {x_2}^{\alpha_2})$$ where the $\mu_i$ are distinct elements of $\bk^*$. Furthermore, this is a factorization of $x_1^{a_1} + x_2^{a_2}$ into non-associate irreducible components by Remark \ref{firstRemarks}. Since $\bk[x_1,x_2]$ is a UFD, $x_1^{a_1} + x_2^{a_2}$ is not an $m^{th}$ power in $\bk[x_1, x_2]$, as required.  
\end{proof}

\subsection{Normality}

In this subsection, we will show that for $n \geq 2$, $B_{a_0, \dots, a_n}$ is normal.

\begin{remark}\label{uniqueSingularPoint}
    Using the Jacobian criterion (assuming $a_i \geq 2$ for all $i$), it is easy to verify that $\mgoth = \lb x_0, \dots, x_n \rb \in \Spec B_{a_0, \dots, a_n}$ is the unique singular point of $\Spec B_{a_0, \dots, a_n}$.  
\end{remark}

\begin{definitions}
Recall that a \textit{normal domain} is an integral domain that is integrally closed in its field of fractions and a ring $R$ is \textit{normal} if $R_\pgoth$ is a normal domain for every prime ideal $\pgoth \in \Spec R$. A Noetherian local ring $R$ is \textit{Cohen-Macaulay} if the depth of $R$ (as a module over itself) is
equal to the Krull dimension of $R$. A ring $R$ is \textit{Cohen-Macaulay} if it is Noetherian and if $R_\pgoth$ is
Cohen-Macaulay for all $\pgoth \in \Spec R$. If $R$ is a ring and $k \in \Nat$, then $R$ \textit{satisfies condition $S_k$} if $\depth R_\pgoth \geq \inf\{k, \height \pgoth\}$
for every prime ideal $\pgoth \in \Spec R$. A ring $R$ is \textit{regular in codimension $k$} if $R_\pgoth$ is a regular local ring for all prime
ideals $\pgoth$ of $R$ satisfying $\height \pgoth \leq k$.
\end{definitions}
The following results are useful. 
\begin{remarks}\label{CMRemarks} \ 
\begin{enumerate}[\rm(i)]
\item Every regular ring is Cohen-Macaulay. 
\item A Noetherian ring $R$ is Cohen-Macaulay if and only if $R$ satisfies $S_k$ for all $k \in \Nat$.\cite[Example B.77]{gortz2020algebraic}
\item If $R$ is Cohen-Macaulay, so is the polynomial ring $R[x]$. 
\end{enumerate}
\end{remarks}

\begin{lemma}[\cite{MR1251956}, Theorem 2.1.3 (a)]\label{quotientRegularCM}
Let $S$ be a Noetherian Cohen-Macaulay ring and let $I = \lb f_1, \dots, f_k \rb$ be such
that $(f_1, \dots , f_k)$ is an $S$-regular sequence. Then $S/I$ is Cohen-Macaulay.
\end{lemma}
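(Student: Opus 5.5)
The plan is to reduce to a single element by induction on $k$, and then to check the Cohen--Macaulay property prime by prime, where it becomes a comparison of depth and dimension in a local ring.

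\emph{Reduction to $k = 1$.} Put $S' = S/\lb f_1 \rb$. By definition of a regular sequence, the images of $f_2, \dots, f_k$ form an $S'$-regular sequence. Also $S/I \isom S'/\lb \bar f_2, \dots, \bar f_k\rb$. So once the case $k=1$ is known, $S'$ is Noetherian and Cohen--Macaulay, and induction finishes the argument. It therefore suffices to treat $I = \lb f \rb$ with $f$ a non-zerodivisor on $S$ and $S/fS \neq 0$.

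\emph{Localization.} The primes of $S/fS$ are the images of the primes $\pgoth$ of $S$ with $f \in \pgoth$, and $(S/fS)_{\pgoth} \isom S_\pgoth / f S_\pgoth$. Localization is flat, so multiplication by $f$ remains injective on $S_\pgoth$. Moreover $S_\pgoth/fS_\pgoth \neq 0$ because $f \in \pgoth S_\pgoth$. Thus $f$ is $S_\pgoth$-regular, and we are reduced to the following claim: if $(R,\mgoth)$ is a Noetherian local Cohen--Macaulay ring and $f \in \mgoth$ is a non-zerodivisor, then $R/fR$ is Cohen--Macaulay.

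\emph{Local case.} I would establish two equalities.
\begin{enumerate}[\rm(a)]
\item $\depth R/fR = \depth R - 1$. This is the standard fact that a maximal $R$-regular sequence in $\mgoth$ may be chosen to begin with any given regular element $f$. It follows from the characterisation of depth via $\operatorname{Ext}$, or equivalently from the long exact sequence attached to $0 \to R \xrightarrow{f} R \to R/fR \to 0$.
\item $\dim R/fR = \dim R - 1$. Krull's principal ideal theorem gives $\dim R/fR \geq \dim R - 1$. For the reverse inequality, a non-zerodivisor lies in no associated prime, hence in no minimal prime. Since $R$ is Cohen--Macaulay it is equidimensional, so every maximal chain of primes through $\lb f \rb$ is shortened by at least one.
\end{enumerate}
Combining these gives $\depth R/fR = \depth R - 1 = \dim R - 1 = \dim R/fR$, so $R/fR$ is Cohen--Macaulay.

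I expect the main obstacle to be step (a), the depth drop, since it is the only genuinely homological input. I would prove it via $\operatorname{Ext}^i_R(R/\mgoth, -)$ applied to the short exact sequence above, together with Nakayama's lemma to see that multiplication by $f$ is zero on these Ext modules. Step (b) could alternatively be bypassed by using only $\dim R/fR \geq \dim R - 1$ together with the universal inequality $\depth \leq \dim$. This gives $\dim R - 1 = \depth R/fR \leq \dim R/fR$, while $\dim R/fR \leq \dim R - 1$ holds because $f$ avoids the minimal primes. Either route yields equality.
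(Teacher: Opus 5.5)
Your argument is correct and is essentially the standard proof of the cited Bruns--Herzog Theorem 2.1.3(a); the paper itself gives only the citation. That proof localizes at primes containing $I$, where the sequence stays regular, and observes that depth and dimension both drop by exactly the length of the sequence; inducting on $k$ rather than treating $f_1,\dots,f_k$ at once is immaterial. Two small slips do not affect validity: multiplication by $f$ on $\operatorname{Ext}^i_R(R/\mathfrak{m},R)$ is zero simply because $f$ annihilates $R/\mathfrak{m}$ (no Nakayama needed), and $\dim R/fR\le\dim R-1$ uses only that $f$ avoids the minimal primes, not equidimensionality --- this ``$\le$'' bound, not ``$\geq$'', is also what your alternative route actually relies on.
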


\begin{proposition}\label{CM}
    If $R$ is a Noetherian Cohen-Macaulay integral domain, then for every $f \in R[x_1, \dots, x_n]$, the ring $R[x_1, \dots, x_n] / \lb f \rb$ is Cohen-Macaulay. 
\end{proposition}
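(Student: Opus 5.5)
The plan is to reduce the statement to Lemma \ref{quotientRegularCM} with $S = R[x_1, \dots, x_n]$ and a regular sequence of length one, after handling the degenerate choices of $f$ separately.

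First, set $S = R[x_1, \dots, x_n]$. Since $R$ is Noetherian, $S$ is Noetherian by the Hilbert basis theorem. Applying Remark \ref{CMRemarks}(iii) $n$ times, by induction on the number of variables, shows that $S$ is Cohen-Macaulay. Since $R$ is an integral domain, so is $S$.

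Next, split into cases according to $f$.
\begin{itemize}
\item If $f = 0$, then $S/\lb f \rb = S$, which is Cohen-Macaulay by the previous step.
\item If $f$ is a unit of $S$, then $S/\lb f \rb$ is the zero ring. It has no prime ideals, so it is vacuously Cohen-Macaulay under the definition above (Noetherian, with the local condition holding at every prime). I would say this explicitly, since conventions differ on whether the zero ring counts.
\item In the remaining case, $f$ is a nonzero non-unit. Because $S$ is a domain, multiplication by $f$ is injective on $S$, so $f$ is a nonzerodivisor. Because $f$ is not a unit, $S/fS \neq 0$. Hence $(f)$ is an $S$-regular sequence of length one, and Lemma \ref{quotientRegularCM} gives that $S/\lb f \rb$ is Cohen-Macaulay.
\end{itemize}

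The argument is essentially formal, and the only point needing care is the definition of a regular sequence. One should check that the notion used in Lemma \ref{quotientRegularCM} is the global one: $f$ is a nonzerodivisor on $S$ and $S/fS \neq 0$. The non-unit case satisfies exactly this. The cases $f = 0$ and $f$ a unit lie outside the lemma's hypothesis, which is why they are handled directly.

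If one prefers to work locally, the same conclusion follows prime by prime. For $\pgoth \in \Spec S$ with $f \in \pgoth$, the image of $f$ in the domain $S_\pgoth$ is nonzero, hence a nonzerodivisor in the maximal ideal. The local version of the lemma then shows that $S_\pgoth / f S_\pgoth$ is Cohen-Macaulay. These rings are precisely the localizations of $S/\lb f \rb$ at its primes.
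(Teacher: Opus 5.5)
Your proof is correct and follows the paper's argument: $S = R[x_1,\dots,x_n]$ is Cohen--Macaulay by Remark~\ref{CMRemarks}(iii), the cases $f = 0$ and $f$ a unit are handled directly, and otherwise $(f)$ is an $S$-regular sequence, so Lemma~\ref{quotientRegularCM} applies. You also make explicit two points the paper leaves implicit: the domain hypothesis is what makes a nonzero $f$ a nonzerodivisor, and the zero ring is vacuously Cohen--Macaulay under the paper's definition.
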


\begin{proof} Since $R$ is Cohen-Macaulay, so is $R[x_1, \dots, x_n]$. As such, the result is true both when $f = 0$ and when $f$ is a unit. Otherwise, $(f)$ is a regular sequence, so the result follows from Lemma \ref{quotientRegularCM}. 
\end{proof}

\begin{theorem}[Serre's Normality Criterion]\label{Serre}
    A Noetherian ring $R$ is normal if and only if the following hold:
    \begin{enumerate}[\rm(i)]
        \item $R$ is regular in codimension 1
        \item $R$ satisfies condition $S_2$.
    \end{enumerate}
\end{theorem}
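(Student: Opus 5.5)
The plan is to prove both implications by reducing to local statements. Normality, $R_1$ and $S_2$ are all conditions on the localizations $R_\pgoth$, so it suffices to understand Noetherian local rings. The one global tool I would use throughout is the theory of associated primes of principal ideals generated by nonzerodivisors.

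\textbf{($\Rightarrow$)} Assume $R$ is normal and let $\pgoth \in \Spec R$. Then $A = R_\pgoth$ is a Noetherian local normal domain with maximal ideal $\mgoth$.

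\emph{Key lemma.} If $0 \neq x \in \mgoth$ and $\mgoth \in \mathrm{Ass}(A/xA)$, then $\mgoth$ is principal and $\dim A = 1$. To prove it:
\begin{enumerate}[\rm(i)]
\item Pick $y \in A \setminus xA$ with $y\mgoth \subseteq xA$, and set $t = y/x \in \Frac A$. Then $t \notin A$ and $t\mgoth \subseteq A$.
\item If $t\mgoth \subseteq \mgoth$, then $\mgoth$ is a faithful finitely generated $A[t]$-module. The determinant trick makes $t$ integral over $A$, so $t \in A$ by normality, a contradiction.
\item Hence $t\mgoth = A$. Then $\mgoth = t^{-1}A$ is principal (note $t^{-1} \in A$), and $\dim A = 1$ by Krull's principal ideal theorem.
\end{enumerate}

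Both conditions now follow from the lemma.
\begin{itemize}
\item $R_1$: if $\height \pgoth = 1$, the lemma applies to any nonzero $x \in \mgoth$. Indeed, $\mgoth$ is the only nonzero prime, so it is associated to $A/xA$. Thus $\mgoth$ is principal and $A$ is a DVR, i.e. regular.
\item $S_2$: if $\height \pgoth \geq 2$, pick $0 \neq x \in \mgoth$, which is a nonzerodivisor. The lemma shows $\mgoth \notin \mathrm{Ass}(A/xA)$, so some element of $\mgoth$ is a nonzerodivisor on $A/xA$. Hence $\depth A \geq 2$.
\item The remaining cases of $S_2$ are trivial: if $\height \pgoth = 1$ then $A$ is a domain that is not a field, so $\depth A \geq 1$; height $0$ imposes nothing.
\end{itemize}

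\textbf{($\Leftarrow$)} Assume $R_1$ and $S_2$.

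\emph{Step 1: $R$ is reduced.} $S_1$ says $\mathrm{Ass}(R)$ consists only of minimal primes. $R_0$ says $R_\pgoth$ is a field for each minimal $\pgoth$. So the map $R \to \prod_{\pgoth \text{ minimal}} R_\pgoth$ is injective, and $R$ is reduced. Its total quotient ring $Q(R)$ is then a finite product of fields. Since all conditions localize, I may assume $R$ is local.

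\emph{Step 2: $R$ is integrally closed in $Q(R)$.} Let $f/g \in Q(R)$ be integral over $R$, with $g$ a nonzerodivisor.
\begin{enumerate}[\rm(i)]
\item By $S_2$, every $\qgoth \in \mathrm{Ass}(R/gR)$ has height $1$. Otherwise $\depth R_\qgoth \geq 2$, contradicting $\qgoth R_\qgoth \in \mathrm{Ass}(R_\qgoth/gR_\qgoth)$.
\item For each such $\qgoth$, $R_\qgoth$ is a DVR by $R_1$, hence integrally closed. So $f \in gR_\qgoth$.
\item Using a primary decomposition of $gR$, whose components all lie at these height-one primes, I conclude $f \in gR$, i.e. $f/g \in R$.
\end{enumerate}

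\emph{Step 3: $R$ is a domain.} Write $Q(R) = \prod K_i$. The idempotents $e_i \in Q(R)$ satisfy $e_i^2 = e_i$, so they are integral over $R$ and lie in $R$ by Step 2. A local ring has no nontrivial idempotents, so $Q(R)$ is a single field. Thus $R$ is a domain, integrally closed in its fraction field.

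I expect the main obstacle to be Step 2 of ($\Leftarrow$): correctly using $S_2$ to force all associated primes of $gR$ to have height one, and then passing from membership in each $gR_\qgoth$ to membership in $gR$ via primary decomposition. The key lemma in ($\Rightarrow$) is the other delicate point, since the determinant trick must be applied to the faithful module $\mgoth$.
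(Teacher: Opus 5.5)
The paper gives no proof of this statement. Serre's criterion is quoted as a standard black box and used only once, in Corollary~\ref{PBNormal}, to get normality of $B_{a_0,\dots,a_n}$ from the isolated singularity and the Cohen--Macaulay property. So there is no argument in the paper to compare against. Your proposal is correct and self-contained, and it follows the classical textbook argument (as in Matsumura's \emph{Commutative Ring Theory}):
\begin{enumerate}[\rm(i)]
\item For ($\Rightarrow$), the lemma with $t = y/x$ shows that in a normal local domain, $\mgoth$ can be associated to a nonzero principal ideal only in dimension one. That single lemma yields both $R_1$ and $S_2$.
\item For ($\Leftarrow$), you get reducedness from $R_0$ and $S_1$. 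Then $S_2$ forces the principal ideals $gR$, with $g$ a nonzerodivisor, to be unmixed of height one, where $R_1$ supplies DVRs. Finally, idempotents of $Q(R)$ are integral over $R$, and locality forces a domain.
\end{enumerate}
What the paper buys by citing is brevity. What you buy is an argument the reader can check without leaving the survey.

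Two small points should be written out explicitly; neither is a gap.
\begin{enumerate}[\rm(i)]
\item The paper's definition of ``regular in codimension $1$'' covers all primes with $\height \pgoth \leq 1$. So in ($\Rightarrow$) you should also note that $R_\pgoth$ is a field, hence regular, when $\height \pgoth = 0$. This holds because a zero-dimensional local domain is a field.
\item In Step 2(i) of ($\Leftarrow$), the hypothesis $\depth R_\qgoth \geq 2$ only provides \emph{some} regular sequence of length two. To rule out $\qgoth R_\qgoth \in \mathrm{Ass}(R_\qgoth/gR_\qgoth)$ you need $\depth(R_\qgoth/gR_\qgoth) = \depth R_\qgoth - 1$ for the particular regular element $g$. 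This is equivalent to the fact that all maximal regular sequences have the same length, and you should cite it. You should also say why $\height\qgoth = 0$ cannot occur: the nonzerodivisor $g$ lies in $\qgoth$, while minimal primes of a Noetherian ring consist of zero-divisors.
\end{enumerate}
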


\begin{corollary}\label{PBNormal}
    Let $n \geq 2$. Then $B_{a_0, \dots, a_n}$ is normal. 
\end{corollary}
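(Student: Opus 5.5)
We will apply Serre's Normality Criterion (Theorem \ref{Serre}) to $B = B_{a_0, \dots, a_n}$, which is Noetherian as a quotient of a polynomial ring. This requires two checks: condition $S_2$ and regularity in codimension $1$. We first dispose of the trivial case. If some $a_i = 1$, then $B$ is a polynomial ring by Remark \ref{firstRemarks}, hence regular and in particular normal. So assume $a_i \geq 2$ for all $i$.

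\emph{Condition $S_2$.} Take $R = \bk$ in Proposition \ref{CM}. A field is a Noetherian Cohen-Macaulay integral domain, so $B = \bk[x_0, \dots, x_n]/\lb x_0^{a_0} + \dots + x_n^{a_n} \rb$ is Cohen-Macaulay. By Remark \ref{CMRemarks}(ii), $B$ then satisfies $S_k$ for every $k$, and in particular $S_2$.

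\emph{Regularity in codimension $1$.} By Remark \ref{uniqueSingularPoint}, the only prime $\pgoth$ of $B$ for which $B_\pgoth$ fails to be regular is $\mgoth = \lb x_0, \dots, x_n \rb$. This remark is stated geometrically over $\bar{\bk}$, but the Jacobian criterion applies equally to nonclosed primes of $B_{\bk;\dots}$. The partial derivatives are $a_i x_i^{a_i - 1}$, and since $\operatorname{char}\bk = 0$ they vanish simultaneously only on $V(x_0, \dots, x_n)$. So any prime $\pgoth \neq \mgoth$ gives a regular local ring $B_\pgoth$.

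It remains to rule out $\height \mgoth \leq 1$. Since $B$ is a domain (Proposition \ref{integralDomain}) finitely generated over $\bk$, and $\mgoth$ is maximal, $\height \mgoth = \dim B = n \geq 2$ by Remark \ref{firstRemarks}. Hence every prime of height at most $1$ is distinct from $\mgoth$, so $B$ is regular in codimension $1$. Theorem \ref{Serre} then gives that $B$ is normal.

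The only mildly delicate step is justifying the Jacobian criterion over a non-algebraically closed $\bk$. If needed, one can instead note that normality descends along the faithfully flat extension $B_\bk \to B_{\bar\bk}$, or equivalently that regularity of localizations descends along $\bk \to \bar\bk$. With either justification the rest of the argument is direct.
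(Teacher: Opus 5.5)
Your proof is correct and follows the paper's argument: you apply Serre's criterion, get $S_2$ from Proposition \ref{CM} (with $R=\bk$) together with Remark \ref{CMRemarks}(ii), and get regularity in codimension $1$ from the fact that the unique singular point $\mgoth$ has $\height \mgoth = n \geq 2$. Your extra remarks, on the Jacobian criterion at non-closed primes over a non-algebraically-closed $\bk$ and on why $\height \mgoth = \dim B$, fill in details that the paper leaves as ``clearly satisfied''.
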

\begin{proof}
    This is clear if some $a_i = 1$ so assume $a_i \geq 2$ for all $i$. Apply Theorem \ref{Serre}. Part (i) of Theorem \ref{Serre} is clearly satisfied because $\Sing \Spec B_{a_0, \dots, a_n} = \{\mgoth\}$ with $\height \mgoth = n \geq 2$  and part (ii) follows from Proposition \ref{CM} together with Remark \ref{CMRemarks} (ii). 
\end{proof}

\section{$\Nat$-Grading Structure}
Recall that a ring $R$ is $\Nat$-graded if we can write $R = \bigoplus_{d \in \Nat} R_d$ where each $R_d$ is an abelian group such that $R_m R_n \subseteq R_{m+n}$. For any $d \in \Nat$, the elements of $R_d$ are said to be \textit{homogeneous of degree $d$}. Given $r \in R$, the unique decomposition $r = \sum_{d \in \Nat } r_d$ with finitely many $r_d \neq 0$ is called the \textit{homogeneous decomposition of $r$}. For nonzero $r$, the largest $d$ such that $r_d \neq 0$ is called the degree of $r$; the degree of 0 is $-\infty$.  

    If $R = \bigoplus_{d \in \Nat} R_d$ is an $\Nat$-graded ring, we say that an ideal $I \lhd R$ is \textit{homogeneous} if it can be generated by homogeneous elements. If $I$ is a homogeneous ideal, then the quotient ring $R / I$ admits a well-defined grading $R/I = \bigoplus_{d \in \Nat} (R_d + I) / I$. 

\begin{example}\label{PBGradingExample}

    Given an $(n+1)$-tuple $(a_0, \dots, a_n) \in (\Nat^+)^{n+1}$, let $L = \lcm(a_0, \dots, a_n)$ and for each $i = 0,1, \dots, n$, define $w_i = L / a_i$. Consider the graded polynomial ring $R = \bk[x_0, \dots, x_n]$ whose grading $\ggoth$ is defined by $\bk \subseteq R_0$, $\deg(x_i) = w_i$ for each $i = 0, \dots, n$ and observe that the element $f = x_0^{a_0} + \dots + x_n^{a_n}$ is $\ggoth$-homogeneous of degree $L$. Consequently, the ideal $\lb f \rb \lhd \bk[x_0, \dots, x_n]$ is a $\ggoth$-homogeneous ideal and so $B_{a_0, \dots, a_n} = \bk[x_0, \dots, x_n] / \lb f \rb$ is an $\Nat$-graded ring satisfying $\deg(\bar{x}_i) = w_i$.
\end{example}

\begin{definition}\label{goodCstar}
    An algebraic variety $V$ admits a \textit{good $\bk^*$-action} if there is a closed embedding $V \hookrightarrow \Aff^{n+1}$ and a $\bk^*$-action $t \cdot (z_0, z_1, \dots, z_n) = (t^{w_0} z_0, \dots, t^{w_n} z_n)$ with each $w_i > 0$ that leaves $V$ invariant under this action. 
\end{definition}
\begin{remark}\label{goodCStarRemark}
Algebraically, if $V = \Spec A$, the definition of $V$ admitting a good $\bk^*$-action is equivalent to requiring that $A$ admits a positive $\Nat$-grading, i.e., $A = \bigoplus_{i \in \Nat} A_i$ with $A_0 = \bk$. Example \ref{PBGradingExample} shows that if $V = \Spec B_{a_0, \dots, a_n}$, then $V$ does indeed admit a good $\bk^*$-action.
\end{remark}

\section{Isomorphism Classes and Cancellation}

It is not too hard to show that over a field of characteristic zero Pham-Brieskorn rings that are not polynomial rings are determined up to isomorphism by their exponents. More precisely, we have 
\begin{proposition}\cite[Theorem 7]{hajra2026generalizedzariskicancellationbrieskornpham}\label{PBExponents}
    Let $\bk$ be a field of characteristic zero, let $(a_0, \dots, a_n), (b_0, \dots, b_n) \in (\Nat_{\geq 2})^{n+1}$. Then the following are equivalent:
    \begin{enumerate}[\rm(i)]
        \item $B_{\bk; a_0, \dots, a_n} \isom B_{\bk; b_0, \dots, b_n}$
        \item up to a permutation of the entries, $(a_0, \dots, a_n) = (b_0, \dots, b_n)$.
    \end{enumerate}
\end{proposition}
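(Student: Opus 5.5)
The implication (ii) $\Rightarrow$ (i) is immediate: permuting the variables gives an isomorphism. For (i) $\Rightarrow$ (ii), I would first extend scalars to $\bar{\bk}$, so from now on $\bk$ is algebraically closed. The plan is to extract invariants of the local ring $\mathcal{O} = B_{\mgoth}$ at the unique singular point (Remark \ref{uniqueSingularPoint}), or of its completion, that determine the multiset $\{a_0,\dots,a_n\}$. Since all $a_i,b_i \geq 2$, both varieties are singular. An abstract isomorphism $B_{a}\isom B_{b}$ must send the unique singular point to the unique singular point, so it induces an isomorphism of the complete local rings $\bk[[x_0,\dots,x_n]]/\lb f_a\rb \isom \bk[[x_0,\dots,x_n]]/\lb f_b\rb$. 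Both rings have embedding dimension $n+1$, because $f$ lies in $\mgoth^2$.

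Next I would use the standard fact that such an isomorphism of hypersurface singularities lifts to an automorphism $\phi$ of $\bk[[x]]$ with $\phi(f_a) = u f_b$ for a unit $u$. From this, contact-equivalence invariants are available.

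\begin{itemize}
\item The Tjurina algebra $\bk[[x]]/\lb f, \partial f\rb$ is an invariant. Since $f$ is quasi-homogeneous, $f$ lies in its Jacobian ideal, so this algebra equals the Milnor algebra $\bk[x_0,\dots,x_n]/\lb x_0^{a_0-1},\dots,x_n^{a_n-1}\rb$, a tensor product of the one-variable algebras $\bk[x]/\lb x^{a_i-1}\rb$.
\item The key step is to recover the multiset $\{a_i-1\}$ from the isomorphism class of the local Artinian algebra $A = \bigotimes_i \bk[x_i]/\lb x_i^{c_i}\rb$, where $c_i = a_i - 1 \geq 1$.
\end{itemize}

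To carry out the key step, note that $A$ is a monomial complete intersection. I would recover the $c_i$ through the Hilbert function of the associated graded ring $\operatorname{gr}_{\mgoth}(A)$, which is an invariant of $A$. Its Hilbert series is $\prod_i (1+t+\dots+t^{c_i-1})$. Because $\bk[t]$ is a UFD, this polynomial factors into cyclotomic-type factors. The multiset $\{c_i\}$ is then determined by induction: the largest $c$ with a primitive $c$-th root of unity as a root is the maximum $c_i$; divide out that factor and repeat. Entries with $c_i = 1$ contribute the factor $1$ and cannot be seen this way. However, the total number $n+1$ of entries is known, since it is the embedding dimension of the original singularity. So the number of entries equal to $1$, that is $a_i = 2$, is also determined.

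I expect the main obstacle to be the lifting step, namely that an isomorphism of the quotient rings comes from a contact equivalence of the defining series. I would justify it via minimal embeddings of complete local rings: lift generators, which is possible since both embedding dimensions equal $n+1$, and then compare the principal kernels. A secondary subtlety is making sure that isomorphic Milnor/Tjurina algebras produce isomorphic associated graded rings. This is automatic, because $\operatorname{gr}_{\mgoth}$ depends only on the local ring.
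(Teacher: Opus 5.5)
Your proposal is correct and follows the same route as the paper's sketch. In both, the isomorphism induces an isomorphism of the Artinian algebras $\bk[x_0,\dots,x_n]/\lb x_0^{a_0-1},\dots,x_n^{a_n-1}\rb$ (the singular subscheme, equivalently your Tjurina algebra), and the exponents are then read off from that algebra. You differ only in two justifications. First, you obtain invariance through completion and a lift to a contact equivalence, whereas the paper uses the intrinsic singular subscheme, which avoids the lifting step. Second, you prove the recovery step yourself, using the Hilbert series $\prod_i (1+t+\dots+t^{a_i-2})$ and its factorization into cyclotomic polynomials, where the paper instead cites \cite[Lemma 6]{hajra2026generalizedzariskicancellationbrieskornpham}.
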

Let us give some comments about how (i)$\Rightarrow$(ii). (The (ii)$\Rightarrow$(i) part of the proof is obvious.) The assumption that $B_{\bk; a_0, \dots, a_n} \isom B_{\bk; b_0, \dots, b_n}$ implies that any isomorphism $\varphi : B_{\bk; a_0, \dots, a_n} \to B_{\bk; b_0, \dots, b_n}$ induces an isomorphism of schemes between their respective singular points. This implies that $\bk[x_0, \dots, x_n] / \lb x_0^{a_0 - 1}, \dots, x_n^{a_n-1} \rb \isom \bk[x_0, \dots, x_n] / \lb x_0^{b_0 - 1}, \dots, x_n^{b_n-1} \rb$ (where we use that $x_0^{a_0} + \dots + x_n^{a_n} \in \lb x_0^{a_0 - 1}, \dots, x_n^{a_n-1} \rb$) from which one deduces (see \cite[Lemma 6]{hajra2026generalizedzariskicancellationbrieskornpham}) that (up to a permutation) $a_i = b_i$ for all $i$.

The article \cite{hajra2026generalizedzariskicancellationbrieskornpham} in fact proves that certain singular varieties (including the family of Pham-Brieskorn varieties) satisfy a nice cancellation property. More generally, the authors prove:

\begin{theorem}\cite[Theorem 13]{hajra2026generalizedzariskicancellationbrieskornpham}\label{PBCancellation}
    Let $R$ and $S$ be positively graded affine domains over $\Comp$. Let $V = \Spec R$ and $W = \Spec S$ be the corresponding varieties admitting good $\Comp^*$-actions with vertices $p$ and $q$ respectively. Assume $p$ and $q$ are the unique singularities of $V$ and $W$, respectively. Let $Z$ be a separated scheme over $\Comp$ (not necessarily connected) having a smooth point such that $V \times  Z \isom_\Comp W \times Z$. Then $V$ and $W$ are isomorphic as affine $\Comp$ varieties. 
\end{theorem}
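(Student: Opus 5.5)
Given an isomorphism $\Phi : V \times Z \to W \times Z$, the plan is to recover the cone $V$ from intrinsic data of $V \times Z$, namely its singular locus, and then use the good $\Comp^*$-actions to pass from local to global information. Since $p$ is the unique singular point of $V$, we have $\Sing(V \times Z) = (\{p\} \times Z) \cup (V \times \Sing Z)$, and similarly for $W$. Fix a smooth point $z_0 \in Z$ and let $Z_0$ be the connected component containing it, or better, an open neighbourhood $U$ of $z_0$ in the smooth locus. The first step is to show that $\Phi$ carries the irreducible component of $\{p\} \times Z_0$ through $(p,z_0)$ onto $\{q\} \times Z'$ for some component $Z'$ of $Z$, and that $\Phi(p,z_0)=(q,z_1)$ with $z_1$ a smooth point of $Z$. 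I would separate the two kinds of singular strata by a local invariant. At a point $(p,z)$ with $z$ smooth, the local ring is formally $\widehat{\mathcal O}_{V,p}[[t_1,\dots,t_m]]$. A point $(v,z)$ with $v \neq p$ and $z$ singular is smooth along the $V$-direction, so it is formally a product with a power series ring in $\dim V$ variables. In the first case the number of formal ``smooth directions'' is determined by $\dim Z$ locally. To keep the analysis local, I would shrink $Z$ to the smooth open piece $U$, where the problem reduces to $Z$ smooth; the singular part of $Z$ never meets $\{p\}\times U$.

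The key step is a local statement: if $\widehat{\mathcal O}_{V,p}[[t_1,\dots,t_m]] \isom \widehat{\mathcal O}_{W,q}[[s_1,\dots,s_{m'}]]$, where both are isolated singularities and $\dim V + m = \dim W + m'$, then $\widehat{\mathcal O}_{V,p} \isom \widehat{\mathcal O}_{W,q}$. First, comparing the dimension of the singular locus (it is $m$, resp. $m'$) gives $m = m'$ and $\dim V = \dim W$. Second, I would cancel the power series factor by a Hamburger--Mather type argument: the singular locus $\{t\text{-axis}\}$ is preserved, so one can slice transversally by choosing $t_i$-coordinates cutting out a complement. A cleaner route is to look at tangent data. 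The module of vector fields (derivations) tangent to the singular locus contains $\partial/\partial t_i$, and these can be straightened by a formal flow-box argument, giving the cancellation. I expect this formal cancellation to be the main obstacle. Should it fail, I would fall back on the Zariski tangent space argument used after Proposition~\ref{PBExponents}, comparing the isolated singular schemes.

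Finally, I would globalise. Because $R$ and $S$ are positively graded with vertices $p,q$, each of $V,W$ is the affine cone determined by its germ at the vertex: a good $\Comp^*$-action contracts everything to the vertex. By a result of the type ``quasi-homogeneous isolated singularities with isomorphic analytic (or formal) germs have isomorphic graded rings'', one obtains $\widehat{\mathcal O}_{V,p} \isom \widehat{\mathcal O}_{W,q}$. The comparison is made with the completions $\widehat R, \widehat S$ at the irrelevant ideals, and then a $\Comp^*$-equivariant version of the isomorphism is produced by averaging or by taking the associated graded ring for the weight filtrations. This yields $R \isom S$ as $\Comp$-algebras, hence $V \isom W$.
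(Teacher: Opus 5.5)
The genuine gap is in your first step, and that step is where the real content of the theorem lies. The claim that $\Phi$ carries the component of $\{p\}\times Z$ through $(p,z_0)$ onto some $\{q\}\times Z'$ is false in general, and no local invariant can detect the difference.

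Take $W=V$, $Z=V$ and $\Phi(a,b)=(b,a)$. For every smooth point $z_0$ of $Z$ you get $\Phi(p,z_0)=(z_0,p)$. Its $W$-coordinate is a smooth point and its $Z$-coordinate is the singular point of $Z$, so the component $\{p\}\times Z$ of $\Sing(V\times Z)$ goes to $W\times\{p\}$. Both points have complete local ring $\widehat{\mathcal O}_{V,p}[[t_1,\dots,t_d]]$ with $d=\dim V$. So counting ``formal smooth directions'' cannot separate the two strata, and here \emph{no} smooth $z_0$ satisfies $\Phi(p,z_0)\in\{q\}\times Z_{\mathrm{sm}}$.

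In general, when $\Phi(p,z_0)=(w,z_1)$ with $w\neq q$, comparing completions only gives $\widehat{\mathcal O}_{V,p}[[t]]\isom\widehat{\mathcal O}_{Z,z_1}[[u]]$. That is information about $Z$, not about $W$. Your remedy of shrinking $Z$ to its smooth locus $U$ is not legitimate. $\Phi$ does not respect the product structure, so $\Phi(V\times U)$ need not be of the form $W\times U'$, and you cannot replace $Z$ by $U$ on both sides.

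What is missing is a global argument. For instance, suppose $\widehat{\mathcal O}_{V,p}\not\isom\widehat{\mathcal O}_{W,q}$. Then $\Phi$ must match the loci of points whose complete local ring is $\widehat{\mathcal O}_{V,p}$ with power series variables adjoined. Roughly, this locus is $(\{p\}\times Z_{\mathrm{sm}})\cup((V\setminus\{p\})\times Z')$ in $V\times Z$, but only $(W\setminus\{q\})\times Z'$ in $W\times Z$, where $Z'\subseteq\Sing Z$ is the corresponding locus in $Z$. Your cancellation step rules out points of $\{q\}\times Z_{\mathrm{sm}}$. At $(q,z)$ with $z$ singular the germ is a product of two singular germs, which never has this form. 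One must then show that the extra piece $\{p\}\times Z_{\mathrm{sm}}$ cannot be absorbed. This is where the hypotheses on $Z$ have to be used (it has a smooth point, but may be disconnected with infinitely many components), and your proposal has nothing of the kind.

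By contrast, the step you flag as the main obstacle is easy. Write $A=\widehat{\mathcal O}_{V,p}$, $B=\widehat{\mathcal O}_{W,q}$ and $\phi\colon A[[t]]\to B[[s]]$. The singular locus of $A[[t]]$ is cut out by $\mgoth_A A[[t]]$, so after a change of the $s$-coordinates you get $\phi(t_i)=s_i+h_i$ with $h_i\in\mgoth_B B[[s]]$. An $\mgoth_B$-adic fixed-point argument then gives $B[[s]]/(\phi(t_1),\dots,\phi(t_m))\isom B$. Your fallback to the argument after Proposition~\ref{PBExponents} is unavailable anyway, since it uses the explicit Pham--Brieskorn equation.

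Finally, the globalisation step is unjustified as written. There is no averaging over $\Comp^*$. An isomorphism $\widehat R\to\widehat S$ need not match the two weight filtrations, so passing to associated graded rings does not give back $S$. You need a genuine theorem here, e.g.\ conjugacy of maximal tori in the pro-algebraic group $\operatorname{Aut}(\widehat R)$. Combine it with the fact that $R$ is the subring of elements of $\widehat R$ that are finite for a maximal torus containing the given $\Comp^*$.
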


We deduce the following minor generalization of \cite[Theorem 14]{hajra2026generalizedzariskicancellationbrieskornpham}. We will give the entire proof of the result below in order to demonstrate a useful method of  generalizing results over $\Comp$ to an arbitrary field of characteristic zero. It is often the case that such arguments are omitted and authors may write something like ``a Lefschetz principle argument shows that the claim generalizes to any field of characteristic zero". In such cases, presumably the argument will follow a similar track to the one given in Corollary \ref{generalCancellation}. We first recall the following basic lemma. 

\begin{lemma}\label{fieldEmbedding}
    Every finitely generated field extension of $\Rat$ can be embedded in $\Comp$. 
\end{lemma}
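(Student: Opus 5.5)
We prove Lemma \ref{fieldEmbedding} by splitting the extension into a purely transcendental part and a finite algebraic part, and embedding each in turn. Let $K$ be a finitely generated field extension of $\Rat$. Choose a transcendence basis $t_1, \dots, t_r$ of $K$ over $\Rat$; we can take it from among the finitely many generators of $K$. Then $K$ is a finite algebraic extension of $F = \Rat(t_1, \dots, t_r)$, which is purely transcendental over $\Rat$. The plan is to embed $F$ into $\Comp$ first, and then extend this embedding to $K$ using that $\Comp$ is algebraically closed.

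\textbf{Step 1: embedding $F$.} We need $r$ complex numbers $z_1, \dots, z_r$ that are algebraically independent over $\Rat$. Once we have them, $t_i \mapsto z_i$ defines an injective ring map $\Rat[t_1, \dots, t_r] \to \Comp$. It extends uniquely to an embedding of the fraction field $F$, because every nonzero polynomial is sent to a nonzero complex number. To find the $z_i$ we use a cardinality argument. The algebraic closure $\overline{\Rat}$ of $\Rat$ in $\Comp$ is countable, and inductively, for any finite set $z_1, \dots, z_{j}$ the algebraic closure of $\Rat(z_1, \dots, z_j)$ in $\Comp$ is countable. Since $\Comp$ is uncountable, we can pick $z_{j+1}$ outside this countable set. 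Then $z_1, \dots, z_{j+1}$ are algebraically independent. This step is the only place where a property of $\Comp$ beyond algebraic closure is needed, namely uncountability, or equivalently infinite transcendence degree over $\Rat$. I expect it to be the main point requiring care, though it is standard.

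\textbf{Step 2: extending to $K$.} Write $\sigma : F \hookrightarrow \Comp$ for the embedding from Step 1. Since $K/F$ is finite, and separable because we are in characteristic zero, the primitive element theorem gives $K = F(\alpha)$. Let $m_\alpha(x) \in F[x]$ be the minimal polynomial of $\alpha$. Its image $m_\alpha^\sigma(x) \in \Comp[x]$ has a root $\beta \in \Comp$, since $\Comp$ is algebraically closed. We then define $K \cong F[x]/\lb m_\alpha \rb \to \Comp$ by $x \mapsto \beta$, extending $\sigma$. This map is well defined because $m_\alpha^\sigma(\beta) = 0$. It is injective because $K$ is a field and the map is nonzero. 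This gives the desired embedding $K \hookrightarrow \Comp$.

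Alternatively, Step 2 could be done without a primitive element, by induction on the number of algebraic generators, extending the embedding one generator at a time. Either way it is routine, so the substance of the lemma lies in Step 1.
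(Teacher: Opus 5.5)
Your proof is correct and complete. Choosing the transcendence basis from among the generators, getting algebraically independent $z_1,\dots,z_r$ from the countability of the algebraic closure of $\Rat(z_1,\dots,z_j)$ inside the uncountable field $\Comp$, and extending over the finite separable extension $K/F$ via a primitive element and a root of $m_\alpha^\sigma$ all work as written. The paper gives no proof of this lemma and simply recalls it as a basic fact; your argument is the standard one that would fill that gap.
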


\begin{corollary}\label{generalCancellation}
    Let $n \geq 2$ and let $\bk$ be a field of characteristic zero. If
    $(B_{\bk; a_0, \dots, a_n})^{[c]} \isom_\bk (B_{\bk; b_0, \dots, b_m})^{[d]}$ for some
    $c, d \in \Nat$ and no $a_i = 1$, then $c = d$, $m = n$ and up to a permutation of the
    entries $(a_0, \dots, a_n) = (b_0, \dots, b_n)$.
\end{corollary}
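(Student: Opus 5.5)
First I will reduce to the case $\bk = \Comp$ by a Lefschetz-type argument. Fix a $\bk$-isomorphism $\varphi : (B_{\bk; a_0, \dots, a_n})^{[c]} \to (B_{\bk; b_0, \dots, b_m})^{[d]}$, writing the source as $\bk[x_0,\dots,x_n,y_1,\dots,y_c]/\lb \sum x_i^{a_i}\rb$ and the target similarly with variables $x'_j, y'_k$. The map $\varphi$ and its inverse are determined by the images of the finitely many generators. These images are polynomials with finitely many coefficients in $\bk$. Let $\bk_0 \subseteq \bk$ be the subfield generated over $\Rat$ by all these coefficients. Then $\bk_0$ is finitely generated over $\Rat$, and $\varphi$ and $\varphi^{-1}$ are already defined over $\bk_0$. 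They are mutually inverse there because the composites are identities on the generators. Since the defining ideals are generated by polynomials with integer coefficients, $B_{\bk;\ldots} = B_{\bk_0;\ldots}\otimes_{\bk_0}\bk$, and $\bk_0$-maps between these algebras inject into the $\bk$-versions. So we get an isomorphism $(B_{\bk_0; a})^{[c]} \isom_{\bk_0} (B_{\bk_0; b})^{[d]}$. By Lemma \ref{fieldEmbedding}, choose an embedding $\bk_0 \hookrightarrow \Comp$ and tensor with $\Comp$ over $\bk_0$ to obtain $(B_{\Comp; a})^{[c]} \isom_\Comp (B_{\Comp; b})^{[d]}$.

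Next I compare dimensions and the cancellation data over $\Comp$. Krull dimension gives $n + c = m + d$. Assume without loss of generality $c \geq d$. Since no $a_i = 1$, $V = \Spec B_{\Comp;a}$ is singular at the vertex by Remark \ref{uniqueSingularPoint}, so $V \times \Aff^c$ is singular. Hence $W = \Spec B_{\Comp;b}$ cannot be a polynomial ring, so no $b_j = 1$ either. Here I should double-check the case where some $b_j=1$; then $W\times\Aff^d$ would be smooth, which is a contradiction. Now write $V\times\Aff^{c} \isom (W\times\Aff^{c-d})\times\Aff^{d}$.

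To apply Theorem \ref{PBCancellation} with $Z = \Aff^d$, both factors must be positively graded with a unique singular point. $V$ satisfies this by Example \ref{PBGradingExample} and Remark \ref{uniqueSingularPoint}. However, $W\times\Aff^{c-d}$ has singular locus $\{q\}\times\Aff^{c-d}$, which is not a point when $c>d$. So I first need to show $c = d$. I expect this to be the main obstacle. My plan is to compare singular loci: an isomorphism carries $\Sing(V)\times\Aff^c = \{p\}\times\Aff^c$ onto $\{q\}\times\Aff^d$. The dimensions of these singular loci give $c = d$, and then $n = m$.

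With $c=d$, Theorem \ref{PBCancellation} applied with $Z=\Aff^c$ (which is separated and smooth) gives $B_{\Comp;a}\isom B_{\Comp;b}$. Proposition \ref{PBExponents} over $\Comp$ then yields that $(a_0,\dots,a_n)$ and $(b_0,\dots,b_n)$ agree up to permutation. This conclusion is purely numerical, so it holds for the original $\bk$ as well.
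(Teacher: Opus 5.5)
Your proposal is correct and follows the paper's proof. In both, the singular-locus dimension count $\{p\}\times\Aff^c \isom \{q\}\times\Aff^d$ forces $c=d$ and $m=n$, Theorem \ref{PBCancellation} with $Z=\Aff^c$ and Proposition \ref{PBExponents} finish, and the passage from $\bk$ to $\Comp$ uses the same finitely-generated-subfield-plus-embedding argument. The only differences are cosmetic: you do the descent to $\Comp$ first while the paper treats $\Comp$ first and then reduces to it, and your detour through $W\times\Aff^{c-d}$ is unnecessary.
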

\begin{proof}
    We first prove the case $\bk = \Comp$. Considering dimensions, $n + c = m + d$. It is easy to see that 
    $\Sing \left(\Spec  (B_{\Comp; a_0, \dots, a_n})^{[c]} \right)$
    is nonempty and $c$-dimensional. The assumed isomorphism easily implies that no $b_{j} = 1$, that $m \geq 1$, 
    that $\Sing \left( \Spec (B_{\Comp; b_0, \dots, b_m})^{[d]} \right)$ is $d$-dimensional and hence that $c = d$ and, by the equality of dimensions, that $m = n$.

    Since $B_{\Comp; \ba}$ and $B_{\Comp; \bb}$ are positively graded by Example \ref{PBGradingExample} with unique singular points at their respective origins, we can apply Theorem \ref{PBCancellation} with $Z = \Aff_\Comp^c$ to
    $R = B_{\Comp; a_0, \dots, a_n}$ and $S = B_{\Comp; b_0, \dots, b_n}$. It follows that 
    $B_{\Comp; a_0, \dots, a_n} \isom B_{\Comp; b_0, \dots, b_n}$ and by
    Proposition \ref{PBExponents} we obtain $(a_0, \dots, a_n) = (b_0, \dots, b_n)$ up to a permutation of the entries.

    To prove the Corollary, it now suffices to prove the following claim:
    \begin{equation}\label{reductionAnyk}
        \text{if $(B_{\bk; a_0, \dots, a_n})^{[c]} \isom_\bk (B_{\bk; b_0, \dots, b_m})^{[d]}$
        then $(B_{\Comp; a_0, \dots, a_n})^{[c]} \isom_{\Comp} (B_{\Comp; b_0, \dots, b_m})^{[d]}$}
    \end{equation}

    Let $P= \bk[x_0, \dots, x_n, u_1, \dots, u_c] = \bk^{[n+c+1]}$ and
    $Q = \bk[y_0, \dots, y_m, v_1, \dots, v_d] = \bk^{[m+d+1]}$ and let us abbreviate
    $f_\ba = x_0^{a_0} + \dots + x_n^{a_n} \in \bk[x_0, \dots, x_n]$ and
    $f_\bb = y_0^{b_0} + \dots + y_m^{b_m}$. Let $\varphi$ be an isomorphism $P/\lb f_\ba \rb \to Q/\lb f_\bb \rb$ whose inverse is $\psi$. For each
    $i = 0 , \dots, n$ and each $j = 1, \dots, c$, let $s_i \in Q$ be a representative of $\varphi(\bar{x}_i) \in Q/\lb f_\bb \rb$ and let
    $t_j \in Q$ be a representative of $\varphi(\bar{u}_j)$. Conversely, for each
    $k = 0, \dots, m$ and each $\ell = 1, \dots, d$ let $w_k \in  P$ be a
    representative of $\psi(\bar{y}_k)$ and let $z_\ell \in P$ be a representative of
    $\psi(\bar{v}_\ell)$.

    Let $K \subseteq \bk$ denote the subfield of $\bk$ generated over $\Rat$ by the
    coefficients of all of the $s_i, t_j \in Q$ and the $w_k, z_\ell \in P$.
    Since there are finitely many $s_i, t_j, w_k, z_\ell$, each with finitely many
    coefficients, $K$ is a finitely generated field extension of $\Rat$. Since $f_\ba$ has coefficients in $\Rat \subseteq K$,
    we have $B_{K; \ba}[u_1, \dots, u_c] \otimes_K \bk \isom B_{\bk; \ba}[u_1, \dots, u_c]$ and as $\bk$ is free as a $K$-module on a basis containing $1$, the canonical map into this extension is injective, so we may view $B_{K; \ba}[u_1, \dots, u_c]$ as a subring of $B_{\bk; a_0, \dots, a_n}[u_1, \dots, u_c]$. By our choice of $K$, it can be checked that $\varphi$ and $\psi$ restrict to
    $K$-isomorphisms
    $\varphi_0 : B_{K; a_0, \dots, a_n}[u_1, \dots, u_c] \to B_{K; b_0, \dots, b_m}[v_1, \dots, v_d]$
    and
    $\psi_0 : B_{K; b_0, \dots, b_m}[v_1, \dots, v_d] \to B_{K; a_0, \dots, a_n}[u_1, \dots, u_c]$
    that are mutually inverse.

    Since $K$ is finitely generated over $\Rat$, Lemma \ref{fieldEmbedding} implies that we may assume
    without loss of generality that $\Rat \subseteq K \subseteq \Comp$. Extension of scalars $- \otimes _K \Comp$ commutes with adjoining variables,
    and $B_{K; a_0, \dots, a_n} \otimes_K \Comp \isom B_{\Comp; a_0, \dots, a_n}$ since $f_\ba$ has coefficients in
    $\Rat$. Applying $- \otimes_K \Comp$ to $\varphi_0$ therefore
    gives $B_{\Comp; a_0, \dots, a_n}[u_1, \dots, u_c] \isom
    B_{\Comp; b_0, \dots, b_m}[v_1, \dots, v_d]$, proving \eqref{reductionAnyk} and completing
    the proof.
\end{proof}

\subsection{Isomorphism classes of $V(x_0^{a_0} + \dots + x_n^{a_n} + 1)$}

Given Proposition \ref{PBExponents}, one might ask the following question:

\begin{question}\label{variantQuestion}
    Assume $(a_0, \dots, a_n), (b_0, \dots, b_n)  \in (\Nat_{\geq 2})^{n+1}$ and suppose 
    $$\bk[x_0, \dots, x_n] / \lb x_0^{a_0} + \dots + x_n^{a_n}+1 \rb \isom \bk[y_0, \dots, y_n] / \lb y_0^{b_0} + \dots + y_n^{b_n}+1 \rb.$$ Is $(a_0, \dots, a_n) = (b_0, \dots b_n)$ up to a permutation of the entries?
\end{question}  

The main difference between this family of varieties and the family of Pham-Brieskorn varieties is that these varieties are smooth and so we don't have the invariants associated with singular points to help us detect differences. In \cite{gurjar2025classification} the authors show that Question \ref{variantQuestion} has an affirmative answer for $n = 1$ (the case of affine curves). In \cite{chitayat2026isomorphism}, we show the same for $n = 2$.

\section{Unique Factorization and Divisor Class Groups}

    Now that we know that Pham-Brieskorn rings are normal and Cohen-Macaulay, the following question arises naturally.

\begin{question}\label{UFDQuestion}
    Which Pham-Brieskorn rings are unique factorization domains?
\end{question}

Question \ref{UFDQuestion} is answered by Proposition \ref{PBUFDDim2} for $n = 2$ and $\bk = \bar{\bk}$, by Theorem \ref{storchUFD} for $n = 3$ and $\bk = \Comp$ and by Proposition \ref{highDimUFD} for $n \geq 4$ and arbitrary $\bk$ of characteristic zero. It is also worth noting that since $B_{a_0, \dots, a_n}$ is $\Nat$-graded over $\bk$, \cite[Remark 8.2]{samuel1964ufd} implies  
\begin{remark}\label{baseChangeUFD}
    If $\bar{\bk}$ is the algebraic closure of $\bk$ and $B_{\bar{\bk}; a_0, \dots, a_n}$ is a UFD, then so is $B_{\bk; a_0, \dots, a_n}$.  
\end{remark}

\begin{proposition}\label{PBUFDDim2}
    Assume $a_0, a_1, a_2 \geq 2$ and that $\bk = \bar{\bk}$. The Pham-Brieskorn ring $B_{\bk; a_0, a_1, a_2}$ is a UFD if and only if $a_0, a_1, a_2$ are pairwise relatively prime.  
\end{proposition}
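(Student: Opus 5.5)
We argue the two directions separately, using the grading of Example \ref{PBGradingExample} and Mori's/Storch's description of the class group of a positively graded normal surface singularity. Set $B = B_{\bk; a_0,a_1,a_2}$, a normal positively graded domain of dimension $2$ (Corollary \ref{PBNormal}).

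For the direction ``not pairwise coprime $\Rightarrow$ not a UFD'', suppose $d = \gcd(a_0,a_1) > 1$. Write $a_0 = d\alpha_0$ and $a_1 = d\alpha_1$. Then
$$x_0^{a_0} + x_1^{a_1} = \prod_{i=1}^d (x_0^{\alpha_0} + \mu_i x_1^{\alpha_1}),$$
where the $\mu_i$ are the $d$-th roots of unity. In $B$ this gives
$$-x_2^{a_2} = \prod_{i=1}^d (\bar x_0^{\alpha_0} + \mu_i \bar x_1^{\alpha_1}).$$
We show $\bar x_2$ is irreducible in $B$. It is homogeneous of degree $w_2$, and any factorization of a homogeneous element in a positively graded domain can be taken homogeneous. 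Modulo $\bar x_2$ we get $\bk[x_0,x_1]/\lb x_0^{a_0}+x_1^{a_1}\rb$, which is not a domain, so $\bar x_2$ is not prime.

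It remains to check that $\bar x_2$ is irreducible. A proper homogeneous factor $g$ would have degree strictly between $0$ and $w_2$. Any element of $B$ of degree less than $w_2$ lies in the image of $\bk[x_0,x_1]$, because the relation has degree $L \ge w_2$. Then $\bar x_2 = gh$ would force $x_2 \in \lb x_0,x_1, f\rb$ in $\bk[x_0,x_1,x_2]$, which is false. So $\bar x_2$ is irreducible but not prime, and $B$ is not a UFD.

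(Alternatively, one can show that the factor $\bar x_0^{\alpha_0}+\mu_1\bar x_1^{\alpha_1}$ generates a non-principal height-one prime.)

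For the converse, assume the $a_i$ are pairwise coprime. Then $L = a_0a_1a_2$ and $w_0 = a_1a_2$, $w_1 = a_0a_2$, $w_2 = a_0a_1$. The plan is to show $\Cl(B) = 0$. Removing the vertex, $U = \Spec B \setminus \{\mgoth\}$ is the total space of a Seifert $\bk^*$-bundle over the curve
$$C = \Proj B \subseteq \PPP(w_0,w_1,w_2).$$
By Dolgachev–Pinkham–Demazure, $B = \bigoplus_{k \ge 0} H^0(C, \OSheaf(kD))$ for a $\Rat$-divisor $D$ on $C$. Flenner and Watanabe then give
$$\Cl(B) \cong \Cl(C, D\text{-twisted})/\lb \text{class of } D \rb,$$
where the twisted group accounts for the fractional parts at the orbifold points. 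One checks that $C \cong \PPP^1$: the affine chart $x_0 \neq 0$ is a quotient of a Fermat-type curve, and the genus formula gives $0$ when the exponents are pairwise coprime. The fractional points of $D$ are the three points $x_i = 0$, with multiplicities $a_i$ and coefficients $e_i/a_i$.

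The class group is then the abelian group generated by the class $h$ of a point of $\PPP^1$ and the classes $P_i$, subject to $a_i P_i = h$ and $\deg$-relation $h = \sum (e_i/a_i)$-type, modulo $D$. This reduces to a cyclic group whose order is a product of gcd's of the $a_i$, hence trivial when they are pairwise coprime. This matches the classical statement of Storch and Mori that a quasi-homogeneous surface $\Spec \bk[x,y,z]/\lb f\rb$ with weights as above is factorial iff the weights are pairwise coprime in the appropriate sense.

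The main obstacle is this computation of $\Cl(B)$ in the coprime case. Getting the Demazure data ($C$, the three orbifold points, and $D$) exactly right and verifying that the resulting finite group vanishes takes some care.

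A more elementary route I would try first is Nagata's criterion. $\bar x_0$ is prime, since $B/\lb \bar x_0\rb \cong \bk[x_1,x_2]/\lb x_1^{a_1}+x_2^{a_2}\rb$ is a domain by Remark \ref{firstRemarks}. It therefore suffices to show $B[\bar x_0^{-1}]$ is a UFD. For this, adjoin $t = \bar x_0^{1/(a_1a_2)}$-type homogeneous fractions: the degree-zero part of $B[\bar x_0^{-1}]$ is $\bk[u,v]/\lb 1+u^{a_1}+v^{a_2}\rb$ with $u = x_1/x_0^{a_0/a_1}$, suitably interpreted. Coprimality makes $\bar x_0$ a degree-$w_0$ unit together with elements of degree $1$ in the localization, so $B[\bar x_0^{-1}] \cong A[T,T^{-1}]$ with $A$ the coordinate ring of an affine rational curve. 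I expect that $A$ is a polynomial-like UFD (a PID) exactly by the same coprimality, and checking this is where the genuine work sits.
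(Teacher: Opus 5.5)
Your argument for ``not pairwise coprime $\Rightarrow$ not a UFD'' is the paper's argument. $\bar x_2$ is irreducible but not prime because $B/\lb \bar x_2\rb \isom B_{a_0,a_1}$ is not a domain. Your degree argument correctly fills in the irreducibility step the paper leaves to the reader. A small correction: a factor of degree $<w_2$ comes from $\bk[x_0,x_1]$ simply because $\deg x_2 = w_2$, not because of the degree of the relation.

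The gap is the converse. The paper settles it by citing Samuel; you neither cite a theorem as the proof nor complete either of your routes.

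In the Demazure route, the decisive computation is exactly what is missing. With $G = \lb h,Q_0,Q_1,Q_2 \mid a_iQ_i = h\rb$, you must check two things. First, $G$ is torsion-free, so that $\deg$ embeds it in $\Rat$ with image $\frac{1}{a_0a_1a_2}\Integ$; this uses pairwise coprimality. Second, the Demazure divisor has degree $L/(w_0w_1w_2) = 1/(a_0a_1a_2)$, so it generates $G$ and $\Cl(B) = G/\Integ D = 0$. As written (a ``$D$-twisted'' group, a ``$\deg$-relation of $(e_i/a_i)$-type''), this is an outline, not a proof.

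The Nagata route contains actual errors.
\begin{itemize}
\item The degree-zero part of $B[\bar x_0^{-1}]$ is $\bk[\bar x_1^{a_1}/\bar x_0^{a_0}] \isom \bk^{[1]}$, not $\bk[u,v]/\lb 1+u^{a_1}+v^{a_2}\rb$. The latter is a smooth affine curve of genus $(a_1-1)(a_2-1)/2 \geq 1$ with one place at infinity. Its class group is its Jacobian, so your expectation that it is a PID is false.
\item More seriously, $B[\bar x_0^{-1}]$ has elements of degree $1$ but no \emph{units} of degree $1$. Since $\bar x_0$ is prime, its units are the $\lambda\bar x_0^m$, whose degrees are divisible by $w_0 = a_1a_2$. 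So the decomposition $B[\bar x_0^{-1}] \isom A[T,T^{-1}]$ you rely on is not available.
\end{itemize}

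The Nagata idea does work if you invert all three coordinates.
\begin{itemize}
\item Each $\bar x_i$ is prime, since $B/\lb \bar x_i \rb$ is a two-variable Pham--Brieskorn ring with coprime exponents.
\item In $B' = B[(\bar x_0\bar x_1\bar x_2)^{-1}]$, the equality $\gcd(w_0,w_1,w_2) = 1$ gives a Laurent monomial $T$ in the $\bar x_i$ that is a unit of degree $1$.
\item A Laurent monomial $\bar x_0^{e_0}\bar x_1^{e_1}\bar x_2^{e_2}$ of degree $0$ must satisfy $a_i \mid e_i$ for each $i$. Hence $B'_0 = \bk[U,U^{-1},(1+U)^{-1}]$ with $U = \bar x_1^{a_1}/\bar x_0^{a_0}$, using $\bar x_2^{a_2}/\bar x_0^{a_0} = -1-U$.
\item Therefore $B' = B'_0[T,T^{-1}]$ is a UFD. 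Nagata's criterion, applied to the multiplicative set generated by the primes $\bar x_0,\bar x_1,\bar x_2$, then shows that $B$ is a UFD.
\end{itemize}
This gives a self-contained substitute for the paper's citation of Samuel.
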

\begin{proof}
    Let us write $B = B_{\bk; a_0, a_1, a_2} = \bk[x_0, x_1, x_2]  / \lb x_0^{a_0} + x_1^{a_1} + x_2^{a_2} \rb$ and let $\bar{x}_i$ denote the canonical image of $x_i$ in $B$. The $(\Leftarrow)$ direction is given by \cite[Example 1, p.26]{samuel1964ufd}. We prove $(\Rightarrow)$ by contrapositive. By assumption, we may assume without loss of generality that $\gcd(a_0,a_1) \neq 1$. It can be checked that $\bar{x}_2$ is irreducible in $B$ (use that $\bar{x}_2$ is $\ggoth$-homogeneous with respect to the grading defined in Example \ref{PBGradingExample}). Since $B_{\bk; a_0, a_1, a_2} / \lb \bar{x}_2 \rb \isom B_{\bk; a_0, a_1}$ is not a domain (by Remark \ref{firstRemarks}), it follows that $\bar{x}_2$ is not prime in $B$ and so $B_{a_0, a_1, a_2}$ is not a UFD.
\end{proof}

For the rest of this section we will use the theory of divisor class groups of varieties over an algebraically closed $\bk$. Since by Corollary \ref{PBNormal}, $\Spec B_{a_0, \dots, a_n}$ is normal, it makes sense to define $\Cl(\Spec B_{\bk; a_0, \dots,  a_n})$ whenever $\bk = \bar{\bk}$. We make use of the following theorem. 

\begin{theorem}[\cite{Hartshorne}, Proposition II.6.2]
    Let $A$ be a Noetherian $\bk$-domain where $\bk = \bar{\bk}$ and let $X = \Spec A$. Then $A$ is a unique factorization domain if and only if $A$ is normal and $\Cl(X) = 0$. 
\end{theorem}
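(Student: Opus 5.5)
The plan is to go through the standard correspondence between height-one primes and prime divisors, together with the characterization of UFDs as Krull domains in which every height-one prime is principal.

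For the forward direction, assume $A$ is a UFD. A UFD is integrally closed, so $A$ is normal. Next take a prime divisor $Y \subseteq X$; it corresponds to a height-one prime $\pgoth \lhd A$. Pick a nonzero element of $\pgoth$ and factor it into primes. Since $\pgoth$ is prime, one of these prime factors $f$ lies in $\pgoth$. Then $0 \neq \lb f \rb \subseteq \pgoth$, with $\lb f\rb$ prime and $\height \pgoth = 1$, which forces $\pgoth = \lb f \rb$.

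It remains to see that $Y = \div(f)$. For any other height-one prime $\qgoth$ we have $f \notin \qgoth$, since otherwise $\lb f \rb \subseteq \qgoth$ and height one would give $\qgoth = \lb f \rb$. So $v_\qgoth(f) = 0$. In the DVR $A_\pgoth$, $f$ generates the maximal ideal, so $v_\pgoth(f) = 1$. Hence every prime divisor is principal and $\Cl(X) = 0$.

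For the converse, assume $A$ is normal and $\Cl(X) = 0$. Since $A$ is a Noetherian normal domain, it is a Krull domain: $A = \bigcap_{\height \pgoth = 1} A_\pgoth$, and each $A_\pgoth$ is a DVR. Given a height-one prime $\pgoth$, the vanishing of $\Cl(X)$ gives some $f \in \Frac(A)^*$ with $\div(f) = Y_\pgoth$. Then $f$ has nonnegative valuation at every height-one prime, so $f \in A$ by the intersection formula.

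We next check that $\pgoth = \lb f \rb$. Let $g \in \pgoth$. Then $v_\pgoth(g) \geq 1$, so $\div(g/f) \geq 0$, and hence $g/f \in A$. This gives $\pgoth \subseteq \lb f \rb$, and since $f \in \pgoth$, equality holds. So every height-one prime is principal.

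To conclude that $A$ is a UFD, I would use the standard criterion that a Noetherian domain is a UFD if and only if every height-one prime is principal. The argument runs as follows. By Noetherianity, every nonzero nonunit factors into irreducibles. Let $a$ be irreducible and take a minimal prime $\pgoth$ over $\lb a\rb$. By Krull's Hauptidealsatz, $\height \pgoth = 1$, so $\pgoth = \lb f\rb$ by the previous step. Then $f \mid a$, and irreducibility of $a$ makes $a$ an associate of $f$, hence prime. This gives unique factorization.

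The main obstacle I expect is the direction $\Cl(X) = 0 \Rightarrow$ UFD, and specifically the step showing the generator $f$ lies in $A$ and generates $\pgoth$. This is exactly where normality enters, through $A = \bigcap_{\height \pgoth = 1} A_\pgoth$. I would cite that intersection formula (Matsumura, or Hartshorne's Proposition II.6.3A) rather than reprove it. The hypothesis $\bk = \bar{\bk}$ plays no real role beyond the fact that $X$ is a variety.
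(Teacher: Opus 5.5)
Your proof is correct and takes essentially the same route as Hartshorne's proof of Proposition II.6.2, which the paper cites without reproducing: reduce to showing that every height-one prime is principal, and use $A = \bigcap_{\height \pgoth = 1} A_\pgoth$ for the normal direction. The only differences are cosmetic. You prove the criterion ``all height-one primes principal $\Rightarrow$ UFD'' directly from Krull's Hauptidealsatz, where Hartshorne cites Matsumura, and you are right that $\bk = \bar{\bk}$ plays no role.
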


Over $\Comp$, the divisor class group of $\Spec B_{\Comp; a_0, a_1, a_2}$ can be calculated using the following result.     

\begin{theorem}\cite[Theorem 1.1, Pham-Brieskorn case]{WROBEL202043}. 
    Let $X = \Spec B_{\Comp; a_0, a_1, a_2}$ where $a_0, a_1, a_2 \geq 2$. Assume without loss of generality that $\gcd(a_0, a_1) \geq \gcd(a_0, a_2)$ and $\gcd(a_0, a_1) \geq \gcd(a_1, a_2)$. The following hold: 
    \begin{enumerate}[\rm(a)]
        \item $\Cl(X)$ is trivial if and only if $\gcd(a_i,a_j) = 1$ for all $i\neq j$. 
        \item If $g_{0,1} = \gcd(a_0,a_1)>1$ and $\gcd(a_0, a_2) = \gcd(a_1,a_2) = 1$, then $\Cl(X) \isom (\Integ / a_2 \Integ )^{g_{0,1}-1}.$
        \item If $\gcd(a_0, a_1) = \gcd(a_0,a_2) = \gcd(a_1,a_2) = 2$, then $\Cl(X) \isom \Integ / (a_0a_1a_2 / 4)\Integ$.
        \item If none of (a)-(c) are satisfied, then $\Cl(X)$ is not finitely generated. 
    \end{enumerate}
\end{theorem}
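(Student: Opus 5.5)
The natural approach is to realize $X=\Spec B_{\Comp;a_0,a_1,a_2}$ as the affine cone over a weighted projective curve and compute $\Cl(X)$ from the Picard group of that curve. By Example \ref{PBGradingExample}, $B=B_{\Comp;a_0,a_1,a_2}$ is positively graded with $\deg \bar x_i=w_i=L/a_i$, where $L=\lcm(a_0,a_1,a_2)$. Let $C=\Proj B$, a smooth projective curve. For a normal positively graded surface, the Demazure/Watanabe description (or Flenner's) gives an exact sequence
\[
0\to \Integ \to \Cl(C)\to \Cl(X)\to 0,
\]
where $\Cl(C)$ here means the group of $\Rat$-divisors $D$ on $C$ with $B\cong\bigoplus_d H^0(C,\OSheaf(dD))$, taken modulo those that become integral. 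Concretely, $\Cl(X)\cong \Pic(C)\oplus\bigl(\bigoplus_{P}\Integ/m_P\Integ\bigr)$ modulo the subgroup generated by the class of $D$ (suitably lifted), where $P$ runs over the orbifold points with isotropy order $m_P$. The first step is therefore to compute the Dolgachev--Pinkham--Demazure data $(C,D)$ for $B$: the genus of $C$, its orbifold points, and the fractional parts of $D$ there.

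Step two is this explicit computation. The orbifold points are the points where one coordinate vanishes, namely $x_i=0$, giving $x_j^{a_j}+x_k^{a_k}=0$. This set has $\gcd(a_j,a_k)$ points, each with isotropy of order $L/\lcm(a_j,a_k)$. The genus of $C$ follows from Riemann--Hurwitz applied to the map $C\to\PPP^1$, $[x]\mapsto[x_0^{a_0}:x_1^{a_1}]$, or equivalently to the quotient of the Fermat curve. One gets $g(C)=0$ exactly when the ``platonic or two-gcd-type'' conditions hold, which should correspond to cases (a)--(c). Step three uses the fact that $\Pic^0(C)$ is a divisible, non-finitely generated group when $g\ge1$. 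Since the kernel in the sequence is only $\Integ$, $\Cl(X)$ is then not finitely generated, which gives (d). Along the way one must check that $g=0$ happens exactly in cases (a)--(c).

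Step four treats the genus-zero cases, where $\Pic(C)=\Integ$ and $\Cl(X)$ is a finite abelian group. It is the quotient of $\Integ\oplus\bigoplus_P\Integ/m_P$ by the relation imposed by $D$. In case (b), the points with $x_2=0$ number $g_{0,1}$, each with isotropy $a_2$. The other orbifold points are single points, whose contributions are absorbed by $\Integ$ because the remaining gcds are $1$. This should leave $(\Integ/a_2)^{g_{0,1}-1}$. In case (c), three pairs of points each carry isotropy orders related to $a_i/2$ or similar, and a Smith normal form computation should yield a cyclic group of order $a_0a_1a_2/4$. Case (a) is the Samuel/Mori result that $B$ is a UFD, consistent with Proposition \ref{PBUFDDim2}.

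The main obstacle is step four, in particular case (c). There the bookkeeping of the fractional divisor $D$ (the Seifert invariants $e_P/m_P$) and the Smith normal form of the resulting relation matrix are delicate, and showing cyclicity of the exact order $a_0a_1a_2/4$ needs care. I would first handle the simplest instance $(2,2,2k)$ to calibrate the relation matrix, and then generalize. A secondary subtlety is confirming that the genus computation really separates (a)--(c) from (d). For instance, one must check that two gcds exceeding $1$, not both equal to $2$, always force $g\ge1$.
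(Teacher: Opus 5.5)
You should know first that the paper gives no proof of this statement; it only cites Wrobel's theorem on rational trinomial varieties. Your Demazure--Watanabe route is therefore independent of anything in the paper, and its outer steps are sound:
\begin{itemize}
\item Part (a) follows from Proposition \ref{PBUFDDim2} together with the criterion $\Cl(X)=0\iff B$ is a UFD.
\item The genus dichotomy you need is exactly Proposition \ref{S0char} combined with Proposition \ref{CastelnuovoProp}(b), since conditions (i) and (ii) there are precisely (a)$\cup$(b) and (c) here. This is not a ``platonic'' condition: $(2,3,7)$ is not platonic but has genus $0$.
\item For $g(C)\geq 1$, $\Pic^0(C)$ meets $\Integ[D]$ trivially because $\deg D>0$. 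So $\Pic^0(C)$ embeds in $\Cl(X)$, which gives (d).
\end{itemize}

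The genuine problem is your concrete description of the middle term of Watanabe's sequence. That term is $G=\Gamma/\mathrm{Prin}(C)$, where $\Gamma\subset\Div(C)\otimes\Rat$ is generated by $\Div(C)$ and the divisors $\frac{1}{m_P}P$. It is a non-split extension of $\bigoplus_P\Integ/m_P$ by $\Pic(C)$. For $C\isom\PPP^1$ it is $G=\langle h,e_P\mid m_Pe_P=h\rangle$, not $\Integ\oplus\bigoplus_P\Integ/m_P$. With your formula, case (a) would produce $(\Integ\oplus\Integ/a_0a_1a_2)/\langle D\rangle$. A quotient of $\Integ\oplus\Integ/M$ by one element is nonzero when $M>1$, so this contradicts factoriality. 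Step four as you set it up therefore computes the wrong group, and beyond that it is not carried out ((b) and (c) are only asserted).

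With the correct $G$ the computation is short.
\begin{itemize}
\item Write $D=kh+\sum_P p_Pe_P$ with $\gcd(p_P,m_P)=1$, since the denominator of $D$ at $P$ is the isotropy order. Note that $\deg D=L/(w_0w_1w_2)$, from the Hilbert series of $B$ or from Orlik's formula for $b$.
\item For two points $P,P'$ on the same coordinate line, $u=e_P-e_{P'}$ has order $m_P=m_{P'}$. These differences split off a torsion summand $T$.
\item The complementary summand, generated by $h$ and one $e_P$ per coordinate line, is isomorphic to $\Integ$ via $N\cdot\deg$, where $N$ is the product of the three isotropy orders. This works because in cases (a), (b), (c) those orders, namely $(a_0,a_1,a_2)$, $(a_0/g_{0,1},a_1/g_{0,1},a_2)$ and $(a_0/2,a_1/2,a_2/2)$, are pairwise coprime.
\item In (a) and (b) one gets $N\deg D=1$. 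Hence $\Cl(X)\isom T$, which is $0$ in (a) and $(\Integ/a_2)^{g_{0,1}-1}$ in (b), independently of the $p_P$.
\item In (c), $T\isom\Integ/N$ with $N=a_0a_1a_2/8$, and $D$ corresponds to $(2,\tau)$. So $|\Cl(X)|=2N=a_0a_1a_2/4$, and the quotient fails to be cyclic only if $N$ is even and $\tau\in 2T$.
\item If $a_i/2$ is even, the two points $P,P'$ over $x_i=0$ contribute $-p_{P'}u$ to $\tau$. This is nonzero in $T/2T$ because $p_{P'}$ is coprime to the even number $m_{P'}$.
\end{itemize}
This settles the cyclicity issue you flagged as the main obstacle, without a case-by-case Smith normal form.
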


In order to determine when $B_{\Comp; a_0, a_1, a_2, a_3}$ is a UFD, we require some notation. Note that the goal is to state Theorem \ref{PBCL4} and Theorem \ref{storchUFD}, two of the central results of \cite{storch1984picard}. Note that we modify the notation slightly, to preserve our notation. We will use $\ba = (a_0, a_1, a_2, a_3)$ whereas Storch uses  $\textbf{r} = (r_1, r_2, r_3, r_4)$. 
\begin{notation}
    Consider the tuple $\textbf{a} = (a_0, a_1, a_2, a_3)$. For each subset $H \subseteq \{0,1,2,3\}$ define 
    $$\textbf{a}_H = \prod_{i \in H} a_i, \qquad  \textbf{v}_H = \underset{i \in H}\lcm(a_i), \qquad \tau(\ba) = \sum_{H \subseteq \{0,1,2,3\}} (-1)^{|H|} \frac{\textbf{a}_H}{\textbf{v}_H}$$
    with conventions $\ba_\emptyset = 1$ and $\textbf{v}_\emptyset = 1$. 
    Also, given a set of fractions $\{\frac{p_0}{q_0},\frac{p_1}{q_1},  \dots, \frac{p_n}{q_n}\}$ expressed in lowest terms, we can define $\gcd(\frac{p_0}{q_0},\frac{p_1}{q_1},  \dots, \frac{p_n}{q_n}) = \frac{\gcd(p_0, \dots, p_n)}{\lcm(q_0, \dots, q_n)}$.  
\end{notation}

\begin{nothing}
    Let $S_0$ denote the set of 4-tuples $(\alpha_0, \alpha_1,\alpha_2, \alpha_3) \in (\Nat^+)^4$ such that $\frac{\alpha_0}{a_0} + \frac{\alpha_1}{a_1} + \frac{\alpha_2}{a_2} + \frac{\alpha_3}{a_3} = 1$ and declare two elements $(\alpha_0, \alpha_1,\alpha_2, \alpha_3)$, $(\beta_0, \beta_1,\beta_2, \beta_3)$ of $S_0$ to be equivalent if there exists some $s$ coprime to $t = \lcm(a_0, a_1, a_2, a_3)$ such that $s\left(\frac{\alpha_0}{a_0},  \frac{\alpha_1}{a_1} , \frac{\alpha_2}{a_2} , \frac{\alpha_3}{a_3} \right) = \left(\frac{\beta_0}{a_0}, \frac{\beta_1}{a_1}, \frac{\beta_2}{a_2}, \frac{\beta_3}{a_3} \right) \mod \Integ^4$. It can be checked that if $\frac{\alpha_0}{a_0} + \frac{\alpha_1}{a_1} + \frac{\alpha_2}{a_2} + \frac{\alpha_3}{a_3} = 1$ then $\gcd\left(\frac{\alpha_0}{a_0},  \frac{\alpha_1}{a_1} , \frac{\alpha_2}{a_2} , \frac{\alpha_3}{a_3} \right) = \frac{1}{d}$ for some $d$ dividing $t$ and that if $\left(\frac{\alpha_0}{a_0},  \frac{\alpha_1}{a_1} , \frac{\alpha_2}{a_2} , \frac{\alpha_3}{a_3} \right) \sim \left(\frac{\beta_0}{a_0},  \frac{\beta_1}{a_1} , \frac{\beta_2}{a_2} , \frac{\beta_3}{a_3} \right)$ then their $\gcd$s are equal.
\end{nothing}

\begin{example}\label{S0remarks}
    Consider the case where $a_0 = a_1 = a_2 = a_3 = 7$. Then $(1,1,1,4)$ and $(2,2,2,1)$ are equivalent elements of $S_0$ since $2 \cdot \left(\frac{1}{7},  \frac{1}{7} , \frac{1}{7} , \frac{4}{7} \right) \equiv \left(\frac{2}{7},  \frac{2}{7} , \frac{2}{7} , \frac{8}{7} \right) \equiv \left(\frac{2}{7},  \frac{2}{7} , \frac{2}{7} , \frac{1}{7} \right) \mod \Integ^4$. For general $(a_0, a_1, a_2, a_3)$, computing the set $S_0$ and its equivalence classes is not always easy to do by hand. When the $a_i$ are small however, it is often possible to do the computations by hand. We demonstrate this below.  

    Consider the case where $\textbf{a} = (2,3,5,30)$. We will see that this case is of interest because of its appearance in Section \ref{rigiditySection}. Since $\sum_{i = 0}^3 {\frac{1}{a_i}} > 1$, $S_0 = \emptyset$. In the special case where $\sum_{i = 0}^3 {\frac{1}{a_i}} = 1$, $S_0$ has $(1,1,1,1)$ as its unique element and hence contains one equivalence class. 
\end{example}

\begin{theorem}\label{PBCL4}\cite[Satz 2.1]{storch1984picard}
    Let $X = \Spec B_{\Comp; a_0,a_1, a_2, a_3}$ and let $t = \lcm(a_0, a_1, a_2, a_3)$. Then $$\Cl(X) = \Integ^{\rho(\ba)} \text{ where } \rho(\ba) = \tau(\ba) - \sum_{d \mid t} \varphi(d) w(d;\ba)$$  
    where $\varphi(d)$ is Euler's phi function and $w(d;\ba)$ is the number of equivalence classes of $S_0$ whose $\gcd$ is exactly $\frac{1}{d}$. 
\end{theorem}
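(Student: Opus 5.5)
We would prove this by realizing $X$ as the affine cone over a quasi-smooth weighted projective surface and computing the class group from the Picard group of that surface. Let $Y = \Proj B_{\Comp; a_0,a_1,a_2,a_3}$, with the grading of Example \ref{PBGradingExample}. Then $Y$ is a weighted hypersurface of degree $t$ in $\PPP(w_0,w_1,w_2,w_3)$, and $X\setminus\{\mgoth\}$ is a Seifert $\Comp^*$-bundle over $Y$. Since $X$ is normal (Corollary \ref{PBNormal}) and $\codim\{\mgoth\} = 3 \geq 2$, we have $\Cl(X) = \Cl(X\setminus\{\mgoth\})$. The standard cone sequence (Watanabe/Demazure) gives $\Cl(X) \isom \Cl(Y)/\lb \mathcal{O}_Y(1)\rb$, where $\mathcal{O}_Y(1)$ is the class of the $\Q$-divisor $D$ with $B=\bigoplus_d H^0(Y,\mathcal{O}(dD))$, corrected by the orbifold data along the curves $\{x_i=0\}$. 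The singularities of $Y$ are cyclic quotient singularities, so $\Cl(Y)\otimes\Rat = \Pic(\tilde Y)\otimes\Rat$ modulo exceptional curves. We therefore expect $\Cl(X)$ to be torsion-free of rank $\rho(\tilde Y)-1-\#\{\text{exceptional curves}\}$, which equals the rank of the Néron–Severi group of the cone link. Torsion-freeness should follow from the link $X\cap S^7$ being $2$-connected (Milnor), since then $H^2$ of the link is torsion-free.

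Rather than resolving $Y$, we would compute this rank topologically. Since $X$ has an isolated singularity and $H^1(X\setminus\{\mgoth\},\mathcal{O})=H^2_{\mgoth}(B)=0$ (Cohen–Macaulayness, Proposition \ref{CM}), the exponential sequence on $U=X\setminus\{\mgoth\}$ gives $\Cl(X)=\Pic(U)\hookrightarrow H^2(U,\Integ)$. The image is the kernel of $H^2(U,\Integ)\to H^2(U,\mathcal{O}_U)=H^3_{\mgoth}(B)$, i.e. the integral classes of Hodge type $(1,1)$. Now $U$ retracts onto the link $K$, and $H^2(K,\Integ)$ is computed from the Milnor lattice $H^3(F)$ and the monodromy $h$. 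Brieskorn–Pham theory gives that $H^3(F)\otimes\Comp$ has eigenbasis indexed by $(\alpha_0,\dots,\alpha_3)$ with $1\le\alpha_i\le a_i-1$, with eigenvalue $e^{2\pi i\sum\alpha_i/a_i}$. The rank of $H^2(K)$ is the dimension of the $h$-invariants, namely the number of tuples with $\sum \alpha_i/a_i\in\Integ$. The standard inclusion–exclusion count of such tuples is exactly $\tau(\ba)$; we would verify this by a generating-function computation over characters of $\prod\Integ/a_i$.

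The main obstacle is to identify which invariant classes are algebraic, and this is where $S_0$ enters. The Hodge type of the eigenvector for $\alpha$ is governed by $\lfloor\sum\alpha_i/a_i\rfloor$ (Steenbrink's spectrum). The tuples with $\sum\alpha_i/a_i=1$, i.e. $S_0$, carry the contribution to $H^{2,0}$ and $H^{0,2}$ of the link's mixed Hodge structure. A rational class is of type $(1,1)$ exactly when it avoids every Galois orbit meeting $S_0$. The Galois group $\mathrm{Gal}(\Rat(\zeta_t)/\Rat)\isom(\Integ/t)^*$ acts on the index tuples by multiplication by $s$ mod $\Integ^4$, which is precisely the equivalence relation defining the classes of $S_0$. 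An orbit whose gcd is $1/d$ factors through $(\Integ/d)^*$ and so has size $\varphi(d)$. Each such class forces the removal of an irreducible $\Rat$-subrepresentation of dimension $\varphi(d)$. Summing over classes gives $\sum_{d\mid t}\varphi(d)w(d;\ba)$ removed dimensions, so $\rho(\ba)=\tau(\ba)-\sum_d\varphi(d)w(d;\ba)$. The delicate points to check are:
\begin{enumerate}[\rm(i)]
\item the rational orbit of a class in $S_0$ consists entirely of non-$(1,1)$ eigenvectors, so the whole orbit must be removed, and distinct classes give disjoint orbits;
\item no invariant tuple with $\sum\alpha_i/a_i=2$ lies outside those orbits while still being non-algebraic; by the symmetry $\alpha\mapsto a-\alpha$ these are conjugate to the $S_0$ ones, and we must avoid double counting them.
\end{enumerate}
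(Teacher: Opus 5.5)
The paper gives no proof of this statement; it quotes Storch's Satz 2.1. Judged on its own, your combinatorial skeleton is right:
\begin{itemize}
\item the characters $\alpha$ of $G=\prod_i\mu_{a_i}$ with $1\le\alpha_i\le a_i-1$ and $\sum_i\alpha_i/a_i\in\Integ$ are counted by $\tau(\ba)$ (Milnor--Orlik);
\item those with $\sum_i\alpha_i/a_i=1$ are exactly $S_0$;
\item a tuple whose $\gcd$ is $1/d$ has a Galois orbit of size exactly $\varphi(d)$, so the subtracted term is correct.
\end{itemize}

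The gap is the step that carries the whole theorem: that $\Cl(X)$ is precisely the group of integral $(1,1)$-classes in $H^2(K,\Integ)$. You feed the \emph{algebraic} vanishing $H^2_{\mgoth}(B)=0$ into the \emph{analytic} exponential sequence. At best this describes $\Pic(U^{an})$, not the algebraic group $\Cl(X)=\Pic(U)$. Nothing in the sketch shows any of the following:
\begin{itemize}
\item that $\Pic(U)\to H^2(K,\Integ)$ is injective;
\item that the kernel of $H^2(K,\Comp)\to H^2(U^{an},\OSheaf)$ is $F^1$ of a Hodge structure;
\item that every rational $(1,1)$-class comes from an algebraic Weil divisor on $\Spec B$. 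This is the essential point for the lower bound $\operatorname{rank}\Cl(X)\ge\rho(\ba)$.
\end{itemize}
That is a Lefschetz $(1,1)$ statement plus a comparison theorem, and it is the real content of the result.

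The natural repair is to run the argument on $Y=\Proj B$, as your first paragraph begins to do. First, $\operatorname{rank}\Cl(X)=\operatorname{rank}\Cl(Y)-1$. Second, since $Y$ has quotient singularities and $H^1(Y,\OSheaf_Y)=0$, Lefschetz $(1,1)$ on a resolution gives $\operatorname{rank}\Cl(Y)=\dim_\Rat\bigl(H^2(Y,\Rat)\cap H^{1,1}\bigr)$. Third, the Griffiths--Steenbrink residue description splits $H^2_{\mathrm{prim}}(Y,\Comp)$ into $G$-eigenlines $V_\alpha$ whose Hodge type is determined by $\sum_i\alpha_i/a_i\in\{1,2,3\}$.

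Torsion-freeness is best obtained algebraically. Use $\Cl(B)\isom\Cl(B_\mgoth)\hookrightarrow\Cl(\widehat{B_\mgoth})$ and Grothendieck purity for the complete intersection $\widehat{B_\mgoth}$: its punctured spectrum is simply connected, so in characteristic $0$ its class group has no torsion. Also, the link is only $1$-connected, not $2$-connected. $2$-connectedness would force $H^2(K)=0$ and hence $\Cl(X)=0$, contradicting $\rho(4,4,4,4)=19$. It is $H_1(K)=0$ that makes $H^2(K,\Integ)=\operatorname{Hom}(H_2(K),\Integ)$ torsion-free.

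Your two ``delicate points'' are also misdiagnosed. Point (i) is false. Take $\ba=(5,5,5,5)$ and $(1,1,1,2)\in S_0$. Multiplying by $s=2$ and $s=3$ gives $(2,2,2,4)$ and $(3,3,3,1)$, both with $\sum_i\alpha_i/a_i=2$. These eigenlines are individually of type $(1,1)$, yet not algebraic. This is why the Fermat quintic surface has Picard number $37<45=h^{1,1}$, matching $\rho(5,5,5,5)=52-16=36$.

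The whole orbit is discarded for a different reason. The space of rational Hodge classes is stable under $G$, since $G$ acts by automorphisms and so preserves both the $\Rat$-structure and the Hodge decomposition. Each character occurs with multiplicity one, so this space is a sum of full Galois-orbit spans, and an orbit meeting $S_0$ contains a $(2,0)$-line. This needs the full $G$-action: the monodromy acts trivially on $H^3(F)^h\isom H^2(K)$ and cannot split it.

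Point (ii) is not settled by $\alpha\mapsto a-\alpha$, which sends sum-$2$ tuples to sum-$2$ tuples rather than into $S_0$. What is needed is that an orbit consisting entirely of sum-$2$ characters spans a rational subspace of $H^{1,1}$, and that its classes are algebraic. That is once again the missing Lefschetz $(1,1)$ input.
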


\begin{example}
    Let us compute the divisor class group of $\Spec(B_{\Comp;4,4,4,4})$. For each nonempty subset $H \subseteq \{0,1,2,3\}$ we have $a_H = 4^{|H|}$, $v_H = 4$. Then 

    $$\tau(\ba) = 1 - 4(\frac{4}{4}) + 6(\frac{16}{4}) - 4(\frac{64}{4}) + 64 = 21.$$

    By Example \ref{S0remarks}, $S_0$ consists of one element because $\sum_{i = 0}^3\frac{1}{a_i} = 1$ and the $\gcd(\frac{1}{4},\frac{1}{4},\frac{1}{4},\frac{1}{4}) = \frac{1}{4}$. Thus the only non-zero summand in the sum $\sum_{d \mid t} \varphi(d) w(d;\ba)$ when $\ba = (4,4,4,4)$ is $d=4$, in which case we have $\varphi(d) = 2$ and $w(d; \ba) = 1$. We deduce that $\rho(\ba) = 21- 2 = 19$ and $\Cl(\Spec B_{\Comp; 4,4,4,4}) = \Integ^{19}$. 
    
\end{example}

\noindent \textbf{The graph of $\bf{a}$}. Given $\textbf{a} = (a_0, a_1, a_2, a_3)$ we assign a decorated graph of 4 vertices, one vertex $v_i$ associated to each $a_i$. We join $v_i$ and $v_j$ by an edge $\varepsilon_{ij}$ if and only if $\gcd(a_i, a_j) \neq 1$ and we decorate this edge $\varepsilon_{ij}$ with the positive integer $\gcd(a_i, a_j)$. 

\begin{theorem}\cite[Satz 2.3]{storch1984picard}\label{storchUFD}
Let ${\bf a}= (a_0,a_1,a_2,a_3)\in (\mathbb{N}^+)^4$. 
Then $B_{\Comp; a_0, a_1, a_2, a_3}$ is a UFD if and only if $\textbf{a}$
satisfies one of the following two conditions:
\begin{enumerate}[\rm(i)]
  \item one of the $a_i$ is coprime to the remaining three;
  \item after a suitable renumbering of the $a_i$ the graph of $\textbf{a}$ has the shape
    \[
      \begin{tikzpicture}[baseline=(current bounding box.center),
                          every node/.style={inner sep=1.5pt}]
        \coordinate (v4) at (0,0);
        \coordinate (v1) at (2,0);
        \coordinate (v3) at (0,1.6);
        \coordinate (v2) at (2,1.6);
        \draw (v4) -- node[below] {$\scriptstyle d_1$} (v1);
        \draw (v4) -- node[left]  {$\scriptstyle d_3$} (v3);
        \draw (v4) -- node[above left, pos=0.55] {$\scriptstyle d_2$} (v2);
        \fill (v1) circle (1.5pt) node[right] {$a_1$};
        \fill (v2) circle (1.5pt) node[right] {$a_2$};
        \fill (v3) circle (1.5pt) node[left]  {$a_3$};
        \fill (v4) circle (1.5pt) node[left]  {$a_0$};
      \end{tikzpicture}
      \qquad d_i:=\gcd(a_i,a_0),\quad i=1,2,3,
    \]
    and there is no permutation $\pi\in\mathfrak{S}_3$ with
    $2\mid d_{\pi(1)}$, $3\mid d_{\pi(2)}$, $5\mid d_{\pi(3)}$.
\end{enumerate}
If the graph of $\textbf{a}$ has the shape in (ii) and such a permutation $\pi$ does exist, then $\rho(\ba)=8$.
\end{theorem}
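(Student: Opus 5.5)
The plan is to reduce everything to Theorem \ref{PBCL4}. By the cited Hartshorne proposition and Corollary \ref{PBNormal}, $B_{\Comp;\ba}$ is a UFD if and only if $\Cl(X)=0$, i.e.\ if and only if $\rho(\ba)=0$. So the statement amounts to a purely arithmetic characterization of the vanishing of
\[
\rho(\ba)=\tau(\ba)-\sum_{d\mid t}\varphi(d)\,w(d;\ba).
\]
I would first rewrite $\tau(\ba)$ in a factored, "local" form. For $|H|\le 1$ the terms $\ba_H/\textbf{v}_H$ equal $1$. For larger $H$ the term $\ba_H/\textbf{v}_H$ is $1$ exactly when the $a_i$, $i\in H$, are pairwise coprime, and is otherwise a product of $\gcd$ contributions. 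This lets me read off $\tau$ from the decorated graph of $\ba$: inclusion–exclusion over the edges expresses $\tau(\ba)$ as a polynomial in the edge labels and the vertex data.

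Second, I would interpret the correction term $\sum_d \varphi(d)w(d;\ba)$ as counting the $\mu_t$-orbits of "interior lattice points" $\alpha_i/a_i$ summing to $1$. Equivalently, it counts the $h^{2,0}$-type contributions (the part of the Hodge structure of the link coming from the weight-one eigenspaces). Then $\rho(\ba)$ is the rank of the invariant part of $H^2$ of the Milnor fibre of type $(1,1)$. This is the classical Brieskorn–Pham description: the Milnor lattice is spanned by characters $\prod_i \zeta_i^{k_i}$ with $0<k_i<a_i$, and the $\Cl$-contributing characters are those with $\sum k_i/a_i\in\Integ$, excluding those with sum exactly $1$ or $3$. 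Counting those with sum exactly $2$, grouped into Galois orbits, gives $\rho$.

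Third comes the case analysis. If some $a_i$ is coprime to the other three, say $a_0$, then $k_0/a_0$ has a denominator not cancelled by the others, so no character has integral sum and $\rho=0$; this gives (i). If no such $a_i$ exists, the graph has no isolated vertex. I would go through the connected graphs on four vertices, plus the case of two disjoint edges. For every shape other than the star centred at $a_0$, I would exhibit an explicit tuple $(k_0,\dots,k_3)$ with $\sum k_i/a_i=2$. For instance, for two disjoint edges $\{0,1\},\{2,3\}$, take $k_0/a_0+k_1/a_1=1$ and $k_2/a_2+k_3/a_3=1$, with $k_i=a_i/d$ multiples. This gives $\rho>0$.

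For the star, the condition $\sum k_i/a_i=2$ forces the denominators of $k_1/a_1,k_2/a_2,k_3/a_3$ to divide $d_1,d_2,d_3$ respectively, and those three fractions must together cancel the fractional part of $k_0/a_0$. Since the $d_i$ are pairwise coprime (the leaves are not adjacent), this reduces to solving $x_1/d_1'+x_2/d_2'+x_3/d_3'\equiv -k_0/a_0$ with each fraction in $(0,1)$, summing to exactly $2$ rather than $1$ or $3$. The main obstacle is this counting. A solution with sum $2$ surviving the orbit correction should exist precisely when the three denominators can be chosen as $2,3,5$, because $\tfrac12+\tfrac23+\tfrac45=\tfrac{59}{30}$-type near-misses appear. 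More precisely, the classification of triples with $1/p+1/q+1/r>1$ corresponds to the $E_8$ situation.

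I would first try to compute $\rho$ directly in the reduced case $\ba=(30,2,3,5)$ (and its multiples), expecting the $E_8$ lattice and hence $\rho=8$, matching the final assertion. I would then argue that all admissible tuples in the star case reduce to this computation, or else have no contributing characters.
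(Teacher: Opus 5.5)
The survey only quotes this result from Storch and gives no proof, so your proposal has to stand on its own. It does not yet do so: at the two points where the theorem is actually decided, it either makes an invalid inference or leaves the answer as a guess.

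\textbf{First gap: the reading of Theorem~\ref{PBCL4} and the non-star case.} Theorem~\ref{PBCL4} must be read orbitwise. $S_0$ is exactly the set of tuples with $0<k_i<a_i$ and $\sum k_i/a_i=1$, and a tuple with $\gcd=\frac1d$ has a $(\Integ/t\Integ)^*$-orbit of size $\varphi(d)$. So the subtracted term counts all integral-sum tuples whose orbit \emph{meets} sum $1$ (equivalently, via $s=-1$, sum $3$). Hence $\rho(\ba)$ counts tuples whose \emph{entire} orbit has sum $2$, not individual tuples of sum $2$.

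Consequently, exhibiting one tuple with $\sum k_i/a_i=2$ does not give $\rho>0$. For $\ba=(42,2,3,7)$, which is a UFD by the very statement you are proving, the tuple $(13,1,1,6)$ has sum $2$, but $s=13$ sends it to $(1,1,1,1)$, which has sum $1$.

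Your disjoint-edge tuple does work, but for a reason you need to state. If $x_0+x_1=1=x_2+x_3$ with no $x_i\in\Integ$, then $\{sx_0\}+\{sx_1\}=1=\{sx_2\}+\{sx_3\}$ for every unit $s$, where $\{\cdot\}$ denotes fractional part. So the whole orbit has sum $2$. You also need the unstated fact that every graph on four vertices without isolated vertices, other than $K_{1,3}$, contains two disjoint edges. With those two facts the non-star case is settled; ``going through the shapes'' with arbitrary sum-$2$ tuples does not settle it.

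\textbf{Second, more serious gap: the star case.} You explicitly leave this open (``the main obstacle is this counting'', ``should exist precisely when''). The $E_8$ heuristic and the proposed reduction to $(30,2,3,5)$ are not arguments, and without this step neither criterion (ii) nor $\rho=8$ is proved. The missing argument is short:
\begin{enumerate}
\item For $i=1,2,3$, let $e_i\mid d_i$ be the exact denominator of $x_i=k_i/a_i$. These $e_i$ are pairwise coprime, $x_0=\{-(x_1+x_2+x_3)\}$ is forced, and the total sum equals $\lceil x_1+x_2+x_3\rceil$.
\item By the Chinese remainder theorem, the units $s$ modulo $t$ move each $x_i$ independently through all primitive fractions with denominator $e_i$. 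So the orbit contains the tuple with $(x_1,x_2,x_3)=(1/e_1,1/e_2,1/e_3)$, and the values of $x_1+x_2+x_3$ over the orbit lie between $\sum 1/e_i$ and $3-\sum 1/e_i$, with both bounds attained.
\item Hence the orbit avoids sum $1$ if and only if $\sum 1/e_i>1$; equality is impossible because the $e_i$ are pairwise coprime. When $\sum 1/e_i>1$, every member of the orbit has sum $2$.
\item For pairwise coprime $e_i\ge 2$, the condition $\sum 1/e_i>1$ forces $\{e_1,e_2,e_3\}=\{2,3,5\}$.
\item Since the $d_i$ are pairwise coprime, the matching $\pi$ is unique when it exists. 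This gives $\rho=\varphi(2)\varphi(3)\varphi(5)=8$ if $\pi$ exists, and $\rho=0$ otherwise.
\end{enumerate}

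A minor point: the Milnor fibre of a four-variable singularity has reduced cohomology only in degree $3$, so the remark about ``$H^2$ of the Milnor fibre'' is wrong. You do not need that detour anyway.
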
 

There are a few big results that go into proving Proposition \ref{highDimUFD}, the main result of this section. These results are useful on their own and so we include the statements here. 

\begin{theorem}[\cite{SGA2}, Corollaire XI.3.14]\label{UFDlocal}
    Suppose $A$ is a local Noetherian ring that is a complete intersection. If $A_\pgoth$ is a unique factorization domain for all $\pgoth$ such that $\dim A_\pgoth \leq 3$, then $A$ is a unique factorization domain.   
\end{theorem}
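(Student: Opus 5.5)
The plan is to induct on $\dim A$, working throughout with the punctured spectrum $U = \Spec A \setminus \{\mgoth\}$. If $\dim A \leq 3$ there is nothing to prove: the hypothesis applied to $\pgoth = \mgoth$ says that $A = A_\mgoth$ is a UFD.

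\textbf{Reduction to $\Pic(U) = 0$.} Suppose $\dim A \geq 4$.
\begin{enumerate}[\rm(1)]
\item Every non-maximal prime $\pgoth$ gives a local complete intersection $A_\pgoth$ of smaller dimension, and its localizations are localizations of $A$. So the induction hypothesis shows that every such $A_\pgoth$ is a UFD, i.e. $U$ is locally factorial.
\item $A$ is Cohen--Macaulay, being a complete intersection, so it satisfies $S_2$. Regularity in codimension one follows from the local rings of height $\leq 1$ being UFDs. By Theorem \ref{Serre}, $A$ is therefore a normal domain.
\item Since $\depth A \geq 2$, we have $\Cl(A) = \Cl(U)$. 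Since $U$ is locally factorial, $\Cl(U) = \Pic(U)$.
\end{enumerate}
It therefore suffices to show $\Pic(U) = 0$, i.e. that $A$ is parafactorial in Grothendieck's sense.

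\textbf{Passing to the completion and cutting by a hyperplane.}
\begin{enumerate}[\rm(1)]
\item Next I would replace $A$ by $\hat A$. The map $\Pic(U) \to \Pic(\hat U)$ is injective: faithful flatness together with $\depth \geq 2$ identifies sections of line bundles on the two punctured spectra. Moreover $\hat A$ is again a complete intersection.
\item Choose a nonzerodivisor $f \in \mgoth$ such that $A/f$ is a complete intersection of dimension $\dim A - 1 \geq 3$.
\item For each $n \geq 1$, set $U_n = U \cap V(f^n)$, the punctured spectrum of $A/f^n$.
\end{enumerate}
The Lefschetz step has three parts.
\begin{enumerate}[\rm(a)]
\item Because $A/f^n$ is Cohen--Macaulay of dimension $\geq 3$, we get $H^1(U_0, \OSheaf) = H^2_\mgoth(A/f) = 0$, and likewise $H^2(U_0,\OSheaf) = H^3_\mgoth(A/f) = 0$ as soon as $\dim A/f \geq 4$.
\item Using the exact sequences $0 \to \OSheaf_{U_0} \to \OSheaf^*_{U_{n+1}} \to \OSheaf^*_{U_n} \to 1$, the vanishing of $H^1$ shows that $\varprojlim_n \Pic(U_n) \to \Pic(U_0)$ is injective.
\item Grothendieck's comparison between $U$ and its formal completion along $U_0$, which needs depth $\geq 3$ along $V(f)$ (true since $\depth A \geq 4$), shows that $\Pic(U) \to \varprojlim_n \Pic(U_n)$ is injective.
\end{enumerate}
Hence $\Pic(U) \hookrightarrow \Pic(U_0)$.

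\textbf{Concluding.} When $\dim A \geq 5$, $A/f$ is a complete intersection of dimension $\geq 4$ whose punctured spectrum is again locally factorial; here one must choose $f$ generically so that the low-dimensional localizations of $A/f$ are still UFDs. Induction then gives $\Pic(U_0) = 0$, and we are done.

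\textbf{Main obstacle: $\dim A = 4$.} The expected obstacle is this base case. Here $A/f$ has dimension $3$, and $\Pic(U_0)$ need not vanish: a $3$-dimensional complete intersection need not be factorial. I would first try to use the stronger conclusion available from the vanishing of $H^1$ alone: the map $\Pic(U) \to \Pic(U_0)$ is injective and, for $f$ varied over $\mgoth^N$ for large $N$, its image is forced into the intersection of the images over all choices of $f$. I would then show this intersection is trivial by a deformation argument: a line bundle on $U_0$ that lifts to every infinitesimal thickening for all $f$ must already be trivial, since $H^2_\mgoth(A/f) = 0$ makes the obstruction classes rigid. If this fails, the fallback is to reduce the $\dim 4$ case directly to the hypothesis that the $3$-dimensional localizations are UFDs, via a generic hyperplane through a height-$3$ prime rather than through $\mgoth$.
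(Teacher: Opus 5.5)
The paper gives no proof of this statement; it quotes SGA~2, where it is derived from Grothendieck's theorem that every local complete intersection of dimension $\geq 4$ is parafactorial. Your first block is exactly that derivation and is fine: local factoriality of $U$ by induction on dimension, normality via Theorem~\ref{Serre}, and $\Cl(A)=\Cl(U)=\Pic(U)$.

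The gap is in proving $\Pic(U)=0$, and it is not a boundary technicality. The case $\dim A=4$ is the whole theorem, and cutting $A$ down by a hyperplane cannot reach it. Your Lefschetz step only yields an injection $\Pic(U)\hookrightarrow\Pic(U_0)$. When $\dim A=4$, the target is the punctured spectrum of a $3$-dimensional complete intersection, which is typically not parafactorial. For example, with $\bk=\bar{\bk}$, the ring $A=\bk[[x_0,\dots,x_4]]/(x_0^2+\dots+x_4^2)$ is factorial. Yet for $f=x_4$ (or any $f$ with generic linear part) one has $A/f\isom\bk[[x,y,z,w]]/(xy-zw)$ and $\Pic(U_0)\isom\Integ$, so the injection carries no information.

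Your proposed rescues supply no mechanism:
\begin{itemize}
\item The groups $\Pic(U_0)$ for different $f$ do not live in a common group, so their images cannot be intersected.
\item Every class in the image of $\Pic(U)$ lifts to all thickenings automatically, so ``lifts to every thickening'' is not a constraint.
\item The vanishing of $H^1(U_0,\OSheaf)=H^2_\mgoth(A/f)$ only gives uniqueness of lifts. The obstruction group $H^2(U_0,\OSheaf)=H^3_\mgoth(A/f)$ is nonzero when $\dim A/f=3$.
\item Taking $f$ inside a height-$3$ prime does not change the fact that $A/f$ is a $3$-dimensional complete intersection.
\end{itemize}

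The missing idea is to run the Lefschetz argument in the other direction: build $\hat A$ up from a regular ring and use surjectivity rather than injectivity. Write $\hat A=R/(f_1,\dots,f_r)$ with $R$ a complete regular local ring and $f_1,\dots,f_r$ a regular sequence, and put $A_i=R/(f_1,\dots,f_i)$. Each $A_i$ is a complete intersection of dimension $\geq\dim A\geq 4$, so
\[
H^1(U_{A_i},\OSheaf)=H^2_\mgoth(A_i)=0 \quad\text{and}\quad H^2(U_{A_i},\OSheaf)=H^3_\mgoth(A_i)=0.
\]
The second vanishing kills the obstructions, so every line bundle on $U_{A_i}$ lifts to all thickenings $U_{A_{i-1}}\cap V(f_i^n)$; the first vanishing makes the lift unique. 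Completeness of $A_{i-1}$ then lets one algebraize the resulting formal line bundle; this is the effective Lefschetz condition of SGA~2. Hence $\Pic(U_{A_{i-1}})\to\Pic(U_{A_i})$ is bijective. Starting from $\Pic(U_R)=\Cl(R)=0$, one gets $\Pic(U_{\hat A})=0$, and then $\Pic(U)=0$ by the descent you already described. The hypothesis $\dim A\geq 4$ enters uniformly through $H^3_\mgoth=0$, and there is no special base case.

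Separately, your $\dim A\geq 5$ step applies the full statement to $\hat A/f$. That requires the localizations of $\hat A/f$ of dimension $\leq 3$ to be UFDs, a Bertini-type claim you do not justify. Factoriality is not preserved by hyperplane sections, as the example above shows, nor in general by completion. Inducting instead on the statement ``every local complete intersection of dimension $\geq 4$ has $\Pic(U)=0$'' removes that issue, but it only confirms that everything rests on dimension $4$.
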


\begin{theorem}[\cite{Hartshorne}, Corollary II.6.16]\label{ClPic}
    If $X$ is a Noetherian, integral, separated, locally factorial scheme, then $\Cl(X) \isom \Pic(X)$.  
\end{theorem}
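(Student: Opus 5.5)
The plan is to factor the desired isomorphism through Cartier divisors: first prove $\Cl(X) \isom \CaCl(X)$ using local factoriality, then prove $\CaCl(X) \isom \Pic(X)$ using only that $X$ is integral.

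\emph{Step 1: Weil divisors versus Cartier divisors.} Let $K$ be the function field of $X$ and $\mathcal{K}$ the constant sheaf $K$. Since a locally factorial scheme is normal (a UFD is integrally closed), Weil divisors and $\Cl(X)$ make sense. A Cartier divisor is a global section of $\mathcal{K}^*/\OSheaf_X^*$, i.e. data $\{(U_i, f_i)\}$ with $f_i/f_j \in \OSheaf^*(U_i\cap U_j)$. Send it to the Weil divisor $\sum_Y v_Y(f_i)\,Y$, where $v_Y$ is the valuation of the DVR $\OSheaf_{X,\eta_Y}$ at the generic point of a prime divisor $Y$. This is well defined because $f_i/f_j$ are units, and it is a group homomorphism.

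\begin{itemize}
\item \emph{Injectivity.} If $\sum_Y v_Y(f_i)Y = 0$ on $U_i$, then $f_i$ has valuation zero at every height-one prime of the normal Noetherian ring $\OSheaf_{X,x}$. Normal Noetherian domains are intersections of their height-one localizations, so $f_i$ and $f_i^{-1}$ lie in $\OSheaf_{X,x}$. Hence $f_i$ is a unit near every point, and the Cartier divisor is trivial.
\item \emph{Surjectivity.} This is where local factoriality enters, and it is the only real content. Given a Weil divisor $D$ and a point $x$, the restriction of $D$ to $\Spec \OSheaf_{X,x}$ is a finite combination of height-one primes. In a UFD every height-one prime is principal, say generated by $g_Y$. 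Then $f = \prod g_Y^{n_Y}$ satisfies $\div(f) = D$ on $\Spec \OSheaf_{X,x}$. The divisor $D - \div(f)$ has support a finite union of closed prime divisors not containing $x$, so $D = \div(f)$ on some open neighbourhood of $x$. Doing this at every point gives a Cartier divisor mapping to $D$.
\item \emph{Principal divisors.} Globally principal Cartier divisors $(X,f)$ map to principal Weil divisors $\div(f)$, and conversely, so we obtain $\CaCl(X) \isom \Cl(X)$.
\end{itemize}

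\emph{Step 2: Cartier divisors versus line bundles.} To $D = \{(U_i,f_i)\}$ associate the subsheaf $\mathcal{L}(D) \subseteq \mathcal{K}$ generated on $U_i$ by $f_i^{-1}$. It is invertible, and $\mathcal{L}(D_1 - D_2) \isom \mathcal{L}(D_1)\otimes \mathcal{L}(D_2)^{-1}$. Moreover, $\mathcal{L}(D) \isom \OSheaf_X$ if and only if $D$ is principal, since an isomorphism is multiplication by a global element of $K^*$. This gives an injection $\CaCl(X) \hookrightarrow \Pic(X)$.

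For surjectivity, use integrality. An invertible sheaf $\mathcal{L}$ satisfies $\mathcal{L} \otimes \mathcal{K} \isom \mathcal{K}$, because it is trivial on a dense open set and $\mathcal{K}$ is constant on an irreducible space. Hence $\mathcal{L}$ embeds in $\mathcal{K}$, and the local generators give a Cartier divisor with $\mathcal{L}(D) \isom \mathcal{L}$.

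Composing Steps 1 and 2 yields $\Cl(X) \isom \Pic(X)$. The main obstacle is the surjectivity in Step 1, specifically making sure the local equation at $x$ extends to an open neighbourhood. Noetherianity handles this, since $D$ has finitely many components, so only finitely many closed sets must be avoided. Separatedness is needed only so that $\Cl(X)$ is set up as in the standard theory.
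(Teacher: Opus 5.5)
Your proof is correct and takes essentially the same route as the cited source. The paper gives no argument of its own and defers to Hartshorne, where Corollary II.6.16 combines Proposition II.6.11 ($\Cl(X) \isom \CaCl(X)$, with surjectivity from local principality of height-one primes plus a finiteness argument) with Propositions II.6.13 and II.6.15 ($\CaCl(X) \isom \Pic(X)$ for integral $X$ via $D \mapsto \mathcal{L}(D) \subseteq \mathcal{K}$), just as you do; the step you leave implicit, that the local equations $f_x$ agree up to units on overlaps, follows from the normality argument you already use for injectivity.
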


\begin{lemma}[\cite{murthy1969vector}, Lemma 5.1]\label{murthyLemma}
Let $A$ be a normal graded algebra with $A_0 = \bk$, generated by finitely many elements of positive degree. Then $\Pic(A) =  0$.
\end{lemma}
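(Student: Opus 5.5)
The plan is the standard ``homotopy'' argument that uses the grading as a contraction of $\Spec A$ to the vertex. Write $A=\bigoplus_{d\in\Nat}A_d$ with $A_0=\bk$. Let $t$ be a new indeterminate, and define a $\bk$-algebra homomorphism
$$\Phi : A \to A[t], \qquad \Phi\Big(\sum_d a_d\Big)=\sum_d a_d t^d,$$
where $a_d\in A_d$. Algebraically, $\Phi$ is the coaction of the good $\bk^*$-action of Remark \ref{goodCStarRemark}, extended over $t=0$. It is a ring homomorphism because the grading is multiplicative.

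Let $\epsilon_0,\epsilon_1 : A[t]\to A$ be evaluation at $t=0$ and $t=1$. Then $\epsilon_1\circ\Phi=\Id_A$. On the other hand, $\epsilon_0\circ\Phi$ is the composite $A\to A_0=\bk\hookrightarrow A$, which kills every homogeneous element of positive degree. Since $\Pic$ is a covariant functor (via $P\mapsto P\otimes$), we get
$$\Pic(\epsilon_1)\circ\Pic(\Phi)=\Id_{\Pic(A)}, \qquad \Pic(\epsilon_0)\circ\Pic(\Phi) \text{ factors through } \Pic(\bk)=0.$$

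The key step is \emph{homotopy invariance}: for a Noetherian normal domain $A$, the map $\Pic(A)\to\Pic(A[t])$ induced by the inclusion $\iota : A\hookrightarrow A[t]$ is an isomorphism. Granting this, $\epsilon_0$ and $\epsilon_1$ are both left inverses of $\iota$, so $\Pic(\epsilon_0)=\Pic(\iota)^{-1}=\Pic(\epsilon_1)$. Hence
$$\Id_{\Pic(A)}=\Pic(\epsilon_1)\Pic(\Phi)=\Pic(\epsilon_0)\Pic(\Phi)=0,$$
and therefore $\Pic(A)=0$. Note that $A$ is Noetherian because it is finitely generated over $\bk$, and it is a domain: the lemma says normal, and in the application $A$ is a Pham-Brieskorn domain. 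If one wants non-domains, $A$ normal and connected (graded with $A_0=\bk$ forces connectedness) gives a domain anyway.

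I expect homotopy invariance to be the main obstacle. I would prove it through divisor class groups, as follows.

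1. For a Noetherian normal domain $R$, invertible modules correspond to locally principal divisorial ideals, so $\Pic(R)\hookrightarrow\Cl(R)$.

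2. By Gauss's lemma / Nagata, $\Cl(A)\to\Cl(A[t])$ is an isomorphism. Height-one primes of $A[t]$ either contract to $0$, and these are principal after inverting $A\setminus 0$ and contribute nothing, or are extended from height-one primes of $A$.

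3. Hence an invertible ideal $J\subseteq A[t]$ is isomorphic to $IA[t]$ for some divisorial ideal $I\subseteq A$.

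4. It remains to see that $I$ is itself invertible. Since $IA[t]$ is locally principal, its image under $\epsilon_0$ is $IA[t]\otimes_{A[t]}A[t]/(t)\cong I$. Base change preserves invertibility, so $I$ is locally principal, i.e.\ invertible.

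This gives surjectivity of $\Pic(A)\to\Pic(A[t])$. Injectivity is immediate because $\epsilon_0\circ\iota=\Id$.
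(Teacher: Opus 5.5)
Your argument is correct. The paper gives no proof of this lemma; it is simply quoted from Murthy, so there is no internal argument to match. What you give is the standard contraction argument. You use that $\epsilon_1\circ\Phi=\Id_A$, that $\epsilon_0\circ\Phi$ factors through the ring map $A\to A_0=\bk$, and that $\epsilon_0,\epsilon_1$ are both left inverses of $\iota$. This reduces everything to $\Pic(A)\cong\Pic(A[t])$. Your proof of that isomorphism for a Noetherian normal domain is sound:
\begin{itemize}
\item the class of an invertible $J$ comes from a divisorial $I\subseteq A$ via $\Cl(A)\cong\Cl(A[t])$;
\item $I[t]=IA[t]$ is divisorial by flatness and carries the pulled-back divisor, so $J\cong I[t]$;
\item $I\cong I[t]\otimes_{A[t]}A[t]/(t)$ is then invertible by base change.
\end{itemize}

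A more direct route avoids $A[t]$ and uses the grading on the class group instead. Localizing $A$ at its nonzero homogeneous elements gives a Laurent polynomial ring over a field, so every class in $\Cl(A)$ is represented by a homogeneous divisorial ideal $I$. If $IA_{\mgoth}$ is principal for $\mgoth=A_+$, then $I$ is already principal by graded Nakayama. Hence $\Cl(A)\hookrightarrow\Cl(A_{\mgoth})$, and an invertible module, being free at $\mgoth$, is free.

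Each approach buys something different. Yours is more general: normality enters only through homotopy invariance of $\Pic$, so the same argument gives $\Pic(A)\cong\Pic(A_0)$ for every seminormal $\Nat$-graded ring (Traverso--Swan). The direct route gives the sharper injection $\Cl(A)\hookrightarrow\Cl(A_{\mgoth})$. That injection alone would show $B_{a_0,\dots,a_n}$ is a UFD as soon as its local ring at the vertex is. In Proposition \ref{highDimUFD} this would bypass both Theorem \ref{ClPic} and the present lemma.
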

\begin{remark}
Note that in the statement of Lemma \ref{murthyLemma}, $A$ is not necessarily a polynomial ring. Also, if $X = \Spec A$, then $\Pic(X) = \Pic(A)$.
\end{remark}

We can now prove (for any field of characteristic zero):

\begin{proposition}\label{highDimUFD}
    Let $n \geq 4$. Then $B_{\bk; a_0, \dots, a_n}$ is a unique factorization domain. 
\end{proposition}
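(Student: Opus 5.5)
By Remark \ref{baseChangeUFD}, it suffices to treat the case $\bk = \bar{\bk}$. Indeed, $B_{\bar{\bk};\ba}$ being a UFD forces $B_{\bk;\ba}$ to be a UFD. We may also assume $a_i \geq 2$ for all $i$, since otherwise $B$ is a polynomial ring. Write $B = B_{\bk; a_0, \dots, a_n}$, let $X = \Spec B$ and let $\mgoth = \lb \bar{x}_0, \dots, \bar{x}_n \rb$. The strategy is to first show that $B$ is locally factorial, then use the grading to globalize.

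\textbf{Step 1: $B$ is locally factorial.} Let $\pgoth \in \Spec B$. If $\pgoth \neq \mgoth$, then by Remark \ref{uniqueSingularPoint} the local ring $B_\pgoth$ is regular, hence a UFD by the Auslander--Buchsbaum theorem. It remains to show that $B_\mgoth$ is a UFD, and here we apply Theorem \ref{UFDlocal} to $A = B_\mgoth$. This ring is a complete intersection, being the localization of a polynomial ring modulo one nonzero element. Its primes $\qgoth A$ with $\dim A_{\qgoth} \leq 3$ all satisfy $\qgoth \neq \mgoth$, because $\dim B_\mgoth = n \geq 4$. Hence each such $A_{\qgoth} = B_\qgoth$ is regular and therefore a UFD. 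Theorem \ref{UFDlocal} then yields that $B_\mgoth$ is a UFD. This is exactly the step where $n \geq 4$ is used, and the dimension count is the only point needing care.

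\textbf{Step 2: globalize.} $X$ is Noetherian, integral (Proposition \ref{integralDomain}), separated (affine), and locally factorial by Step 1. Theorem \ref{ClPic} therefore gives $\Cl(X) \isom \Pic(X) = \Pic(B)$. By Example \ref{PBGradingExample}, $B$ is positively graded with $B_0 = \bk$ and is generated by the $\bar{x}_i$, which have positive degree. By Corollary \ref{PBNormal}, $B$ is normal. Lemma \ref{murthyLemma} therefore gives $\Pic(B) = 0$, so $\Cl(X) = 0$.

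\textbf{Conclusion.} $B$ is a normal Noetherian domain with $\Cl(\Spec B) = 0$. By the cited Hartshorne Proposition II.6.2, $B$ is a UFD over $\bar{\bk}$, and by the first reduction the result holds for arbitrary $\bk$.

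The main obstacle is Step 1. It requires checking the hypotheses of the Grothendieck purity result, Theorem \ref{UFDlocal}, namely the complete intersection property and the dimension bound. The remaining steps are direct citations.
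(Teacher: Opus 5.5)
Your proof is correct and follows the paper's argument essentially step for step. You reduce to $\bk = \bar{\bk}$ via Remark \ref{baseChangeUFD}, obtain local factoriality from regularity away from $\mgoth$ together with Theorem \ref{UFDlocal} at $\mgoth$ (using $\dim B_\mgoth = n \geq 4$), and conclude $\Cl(X) \isom \Pic(B) = 0$ via Theorem \ref{ClPic} and Lemma \ref{murthyLemma}, while also making explicit a few points the paper leaves implicit: the reduction to $a_i \geq 2$, the normality hypothesis needed for Lemma \ref{murthyLemma}, and the final appeal to Hartshorne II.6.2.
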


\begin{proof}
    Let $B = B_{\bk; a_0, \dots, a_n}$. By Remark \ref{baseChangeUFD}, it suffices to prove the special case where $\bk$ is algebraically closed. We first prove that $X = \Spec B$ is locally factorial, i.e. that $B_\pgoth$ is a unique factorization domain for all $\pgoth \in \Spec B$. Let $\mgoth$ denote the maximal ideal $\lb x_0, \dots, x_n \rb \in \Spec B$. For all $\pgoth \in \Spec B \setminus \{\mgoth\}$ (which includes all primes of height at most 3), $B_\pgoth$ is regular and hence is a unique factorization domain.  Since $B_\mgoth$ is a local Noetherian complete intersection, Theorem \ref{UFDlocal} implies that $B_\mgoth$ is a UFD. It follows that $B$ is locally factorial. Consequently, $X$ satisfies the assumptions of Theorem \ref{ClPic} and so $\Cl(X) = \Pic(X) = \Pic(B) = 0$, the last equality by Lemma \ref{murthyLemma} because $B$ is positively graded by Example \ref{PBGradingExample}.  
\end{proof}

\section{Remarks on the Singular Point}

In this section we discuss the unique singular point of $\Spec B_{a_0, \dots, a_n}$ for $n = 1$ and $n = 2$. Throughout this section, we assume $\bk = \bar{\bk}$. 

\subsection{The case of curves $(\bk = \bar{\bk})$}

\begin{nothing}\label{nequals2}  When $n = 1$, we restrict to the case where $B_{\bk; a_0, a_1}$ is an integral domain, in which case (since $\bk = \bar{\bk}$) we have $\gcd(a_0, a_1) = 1$. Since normalization gives a resolution of singularities, we need only compute the normalization of $\Spec(B_{\bk; a_0,a_1})$. We will show that the normalization of $B_{\bk; a_0, a_1}$ is an affine line. Equivalently, we show that the integral closure of $B_{a_0, a_1}$ in its field of fractions is a polynomial ring in 1 variable. 

Since $\bk = \bar{\bk}$, we have an isomorphism $B_{a_0, a_1} \isom \bk[x_0, x_1]  /\lb x_0^{a_0} - x_1^{a_1} \rb$. We will let $C = \bk[x_0, x_1]  /\lb x_0^{a_0} - x_1^{a_1} \rb$ for the rest of this example and will compute the integral closure of $C$. Observe that we have an isomorphism $\phi: C \overset{\isom}{\rightarrow} \bk[t^{a_0}, t^{a_1}] \subset \bk[t]$ given by $\bar{x}_0 \mapsto t^{a_1}$ and $\bar{x}_1 \mapsto t^{a_0}$ and that $\Frac(\bk[t^{a_0}, t^{a_1}]) = \bk(t)$ because $\gcd(a_0, a_1) = 1$. Consequently, we can extend $\phi$ to an isomorphism $\bar{\phi} : \Frac{C} \to \bk(t)$. Since $\bk[t]$ is the integral closure of $\bk[t^{a_0}, t^{a_1}]$ in $\bk(t)$, it follows $B_{a_0, a_1}$ is rational and its integral closure is $\bk^{[1]} = \bk[\bar{\phi}^{-1}(t)] \subset \Frac C$. 
\end{nothing}

\subsection{The case of surfaces ($\bk = \Comp$)}

When $n = 2$, the singular point of $\Spec B_{\Comp; a_0, a_1, a_2}$ is more complicated to resolve. To do so requires the more general theory of singularities of algebraic surfaces with ``good $\Comp^*$ actions" developed by Orlik and Wagreich in \cite{orlik1979} and \cite{Orlik1977AlgebraicSW}. Recall Definition \ref{goodCstar} and Remark \ref{goodCStarRemark}. 

\begin{nothing}\label{orlikSetup}
	Assume an algebraic surface $V$ admits a good $\Comp^*$-action and has a unique singular point $o$. Then, as discussed on pages 710--711 of \cite{orlik1979}, one can construct a 3-manifold called ``the link of the singularity $(V, o)$'' that contains enough information to fully describe the minimal resolution graph of the exceptional curve $\pi^{-1}(o)$ of a minimal resolution $\pi: \tilde{V} \to V$. The invariants of the link consist of a ``Betti number" $b \in \Integ$, a ``genus" $g \in \Nat$ and ``orbit invariants" $(\alpha_i, \beta_i)$ where $\alpha_i, \beta_i \in \Nat$ and $0 <\beta_i < \alpha_i$. This collection of invariants is encompassed by the following notation:
	$$ K(V, o) = \left\{ b ; g; (\alpha_1, \beta_1), \dots , (\alpha_r, \beta_r) \right\}. $$ 
    By Remark \ref{goodCStarRemark} together with Example \ref{PBGradingExample}, $V = \Spec B_{a_0, a_1, a_2}$ admits a good $\Comp^*$ action and furthermore, there are simple formulas \cite[p.711]{orlik1979}  for calculating $K(V, o)$ in terms of $a_0,a_1,a_2$. The Theorem on p.710 of \cite{orlik1979} then tells us how to use these values to determine the resolution graph of $\pi^{-1}(o)$.  
\end{nothing}

\begin{theorem}\label{OWMainTheorem}
    Let $V$ be an algebraic surface with a good $\Comp^*$-action and an isolated singularity at the origin. Suppose its link is
    \[
         K(V, o) = \left\{ b ; g; (\alpha_1, \beta_1), \dots , (\alpha_r, \beta_r) \right\}
    \]
    Then the weighted dual graph $\Gamma$ of an equivariant resolution of $V$ is
    the star-shaped graph depicted in Figure~\ref{fig:OWgraph}, in which the
    central curve has genus $g$ and every other curve has genus $0$. The
    integers $b_{i,j} \geq 2$ are those occurring in the continued fraction
    expansion
    \[
        \frac{\alpha_i}{\alpha_i - \beta_i}
            = b_{i,1} - \cfrac{1}{b_{i,2} - \cfrac{1}{\ddots \;-\; \cfrac{1}{b_{i,s_i}}}} ,
        \qquad 1 \leq i \leq r ,
    \]
    the length $s_i$ of the $i$-th branch being the length of this expansion.
\end{theorem}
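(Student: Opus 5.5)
The plan is to recover the Orlik--Wagreich theorem from the Seifert partial-quotient picture. We use the good $\Comp^*$-action to build a canonical equivariant partial resolution, then resolve the remaining cyclic quotient singularities torically.

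\textbf{Step 1: the weighted blow-up.} Write $V=\Spec A$ with $A=\bigoplus_{d\ge 0}A_d$ positively graded and $A_0=\Comp$, as in Remark \ref{goodCStarRemark}. Let $C=\Proj A$. Since $A$ is normal of dimension $2$, $C$ is a smooth projective curve, and its genus is $g$. Take
$$\tilde V_0=\operatorname{Spec}_C\Big(\bigoplus_{d\ge0}\mathcal{O}_C(d)\Big),$$
the total space of the Seifert $\Comp^*$-bundle, equivalently the weighted blow-up of the vertex $o$. This is a partial resolution $\tilde V_0\to V$ whose exceptional locus is the zero section $E_0\cong C$. It is an isomorphism off $o$, because the $\Comp^*$-action is free away from finitely many orbits.

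\textbf{Step 2: local structure along $E_0$.} Over a point $p\in C$ with isotropy group $\mathbb{Z}/\alpha$, the space $\tilde V_0$ is locally $(\Comp\times\Comp)/\mu_\alpha$ with weights $(1,\beta')$. So $\tilde V_0$ has a cyclic quotient singularity of type $\frac{1}{\alpha}(1,\beta')$ at the point of $E_0$ over $p$. I would match $(\alpha,\beta')$ to the link's orbit invariants $(\alpha_i,\beta_i)$, using the description of the link as a Seifert fibration over $C$ on pp.~710--711 of \cite{orlik1979}. Away from these finitely many points, $\tilde V_0$ is smooth along $E_0$.

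\textbf{Step 3: resolving the quotient points.} Each cyclic quotient singularity has a minimal toric resolution. It is a chain of rational curves with self-intersections $-b_{i,j}$, given by the Hirzebruch--Jung continued fraction. The strict transform of $E_0$ meets the first curve of the chain, so the dual graph is star-shaped: central vertex $E_0$ of genus $g$, and one chain per orbit invariant, all of genus $0$. The self-intersection of the central curve is then read off as $-b$ from the degree of the Seifert line bundle, which is the Betti/Euler number.

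\textbf{Main obstacle.} The hardest part is the orientation convention. One must confirm that, in the normalization $0<\beta_i<\alpha_i$ used for $K(V,o)$, the chain attached to the central curve is the expansion of $\alpha_i/(\alpha_i-\beta_i)$ rather than of $\alpha_i/\beta_i$. I would settle this with the toric computation: the chain adjacent to the zero section in $\frac{1}{\alpha}(1,q)$ is given by $\alpha/q$, so I would check that Orlik--Wagreich's $\beta_i$ corresponds to $q=\alpha_i-\beta_i$. The test case $x^2+y^2+z^n$ (an $A_{n-1}$ singularity) would serve as a sanity check. Finally, since every $b_{i,j}\ge 2$, no $(-1)$-curves appear on the branches.
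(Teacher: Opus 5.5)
The paper gives no proof of this statement. It is quoted from Orlik--Wagreich (the Theorem on p.~710 of \cite{orlik1979}). Your outline is the standard route: the Seifert/weighted-blow-up partial resolution $\tilde V_0$ with cyclic quotient points on the zero section $E_0$, followed by Hirzebruch--Jung resolution. However, Step~3 contains a genuine error. In the normalization $0<\beta_i<\alpha_i$ with arms $\alpha_i/(\alpha_i-\beta_i)$, the central curve has self-intersection $-b-r$ (Figure~\ref{fig:OWgraph}), not $-b$.

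The conventions ``centre $-b$, arms $\alpha_i/\beta_i$'' and ``centre $-b-r$, arms $\alpha_i/(\alpha_i-\beta_i)$'' are interchanged by $(b,\beta_i)\mapsto(b+r,\alpha_i-\beta_i)$. So the orientation question you flag as the main obstacle cannot be settled independently of the central weight, and your Step~3 mixes the two conventions. The paper's own example refutes the claim: for $B_{5,7,7}$ one has $b=-4$ and $r=7$, so your recipe would give an exceptional curve of self-intersection $+4$, contradicting negative definiteness. Your proposed sanity check $x^2+y^2+z^n$ ($n$ odd) would also expose it. There $b=-1$, $r=2$, $\beta=(n+1)/2$, and the equivariant resolution is a central $(-1)$-curve flanked by two chains expanding $2n/(n-1)$, which contracts to $A_{n-1}$; your formula would instead give a $(+1)$-curve.

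What is missing is the actual computation of $\tilde E_0^2$, which is not ``the degree of the Seifert line bundle'': that degree is rational. In $\tilde V_0$ one has $E_0^2=-b-\sum_i\beta_i/\alpha_i$. For Pham--Brieskorn surfaces this is $-d/(q_0q_1q_2)$, equivalent to formula (v) of the paper's example, e.g.\ $-1/5$ for $(5,7,7)$. If $\pi$ resolves the quotient points and $\pi^*E_0=\tilde E_0+\sum_{i,j}c_{i,j}C_{i,j}$, then $\tilde E_0^2=E_0^2-\sum_i c_{i,1}$. For a Hirzebruch--Jung chain expanding $n/q$ that meets $\tilde E_0$ at its first curve, $c_{i,1}=q/n$.

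The identification you left open in Step~2 comes from the slice. The group $\mu_{\alpha_i}$ acts on a transversal slice with weight $w_i$, where $w_i\beta_i\equiv 1\pmod{\alpha_i}$ in Orlik's normalization. For Pham--Brieskorn surfaces $w_i=q_j$, the weight of the coordinate vanishing on the orbit, which is exactly the congruence in (ii)--(iv). So near the point, $\tilde V_0$ is of type $\frac{1}{\alpha_i}(-1,w_i)$ in (fibre, slice) coordinates, with $E_0=\{\text{fibre}=0\}$. Normalizing the slice weight to $1$ shows that the chain next to $E_0$ expands $\alpha_i/q$ with $q\equiv -w_i^{-1}\equiv \alpha_i-\beta_i$.

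Hence $c_{i,1}=(\alpha_i-\beta_i)/\alpha_i$, and $\tilde E_0^2=-b-\sum_i\beta_i/\alpha_i-\sum_i(\alpha_i-\beta_i)/\alpha_i=-b-r$. For $(5,7,7)$ this is $-\tfrac15-7\cdot\tfrac25=-3$, matching the paper's graph.

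A minor point on Step~1: $\tilde V_0\to V$ is an isomorphism off $o$ because $\tilde V_0\setminus E_0=\operatorname{Spec}_C\bigoplus_{d\in\Integ}\mathcal{O}_C(d)\cong V\setminus\{o\}$, not because the action is generically free. Also, the $\mathcal{O}_C(d)$ are in general only reflexive rather than invertible, which is precisely why $\tilde V_0$ has the quotient points.
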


\begin{figure}[ht]
\centering
\begin{tikzpicture}[
    vertex/.style = {circle, draw, minimum size=10mm, inner sep=1pt,
                     font=\small},
    ctr/.style    = {circle, draw, minimum size=13mm, inner sep=1pt,
                     font=\small},
    x=1cm, y=1cm
]
    \node[ctr] (c) at (0,0) {$-b-r$};
    \node[font=\small] at (0,-1.05) {$[g]$};

    \node[vertex] (a11) at (3.2, 2.3) {$-b_{1,1}$};
    \node[vertex] (a12) at (5.4, 2.3) {$-b_{1,2}$};
    \node[vertex] (a1s) at (8.6, 2.3) {$-b_{1,s_1}$};
    \draw (c) -- (a11) -- (a12);
    \draw (a12) -- node[fill=white, inner sep=2pt] {$\cdots$} (a1s);

    \node[vertex] (a21) at (3.2, 0.75) {$-b_{2,1}$};
    \node[vertex] (a22) at (5.4, 0.75) {$-b_{2,2}$};
    \node[vertex] (a2s) at (8.6, 0.75) {$-b_{2,s_2}$};
    \draw (c) -- (a21) -- (a22);
    \draw (a22) -- node[fill=white, inner sep=2pt] {$\cdots$} (a2s);

    \node at (3.2, -0.6) {$\vdots$};
    \node at (5.4, -0.6) {$\vdots$};
    \node at (8.6, -0.6) {$\vdots$};

    \node[vertex] (ar1) at (3.2, -2.1) {$-b_{r,1}$};
    \node[vertex] (ar2) at (5.4, -2.1) {$-b_{r,2}$};
    \node[vertex] (ars) at (8.6, -2.1) {$-b_{r,s_r}$};
    \draw (c) -- (ar1) -- (ar2);
    \draw (ar2) -- node[fill=white, inner sep=2pt] {$\cdots$} (ars);
\end{tikzpicture}
\caption{The star-shaped resolution graph $\Gamma$ of
         Theorem~\ref{OWMainTheorem}.}
\label{fig:OWgraph}
\end{figure}

\begin{remark}\label{dontAdd10orbits}
    In the computations below, we see that orbits with a $(1,0)$ invariant can appear, and we note that the condition $0 < \beta_i < \alpha_i$ mentioned in \ref{orlikSetup} requires that we avoid adding these $(1,0)$ orbits to the link. 
\end{remark}

\begin{example}
    Let $V = \Spec B_{a_0, a_1, a_2}$. Set $d = \lcm(a_0, a_1, a_2)$, $q_i = d/a_i$ for each $i \in \{0,1,2\}$. (These $q_i$ are nothing other than the weights $w_i$ from Example \ref{PBGradingExample} although we use $q_i$ rather than $w_i$ to match the notation in \cite{orlik1979}.) Note also that $\gcd(q_0, q_1, q_2) = 1$, so if we let $\alpha_0 = \gcd(q_1, q_2)$ 
    \begin{equation}\label{uniqueBeta0}
        \text{there exists a unique $\beta_0$  such that $q_0 \beta_0 \equiv 1 \mod \alpha_0$ and $0 \leq \beta_0 < \alpha_0$ }. 
    \end{equation}
    We will use \eqref{uniqueBeta0} to determine the orbits $(\alpha_i, \beta_i)$ of the link.

    \begin{enumerate}[\rm(i)]
    \item The value of $g$ is determined by the formula 
    $$2g = \frac{d^2}{q_0q_1q_2} - \frac{d\gcd(q_0,q_1)}{q_0q_1} - \frac{d\gcd(q_1,q_2)}{q_1q_2} - \frac{d\gcd(q_0,q_2)}{q_0q_2} + \frac{\gcd(d,q_0)}{q_0} + \frac{\gcd(d,q_1)}{q_1} + \frac{\gcd(d,q_2)}{q_2} -1.$$

    \item There are $d / \lcm(q_1,q_2)$ orbits with $\alpha_0 = \gcd(q_1, q_2)$ and $\beta_0$ satisfying $q_0 \beta_0 \equiv 1 \mod \alpha_0$ and $0 \leq \beta_0 < \alpha_0$. 

    \item There are $d / \lcm(q_0,q_2)$ orbits with $\alpha_1 = \gcd(q_0, q_2)$ and $\beta_1$ satisfying $q_1 \beta_1 \equiv 1 \mod \alpha_1$ and $0 \leq \beta_1 < \alpha_1$. 

    \item There are $d / \lcm(q_0,q_1)$ orbits with $\alpha_2 = \gcd(q_0, q_1)$ and $\beta_2$ satisfying $q_2 \beta_2 \equiv 1 \mod \alpha_2$ and $0 \leq \beta_2 < \alpha_2$. 

    \item Finally, $b = \frac{d}{q_0 q_1 q_2} - \sum_{i = 1}^r \frac{\beta_i}{\alpha_i}$.
    \end{enumerate}

    As a concrete example, let us now take $V = \Spec B_{5,7,7}$. Then $d = \lcm(5,7,7) = 35$ and $(q_0, q_1, q_2) = (7,5,5)$. Part (i) gives $2g = \frac{35^2}{7 \cdot 5 \cdot 5} - \frac{35 \cdot 1}{35} - \frac{35 \cdot 5}{25} - \frac{35 \cdot 1}{35} + \frac{7}{7} + \frac{5}{5} + \frac{5}{5} - 1 = 7-1-7-1 + 1 + 1 + 1 - 1 = 0$ so $g = 0$. Part (ii) gives $7$ $(5,3)$-orbits and parts (iii) and (iv) give 2 $(1,0)$-orbits which (by Remark \ref{dontAdd10orbits}) we do not add to the link. Finally $b = \frac{35}{7 \cdot 5 \cdot 5} - 7 \cdot \frac{3}{5} = -4$. We conclude that the link is
    $$K(\Spec B_{5,7,7},o) = \{-4;0;(5,3)^7\}.$$

    Applying Theorem \ref{OWMainTheorem} gives the resolution graph:

    \begin{center}
\begin{tikzpicture}[
    vertex/.style = {circle, draw, minimum size=8mm, inner sep=1pt,
                     font=\footnotesize},
    x=1cm, y=1cm
]
    \node[vertex] (c) at (0,0) {$-3$};
    \foreach \i in {1,...,7} {
        \pgfmathsetmacro{\y}{(4-\i)*1.15}
        \node[vertex] (a\i) at (2.6, \y) {$-3$};
        \node[vertex] (b\i) at (4.6, \y) {$-2$};
        \draw (c) -- (a\i) -- (b\i);
    }
\end{tikzpicture}
\end{center}
    
\end{example}

\section{Weighted projective hypersurfaces and $\Proj(B_{a_0, \dots, a_n})$}
\subsection{Quasismooth Weighted Complete Intersections}

We collect some known results on quasismooth weighted complete intersections in weighted projective spaces. Although not strictly required, in order to be consistent with our definition of ``variety" at the start of the article, we assume that the field $\bk$ below is algebraically closed. Note that weighted projective varieties appear in many areas of algebraic geometry. The articles \cite{dolgachev} and \cite{iano-fletcher_2000} are two of the standard references. There is also a new book on the topic \cite{przyjalkowski2026weighted}.

\begin{nothing}
    If $R = \bk[x_0, x_1, \dots, x_n]$ is a polynomial ring, then it admits many $\Nat$-gradings. For each $i  \in \{0, \dots, n\}$, choose some $w_i \in \Nat^+$ and declare the element $x_i$ to be homogeneous of degree $w_i$. It is easy to see that this defines a unique grading of $R$ and that $R_d$ is an $R_0$-module with a basis consisting of monomials $M_d = \setspec{x_0^{e_0}x_1^{e_1} \cdots x_n^{e_n}}{ \sum_{i = 0}^n e_i w_i = d}$. We use the notation $\bk_{w_0, \dots, w_n}[x_0, \dots, x_n]$ to denote the polynomial ring $\bk[x_0, \dots, x_n]$ equipped with the grading satisfying $\deg(x_i) = w_i$ for all $i = 0,1, \dots, n$. 
    
    Define the \textit{weighted projective space}  $\PPP = \PPP(w_0, \dots , w_n) = \Proj(R)$. A \textit{weighted projective variety} $X$ is a closed subvariety of a weighted projective space. Whenever we write ``the variety $X \subseteq \PPP$", we mean that $X$ is a closed subvariety of $\PPP$. Since $\PPP(w_0, \dots , w_n)$ is a projective variety \cite[Corollary 4B.8]{beltramettiRobbiano}, every weighted projective variety is also a projective variety. Let $I$ be a homogeneous prime ideal of the graded ring $R$. Then $X_I = V_+(I)$ is closed in $\PPP$ and $X_I$ is isomorphic to $\Proj( R/I )$. In the special case where $I = \lb f \rb$ where $f \in R_d$, we say that $X_I = X_f$ is a \textit{weighted hypersurface} of \textit{degree $d$}. Given a weighted hypersurface $X_f \subseteq \PPP(w_0, \dots, w_n)$ of degree $d$, the \textit{amplitude} of $X_f$ is the integer $\alpha = d- \sum_{i = 0}^n w_i$.  
\end{nothing}

\begin{example}\label{pbGrading}
     Recall Example \ref{PBGradingExample}. Given $(a_0, \dots, a_n)$ and $L = \lcm(a_0, \dots, a_n)$ and $w_i = L / a_i$, the element $f = x_0^{a_0} + x_1^{a_1} + \dots + x_n^{a_n} \in \bk_{w_0, \dots, w_n}[x_0, \dots, x_n]$ is homogeneous of degree $L$, so $X_f = V_+(\lb f \rb)$ is a weighted hypersurface of $\PPP(w_0, \dots, w_n)$ of degree $L$ with amplitude $\alpha = L - \sum_{i = 0}^n \frac{L}{a_i} = L - \sum_{i = 0}^n w_i$.   
\end{example}

\begin{assumption}
    From this point onward, whenever we view $B_{a_0, \dots, a_n}$ as an $\Nat$-graded ring, we are assuming the grading defined in Example \ref{pbGrading}.
\end{assumption}

Generally, a weighted projective variety $X$ is singular and its singularities need not be easy to compute. In certain special cases, in particular when $X$ is a weighted hypersurface with nice properties, it becomes quite easy to compute $\Sing(X)$. The relevant properties are those of \textit{well-formedness} and \textit{quasismoothness}. 

\begin{definition}
    A weighted projective space $\PPP = \PPP(w_0, \dots, w_n)$ is said to be \textit{well-formed} if for each $i = 0,\dots,n$, we have $\gcd(w_0,\dots,w_{i-1},\hat{w_i},w_{i+1}, \dots , w_n) = 1$.  A weighted projective variety $X \subseteq \PPP$ is \textit{well-formed} if both $\PPP$ is well-formed and $\codim_X(X \cap \Sing(\PPP)) \geq 2$. 
\end{definition}

\begin{definition}\label{quasismoothDefinitions}
    Given a weighted projective variety $X = V_+(I) \subset \PPP(w_0, \dots, w_n)$, we can define the (affine) closed subvariety $C_X = V(I) \subseteq \aff^{n+1}$, which is called the  \textit{affine cone over $X$}; note that $C_X$ passes through the origin of $\aff^{n+1}$ and $C_X \isom \Spec( R/I )$ is an integral affine scheme. The variety $X$ is called \textit{quasismooth} if $C_X$ is nonsingular away from the origin. 
\end{definition}

\begin{definition}
    A normal $\bk$-variety $X$ is $\Rat$-factorial if for every Weil divisor $D$ there exists some $n \in \Nat^+$ such that $nD$ is Cartier. 
\end{definition}

\begin{remark}\label{qswciProperties}
    Well-formed quasismooth hypersurfaces in weighted projective space are special cases of well-formed quasismooth weighted complete intersections which have the following very nice properties: 
    \begin{enumerate}
        \item they are normal (because the affine cone $C_X$ is normal by Serre's Normality Criterion);
        \item they have at most cyclic quotient singularities \cite[p.105]{iano-fletcher_2000};
        \item they have at most rational singularities \cite[Corollary 7.4.10]{ishii2014introduction}; 
        \item they are Cohen-Macaulay \cite[Theorem 3.1A(c)]{beltramettiRobbiano};
        \item they are $\Rat$-factorial. \cite[Proposition 5.15]{kollar_mori_1998}      
    \end{enumerate}
\end{remark}

\begin{proposition}\cite[p.185 and Proposition 8]{Dimca1986}.
    Suppose $X \subseteq \PPP(w_0, \dots, w_n)$ is well-formed and quasismooth. Then $\Sing(X) = X \cap \Sing(\PPP)$. 
\end{proposition}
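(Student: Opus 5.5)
The plan is to work locally on the standard affine charts of $\PPP$ and to compare $X$ with its affine cone $C_X$ through the quotient description of weighted projective space. Let $\pi : \aff^{n+1} \setminus \{0\} \to \PPP$ be the quotient by the $\bk^*$-action $t \cdot (z_i) = (t^{w_i} z_i)$, so that $\pi^{-1}(X) = C_X \setminus \{0\}$. For each $i$, the open set $U_i = \{x_i \neq 0\} \subseteq \PPP$ is isomorphic to $\aff^n / \mu_{w_i}$. Here $\aff^n$ is the slice $\{x_i = 1\}$ and $\mu_{w_i}$ acts diagonally with weights $w_j$ for $j \neq i$. Correspondingly, $X \cap U_i \isom (C_X \cap \{x_i = 1\}) / \mu_{w_i}$. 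Since $C_X \setminus \{0\}$ is smooth by quasismoothness, and the slice $\{x_i = 1\}$ meets every orbit in $\{x_i \neq 0\}$ transversally, the affine slice $Y_i = C_X \cap \{x_i = 1\}$ is smooth. So locally $X$ is the quotient of a smooth variety $Y_i$ by a finite cyclic group, and the same holds for $\PPP$ with $\aff^n$ in place of $Y_i$.

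The inclusion $\Sing(X) \subseteq X \cap \Sing(\PPP)$ comes first. Take $p \in X$ with $p \notin \Sing(\PPP)$. Because $\PPP$ is well-formed, its singular locus is exactly the set of points whose $\bk^*$-stabilizer $\mu_{\gcd\{w_j : z_j \neq 0\}}$ is nontrivial. (Well-formedness means no $\mu_m$ acts as a pseudoreflection group, so by Chevalley–Shephard–Todd a nontrivial stabilizer gives a genuine singularity.) Hence the stabilizer of a lift $\tilde p$ is trivial, so $\pi$ is a principal $\bk^*$-bundle near $\tilde p$. The local structure of $X$ at $p$ is then that of a slice through $\tilde p$ in the smooth variety $C_X \setminus \{0\}$, so $p$ is a smooth point of $X$.

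The reverse inclusion is where I expect the main obstacle. Take $p \in X \cap \Sing(\PPP)$, with stabilizer $G = \mu_m$, $m > 1$. Then $X$ is locally at $p$ the quotient $(T_{\tilde p} Y_i)/G$ (linearizing via Luna's slice or a direct Cartan argument). We need this quotient to be singular, which by Chevalley–Shephard–Todd means $G$ must contain no pseudoreflections acting on the tangent space of the slice of $C_X$. A pseudoreflection would fix a hyperplane in that slice, which would make $X \cap \Sing(\PPP)$ contain a divisor of $X$ near $p$. This contradicts the well-formedness hypothesis $\codim_X(X \cap \Sing(\PPP)) \geq 2$. To make this precise, I would compute the fixed locus of each $\zeta \in G$ on $Y_i$. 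It is $Y_i \cap \{x_j = 0 : \zeta^{w_j} \neq 1\}$, which maps onto a component of $X \cap \Sing(\PPP)$, so it has codimension at least $2$ in $Y_i$. Since this fixed locus is smooth, its tangent space at $\tilde p$ is the fixed subspace of $\zeta$, and that subspace therefore has codimension at least $2$. Hence $\zeta$ is not a pseudoreflection, the quotient is singular at $p$, and equality follows. Carrying this out, I would follow Dimca's argument (the cited Proposition 8) and check the tangent-space comparison carefully.
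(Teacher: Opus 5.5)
Your proposal is correct and is essentially the standard argument behind the cited result of Dimca; the paper itself gives no proof, only the citation. You use the local charts $X \cap U_i \isom Y_i/\mu_{w_i}$ with $Y_i$ smooth by quasismoothness, observe that the action is free over points outside $\Sing(\PPP)$, and use $\codim_X(X \cap \Sing(\PPP)) \geq 2$ to force every nontrivial $\zeta$ in a stabilizer to have fixed subspace of codimension at least $2$ in $T_{\tilde p}Y_i$, so that Chevalley--Shephard--Todd yields a genuine singularity. One small correction: the fixed locus of $\zeta$ maps finitely \emph{into} $X \cap \Sing(\PPP)$, not necessarily onto a component, which is all your dimension count needs. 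The same codimension bound also shows that $\zeta$ does not act trivially on $T_{\tilde p}Y_i$, so the stabilizer acts faithfully there.
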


We make use of the following Lemma many times. 

\begin{lemma}\label{projRd}
    Suppose $R$ is $\Nat$-graded and let $d \in \Nat^+$. Then $\Proj R \isom \Proj R^{(d)}$. 
\end{lemma}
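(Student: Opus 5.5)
Since $\Proj$ is built by gluing the affine pieces $\Spec((R)_{(f)})$ for homogeneous $f$ of positive degree, the plan is to compare these charts for $R$ and for the Veronese subring $R^{(d)} = \bigoplus_{k \in \Nat} R_{kd}$ (graded by $(R^{(d)})_k = R_{kd}$) and check that they agree and glue the same way.

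\textbf{Step 1: the charts agree.} For a homogeneous $f \in R_e$ with $e > 0$, put $g = f^d \in R_{de}$, so $g$ is homogeneous of degree $e$ in $R^{(d)}$. We have $D_+(f) = D_+(f^d)$ in $\Proj R$, since a homogeneous prime contains $f$ if and only if it contains $f^d$. We claim that
$$(R^{(d)})_{(g)} = R_{(f^d)} = R_{(f)}.$$
The first equality holds because a degree-zero element of $R_{f^d}$ has the form $a/f^{dm}$ with $a \in R_{dem} \subseteq R^{(d)}$. The second holds because $a/f^m$ with $\deg a = em$ can be rewritten as $af^{(d-1)m}/f^{dm}$. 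Hence the affine opens $D_+(f) \subseteq \Proj R$ and $D_+(f^d) \subseteq \Proj R^{(d)}$ are canonically isomorphic.

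\textbf{Step 2: the charts cover.} The sets $D_+(f)$, with $f$ homogeneous of positive degree in $R$, cover $\Proj R$. The sets $D_+(g)$, with $g$ homogeneous of positive degree in $R^{(d)}$, cover $\Proj R^{(d)}$, and every such $g$ is already homogeneous of positive degree in $R$. For these $g$ we have $D_+(g) = D_+(g^d)$ on both sides, with the compatible identification of Step 1. So it suffices to use the families indexed by $f^d$, which cover both schemes.

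\textbf{Step 3: the gluing is compatible.} On overlaps, $D_+(f^d) \cap D_+(h^d) = D_+((fh)^d)$ on both sides, and the identifications of Step 1 are the natural localization maps. They therefore commute with restriction, and the local isomorphisms glue to a global isomorphism. More cleanly, the inclusion $R^{(d)} \hookrightarrow R$ induces a morphism $\Proj R \to \Proj R^{(d)}$, defined everywhere because any homogeneous prime not containing $R_+$ also does not contain $R^{(d)}_+$. By Step 1 this morphism restricts to isomorphisms on a common open cover, so it is an isomorphism.

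\textbf{Main obstacle.} The only delicate point is the well-definedness and surjectivity on points in Step 3. One must verify that $\pgoth \mapsto \pgoth \cap R^{(d)}$ sends relevant primes to relevant primes, and that every relevant prime $\qgoth$ of $R^{(d)}$ lifts uniquely. The lift is the prime $\{r : r_e^d \in \qgoth \text{ for each homogeneous component } r_e\}$. The chart comparison in Step 1 handles this automatically, so working chartwise avoids the point-level argument.
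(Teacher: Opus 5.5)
Your proposal is correct. The paper states this lemma without proof and uses it as a standard fact, so there is no argument in the paper to compare against. What you give is the standard proof: the charts are identified by $(R^{(d)})_{(f^d)} = R_{(f^d)} = R_{(f)}$, and the charts $D_+(f^d)$ already cover $\Proj R^{(d)}$.

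The ``more cleanly'' version of Step 3 leaves one point implicit. A morphism $\Proj R \to \Proj R^{(d)}$ that restricts to isomorphisms $D_+(f) \to D_+(f^d)$ is an isomorphism only if $D_+(f)$ is the full preimage of $D_+(f^d)$. Here this is immediate, because for a prime $\pgoth$ of $R$ we have $f^d \in \pgoth \cap R^{(d)}$ if and only if $f \in \pgoth$. In your gluing formulation, the overlap identity $D_+(f^d) \cap D_+(h^d) = D_+((fh)^d)$ plays the same role. With that line made explicit, the argument is complete, and the point-level lifting in your last paragraph is not needed.
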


The singular points of a well-formed weighted projective space are easy to compute. First, we remark that every weighted projective space is isomorphic to a well-formed one (this is a consequence of Lemma \ref{projRd}) and so for most purposes (including ours) it suffices to identify the singular points of well-formed weighted projective spaces. They are given by the following result:

\begin{proposition}\label{singularPoints}\cite[Theorem 1.3.8]{przyjalkowski2026weighted}
Let $\PPP = \PPP(w_0, \dots, w_n) = \Proj \bk_{w_0, \dots, w_n}[x_0, \dots, x_n]$ be well-formed and let $w = \prod_{i=0}^n w_i$. For each prime factor $p$ of $w$, let
$J_p = \setspec{ j }{ 0 \le j \le n \text{ and } p \nmid w_j }$ and consider the ideal
$\qgoth_p = \sum_{j \in J_p} R x_j$. Then 
$\displaystyle \Sing \PPP = \bigcup_{ p \mid w } V_+(\qgoth_p)$, where $p$ runs over the prime factors of $w$. Equivalently, a point $P = [c_0 : \cdots : c_n] \in \PPP$ is singular
if and only if $\gcd \setspec{ w_i }{ c_i \neq 0 } > 1$.
\end{proposition}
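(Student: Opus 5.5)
The plan is to compute the singular locus locally on the standard affine charts $D_+(x_i)$, identifying each chart with a quotient of affine space by a finite cyclic group.

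\textbf{Step 1 (chart description).} For each $i$, $D_+(x_i) \isom \Spec (R_{x_i})_0$. We write $(R_{x_i})_0 = \bk[x_0,\dots,\hat{x}_i,\dots,x_n]^{\mu_{w_i}}$, where $\zeta \in \mu_{w_i}$ acts by $x_j \mapsto \zeta^{w_j} x_j$. This comes from setting $x_i = 1$: the ring $R/\lb x_i - 1\rb$ carries the residual $\mu_{w_i}$-action, and its invariants recover the degree-zero part of the localization. Thus $D_+(x_i) \isom \aff^n/\mu_{w_i}$.

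\textbf{Step 2 (where the action is free enough).} Take a point $P=[c_0:\cdots:c_n]$ with $c_i \neq 0$. In the chart it corresponds to the orbit of the point $(c_j/c_i^{w_j/w_i})_{j\ne i}$, defined up to the choice of roots. Its stabilizer in $\mu_{w_i}$ consists of those $\zeta$ with $\zeta^{w_j}=1$ for all $j$ with $c_j \ne 0$. Since $\zeta^{w_i}=1$ already holds, the stabilizer is the cyclic group $\mu_e$ with $e=\gcd\{w_j : c_j\neq 0\}$. We then use well-formedness: no $\mu_p$ with $p$ prime fixes a hyperplane. Indeed, $\mu_p$ fixes the hyperplane $\{x_j=0\}$ pointwise precisely when $p$ divides all $w_k$ with $k\neq j$, which is excluded. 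Hence the action of $\mu_{w_i}$ contains no pseudoreflections.

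\textbf{Step 3 (Chevalley–Shephard–Todd / Prill).} For a finite group $G$ acting linearly on $\aff^n$ without pseudoreflections, the image of a point $v$ in $\aff^n/G$ is smooth if and only if its stabilizer $G_v$ is trivial. Near the image of $v$, the quotient $\aff^n/G$ is étale-locally isomorphic to $\aff^n/G_v$, and $G_v$ also acts without pseudoreflections. We then apply the statement that $\aff^n/H$ is smooth at the origin if and only if $H$ is generated by pseudoreflections. This is the main obstacle: it depends on the Luna slice / étale-local comparison and on Chevalley–Shephard–Todd together with its converse. I would cite these rather than prove them.

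\textbf{Step 4 (rewriting).} Combining Steps 2 and 3, $P$ is singular if and only if $e=\gcd\{w_j: c_j\ne0\}>1$. This happens exactly when some prime $p$ divides every $w_j$ with $c_j\neq 0$, that is, when $c_j=0$ for all $j\in J_p$. This condition says $P\in V_+(\qgoth_p)$. Note that such a $p$ necessarily divides $w$, so the union may be taken over the prime factors of $w$.
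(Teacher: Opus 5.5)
The paper gives no proof of this proposition; it only cites \cite[Theorem 1.3.8]{przyjalkowski2026weighted}, so there is no internal argument to compare yours with. Your argument is correct and is the standard one. It runs through the cyclic-quotient charts $D_+(x_i)\isom\aff^n/\mu_{w_i}$ and the stabilizer $\mu_e$ with $e=\gcd\{w_j : c_j\neq 0\}$. Well-formedness rules out pseudoreflections, and after the \'etale-local reduction to the stabilizer you apply Chevalley--Shephard--Todd together with its converse (via the grading).

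Two small points are worth stating explicitly.

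First, ``smooth iff the stabilizer is trivial'' requires the $\mu_{w_i}$-action on $\aff^n$ to be faithful. A nontrivial kernel would lie in every stabilizer without producing any singularity. Faithfulness holds here: the kernel is $\mu_{\gcd(w_0,\dots,w_n)}$, and well-formedness forces $\gcd(w_0,\dots,w_n)=1$. The same fact justifies your passage from ``no $\mu_p$ fixes a hyperplane'' to ``no pseudoreflections at all''.

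Second, you verify the criterion only on closed points $[c_0:\cdots:c_n]$. The equality of closed sets $\Sing\PPP=\bigcup_{p\mid w}V_+(\qgoth_p)$ still follows, because both sides are closed and $\PPP$, being of finite type over $\bk$, is Jacobson.

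If you would rather avoid the Luna/Prill \'etale-slice step, a more elementary route is available. Let $S=\{j : c_j\neq 0\}$ and $x_S=\prod_{j\in S}x_j$, and work on the chart $D_+(x_S)$. Its coordinate ring is the semigroup algebra of $\{m\in\Integ^S\times\Nat^{S^c} : \sum_j m_jw_j=0\}$. This splits as $\bk[\Integ^{|S|-1}]\otimes_\bk \bk[x_k : k\notin S]^{\mu_e}$. Hence $P$ has a neighbourhood $\mathbb{G}_m^{|S|-1}\times\aff^{|S^c|}/\mu_e$ in which it lies over the origin of the second factor. Only the graded, origin case of Chevalley--Shephard--Todd is then needed.
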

\begin{remark}
Note that Proposition \ref{singularPoints} is false without the well-formedness assumption. Indeed, $\PPP(1,2,2) \isom \PPP^2$ is smooth, but the proposition applied to $\PPP(1,2,2)$ would imply that $\Sing(\PPP(1,2,2)) = V_+(x_0)$ which is non-empty.   
\end{remark}

\begin{example}
    Let $\PPP = \PPP(3,3,4,1)$. Then $w = 36$ and the prime factors of $w$ are $2$ and $3$. Applying Proposition \ref{singularPoints} gives $\Sing(\PPP) = \setspec{[\alpha: \beta: 0: 0]}{\alpha, \beta \text{ not both 0}} \cup \{[0:0:1:0]\}$. Thus the singular locus is a union of a projective line and a point. 
\end{example}

Another very nice property of well-formed quasismooth hypersurfaces is the fact that we can easily identify their canonical divisors. This is used very concretely in the proofs that $B_{2,3,4,12}$ and $B_{2,3,5,30}$ are rigid and in the classification of rational Pham-Brieskorn threefolds.  
\begin{theorem}\cite[Theorem 3.4.4]{dolgachev}
 Let $X \subseteq \PPP(w_0, \dots,w_n)$ be a well-formed quasismooth weighted complete intersection. Then $\omega_X \isom \omega^\circ_X \isom \OSheaf_X(\alpha)$ where $\alpha$ is the amplitude
of $X$.
\end{theorem}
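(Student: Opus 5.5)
The plan follows the standard argument from Dolgachev. Let $X = V_+(I) \subseteq \PPP = \PPP(w_0, \dots, w_n)$ be defined by a regular sequence of weighted homogeneous polynomials $f_1, \dots, f_k$ of degrees $d_1, \dots, d_k$. Put $A = R/I$ with $R = \bk_{w_0,\dots,w_n}[x_0,\dots,x_n]$. The amplitude is $\alpha = \sum d_j - \sum w_i$.

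\textbf{Step 1: the graded canonical module of $A$.} Since $R$ is Gorenstein with graded canonical module $\omega_R = R(-\sum w_i)$, and $A$ is a quotient by a homogeneous regular sequence, adjunction for complete intersections (equivalently, computing $\operatorname{Ext}^k_R(A, \omega_R)$ via the Koszul resolution) gives $\omega_A \isom A(\sum d_j - \sum w_i) = A(\alpha)$.

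\textbf{Step 2: sheafify on $X = \Proj A$.} For a finitely generated graded module $M$, let $\widetilde M$ be the associated sheaf, so $\OSheaf_X(\alpha) = \widetilde{A(\alpha)}$. I need $\widetilde{\omega_A} \isom \omega^\circ_X$, the dualizing sheaf. I would prove this on the punctured cone $U = C_X \setminus \{0\}$, where the good $\bk^*$-action gives a geometric quotient $\pi : U \to X$. Quasismoothness makes $U$ smooth, and $\omega_U$ is the restriction of $\widetilde{\omega_A}$. The key point is that, for well-formed $X$, $\omega^\circ_X = (\pi_* \omega_U)^{\bk^*}_0$. This follows from:
\begin{itemize}
\item $X$ is normal and Cohen--Macaulay by Remark \ref{qswciProperties}, so $\omega^\circ_X$ is a reflexive rank-one sheaf, determined by its restriction to any open set whose complement has codimension $\geq 2$;
\item on $X \setminus \Sing(\PPP)$ the action is free, so $\pi$ is a principal $\bk^*$-bundle there, and the contraction by the Euler vector field $\sum w_i x_i \partial_{x_i}$ identifies invariant degree-zero $n$-forms on $U$ with top forms on $X$, twisted by the character of the top form.
\end{itemize}
Tracking degrees, $dx_0\wedge\cdots$ has weight $\sum w_i$ and the residue along the $f_j$ has weight $-\sum d_j$, which yields $\omega^\circ_X \isom \OSheaf_X(\alpha)$ on this open set.

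\textbf{Step 3: extend and conclude.} Well-formedness gives $\codim_X(X\cap\Sing\PPP)\ge 2$. Well-formedness of $\PPP$ ensures that $\OSheaf_X(\alpha)$ is reflexive, namely it equals $j_*$ of its restriction. So the isomorphism from Step 2 extends to all of $X$. Finally, $\omega_X \isom \omega^\circ_X$: by definition $\omega_X$ is the reflexive hull of the canonical sheaf on the smooth locus, and $X$ is Cohen--Macaulay, so the two agree.

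\textbf{Main obstacle.} The delicate part is Step 2–3: showing that $\OSheaf_X(m)$ is reflexive and commutes with restriction to the codimension-two complement. In non-well-formed situations $\OSheaf_X(m)$ can fail to be the expected divisorial sheaf (compare $\PPP(1,2,2)$), and this is exactly where the hypotheses are used. I would first verify this directly on the affine charts $D_+(x_i)$, which are quotients $\Spec (A[x_i^{-1}])_0$ of smooth varieties by $\mu_{w_i}$, and there check the computation of invariant forms by hand.
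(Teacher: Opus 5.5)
The survey gives no proof of this statement; it only quotes Dolgachev, so there is no in-paper argument to compare with. Your outline is a correct, standard proof under the stated hypotheses. It computes the graded canonical module of the complete intersection, identifies it with the canonical sheaf over the locus where $U\to X$ is a $\bk^*$-torsor (by contracting with the Euler field), and extends across a closed set of codimension $\geq 2$ using reflexivity. It also has the merit of showing where each hypothesis enters. A few attributions should be corrected, though.

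First, reflexivity of $\OSheaf_X(m)$ does not come from well-formedness. Quasismoothness already makes the cone normal. On $D_+(x_i)$ the module $(A_{x_i})_m$ is an isotypic summand, for the $\mu_{w_i}$-action, of the smooth ring $A/\lb x_i-1\rb$, hence reflexive over the invariant ring; this is exactly the chart check you propose at the end. Even on the non-well-formed $\PPP(1,2,2)\isom\PPP^2$ the sheaves $\OSheaf(m)$ are reflexive: $\OSheaf(1)\isom\OSheaf_{\PPP^2}$, generated by $x_0$, and $\OSheaf(2)\isom\OSheaf_{\PPP^2}(1)$. What fails there is that $\OSheaf(m)$ is not the $m$-th reflexive power of $\OSheaf(1)$.

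In your argument, well-formedness of $\PPP$ is actually used in the unproved assertion of Step 2 that the action is free over $X\setminus\Sing\PPP$. By Proposition \ref{singularPoints}, for well-formed $\PPP$ the singular points are exactly the points with nontrivial stabilizer. This fails for $\PPP(1,2,2)$, whose smooth divisor $V_+(x_0)$ has stabilizer $\mu_2$. Well-formedness of $X$ then gives codimension $\geq 2$ for this non-free locus. You should state both uses explicitly.

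Second, $\omega_X\isom\omega^\circ_X$ holds on any normal projective variety. The reason is that $\omega^\circ_X$ is torsion free and $S_2$, hence reflexive, and agrees with $\omega_{X_{\rm reg}}$ on the smooth locus; Cohen--Macaulayness is not the relevant input.

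Third, Step 1 only determines $\omega_A\isom A(\alpha)$ up to graded isomorphism. To use it you must know that the identification of its sheaf on $U$ with top-degree forms respects the natural weights on forms. The cleanest fix is to bypass Step 1. By quasismoothness, $\omega_U$ is free on the Poincar\'e residue $\eta$ of $dx_0\wedge\cdots\wedge dx_n/(f_1\cdots f_k)$, which has weight $-\alpha$. Hence the weight-zero part of $\pi_*\omega_U$ is $\OSheaf_X(\alpha)\cdot\eta$ on every chart. Note that these are forms of degree $\dim U=n-k+1$, not $n$, when $X$ has codimension $k$.
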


We also need the following lemma.

\begin{lemma}\cite[Lemma 7.1, hypersurface case]{iano-fletcher_2000}. \footnote{Note that the last line should read $A_{-(k-\alpha)}$ instead of $A_{-k-\alpha}$ as in \cite{iano-fletcher_2000}.}
\label{FletcherCohomology}
	Let $X_f \subseteq \PPP(w_0, \dots, w_n)$ be a well-formed quasismooth hypersurface. Let $A$ be the graded ring $\bk_{w_0, \dots, w_n}[X_0, \dots, X_n] / \lb f \rb$. Then,

	\[
	H^i(X, \OSheaf_{X}(k)) \isom 
	\begin{cases}
	A_k & \text{if } i = 0\\
	0  & \text{if } 1 \leq i < \dim X \\
	A_{-(k-\alpha)} & \text{if } i = \dim X \\
	\end{cases}
	\]
	for all $k \in \Integ$. In particular, $H^0(X, \omega_X) = H^n(X, \OSheaf_X)$.  
\end{lemma}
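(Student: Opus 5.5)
The plan is to identify $X = X_f$ with $\Proj A$ and compute sheaf cohomology via local cohomology of the graded ring $A$. First, by Definition \ref{quasismoothDefinitions}, the affine cone $C_X = \Spec A$ is smooth away from the vertex. Then Proposition \ref{CM} shows that $A$ is Cohen--Macaulay of dimension $n$, and $\dim X = n-1$. Well-formedness ensures that the sheaves $\OSheaf_X(k)$ are the reflexive rank-one sheaves $\widetilde{A(k)}$, with $\OSheaf_X(k)$ behaving well in codimension one. For a graded ring $A$ with irrelevant ideal $\mgoth$ there is the standard exact sequence
\[
0 \to H^0_\mgoth(A)_k \to A_k \to H^0(X, \OSheaf_X(k)) \to H^1_\mgoth(A)_k \to 0,
\]
together with the isomorphisms $H^i(X,\OSheaf_X(k)) \isom H^{i+1}_\mgoth(A)_k$ for $i \geq 1$. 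These hold for $\widetilde{A(k)}$ once one knows that $\widetilde{A(k)}$ is the sheaf associated to the graded module, which is where well-formedness is used.

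Next, since $A$ is Cohen--Macaulay of dimension $n$ with $\depth_\mgoth A = n$, we get $H^j_\mgoth(A) = 0$ for $j < n$. Provided $n \geq 2$, this gives $H^0(X,\OSheaf_X(k)) = A_k$ and $H^i(X,\OSheaf_X(k)) = 0$ for $1 \leq i < n-1 = \dim X$. For the top degree, $H^{n-1}(X,\OSheaf_X(k)) \isom H^n_\mgoth(A)_k$, and graded local duality gives $H^n_\mgoth(A)_k \isom (\omega_A)_{-k}^\vee$. Here $\omega_A$ is the graded canonical module.

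The key input is then $\omega_A \isom A(\alpha)$ with $\alpha = d - \sum w_i$. I would derive this from $A = S/\lb f \rb$ with $S = \bk_{w_0,\dots,w_n}[x_0,\dots,x_n]$: we have $\omega_S = S(-\sum w_i)$, and for a hypersurface of degree $d$, $\omega_{S/fS} = \omega_S(d)/f = A(d - \sum w_i)$. Hence $H^{\dim X}(X,\OSheaf_X(k)) \isom (A_{\alpha - k})^\vee \isom A_{-(k-\alpha)}$ as vector spaces, which matches the corrected footnote. Taking $k=0$ and using the theorem $\omega_X \isom \OSheaf_X(\alpha)$ gives $H^0(X,\omega_X) = A_\alpha \isom H^{\dim X}(X,\OSheaf_X)$, which is the final claim.

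The main obstacle I expect is the first step: justifying that $H^i(X,\OSheaf_X(k))$ equals the degree-$k$ part of $H^{i+1}_\mgoth(A)$ for every $k$, not only for $k$ divisible by the weights. This needs care because $\OSheaf_X(k)$ need not be invertible on a weighted hypersurface. I would handle it by working on the punctured cone $U = C_X \setminus \{0\}$, using that $X = U/\bk^*$ and that $\OSheaf_X(k)$ is the $k$-isotypic part of the pushforward of $\OSheaf_U$. Because $\bk^*$ is reductive, taking invariants is exact, so $H^i(X,\OSheaf_X(k)) = H^i(U,\OSheaf_U)_k = H^{i+1}_\mgoth(A)_k$ for $i \geq 1$. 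Well-formedness is exactly what guarantees that this pushforward description matches the sheaf $\OSheaf_X(k)$ defined via $\Proj$.
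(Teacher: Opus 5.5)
Your proof is correct and is essentially the standard argument behind the cited result; the paper itself gives no proof beyond the reference to Iano-Fletcher. The ingredients are the Cohen--Macaulayness of $A$, the comparison $H^i(X,\widetilde{A(k)})\isom H^{i+1}_{\mgoth}(A)_k$ for $i\geq 1$, and graded local duality with $\omega_A\isom A(\alpha)$.

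One correction. The step you flag as the main obstacle is automatic for every $k\in\Integ$ and any graded $A$. By the very definition of the sheaf attached to a graded module, $\Gamma(D_+(x_{i_0}\cdots x_{i_p}),\widetilde{A(k)})=(A_{x_{i_0}\cdots x_{i_p}})_k$. Hence the \v{C}ech complex of $\widetilde{A(k)}$ on the cover $\{D_+(x_i)\}$ is just the degree-$k$ part of the \v{C}ech complex computing $H^\bullet_{\mgoth}(A)$. Well-formedness and quasismoothness play no role in that step, nor anywhere in the cohomology computation. They enter only through $\omega_X\isom\OSheaf_X(\alpha)$ in the final sentence, where, as you correctly read it, $H^n$ must mean $H^{\dim X}=H^{n-1}$.
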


\section{The $\cotype$ of a Pham-Brieskorn ring}
The following definitions appear in \cite{Chitayat_Daigle_2019}. In order to keep consistent notation throughout this article, our indices run from $0$ to $n$ (as opposed to from $1$ to $n$). 
	
	\begin{definitions}	\label{typeDef}
		Let $n \geq 2$ and $S = (b_0, \dots,  b_n) \in \Integ^{n+1}$.  
		\begin{itemize}
			
			\item Define\footnote{By convention, $\gcd(S)\geq 0$ and $\lcm(S)\geq 0$.}  $\gcd(S) = \gcd(b_0, \dots, b_n)$ and $\lcm(S) = \lcm(b_0,  \dots,  b_n)$.
			
			\item If $\gcd(S) = 1$, we say that $S$ is \textit{normal}.
			If $S \neq (0, \dots, 0)$ and $d = \gcd(S)$, then the tuple $S' = (\frac{b_0}{d},  \dots,   \frac{b_n}{d})$ is normal,
			and is called the \textit{normalization of $S$}.
			
			\item For each $j \in \{0,\dots,n\}$, define $S_j = (b_0, \dots , \widehat{b_j} , \dots, b_n)$.
			
			\item We define 			$\cotype(S) = |\setspec{i \in \{ 0, \dots, n \} }{ \lcm(S_i) \neq \lcm(S) }| =| \setspec{i \in \{ 0, \dots, n \} }{ b_i \nmid \lcm(S_i) }|$.    \end{itemize} 
    Note that $\cotype(S) \in \{0,1,\dots,n+1\}$ and that, if $S'$ is the normalization of $S$, then $\cotype(S) = \cotype(S')$.

	\end{definitions}

	\begin{definition}\label{ordering}
		Let $n \geq 2$.
		\begin{itemize}
			
			\item Given $S = (a_0,\dots,a_n) \in (\Nat^+)^{n+1}$ and $i \in \{0, \dots, n\}$, define $g_i(S) = \gcd(a_i, \lcm(S_i))$.
			
			\item Let $S = (a_0,\dots,a_n)$ and $S' = (a_0',\dots,a_n')$ be elements of $(\Nat^+)^{n+1}$ and let $i \in \{0, \dots, n\}$.
			We write $S \leq^i S'$ if and only if
			$$
			S_i = S'_i \quad \text{and} \quad g_i(S') \mid a_i \mid a_i'.
			$$
			We write $S <^i S'$ if and only if $S \leq^i S'$ and $S \neq S'$. Note that the relation $\leq^i$ is a partial order on $(\Nat^+)^{n+1}$.
		\end{itemize}
	\end{definition}

    \begin{proposition}\cite[Proposition 5.2]{Chitayat_Daigle_2019}\label{cotypeZero}
        Let $n \geq 2$, $S = (a_0, \dots , a_n)$ and $S' = (a_0', \dots, a_n')$. Assume that $i \in \{0,1,\dots, n\}$ is such that $S \leq^i S'$. Then there is a graded isomorphism $B_S \isom B_{S'}^{(k)}$ where $k = a_i' / a_i$. Consequently, $\Proj B_S \isom \Proj B_{S'}$. 
    \end{proposition}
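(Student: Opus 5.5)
The idea is to reduce everything to an explicit computation with weights, followed by a Veronese subring calculation in the polynomial ring. Write $M = \lcm(S_i) = \lcm(S'_i)$, $g = g_i(S') = \gcd(a_i', M)$, $L = \lcm(S)$, $L' = \lcm(S')$ and $k = a_i'/a_i$.

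\textbf{Comparing the weights.} From $g \mid a_i \mid a_i'$ I will deduce $\gcd(a_i, M) = g$. Indeed, $\gcd(a_i,M)$ divides $\gcd(a_i',M) = g$, and $g$ divides both $a_i$ and $M$. Hence
$$L = \frac{M a_i}{g}, \qquad L' = \frac{M a_i'}{g} = kL.$$
With the grading of Example \ref{pbGrading}, the weights are $w_j = L/a_j$ for $S$ and $w'_j = L'/a'_j$ for $S'$. This gives $w'_j = k w_j$ for $j \neq i$ and $w'_i = L'/a_i' = L/a_i = w_i$.

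\textbf{The coprimality $\gcd(w_i, k) = 1$.} This is the step I expect to be the only real obstacle. Note $w_i = M/g$. Suppose a prime $p$ divides both $M/g$ and $a_i'/a_i$. Then $v_p(M) > v_p(g) = \min(v_p(a_i'), v_p(M))$, which forces $v_p(g) = v_p(a_i')$. But $g \mid a_i$ then gives $v_p(a_i) \geq v_p(a_i')$, contradicting $p \mid a_i'/a_i$.

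\textbf{The Veronese computation.} Let $P = \bk[x_0,\dots,x_n]$ with $\deg x_i = w_i$ and $\deg x_j = k w_j$ for $j \neq i$, and let $f' = x_i^{a_i'} + \sum_{j\neq i} x_j^{a_j}$, so that $B_{S'} = P/\lb f' \rb$.

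First, a monomial $x_i^{e}\prod_{j\neq i} x_j^{e_j}$ has degree divisible by $k$ iff $k \mid e w_i$, iff $k \mid e$, by the coprimality step. Hence $P^{(k)} = \bk[x_i^k, x_j : j\neq i]$, which is a polynomial ring.

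Second, $f'$ is homogeneous of degree $L' = kL$, and $P$ is a graded domain. So if $f'h \in P^{(k)}$, every homogeneous component of $h$ has degree divisible by $k$. Therefore
$$\bigl(P/\lb f'\rb\bigr)^{(k)} = P^{(k)}/f'P^{(k)}.$$

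Third, define $\bk[y_0,\dots,y_n] \to P^{(k)}$ by $y_i \mapsto x_i^k$ and $y_j \mapsto x_j$ for $j \neq i$. This is an isomorphism sending $f_S = \sum_j y_j^{a_j}$ to $f'$, because $(x_i^k)^{a_i} = x_i^{a_i'}$. After dividing degrees in $P^{(k)}$ by $k$, it is graded with respect to the weights $w_j$. Indeed, $y_i$ has degree $w_i$ and its image has degree $kw_i/k$, while $y_j$ has degree $w_j$ and its image has degree $kw_j/k$.

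Passing to quotients yields the graded isomorphism $B_S \isom B_{S'}^{(k)}$. Finally, $\Proj B_S \isom \Proj B_{S'}^{(k)} \isom \Proj B_{S'}$ by Lemma \ref{projRd}.
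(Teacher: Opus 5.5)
Your proof is correct and follows the route the paper indicates. The paper does not write out a general proof; it illustrates the method with the computation $B_{2,3,3,4} \isom B_{10,3,3,4}^{(5)}$: the weights satisfy $w'_j = kw_j$ for $j \neq i$ and $w'_i = w_i$, the Veronese subring is generated by $x_i^k$ and the remaining $x_j$, and one then substitutes $y_i = x_i^k$ and divides degrees by $k$. Your argument is exactly this computation for a general tuple, and your coprimality step $\gcd(w_i,k)=1$ together with the identification $(P/\lb f' \rb)^{(k)} = P^{(k)}/f'P^{(k)}$ supplies the details the example leaves implicit.
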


    \begin{corollary}\cite[Section 4.5]{chitayat2025rationality}\label{ProjIsoCorollary}
        Given $B_{a_0, \dots, a_n}$, there exists some $B_{b_0, \dots, b_n}$ of cotype 0 such that $\Proj B_{a_0, \dots, a_n} \isom \Proj B_{b_0, \dots, b_n}$. 
    \end{corollary}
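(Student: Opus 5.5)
The plan is to start from $S = (a_0, \dots, a_n)$ and repeatedly apply Proposition \ref{cotypeZero}, each time replacing $S$ by some $S'$ with $S \leq^i S'$ (or $S' \leq^i S$, since Proposition \ref{cotypeZero} gives $\Proj B_S \isom \Proj B_{S'}$ in either direction). We stop when we reach a tuple of cotype $0$. The natural move is to \emph{shrink} an entry: if $\cotype(S) > 0$, choose an index $i$ with $a_i \nmid \lcm(S_i)$. Then define $S'$ by keeping $S'_i = S_i$ and setting $a_i' = g_i(S) = \gcd(a_i, \lcm(S_i))$.

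First we check that $S' \leq^i S$. The condition $S'_i = S_i$ holds by construction. We need $g_i(S) \mid a_i' \mid a_i$, which holds since $a_i' = g_i(S)$ divides $a_i$. Hence $\Proj B_{S'} \isom \Proj B_S$ by Proposition \ref{cotypeZero}. We also use that $g_i$ only depends on $a_i$ and $S_i$.

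Next comes the effect on the cotype. After the move, $a_i' \mid \lcm(S_i) = \lcm(S'_i)$, so index $i$ no longer counts toward $\cotype(S')$. For the other indices $j \neq i$, the relevant question is whether $a_j \nmid \lcm(S'_j)$. Since $a_i' \mid a_i$, the lcm can only decrease: $\lcm(S'_j) \mid \lcm(S_j)$. So an index that was ``good'' for $S$ could become ``bad'' for $S'$, and the cotype need not decrease monotonically. To get termination I would use a different measure, $\sum_k a_k$ (or $\lcm(S)$). Each move strictly decreases $a_i$, because $a_i \nmid \lcm(S_i)$ forces $g_i(S) < a_i$. All entries stay positive integers, so the process terminates, and it can only stop at a tuple of cotype $0$.

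One wrinkle is that the shrinking may create entries equal to $1$. The statement allows $b_i \in \Nat^+$ without excluding $1$, so this is harmless. It is also consistent with the Proposition, which is stated for tuples in $(\Nat^+)^{n+1}$.

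The main obstacle I expect is making sure that Proposition \ref{cotypeZero} applies verbatim to the reduced tuple. That is, the graded isomorphism $B_{S'} \isom B_S^{(k)}$ with $k = a_i/a_i'$ must hold with the grading of Example \ref{pbGrading} attached to each tuple. The proposition as cited handles exactly this. With that in hand, the remaining work is just the bookkeeping of the termination argument above.
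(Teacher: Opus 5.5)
Your proposal is correct and is essentially the method the paper illustrates in its example: replace an offending entry $a_i$ by $g_i(S)$, then invoke Proposition \ref{cotypeZero} with the roles of $S$ and $S'$ reversed, with termination guaranteed by the strictly decreasing sum of entries. Your worry about the cotype is unfounded, since for each prime $p$ and each $j \neq i$ one has $v_p(a_i') = \min\bigl(v_p(a_i), \max_{k\neq i} v_p(a_k)\bigr) \geq \min\bigl(v_p(a_i), v_p(a_j)\bigr)$; hence no index $j$ with $a_j \mid \lcm(S_j)$ loses that property, and the cotype drops strictly at each step as in the paper's example, though your alternative measure works just as well.
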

    
    The proofs of Proposition \ref{cotypeZero} and Corollary \ref{ProjIsoCorollary} are not difficult. Rather than give them, let us demonstrate the method with an example. The proofs are simply the generalization of this method to an arbitrary tuple. 

    \begin{example}
        Let $B = B_{10,3,3,4} = \bk[x_0, x_1, x_2, x_3] / \lb x_0^{10} + x_1^3 + x_2^3 + x_3^4 \rb$, so that with the notation of \ref{typeDef}, we have $S' = (10,3,3,4)$ and $S'_0 = (3,3,4)$. We have $\lcm(S') = 60 \neq 12  =\lcm(S'_0)$. Set $S = (g_0(S'), 3,3,4) = (2,3,3,4)$. Then $S <^0 S'$ and furthermore $\cotype(S) = 1 < 2 = \cotype(S')$. We claim that $ B_{2,3,3,4} \isom B_{10,3,3,4}^{(5)}$. Indeed, it is easily checked that 
        \begin{align*}
            (B_{10,3,3,4})^{(5)} &= \left(\bk_{6,20,20,15}[x_0, x_1, x_2, x_3] / \lb x_0^{10} + x_1^3 + x_2^3 + x_3^4 \rb\right)^{(5)}\\
            &= \bk_{30,20,20,15}[x_0^5, x_1, x_2, x_3] / \lb x_0^{10} + x_1^3 + x_2^3 + x_3^4 \rb \\
            & \isom \bk_{30,20,20,15}[y_0, x_1, x_2,x_3] / \lb y_0^2 + x_1^3 + x_2^3 + x_3^4 \rb \\ 
            &\isom \bk_{6,4,4,3}[y_0, x_1, x_2,x_3] / \lb y_0^2 + x_1^3 + x_2^3 + x_3^4 \rb = B_{2,3,3,4}.
        \end{align*}
        The exact same process shows that $(2,3,3,2) <^3 (2,3,3,4)$, where $\cotype(2,3,3,2) = 0 < 1 = \cotype(2,3,3,4)$. 
    \end{example}

For Pham-Brieskorn rings $B = B_{a_0, \dots, a_n}$, the notion of $\cotype$ is useful as it allows us to characterize exactly when $\Proj B_{a_0, \dots, a_n}$ is well-formed.

\begin{proposition}\label{PBsatCodim1}\cite[Proposition 2.14]{chitayat2025rigid}
	Let $n \geq 2$. Let $f = x_0^{a_0} + \dots  + x_n^{a_n} \in \bk[x_0, \dots , x_n]$, let $B = B_{a_0, \dots, a_n}$ and let $X_f = \Proj B$. The following are equivalent:
		
	\begin{enumerate}[\rm(a)]
        \item $\cotype(a_0, \dots, a_n) = 0$;            
        \item $X_f$ is quasi-smooth and well-formed.
	\end{enumerate}
\end{proposition}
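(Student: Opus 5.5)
The plan is to show two things: quasismoothness of $X_f$ holds for every tuple, and well-formedness of $X_f$ is equivalent to $\cotype(a_0,\dots,a_n)=0$.

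\emph{Quasismoothness.} By Definition \ref{quasismoothDefinitions}, the affine cone over $X_f$ is $C_{X_f} = \Spec B_{a_0,\dots,a_n}$.
\begin{enumerate}[\rm(1)]
\item If some $a_i = 1$, then $B$ is a polynomial ring and the cone is smooth.
\item Otherwise Remark \ref{uniqueSingularPoint} says the only singular point of the cone is the origin.
\end{enumerate}
In both cases $X_f$ is quasismooth, so (b) reduces to well-formedness.

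\emph{Well-formedness of $\PPP$ is exactly cotype $0$.} Write $L=\lcm(a_0,\dots,a_n)$ and $w_j = L/a_j$. Fix $i$. For any divisors $a_j$ of $L$ we have the identity
$$\gcd_{j\neq i}\bigl(L/a_j\bigr) = L/\lcm_{j\neq i}(a_j).$$
To verify it, compare $p$-adic valuations prime by prime: $\min_j\bigl(v_p(L)-v_p(a_j)\bigr) = v_p(L)-\max_j v_p(a_j)$. Hence $\gcd(w_0,\dots,\widehat{w_i},\dots,w_n)=1$ if and only if $\lcm(S_i)=\lcm(S)$. So $\PPP(w_0,\dots,w_n)$ is well-formed if and only if $\cotype(a_0,\dots,a_n)=0$. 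This already gives (b)$\Rightarrow$(a).

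\emph{The codimension condition for (a)$\Rightarrow$(b).} Assume cotype $0$. It remains to show $\codim_{X_f}(X_f\cap \Sing\PPP)\ge 2$, and I would use Proposition \ref{singularPoints}.
\begin{enumerate}[\rm(1)]
\item Let $p$ be a prime dividing $\prod w_j$, and let $e=v_p(L)$.
\item I claim at least two indices $j$ satisfy $v_p(a_j)=e$, i.e.\ $p\nmid w_j$. If only one index $i$ had $v_p(a_i)=e$, then $v_p(\lcm(S_i))<e$, so $\lcm(S_i)\ne L$, contradicting cotype $0$. Thus $|J_p|\ge 2$.
\item $X_f\cap V_+(\qgoth_p)$ is the projectivization of the cone $V(f, x_j : j\in J_p)\subseteq \aff^{n+1}$. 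This cone is the Pham--Brieskorn cone in the remaining $n+1-|J_p|$ variables. If no variables remain it is the origin; if one variable remains it is $\{x^a=0\}$, again just the origin.
\item In all cases the cone has dimension at most $n-|J_p|\le n-2$. So its image in $\PPP$ has dimension at most $n-3 = \dim X_f - 2$.
\end{enumerate}
Taking the union over the finitely many primes $p$ gives the codimension bound.

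The only step needing genuine care is the valuation argument producing two indices with $p\nmid w_j$; everything else is bookkeeping.
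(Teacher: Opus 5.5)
Your proof is correct. The survey gives no proof of this proposition; it quotes it from Chitayat--Dubouloz, so there is no in-text argument for you to follow or diverge from.

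What you wrote is the natural direct verification, and it uses exactly the tools the survey sets up for this purpose:
\begin{itemize}
\item Remark \ref{uniqueSingularPoint}, together with the polynomial-ring case, for quasismoothness.
\item Proposition \ref{singularPoints} for the singular locus of $\PPP$.
\item The valuation bookkeeping of Lemma \ref{lem:cot}, whose proof is left to the reader. Your identity $\gcd_{j\neq i}(L/a_j)=L/\lcm_{j\neq i}(a_j)$ is precisely (i)$\Leftrightarrow$(iii) of that lemma, and your two-indices claim is (i)$\Rightarrow$(ii).
\end{itemize}

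Your route makes three things transparent:
\begin{itemize}
\item Quasismoothness is automatic for every tuple.
\item The cotype condition controls only the well-formedness of the ambient $\PPP(w_0,\dots,w_n)$.
\item For these hypersurfaces, well-formedness of $\PPP$ already forces the codimension condition. Indeed, $X_f\cap V_+(\qgoth_p)$ is empty when $|J_p|=n$, and has dimension $n-1-|J_p|\le n-3$ when $|J_p|\le n-1$.
\end{itemize}

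One small remark: your case ``no variables remain'' never actually occurs. Since $p\mid\prod_j w_j$, the prime $p$ divides some $w_j$, so $|J_p|\le n$. Including the case is harmless.
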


\section{Rigidity}\label{rigiditySection}
\begin{definition}
    Let $R$ be a ring. A derivation $D: R \to R$ is \textit{locally nilpotent} if for every $r \in R$, there exists some $n$ such that $D^n(r) = 0$. A ring $R$ is said to be \textit{rigid} if the only locally nilpotent derivation is the zero derivation. We denote the set of locally nilpotent derivations of $R$ by $\lnd(R)$. 
\end{definition}

In this section we discuss the following conjecture. 

\begin{conjecture}\label{PBConjecture}\cite{Kali-Zaid_2000}
    Let $B = B_{\bk; a_0, \dots, a_n}$ be a Pham-Brieskorn ring and assume $\Rat(i) \subseteq \bk$. Then $B_{\bk; a_0, \dots, a_n}$ is non-rigid if and only if $a_i = 1$ for some $i$ or $a_i = a_j = 2$ for some $i \neq j$. 
\end{conjecture}

\begin{remark}\label{easyPart}
    The $(\Leftarrow)$ direction of Conjecture \ref{PBConjecture} is easy to prove. If $a_i = 1$ for some $i$, then $B_{a_0, \dots, a_n}$ is isomorphic to the polynomial ring $\bk[x_0, \dots, x_{i-1}, \hat{x_i}, x_{i+1}, \dots, x_n]$ which is non-rigid. If (without loss of generality) $a_0 = a_1 = 2$, then we can observe that $x_0^2 + x_1^2 = (x_0 + ix_1)(x_0-ix_1)$ (since $\Rat(i) \subseteq \bk$) and deduce that $B_{a_0, \dots, a_n} \isom \bk[u,v, x_2, \dots, x_n] / \lb uv + x_2^{a_2} + \dots + x_n^{a_n} \rb$, the latter admitting the non-zero locally nilpotent derivations $u \frac{\partial}{\partial {x_i}} - a_ix_i^{a_i -1} \frac{\partial}{\partial v}$ for each $i \geq 2$. This proves $(\Leftarrow)$.  
\end{remark}

\begin{remark}
    We will see that the ring $\Reals[x_0, x_1, x_2] / \lb x_0^2 + x_1^2 + x_2^2 \rb$ is rigid, showing that the assumption $\bk \supseteq \Rat(i)$ in Conjecture \ref{PBConjecture} is necessary for the $(\Leftarrow)$ direction to hold. One proof of this claim is given in \cite[Theorem 9.27]{freudenburg2017algebraic}. We will give a different proof, due to Daigle. 
\end{remark}

\begin{theorem}\cite[Introduction]{russell1970forms} \label{extensionLine}
Let $K / \bk$ be a field extension of characteristic zero.
If $A$ is a finitely generated $\bk$-algebra such that $K \otimes_\bk A = K^{[1]}$, then $A = \bk^{[1]}$.
\end{theorem}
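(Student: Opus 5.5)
The plan is to reduce to a Galois extension, and then descend a degree-one element of $K^{[1]}$ to $A$ by Galois descent.

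\emph{Preliminaries.} Since $K$ is free over $\bk$ on a basis containing $1$, the map $A \to K \otimes_\bk A = K[t]$ is injective. Hence $A$ is a domain of transcendence degree $1$ over $\bk$. Moreover $K\otimes_\bk -$ is faithfully flat, so if $\bk[s]\subseteq A$ is a subring with $K\otimes_\bk \bk[s] = K\otimes_\bk A$, then $A=\bk[s]$. The goal is therefore to produce $s\in A$ that has degree exactly $1$ as a polynomial in $t$ inside $K[t]$; then $K[s]=K[t]$.

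\emph{Step 1: reduction to $K=\bar\bk$.} Let $L$ be an algebraic closure of $K$ and $\bar\bk\subseteq L$ the algebraic closure of $\bk$. Then $L\otimes_\bk A = L^{[1]}$, and $A' := \bar\bk\otimes_\bk A$ satisfies $L\otimes_{\bar\bk} A' = L^{[1]}$. I claim $A'=\bar\bk^{[1]}$.
\begin{itemize}
\item $A'$ is a finitely generated $\bar\bk$-algebra. Smoothness and normality descend under the faithfully flat map $A'\to L\otimes A'$, so $A'$ is the coordinate ring of a smooth affine curve $C$.
\item Let $\bar C$ be the smooth completion of $C$. Both the genus of $\bar C$ and the number of points of $\bar C\setminus C$ are unchanged by base change between algebraically closed fields of characteristic zero.
\item After base change to $L$ we get $\Aff^1_L$, which has genus $0$ and one point at infinity. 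Hence $\bar C\cong \PPP^1$ with one point removed, so $C\cong \Aff^1$.
\end{itemize}
I expect this base-change invariance to be the main obstacle to make fully rigorous. If the geometric route is awkward, the alternative is a spreading-out/specialization argument: the isomorphism is defined over a finitely generated $\bar\bk$-algebra, and one specializes it at a $\bar\bk$-point.

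\emph{Step 2: descent from $\bar\bk$ to $\bk$.} An isomorphism $\bar\bk\otimes A\cong \bar\bk[t]$ involves finitely many coefficients, so it is already defined over a finite Galois extension $K_0/\bk$ with group $G$. Thus $K_0\otimes_\bk A = K_0[t]$. The group $G$ acts on $K_0\otimes_\bk A$ via $\sigma\otimes 1$ by $\sigma$-semilinear ring automorphisms with fixed ring $A$. Each such automorphism of $K_0[t]$ sends $t$ to an element $ct+b$ with $c\in K_0^*$, since its inverse must also be polynomial. Hence $G$ preserves the $K_0$-subspace $K_0[t]_{\le 1}$. By Galois descent (Speiser's lemma), this subspace equals $K_0\otimes_\bk V$, where $V = A\cap K_0[t]_{\le 1}$. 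Since $K_0[t]_{\le 1}$ contains elements of degree $1$, so does $V$. Pick such an $s\in V$. Then $K_0[s]=K_0[t]$, and the preliminary remark gives $A=\bk[s]=\bk^{[1]}$.
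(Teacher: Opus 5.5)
The paper does not prove this statement; it only cites Russell, where the triviality of separable forms of the affine line is recorded as known. Your proof is correct and takes a different route from the usual argument for that fact.

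The usual argument works in one stroke over $\bk$. Let $\bar X$ be the regular completion of $X=\Spec A$. Since $\bk$ is perfect, $\bar X_K$ is the smooth completion of $X_K\cong\Aff^1_K$, i.e.\ $\PPP^1_K$, so $\bar X$ has genus $0$. The boundary $\bar X\setminus X$ stays reduced after base change and becomes a single $K$-rational point, which forces it to be a single $\bk$-rational point. A genus-$0$ curve with a rational point is $\PPP^1_\bk$, so $X\cong\Aff^1_\bk$.

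You instead split the problem in two:
\begin{enumerate}[\rm(1)]
\item a geometric step over algebraically closed fields, where residue fields of boundary points cause no trouble;
\item a purely algebraic descent step.
\end{enumerate}
In the descent step, every semilinear automorphism of $K_0[t]$ sends $t\mapsto ct+b$, so $K_0[t]_{\le 1}$ is $G$-stable, and Speiser's lemma then gives you an explicit coordinate $s\in A$. This avoids conics and rational points at infinity entirely. It also isolates exactly where characteristic zero enters: only in obtaining a finite Galois field of definition, which is consistent with Russell's purely inseparable counterexamples.

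The step you flag as the main obstacle is standard. The invariant $h^1(\bar C,\OSheaf_{\bar C})$ is unchanged by flat base change. The curve $\bar C_L$ is still smooth, projective and integral, hence the smooth completion of $C_L\cong\Aff^1_L$. Boundary points with residue field $\bar\bk$ remain single $L$-points.

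The alternative you mention is cleaner, though, and removes the geometry altogether:
\begin{enumerate}[\rm(1)]
\item Restrict the isomorphism and its inverse to $R\otimes_\bk A\cong R[t]$, where $R\subseteq K$ is the finitely generated $\bk$-subalgebra generated by the relevant coefficients.
\item Reduce modulo a maximal ideal $\mgoth\subset R$ to get $\kappa\otimes_\bk A\cong\kappa[t]$, where $\kappa=R/\mgoth$ is finite over $\bk$ by Zariski's lemma.
\item Pass to the Galois closure of $\kappa$ and apply your Step~2.
\end{enumerate}
This is the same spreading-out technique the paper uses in Lemma~\ref{lem:descent}. There $\bk=\bar\bk$ forces $\kappa=\bk$; in your setting, Step~2 is exactly what handles the possibility $\kappa\neq\bk$.
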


\begin{lemma}\label{rationalPointsAreDense}
    Let $\bk$ be an infinite field and let $B$ be an affine $\bk$-domain with
    $\Frac B = \bk^{(n)}$ for some $n \geq 0$. Then the set of $\bk$-rational points of
    $\Spec B$ is dense in $\Spec B$.
\end{lemma}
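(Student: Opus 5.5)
The statement reduces to the classical fact that rational varieties over an infinite field have Zariski-dense rational points. We translate it into geometry and use a dense open subset isomorphic to an open subset of affine space.

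Write $X = \Spec B$ and $K = \Frac B = \bk(t_1,\dots,t_n)$. Since $B$ is an affine $\bk$-domain, $X$ is an integral affine $\bk$-variety. Its function field is $K$, so $X$ is birational to $\Aff^n_\bk = \Spec \bk[t_1,\dots,t_n]$. Concretely, write $B = \bk[b_1,\dots,b_m]$. Each $b_j$ is a rational function $p_j/q$ in the $t_i$, with a common denominator $q \in \bk[t_1,\dots,t_n]$. Conversely, each $t_i = u_i/v$ with $u_i, v \in B$ and $v \neq 0$.

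The key step is to produce nonzero elements $g \in B$ and $h \in \bk[t_1,\dots,t_n]$ with $B_g = \bk[t_1,\dots,t_n]_h$ as subrings of $K$. First note $B_v \supseteq \bk[t_1,\dots,t_n]$. Also $q$ is a nonzero element of $B_v$ (it lies in $\bk[t]\subseteq B_v$). Inverting it gives $B_{v}[1/q] \supseteq \bk[t]_q \supseteq B$. Hence $B_{vq'} = \bk[t]_{qv'}$ after expressing $v$ as an element of $\bk[t]_q$ and $q$ as an element of $B_v$. I would set $U = \Spec B_g \subseteq X$ and identify $U$ with the basic open set $D(h) \subseteq \Aff^n_\bk$. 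This is the standard "birational affine varieties share isomorphic principal open subsets" argument. The main care needed is bookkeeping of which denominators to invert so that both inclusions hold simultaneously. I would handle this by taking $g = vq$, viewing $q$ in $B_v$ and writing it as $c/v^N$ with $c \in B$ so that $g$ may be taken as $vc$, and symmetrically for $h$.

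Next, the $\bk$-rational points of $D(h)$ are the points $a \in \bk^n$ with $h(a) \neq 0$. Since $\bk$ is infinite and $h \neq 0$, a polynomial vanishing on all of these satisfies $Fh = 0$ on $\bk^n$, hence $Fh = 0$ as a polynomial, hence $F = 0$. So these points are dense in $D(h)$, and therefore in $U$. Rational points of $U$ are rational points of $X$, because the isomorphism is over $\bk$ and residue fields are preserved. Finally, $U$ is a nonempty open subset of the irreducible space $X$, so it is dense. Thus a dense subset of $U$ is dense in $X$. This completes the argument; the case $n = 0$ is trivial since then $B = \bk$.
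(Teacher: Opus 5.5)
Your proof is correct, but it takes a different route from the paper. You use both directions of the birational equivalence to build a common principal open set. Write $q = c/v^N$ in $B_v$ and $v = e/q^M$ in $\bk[t_1,\dots,t_n]_q$. Then $B_{vc} = B_v[1/q] = \bk[t_1,\dots,t_n]_q[1/v] = \bk[t_1,\dots,t_n]_{qe}$. The two inclusions hold because $v$, resp.\ $q$, is a unit on the other side; this is the check hidden behind your ``hence'', and it is worth writing out. You then transport the density of $\{a \in \bk^n : h(a) \neq 0\}$ through the isomorphism and use the irreducibility of $\Spec B$.

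The paper argues by contradiction and uses only the single inclusion $B \subseteq \bk[t_1,\dots,t_n]_q$. Given a nonzero $f$ in the ideal of a closed set containing all rational points, it writes $f = P/q^N$ and picks $a \in \bk^n$ with $q(a)P(a) \neq 0$. Restricting evaluation at $a$ to $B$ then produces a rational point outside $V(f)$.

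Both arguments rest on the same fact: a nonzero polynomial over an infinite field is nonzero at some point of $\bk^n$. The paper's version avoids all the localization bookkeeping. It also never uses that $\Frac B$ contains $\bk(t_1,\dots,t_n)$, so it proves the stronger statement that rational points are dense whenever $B$ merely embeds in some purely transcendental extension of $\bk$ (the unirational case). Your version needs full birationality. In exchange it yields more structure: an explicit dense open subset of $\Spec B$ whose rational points are in bijection with $\{a \in \bk^n : h(a) \neq 0\}$.
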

\begin{proof}
    Suppose $C \subseteq \Spec B$ is closed and contains every $\bk$-rational point of $\Spec B$. Write $C = V(I)$ for some ideal $I \subseteq B$. It suffices to show
    $I = 0$.

    Suppose $I \neq 0$ and pick $f \in I$ with $f \neq 0$. Since every $\bk$-rational point
    $\mgoth$ is in $C = V(I)$, $f \in \mgoth$ for every $\bk$-rational $\mgoth$, and hence
    \begin{equation}\label{finM}
    \text{$D(f)$ contains no $\bk$-rational point.} 
    \end{equation}

    Write $B = \bk[b_1, \dots, b_m]$ and $\Frac B = \bk(t_1, \dots, t_n)$, and choose
    $p_i, q_i \in \bk[t_1, \dots, t_n]$ with $q_i \neq 0$ and $b_i = p_i / q_i$. Setting
    $q = q_1 \cdots q_m \neq 0$, every generator $b_i$ lies in
    $\bk[t_1, \dots, t_n]_q$, and therefore
    \[
        B \subseteq \bk[t_1, \dots, t_n]_q .
    \]
    Consequently $f = P / q^N$ for some $P \in \bk[t_1, \dots, t_n]$ and some $N \geq 0$,
    and $P \neq 0$ because $f \neq 0$.

    Since $qP$ is a nonzero polynomial and $\bk$ is infinite, there exists
    $a \in \bk^n$ with $q(a) \neq 0$ and $P(a) \neq 0$. As $q(a) \neq 0$, evaluation at
    $a$ defines a $\bk$-algebra homomorphism
    $\varepsilon_a \colon \bk[t_1, \dots, t_n]_q \to \bk$. Put
    $\varphi = \varepsilon_a |_B$ and $\mgoth = \ker \varphi$. The image of
    $\varphi$ is a subring of $\bk$ containing $\bk$, hence equals $\bk$, so
    $B / \mgoth \isom \bk$ and $\mgoth$ is a $\bk$-rational point of $\Spec B$. Since $\varphi(f) = \frac{P(a)}{q(a)^N} \neq 0$, $f \notin \mgoth$ and so $\mgoth \in D(f)$. This contradicts \eqref{finM}.
\end{proof}

\begin{proposition}
    The ring $\Reals[x_0, x_1, x_2] / \lb x_0^2 + x_1^2 + x_2^2 \rb$ is rigid. 
\end{proposition}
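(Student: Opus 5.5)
Let $A = \Reals[x_0,x_1,x_2]/\lb x_0^2+x_1^2+x_2^2\rb$ and suppose $D \in \lnd(A)$ with $D \neq 0$. The plan is to derive a contradiction from the fact that $\Spec A$ has essentially only one real point, the origin, whereas a nonzero locally nilpotent derivation forces rational parametrizations with many real points.

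\textbf{Step 1.} We pass from $D$ to a rational fibration. The kernel $A_0 = \ker D$ is factorially closed and algebraically closed in $A$, with $\trdeg_\Reals A_0 = 1$. Choose a local slice $r$, so that $Dr \neq 0$ and $D^2 r = 0$, and put $h = Dr \in A_0$. Then the standard slice theorem gives
\[
A_h = (A_0)_h[r] = ((A_0)_h)^{[1]}.
\]

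\textbf{Step 2.} We show that $A_0$ has fraction field $\Reals^{(1)}$. After base change to $\Comp$, $D$ extends to a nonzero locally nilpotent derivation of $A_\Comp = B_{\Comp;2,2,2}$. By Remark \ref{easyPart}, this ring is $\Comp[u,v,x_2]/\lb uv+x_2^2\rb$, a rational surface. The kernel of the extended derivation is $A_0\otimes_\Reals\Comp$, which is a normal curve over $\Comp$ dominated by $\Aff^1$-fibres of a rational surface. By L\"uroth's theorem it is rational, and in fact $\Comp^{[1]}$, since kernels of locally nilpotent derivations on affine surfaces over algebraically closed fields are polynomial rings. Theorem \ref{extensionLine} then gives $A_0 = \Reals^{[1]}$. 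Consequently $\Frac A = \Reals(A_0)(r) = \Reals^{(2)}$.

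\textbf{Step 3.} We reach the contradiction. Since $A$ is an affine $\Reals$-domain with $\Frac A = \Reals^{(2)}$, Lemma \ref{rationalPointsAreDense} says that the $\Reals$-rational points of $\Spec A$ are dense in the $2$-dimensional scheme $\Spec A$. However, a real point satisfies $c_0^2+c_1^2+c_2^2 = 0$, which forces $c_0=c_1=c_2=0$. So the only $\Reals$-rational point is the origin $\mgoth$, and $V(\mgoth)$ is a proper closed subset. This contradiction shows that $D = 0$, so $A$ is rigid.

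The main obstacle is Step 2, specifically justifying that $A_0 \otimes_\Reals \Comp = \Comp^{[1]}$. I would first try the direct route: $A_0\otimes\Comp$ is the kernel of a nonzero locally nilpotent derivation on a normal affine surface over $\Comp$, and such a kernel is a polynomial ring because it is a factorially closed subring of transcendence degree $1$ contained in a rational ring. If that is awkward to justify, Step 2 can be bypassed. The slice isomorphism $A_h \isom (A_0)_h^{[1]}$ already shows that $\Frac A$ is purely transcendental over $\Frac A_0$. It then suffices that the curve $\Spec A_0$ has function field $\Reals(t)$, which one can obtain by showing it has a real point: the image of the origin under $\Spec A \to \Spec A_0$ is a real point, and a geometrically rational real curve with a real point is rational.
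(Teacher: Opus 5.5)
Your Steps 1--3 follow the paper's proof. The paper also passes to $\bar A=\Comp\otimes_\Reals A\isom\Comp[u,v,w]/\lb uv-w^2\rb$, gets $\Comp\otimes_\Reals A_0=\ker\bar D\isom\Comp^{[1]}$, descends to $A_0=\Reals^{[1]}$ by Theorem \ref{extensionLine}, and contradicts Lemma \ref{rationalPointsAreDense}. The one difference is how $\ker\bar D\isom\Comp^{[1]}$ is justified. The paper cites Daigle's Lemma 2.8; the general fact you invoke is false as stated. Kernels of nonzero locally nilpotent derivations of affine surfaces over $\Comp$ need not be polynomial rings, even when the surface is rational and the kernel is factorially closed of transcendence degree $1$. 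For example, $\partial/\partial y$ on $\Comp[x,x^{-1},y]$ has kernel $\Comp[x,x^{-1}]$.

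What you are missing is the unit group. Since $\bar A$ is positively graded with degree-zero part $\Comp$, we have $\bar A^*=\Comp^*$, and the factorially closed subring $\ker\bar D$ has the same units. It is normal (being algebraically closed in the normal ring $\bar A$) and affine. Its fraction field is $\Comp(t)$ by L\"uroth, since the curve is dominated by a rational surface and hence unirational. A normal affine curve with function field $\Comp(t)$ is $\PPP^1$ minus $k\geq 1$ points, with unit group $\Comp^*\times\Integ^{k-1}$. So $k=1$ and $\ker\bar D\isom\Comp^{[1]}$. With this repair your main route coincides with the paper's.

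Your fallback is a genuinely different argument, and it is correct once you make explicit that $A_0$ is normal. Indeed, $A$ is normal and $A_0$ is algebraically closed in $A$, so $A_0$ is a one-dimensional normal affine domain. Hence $\mgoth\cap A_0$, with $\mgoth=\lb x_0,x_1,x_2\rb$, is a regular point with residue field $\Reals$, i.e.\ a degree-one place of $\Frac A_0$. This point matters: a geometrically rational real curve whose only real point is singular need not be rational. For instance, pinch a pair of conjugate points of the conic $X^2+Y^2+Z^2=0$ to a real node.

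With normality in hand, the smooth projective model of $\Frac A_0$ is geometrically $\PPP^1$ (by L\"uroth over $\Comp$) and has a real point, hence is $\PPP^1_\Reals$. Your slice then gives $\Frac A=\Reals^{(2)}$.

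Compared with the paper, this route proves only $\Frac(\ker D)=\Reals(t)$ rather than $\ker D\isom\Reals^{[1]}$. That is exactly what Lemma \ref{rationalPointsAreDense} needs, and it dispenses with both Daigle's lemma and Russell's theorem on forms of the affine line, at the cost of the small amount of curve theory above. It also uses the origin twice: once to produce the degree-one place on the quotient curve, and once as the lone real point contradicting density.
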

\begin{proof}
    By contradiction, assume $B = \Reals[x_0,x_1,x_2] / (x_0^2 + x_1^2 + x_2^2)$ is not rigid. Let $D \in \lnd(B) \setminus \{0\}$ and $A=\ker(D)$. By Remark \ref{easyPart}, $\bar B = \Comp \otimes_\Reals B$ is isomorphic to $\Comp[u,v,w]/ \lb uv-w^2 \rb$ and by \cite[Lemma 2.8]{daigle2004locallyDanielewski}, every element of $\lnd(\bar{B}) \setminus \{0\}$ has a kernel isomorphic to $\Comp^{[1]}$. 
    
Applying the exact functor $\Comp \otimes_\Reals(\underline{\ \ })$ to the exact sequence $0 \to A \to B \xrightarrow{D} B$
shows that $0 \to \Comp \otimes_\Reals A \to \bar B \xrightarrow{\bar D} \bar B$ is exact, so
$\Comp \otimes_\Reals A = \ker \bar D = \Comp^{[1]}$.
By Theorem \ref{extensionLine}, $A = \Reals^{[1]}$, so $\Frac B = (\Frac A)^{(1)} = \Reals^{(2)}$ (using \cite[Principle 11]{freudenburg2017algebraic}). Since $\Spec B$ has only one $\Reals$-rational point (the point $\lb x_0,x_1,x_2 \rb$), this contradicts Lemma \ref{rationalPointsAreDense}.
\end{proof}

\subsection{The $2$-dimensional case.}

Conjecture \ref{PBConjecture} was shown to be true for $n = 2$ in \cite[Lemma 4]{Kali-Zaid_2000} although we will outline the proof given in \cite{freudenburg2017algebraic}. Items \ref{freeRelativelyPrime} to \ref{qeLemma} are the four main ingredients that go into this proof of the 2-dimensional case of Conjecture \ref{PBConjecture}.  


\begin{lemma}\cite[Lemma 2.46]{freudenburg2017algebraic}\label{freeRelativelyPrime}
Let $B$ be a commutative $\bk$-domain and $R \subseteq B$ a subalgebra such that
$B$ is a free $R$-module. Given $x, y \in R$, if $x$ and $y$ are relatively prime in $R$, then $x$ and
$y$ are relatively prime in $B$.
\end{lemma}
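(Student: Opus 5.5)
The plan is to work with the ideal-theoretic definition of ``relatively prime'' used in \cite{freudenburg2017algebraic}: $x,y \in R$ are relatively prime in $R$ when $xR \cap yR = xyR$. With this definition, the statement says that the equality of ideals $xR \cap yR = xyR$ survives the passage from $R$ to $B$. The main tool is coordinates with respect to an $R$-basis of $B$, which exist and are unique by freeness. In effect this is the flat-base-change identity $(xR \cap yR)B = xB \cap yB$, but it is cleaner to argue directly with a basis.

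Note first that $R$ is a domain, being a subring of the domain $B$. Suppose $x = 0$ (the case $y=0$ is symmetric). Then $xB \cap yB = 0 = xyB$ and there is nothing to prove, so assume $x,y \neq 0$. The inclusion $xyB \subseteq xB \cap yB$ is trivial. For the reverse, fix an $R$-basis $\{e_\lambda\}_{\lambda \in \Lambda}$ of $B$ and let $b \in xB \cap yB$, say $b = xc = yd$ with $c,d \in B$. Expand $c = \sum_\lambda c_\lambda e_\lambda$ and $d = \sum_\lambda d_\lambda e_\lambda$ with $c_\lambda, d_\lambda \in R$, almost all zero. Since $x,y \in R$, the elements $xc$ and $yd$ have coordinates $xc_\lambda$ and $yd_\lambda$. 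Uniqueness of coordinates then gives $xc_\lambda = yd_\lambda$ in $R$ for every $\lambda$.

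It follows that $xc_\lambda \in xR \cap yR = xyR$, so $xc_\lambda = xyr_\lambda$ for some $r_\lambda \in R$. We may take $r_\lambda = 0$ whenever $c_\lambda = 0$, so only finitely many $r_\lambda$ are nonzero. Cancelling $x \neq 0$ in the domain $R$ gives $c_\lambda = yr_\lambda$. Hence
\[
b = xc = xy \sum_\lambda r_\lambda e_\lambda \in xyB,
\]
so $xB \cap yB = xyB$, that is, $x$ and $y$ are relatively prime in $B$.

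There is no serious obstacle here. The one step to be careful about is making sure the definition of ``relatively prime'' matches the one in \cite{freudenburg2017algebraic}. Under the weaker ``every common divisor is a unit'' notion, the lemma would need a different argument, or might even fail outside UFD-type situations. So I would begin by confirming that the ideal-intersection definition is the one in force.
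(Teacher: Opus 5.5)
Your proof is correct: with the definition $xR \cap yR = xyR$, comparing the coordinates of $xc = yd$ in an $R$-basis of $B$ and applying the hypothesis coordinatewise gives $xB \cap yB = xyB$. The survey gives no proof of its own and only cites Freudenburg's Lemma 2.46; as far as I recall, that proof is this same basis-coordinate argument under the same ideal-theoretic definition, so your approach matches the source.
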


\begin{theorem}\label{ABCTheorem}{\rm [ABC Theorem]}\cite[Theorem 6.1]{freudenburg2013}
    Suppose $x,y,z \in B$ are pairwise relatively prime and satisfy $x^a + y^b + z^c = 0$ where $a,b,c \geq 2$ and $\frac{1}{a} + \frac{1}{b} + \frac{1}{c} \leq 1$. Then every $D \in \lnd(B)$ satisfies $D(x) = D(y) = D(z) = 0$. 
\end{theorem}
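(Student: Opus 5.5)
The plan is to argue by contradiction with a Mason--Stothers type degree argument, using the degree function attached to a locally nilpotent derivation. Fix $D \in \lnd(B)$ with $D \neq 0$, let $A = \ker D$, and let $\deg_D$ be the degree function given by $\deg_D(f) = \max\{ n \mid D^n f \neq 0\}$, with $\deg_D(0) = -\infty$. I will use its standard properties:
\[
\deg_D(fg) = \deg_D f + \deg_D g, \qquad \deg_D(f+g) \le \max\{\deg_D f, \deg_D g\}, \qquad \deg_D(Df) = \deg_D f - 1 \ \text{ for } f \notin A.
\]
Write $i = \deg_D x$, $j = \deg_D y$, $k = \deg_D z$, and assume that not all of $x, y, z$ lie in $A$.

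First I reduce to the case where none of $x,y,z$ lies in $A$. Suppose, say, $x,y \in A$. Then $z^c = -(x^a + y^b) \in A$, and since $\deg_D$ is additive, $\deg_D z = 0$, so $z \in A$. Suppose instead exactly one of them lies in $A$, say $z$. Then $x^a + y^b \in A$, and the top-degree terms of $x^a$ and $y^b$ must cancel, which forces $ai = bj > 0$. I expect to treat this configuration by the same computation as below, with $z$ playing the role of a constant, so I concentrate on the main case $i, j, k \ge 1$.

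Set $X = x^a$, $Y = y^b$, $Z = z^c$. Then $X + Y + Z = 0$ and $DX + DY + DZ = 0$, and these two relations give
\[
W := X\,DY - Y\,DX = Y\,DZ - Z\,DY = Z\,DX - X\,DZ .
\]
Expanding the first expression,
\[
W = x^{a-1} y^{b-1}\,(b\,x\,Dy - a\,y\,Dx),
\]
and the other two expressions show in the same way that $y^{b-1} z^{c-1}$ and $z^{c-1} x^{a-1}$ divide $W$.

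If $W = 0$, then $X/Y$ is a constant of the extension of $D$ to $\Frac B$. I will use this, together with the coprimality of $x$ and $y$ and the fact that $A$ is factorially closed in $B$, to force $x, y \in A$. That contradicts the assumption $i, j, k \ge 1$.

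If $W \neq 0$, I compare degrees. Order so that $ai \ge bj \ge ck$. Since $X + Y + Z = 0$, the two largest of $ai, bj, ck$ must be equal, so $ai = bj$. The expression $W = Y\,DZ - Z\,DY$ gives the upper bound
\[
\deg_D W \le bj + ck - 1 .
\]
For a lower bound I want to show that $x^{a-1} y^{b-1} z^{c-1}$ divides $W$, which would give
\[
\deg_D W \ge (a-1)i + (b-1)j + (c-1)k .
\]
Combining the two bounds gives $ai \le i + j + k - 1$. Using $ai = bj \ge ck$, this yields
\[
1 \le \frac{1}{a} + \frac{1}{b} + \frac{1}{c} - \frac{1}{ai} < \frac{1}{a} + \frac{1}{b} + \frac{1}{c},
\]
which contradicts the hypothesis $\frac1a + \frac1b + \frac1c \le 1$.

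The main obstacle is the divisibility step. $B$ is not assumed to be a UFD, so pairwise divisibility of $W$ by $x^{a-1}y^{b-1}$, $y^{b-1}z^{c-1}$ and $z^{c-1}x^{a-1}$ does not automatically give divisibility by the product. I would try to avoid needing the full product. From the relation $Y\,DZ - Z\,DY = W$ together with $W = x^{a-1}(\cdots)$, I would extract the divisibility by $z^{c-1}$ directly, using coprimality of $z$ with $x$ and $y$ and a cancellation argument. If that fails, the fallback is to localize: pass to $B_{A\setminus\{0\}}$, which is a polynomial ring $\Frac(A)^{[1]}$ and hence a UFD. Using Lemma \ref{freeRelativelyPrime}, or a direct argument, I would check that the relative primality of $x, y, z$ survives there, and then run the degree count in that UFD, where $\deg_D$ is just the degree in the local slice variable.
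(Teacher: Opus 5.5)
Your architecture is sound. The degree function $\deg_D$, the identity $W = X\,DY - Y\,DX = Y\,DZ - Z\,DY = Z\,DX - X\,DZ$, the equality of the two largest of $ai, bj, ck$, and the final count are all correct. That count also disposes of the case where exactly one of $x,y,z$ lies in $A=\ker D$, since it uses only $ai = bj > 0$ and not $k \geq 1$.

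The gap is the step you flag, together with the case $W=0$. These are the only places where coprimality enters, and neither is carried out, because you never fix what ``relatively prime'' means. If it means ``no common non-unit divisor'', as your UFD worry suggests, then no argument can close them, because the theorem is false under that reading. Take $A = \bk[\alpha,\beta,\gamma]/\langle \alpha^3+\beta^3+\gamma^3\rangle$, $B = A[\alpha t,\beta t,\gamma t] \subseteq A[t]$ and $D = d/dt$, which restricts to a nonzero element of $\lnd(B)$. The elements $x=\alpha t$, $y=\beta t$, $z=\gamma t$ satisfy $x^3+y^3+z^3=0$. They are pairwise non-associate irreducibles of $B$: use the bigrading by $A$-degree $p$ and $t$-degree $q$, in which $B_{(p,q)} \neq 0$ forces $p \geq q$ and $B_{(0,0)}=\bk$. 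Yet $Dx = \alpha \neq 0$. In this example $W=0$, and in $B_{A\setminus\{0\}} = \Frac(A)[t]$ all three elements are divisible by $t$, so your localization fallback fails as well.

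The missing idea is the notion under which the statement holds: $x$ and $y$ are relatively prime when $xB \cap yB = xyB$. This is also what makes Lemma \ref{freeRelativelyPrime} a one-line argument with a basis. It fails in the example above, since $\alpha\beta t \in xB\cap yB$ but $\alpha\beta t \notin xyB$.

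In a domain this condition says that $y \mid xw$ implies $y \mid w$, i.e. $x$ is a nonzerodivisor on $B/yB$. Consequently $x^m$ and $y^n$ are relatively prime for all $m,n \geq 1$, and $z$ is relatively prime to $xy$ whenever it is relatively prime to $x$ and to $y$. Then $W \in x^{a-1}y^{b-1}B \cap z^{c-1}B = x^{a-1}y^{b-1}z^{c-1}B$ with no UFD hypothesis, which gives your lower bound.

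The case $W=0$ is equally short. It gives $b\,x\,Dy = a\,y\,Dx$, so $y\,Dx \in xB\cap yB = xyB$, i.e. $x \mid Dx$. This forces $Dx=0$, since $Dx = xv$ with $v \neq 0$ would give $\deg_D x - 1 = \deg_D(Dx) \geq \deg_D x$. Then $Dy = 0$, and $z^c \in A$ gives $Dz=0$.

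If you prefer to localize, Lemma \ref{freeRelativelyPrime} is not the right tool, because $B_{A\setminus\{0\}}$ is not free over $B$. What carries $xB\cap yB = xyB$ over to $B_{A\setminus\{0\}}$ is flatness of localization.
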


\begin{theorem}\label{ABTheorem}{\rm [AB Theorem]}\cite[Theorem 2.47]{freudenburg2017algebraic}
    Let $D  \in \lnd(B) \setminus \{0\}$. Suppose $u, v \in \ker D$ and $x, y \in B$ are nonzero and that $a, b \geq 2$. Assume $u x^a + vy^b \neq 0$.  
    \begin{enumerate}[\rm(a)]
        \item If $D(ux^a + vy^b) = 0$ then $D(x) = D(y) = 0$.
        \item If $D^2(ux^a + vy^b) = 0$ and $a$ and $b$ are not both 2, then $D(x) = D(y) = 0$.
    \end{enumerate}
\end{theorem}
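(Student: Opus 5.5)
We prove the AB Theorem by reducing to a statement about degrees with respect to $D$. Let $A = \ker D$ and write $\deg_D$ for the degree function of $D$: $\deg_D(r)$ is the largest $k$ with $D^k(r) \neq 0$, and $\deg_D(0) = -\infty$. We use three standard facts about this degree. It satisfies $\deg_D(rs) = \deg_D(r) + \deg_D(s)$. It satisfies $\deg_D(r+s) \le \max(\deg_D r, \deg_D s)$. And the elements of degree $0$ are exactly the nonzero elements of $A$. Since $u, v \in A\setminus\{0\}$, we get $\deg_D(ux^a) = a\deg_D(x)$ and $\deg_D(vy^b) = b\deg_D(y)$.

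\textbf{Part (a).} Assume $D(ux^a+vy^b)=0$ and suppose $D(x)\neq 0$ or $D(y)\neq 0$. Because $ux^a + vy^b \neq 0$, its degree is $0$. This forces $a\deg_D x = b\deg_D y =: N > 0$, since otherwise the larger degree survives. Now choose a local slice $r$ with $D r \in A\setminus\{0\}$. Extending to $B_{Dr} = A_{Dr}[r]$, we may view $x$ and $y$ as polynomials in $r$ over $K = \Frac A$, that is, inside $K[r]$. Set $m = \deg_r x$ and $n = \deg_r y$. The identity $ux^a + vy^b = c \in K^*$ is a polynomial identity in $K[r] \subseteq \bar K[r]$. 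Apply the formal derivative $d/dr$, which corresponds to $D/Dr$:
\[
a u x^{a-1} x' + b v y^{b-1} y' = 0 .
\]
So $x^{a-1}$ divides $b v y^{b-1} y'$. Since $ux^a + vy^b = c$ is a unit, $x$ and $y$ are coprime in $\bar K[r]$. Hence $x^{a-1}$ divides $y'$. As $y' \neq 0$ when $n>0$, this gives $(a-1)m \le n-1$. Symmetrically $(b-1)n \le m-1$. Adding the two inequalities yields $(a-2)m + (b-2)n \le -2$, which is impossible for $a, b \ge 2$ and $m, n \ge 0$. Hence $m = n = 0$, that is, $D x = D y = 0$.

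\textbf{Part (b).} Now $z := ux^a+vy^b$ satisfies $\deg_D z \le 1$, so in $K[r]$ we have $ux^a+vy^b = \ell$ with $\deg_r \ell \le 1$. The case $\deg \ell = 0$ is part (a), so assume $\ell$ is linear. Note that $x$ and $y$ are still coprime in $\bar K[r]$, since a common root would be a root of $\ell$ of multiplicity at least $2$. This is a Mason–Stothers (polynomial ABC) situation for the three coprime polynomials $ux^a$, $vy^b$, $-\ell$. Provided $ux^a$ and $vy^b$ are not both constant, the theorem gives
\[
\max(am, bn, 1) \le \deg \operatorname{rad}(x y \ell) - 1 \le m + n + 1 - 1 = m + n .
\]
Since $am \le m+n$ and $bn \le m+n$, we get $(a-1)m \le n$ and $(b-1)n \le m$. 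If $m, n > 0$, then $(a-1)(b-1) \le 1$, which forces $a=b=2$, and this case is excluded. If $n = 0$, then $am \le m$ forces $m = 0$, and similarly if $m = 0$. It remains to treat $\max(am, bn) \le 1$ with $a, b \ge 2$, which again forces $m = n = 0$. Hence $x, y \in K \cap B = A$, so $Dx = Dy = 0$.

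\textbf{Main obstacle.} The delicate step is the passage to $K[r]$ and back. We must justify three things: that $B \subseteq K[r]$ inside $B_{Dr}$ with $D$ acting as $(Dr)\,d/dr$; that $\deg_r$ agrees with $\deg_D$; and that Mason–Stothers applies over $\bar K$ in characteristic zero. Once these are in place, the rest is the degree counting above.
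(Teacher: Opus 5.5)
Your proof is correct. The survey does not prove this statement itself: it quotes it from \cite[Theorem 2.47]{freudenburg2017algebraic} and uses it as a black box in Proposition \ref{PB2d}. So there is no in-paper argument to compare with.

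The three facts you flag as the main obstacle are standard. For a local slice $r$ one has $B_{Dr}=A_{Dr}[r]$ with $r$ transcendental over $A_{Dr}$, and $D=(Dr)\,d/dr$ there. Hence $\deg_D=\deg_r$ on $B$, and $B\cap K=A$. In (a), your observation $am=bn>0$ is what guarantees $m,n>0$. That is exactly what makes both divisibility inequalities available before you add them, so keep it explicit.

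Two small remarks on (b).

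First, the Mason--Stothers hypothesis holds automatically because $\ell$ is non-constant. Moreover, the outcome $m=n=0$ contradicts $\deg_r\ell=1$. So the linear case cannot occur at all. Your conclusion $Dx=Dy=0$ is still valid, but it is cleaner to say this.

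Second, you can avoid Mason--Stothers and $\bar K$ entirely by reusing the divisibility trick from (a). Since $\ell$ is linear, $K[r]=K[\ell]$. Write $'$ for $d/d\ell$. Differentiating $ux^a+vy^b=\ell$ gives $aux^{a-1}x'+bvy^{b-1}y'=1$, and eliminating between the two equations yields
\[
a\ell x'-x = v\,y^{b-1}\,(a x' y - b x y'), \qquad b\ell y'-y = u\,x^{a-1}\,(b x y' - a x' y).
\]
The coefficient of $\ell^i$ in $a\ell x'-x$ is $(ai-1)x_i$, which is never $0$ when $x_i\neq 0$, because $a\ge 2$. So the left sides are nonzero, of degrees exactly $m$ and $n$. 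This gives $(b-1)n\le m$ and $(a-1)m\le n$ directly over $K$. These are the same inequalities you extracted from Mason--Stothers, and the rest of your argument finishes as before. The payoff is that (a) and (b) become uniform and elementary.
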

\begin{lemma}\label{qeLemma}\cite[Corollary 3.2]{freudenburg2013}
    Suppose that $R$ is a $\Integ$-graded affine $\bk$-domain, $f \in R$ is homogeneous, $\deg f  \neq 0$ and $n \geq 2$ is an integer relatively prime to $\deg f$. Let $x$ be an indeterminate over $R$ and assume $B = R[x]  /\lb x^n - f \rb $ is a domain. Then, the following conditions are equivalent:
    \begin{enumerate}[\rm(a)]
    \item $D^2(f) \neq 0$ for every nonzero $D \in \lnd(R)$;
    \item $B$ is rigid.
    \end{enumerate}
\end{lemma}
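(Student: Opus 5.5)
The plan is to prove the two implications separately. Throughout, $B$ is viewed as a free $R$-module with basis $1, x, \dots, x^{n-1}$. It carries the $\Integ$-grading $\deg_B(r) = n\deg_R(r)$ for homogeneous $r \in R$ and $\deg_B(x) = \deg_R f$, under which $x^n - f$ is homogeneous. Since $\gcd(n, \deg f) = 1$, the summand $R x^i$ is exactly the part of $B$ whose degrees are $\equiv i\deg f \pmod n$. In particular, $R$ is precisely the subring of $B$ spanned by homogeneous elements of degree divisible by $n$.

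\textbf{(b)$\Rightarrow$(a), by contrapositive.} Let $D \in \lnd(R)\setminus\{0\}$ with $D^2(f) = 0$, and put $c = D(f) \in \ker D$.
\begin{itemize}
\item If $c = 0$, extend $D$ to $R[x]$ by $D(x) = 0$. This kills $x^n - f$, so it descends to a nonzero locally nilpotent derivation of $B$.
\item If $c \neq 0$, define $\Delta$ on $R[x]$ by $\Delta(r) = n x^{n-1}D(r)$ for $r \in R$ and $\Delta(x) = c$. Then $\Delta(x^n - f) = n x^{n-1}c - n x^{n-1}c = 0$, so $\Delta$ descends to $B$, and it is nonzero.
\end{itemize}
In the second case, local nilpotence comes from a degree argument. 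Give $r \in R$ its $D$-degree $\nu(r)$ (with $\nu(f) = 1$) and give $x$ weight $1/n$, consistent with $x^n = f$. Then $\Delta$ lowers this weight by $1/n$ on generators: on $R$ it lowers by $1 - (n-1)/n$, and on $x$ it lowers from $1/n$ to $0$. Concretely, $\Delta(c) = n x^{n-1}D^2 f = 0$, $\Delta^2(x) = 0$, and for $r \in R$ an induction on $\nu(r)$ using Leibniz shows $\Delta^k(r) = 0$ for $k \gg 0$. Since $B$ is generated by $R$ and $x$, $\Delta$ is locally nilpotent, so $B$ is not rigid.

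\textbf{(a)$\Rightarrow$(b).} Suppose $\Delta \in \lnd(B)\setminus\{0\}$.
\begin{enumerate}[\rm(1)]
\item First reduce to $\Delta$ homogeneous of some degree $e$: the extreme homogeneous component of a nonzero locally nilpotent derivation of a $\Integ$-graded affine domain is again locally nilpotent and nonzero.
\item Next arrange $n \mid e$. Then $\Delta$ preserves the degrees-divisible-by-$n$ part, so $\Delta(R) \subseteq R$. Set $D = \Delta|_R \in \lnd(R)$. We have $D \neq 0$, since otherwise $\Delta$ kills $R$ and hence kills the element $x$, which is algebraic over $R$; this forces $\Delta = 0$.
\item By (a), $D^2(f) \neq 0$, so in particular $D(f) \neq 0$. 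On the other hand, $\Delta(x)$ has degree $e + \deg x \equiv \deg x \pmod n$, so $\Delta(x) \in Rx$, say $\Delta(x) = g x$. A locally nilpotent derivation with $x \mid \Delta(x)$ must satisfy $\Delta(x) = 0$.
\item Then $D(f) = \Delta(x^n) = 0$, which is a contradiction.
\end{enumerate}

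The main obstacle is the reduction $n \mid e$ in step (2). My first attempt is to find a homogeneous $h \in \ker\Delta$ whose degree is not divisible by $n$ (coprime to $n$ in the appropriate sense). Then $h^m\Delta$ is again a homogeneous locally nilpotent derivation, and a suitable $m$ shifts its degree into $n\Integ$. If $\ker\Delta$ contains no such element, then $\ker\Delta \subseteq R$. In that case I would instead use the $\mu_n$-action $x \mapsto \zeta x$ on $B$: average the conjugates of $\Delta$, or choose a conjugate $\zeta^{\bullet}$-eigencomponent. The aim is to produce a nonzero locally nilpotent derivation that commutes with the action and therefore has degree divisible by $n$. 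The point to check is that this averaged derivation is still locally nilpotent and nonzero.

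Everything else — the freeness of $B$ over $R$, the identification of $R$ as the degree-$n\Integ$ part, and the divisibility fact for locally nilpotent derivations — is standard.
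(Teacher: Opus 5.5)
Your direction (b)$\Rightarrow$(a) is correct. The $\Delta$-nilpotent elements form a subring; it contains $x$ because $\Delta^2x=\Delta c=0$, and your induction on $\deg_D$ puts all of $R$ in it. Steps (1), (3), (4) of (a)$\Rightarrow$(b) are also fine.

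The gap is step (2), and it cannot be closed the way you try. Step (2) never uses (a), and without (a) it is false. Take $R=\bk[u]$ with $\deg u=1$, $f=u$, $n=2$. Then $B\cong\bk[x]$, and every nonzero element of $\lnd(B)$ is $c\,\partial_x$, homogeneous of degree $-1$. So no nonzero LND of $B$ has even degree. Correspondingly, no nonzero LND of $R$ kills $f$, yet that is exactly what your steps (2)--(3) would produce.

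Both of your mechanisms fail on this example:
\begin{itemize}
\item The kernel-element shift: $\ker\partial_x=\bk\subseteq R$, so there is no kernel element to shift by. Separately, ``degree not divisible by $n$'' is not enough for the shift when $n$ is composite.
\item Averaging: after step (1), a homogeneous $\Delta$ of degree $e\equiv km \pmod n$ (with $m=\deg f$) satisfies $\sigma_\zeta\Delta\sigma_\zeta^{-1}=\zeta^k\Delta$ under $x\mapsto\zeta x$. So the average is $0$, and the only eigencomponent is $\Delta$ itself.
\end{itemize}

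The missing idea is a case split on $k$, using the bookkeeping you already set up: $\Delta(Rx^i)\subseteq Rx^{(i+k)\bmod n}$.
\begin{itemize}
\item If $0\le k\le n-2$, then $\Delta x\in Rx^{k+1}$, so $x\mid\Delta x$ and hence $\Delta x=0$. The kernel element you were looking for is $x$ itself, whose degree $m$ is prime to $n$. Then $x^{n-k}\Delta$ (or $\Delta$ itself if $k=0$) has degree divisible by $n$, and your steps (3)--(4) apply.
\item If $k=n-1$ and $\Delta x=0$, the same works with $x\Delta$.
\item The remaining case is $k=n-1$ with $c:=\Delta x\in R\setminus\{0\}$. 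This is exactly the shape of the derivation you built in (b)$\Rightarrow$(a), and it must be refuted using (a) directly rather than by moving the degree.
\end{itemize}

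For that last case, note $\Delta(R)\subseteq x^{n-1}R$, so $\Delta r=x^{n-1}D_1r$ defines a derivation $D_1$ of $R$ with $D_1f=nc\neq0$. Let $\nu=\deg_\Delta$. If $\Delta c\neq0$, then $\nu(x)-2=\nu(\Delta c)=(n-1)\nu(x)+\nu(D_1c)\ge(n-1)\nu(x)$, which is impossible. Hence $\Delta c=0$, so $\nu(x)=1$ and $D_1^2f=nD_1c=0$. Moreover $\nu(D_1r)=\nu(r)-n$ whenever $D_1r\neq0$, so $D_1\in\lnd(R)$. Thus $D_1$ is a nonzero LND of $R$ with $D_1^2f=0$, contradicting (a). This is precisely where the hypothesis on $D^2f$, rather than on $Df$, is needed.
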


\begin{proposition}\label{PB2d}
    Conjecture \ref{PBConjecture} is true for $n = 2$. 
\end{proposition}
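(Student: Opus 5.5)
The plan is to dispose of $(\Leftarrow)$ immediately by Remark \ref{easyPart}, and to prove $(\Rightarrow)$ by contrapositive. So assume $a_0,a_1,a_2 \geq 2$ and at most one $a_i$ equals $2$; we must show $B = B_{a_0,a_1,a_2}$ is rigid. By Proposition \ref{integralDomain}, $B$ is a domain. Fix $D \in \lnd(B)$. I split according to whether $\frac{1}{a_0}+\frac{1}{a_1}+\frac{1}{a_2} \leq 1$.

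\emph{Case 1: $\sum 1/a_i \leq 1$.} I apply the ABC Theorem \ref{ABCTheorem} to $x=\bar x_0$, $y=\bar x_1$, $z=\bar x_2$, which satisfy $x^{a_0}+y^{a_1}+z^{a_2}=0$. The only hypothesis to check is that these are pairwise relatively prime in $B$. For the pair $\bar x_0,\bar x_1$, put $R=\bk[x_0,x_1]\subseteq B$. Then $B=R[x_2]/\lb x_2^{a_2}+x_0^{a_0}+x_1^{a_1}\rb$, since the relation is monic in $x_2$. Hence $B$ is a free $R$-module with basis $1,\bar x_2,\dots,\bar x_2^{a_2-1}$. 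As $x_0,x_1$ are relatively prime in the polynomial ring $R$, Lemma \ref{freeRelativelyPrime} shows they are relatively prime in $B$. The other two pairs are handled by symmetry. The ABC Theorem then gives $D(\bar x_i)=0$ for all $i$, so $D=0$.

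\emph{Case 2: $\sum 1/a_i>1$.} If all $a_i\geq 3$ then the sum is at most $1$, so exactly one $a_i$ equals $2$. The remaining two exponents $p \leq q$ satisfy $p,q\geq 3$ and $\frac1p+\frac1q>\frac12$. Up to permutation this leaves only $(2,3,3)$, $(2,3,4)$ and $(2,3,5)$. For each of these I use Lemma \ref{qeLemma}. I write $B=R[x]/\lb x^m-f\rb$, where $R=\bk^{[2]}$ is the polynomial ring on the other two variables, with a suitable positive grading, and $f=-(y^a+z^b)$ is homogeneous. The choices are:
\begin{itemize}
\item For $(2,3,5)$: take $m=5$ and $f=-(x_0^2+x_1^3)$, with $\deg x_0=3$ and $\deg x_1=2$. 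Then $\deg f=6$, which is coprime to $5$.
\item For $(2,3,4)$: take $m=3$ and $f=-(x_0^2+x_2^4)$, with weights $(2,1)$. Then $\deg f=4$, which is coprime to $3$.
\item For $(2,3,3)$: take $m=2$ and $f=-(x_1^3+x_2^3)$, with weights $(1,1)$. Then $\deg f=3$, which is coprime to $2$.
\end{itemize}
In each case $B$ is a domain, so Lemma \ref{qeLemma} reduces rigidity of $B$ to showing $D^2(f)\neq 0$ for every nonzero $D\in\lnd(R)$. Suppose instead that $D^2(y^a+z^b)=0$ for some such $D$. Since $(a,b)\in\{(2,3),(2,4),(3,3)\}$ is never $(2,2)$, the AB Theorem \ref{ABTheorem}(b) with $u=v=1$ gives $D(y)=D(z)=0$. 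The two variables generate $R$, so $D=0$, a contradiction. Note that $f\neq 0$, as the AB Theorem requires.

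The main obstacle is the bookkeeping in Case 2: choosing which exponent plays the role of $m$ so that $\gcd(m,\deg f)=1$. For example, for $(2,3,3)$ the choice $m=3$ fails, since then $\deg f=6$. Because only three triples occur, this is a finite check, as set out above. The coprimality verification in Case 1 is routine once Lemma \ref{freeRelativelyPrime} is available.
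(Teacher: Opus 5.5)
Your proposal is correct and follows the paper's proof essentially step for step. When $\sum 1/a_i \leq 1$, you get pairwise coprimality of $\bar x_0,\bar x_1,\bar x_2$ from Lemma \ref{freeRelativelyPrime} and then apply the ABC Theorem; otherwise you treat $(2,3,3)$, $(2,3,4)$, $(2,3,5)$ by Lemma \ref{qeLemma} with the same $R$, grading and exponent $m$ as the paper, checking condition (a) through the AB Theorem (b), and your explicit checks of $\gcd(m,\deg f)=1$ and $f\neq 0$ just spell out details the paper leaves implicit.
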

\begin{proof}
    By Remark \ref{easyPart}, it suffices to prove $(\Rightarrow)$, which we do by contrapositive.  Observe that $B_{a_0, a_1, a_2}$ is a free $\bk[x_0, x_1]$-module (of rank $a_2$). Clearly $x_0$ and $x_1$ are relatively prime in the polynomial ring $\bk[x_0, x_1]$ and so Lemma \ref{freeRelativelyPrime} gives that they are relatively prime in $B_{a_0, a_1, a_2}$. The same argument shows that $x_0, x_1, x_2$ are pairwise relatively prime. If $\frac{1}{a_0} + \frac{1}{a_1} + \frac{1}{a_2} \leq 1$, then Theorem \ref{ABCTheorem} implies that $D(x_0) = D(x_1) = D(x_2)= 0$, so we may assume henceforth that $\frac{1}{a_0} + \frac{1}{a_1} + \frac{1}{a_2} > 1$. By assumption, no $a_i$ equals 1 and no pair of distinct $a_i$, $a_j$ equal $2$. So without loss of generality we may assume $(a_0, a_1, a_2) \in \{(2,3,3), (2,3,4), (2,3,5)\}$. 

    Consider first the $(2,3,3)$ case. Let $R = \bk[x_1,x_2]$ and equip $R$ with the standard grading. Then $f = -x_1^3 - x_2^3$ is homogeneous of degree 3. By Theorem \ref{ABTheorem} (b) (setting $u = v = 1$, $a = b = 3$), it follows that condition (a) of Lemma \ref{qeLemma} is satisfied when applied to $R = \bk[x_1, x_2]$. Since $B_{2, 3, 3}  =R[x_0] / \lb x_0^2 - f \rb$, it follows from that Lemma that $B_{2,3,3}$ is rigid.   

    The proof of the $(2,3,4)$ case (resp. $(2,3,5)$ case) is similar, where we let $R = \bk_{2,1}[x_0, x_2]$ (resp. $R = \bk_{3,2}[x_0, x_1]$) and argue in the same way.    
\end{proof}

\subsection{Two reductions of Conjecture \ref{PBConjecture}.}

Many partial results about the higher dimensional cases of Conjecture \ref{PBConjecture} were obtained in the interim (see in particular \cite{freudenburg2013}, \cite{DFM2017}, \cite{affineCones}, \cite{cheltsov_park_won_2016}, \cite{CylindersInDelPezzoSurfaces}, \cite{Chitayat_Daigle_2019},\cite{LiuSunExtensions}). These results were generally obtained as applications or corollaries of otherwise useful results and could not be combined to give a complete proof. Here we will discuss two reductions of Conjecture \ref{PBConjecture} and give the ideas behind the proof of the 3-dimensional case. The complete proof of the $n = 3$ case of Conjecture \ref{PBConjecture} for $\bk = \Comp$ can be found in \cite{chitayat2025rigid}, so here we only give the main elements. Note also that Remark \ref{LefschetzRigidity} explains the generalization to arbitrary $\bk$ containing $\Rat(i)$. One of the interesting elements of the proof of the $n = 3$ case of Conjecture \ref{PBConjecture} is the fact that it requires techniques from both the theory of locally nilpotent derivations and from the theory of birational geometry of surfaces. I don't know how one might go about replacing the algebraic part of the proof with a geometric argument, and vice versa. 

The first algebraic reduction used in the proof is a result from my thesis and consists of the following reduction to two simpler cases. This reduction is valid in all dimensions. We have:

\begin{theorem}\cite[Theorem 1.3.12]{ChitayatThesis}\label{reduction}
    To prove Conjecture \ref{PBConjecture} for $n = k$, it suffices to prove:
    \begin{enumerate}[\rm(1)]
    \item Conjecture \ref{PBConjecture} is true for $n = k-1$,
    \item Conjecture \ref{PBConjecture} is true when $n = k$ and $\cotype(a_0, \dots, a_n) = 0$.
    \end{enumerate}
\end{theorem}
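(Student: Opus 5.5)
We argue by induction on $\cotype(a_0,\dots,a_n)$. Assume (1) and (2) hold for $n=k$. By Remark \ref{easyPart} only the direction $(\Rightarrow)$ needs proof. So fix $S=(a_0,\dots,a_n)$ with all $a_i\geq 2$ and at most one $a_i$ equal to $2$; we must show $B_S$ is rigid. If $\cotype(S)=0$ this is exactly hypothesis (2).

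Suppose $\cotype(S)>0$. Pick an index $i$ with $a_i\nmid \lcm(S_i)$, and let $S^\flat$ be $S$ with $a_i$ replaced by $g_i(S)=\gcd(a_i,\lcm(S_i))$. Then $S^\flat<^i S$, and $\cotype(S^\flat)<\cotype(S)$, as in the example after Corollary \ref{ProjIsoCorollary}. Proposition \ref{cotypeZero} gives a graded isomorphism $B_{S^\flat}\isom B_S^{(k')}$, where $k'=a_i/g_i(S)$. Concretely, set $R=\bk[x_j : j\neq i]$ with the weights of Example \ref{PBGradingExample}, and $f=-\sum_{j\ne i}x_j^{a_j}$, which is homogeneous of degree $L=\lcm(S)$. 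Then $B_S=R[x_i]/\lb x_i^{a_i}-f\rb$. The weight $w_i=L/a_i$ is coprime to $k'$, because $k'$ collects exactly the prime-power part of $a_i$ not already present in $\lcm(S_i)$.

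Two cases arise.

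\textbf{Case $g_i(S)=1$.} Then $B_{S^\flat}$ is a polynomial ring in $n$ variables. Here we prove rigidity of $B_S$ directly via Lemma \ref{qeLemma}: write $B_S=R[x_i]/\lb x_i^{a_i}-f\rb$ with $\gcd(a_i,\deg f)=1$ after renormalizing the grading on $R$ by dividing out the common factor. It then suffices to show $D^2(f)\neq 0$ for every nonzero $D\in\lnd(R)$, where $f=-\sum_{j\neq i}x_j^{a_j}$ in the polynomial ring $R=\bk^{[n]}$.

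Suppose instead that $D^2(f)=0$. Since $R/\lb f\rb\isom B_{S_i}$, which is a Pham--Brieskorn ring in dimension $k-1$, the aim is to produce a nonzero locally nilpotent derivation on $B_{S_i}$ or on a cyclic cover of it, contradicting hypothesis (1). Hypothesis (1) applies because $S_i$ still has no entry equal to $1$ and at most one entry equal to $2$. The natural route is the standard one: if $D(f)=0$, then $D$ induces a derivation on $R/\lb f\rb$, and by homogenizing $D$ (passing to its highest-degree homogeneous component) and rescaling by a kernel element, we can arrange that the induced derivation is nonzero. If instead $D(f)\neq0$ but $D^2(f)=0$, we replace $D$ by $D(f)D$-type modifications, or apply Theorem \ref{ABTheorem}(b) when $f$ has only two terms.

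\textbf{Case $g_i(S)>1$.} Here $S^\flat$ satisfies the conjecture's hypothesis, since its entry $g_i(S)$ is at least $2$. Also $g_i(S)=2$ only if $2\mid a_j$ for some $j\neq i$; a finer argument is needed to exclude a new pair of $2$'s, and if one appears we again fall back on the direct argument above. By induction on $\cotype$, $B_{S^\flat}$ is rigid. It remains to transfer rigidity from the Veronese subring $B_S^{(k')}$ to $B_S$. We use the standard fact that for a positively graded affine domain $B$, any nonzero $D\in\lnd(B)$ can be replaced by a nonzero homogeneous one. A homogeneous locally nilpotent derivation $D$ whose degree is divisible by $k'$ restricts to $B^{(k')}$; in general one passes to $x^{m}D$ for a homogeneous $x\in\ker D$ chosen to fix the degree modulo $k'$. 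The restriction is nonzero, since otherwise $B^{(k')}$ would lie in $\ker D$, and $B$ is integral over $B^{(k')}$ while $\ker D$ is factorially closed. This contradicts the rigidity of $B_{S^\flat}$.

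The main obstacle is the case $g_i(S)=1$: there $B_S$ is a cyclic cover of affine space, and rigidity must be pulled back from dimension $k-1$ through the $D^2(f)\neq0$ criterion of Lemma \ref{qeLemma}. A secondary technical point is finding a homogeneous kernel element of the right degree modulo $k'$ in the Veronese step.
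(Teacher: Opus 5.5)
You build the right framework. Replacing $a_i$ by $g_i(S)$ lowers the cotype, and $B_S\isom B_{S^\flat}[x]/\lb x^{k'}-y\rb$, where $y$ is the $i$-th generator of $B_{S^\flat}$. This $y$ is homogeneous of degree $\lcm(S_i)/g_i(S)$, which is coprime to $k'$. So whenever $B_{S^\flat}$ is rigid, condition (a) of Lemma \ref{qeLemma} holds vacuously and $B_S$ is rigid. You therefore do not need your Veronese argument with kernel elements of prescribed degree modulo $k'$, which as written is unjustified, since nothing guarantees such a kernel element exists. The real problem lies in the two situations where $B_{S^\flat}$ is \emph{not} rigid. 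These are exactly where hypothesis (1) must do its work, and neither is proved.

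When $g_i(S)=1$, Lemma \ref{qeLemma} is an equivalence, so ``$D^2(f)\neq 0$ for every nonzero $D\in\lnd(R)$'' merely restates the rigidity of $B_S$. For a tuple like $(2,3,5,7)$, where every index has $g_i=1$, the whole theorem lives here.

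You settle only the subcase $D(f)=0$. Even there, the correct way to obtain a nonzero induced derivation on $R/fR$ is to write $D=f^mD_0$ with $D_0(R)\not\subseteq fR$. This is legitimate because $f$ is prime and $f\in\ker D$. Homogenizing or multiplying by kernel elements can only make $D$ more divisible by $f$.

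The subcase $D(f)\neq 0=D^2(f)$, where $f$ is a local slice of $D$, is untouched. The derivation $E=D(f)D$ still has $E(f)=D(f)^2\neq 0$ and $E^2(f)=0$, so nothing is gained. Theorem \ref{ABTheorem}(b) applies only when $f$ has two terms, i.e.\ $k=2$. What is missing is an argument that the rigidity of $R/fR=B_{S_i}$ forbids $f$ from being a local slice of a nonzero locally nilpotent derivation of $R=\bk^{[k]}$.

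The same gap appears when $g_i(S)=2$ and another entry equals $2$. For $S=(4,2,3,3)$, index $0$ is the only one counted by the cotype, and $S^\flat=(2,2,3,3)$ is non-rigid. Your polynomial-ring argument does not apply there. By Lemma \ref{qeLemma}, you would have to show that no nonzero $D\in\lnd(B_{2,2,3,3})$ satisfies $D^2(\bar x_0)=0$. The subcase $D(\bar x_0)=0$ again follows from $B_{2,2,3,3}/\lb \bar x_0\rb\isom B_{2,3,3}$ and hypothesis (1), but the local-slice subcase remains open. Until these cases are handled, the proposal does not establish the theorem.
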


The next step is to apply a beautiful result of Kishimoto, Prokhorov and Zaidenberg, connecting the existence of non-zero locally nilpotent derivations of an $\Nat$-graded ring $B$ with the existence of certain special types of cylinders on $\Proj B$. Theorem \ref{PBCanonical2} below is an application of their original result, which can be found as \cite[Theorem 0.6]{KishimotoProkhorovZaidenberg}. Note that once again, this result is valid in any dimension. Before stating it however, we need a definition. 

We recall that a $\Rat$-divisor on a normal projective variety $X$ is an element of $\Rat \otimes_\Integ \Div(X)$. We denote the set of $\Rat$-divisors by $\Div(X, \Rat)$. A $\Rat$-divisor $D$ is ample if there exists some $n \in \Nat^+$ such that $\OSheaf_X(nD)$ is a very ample invertible sheaf. It is $\Rat$-Cartier if there exists some $n$ such that $nD$ is a Cartier divisor.   

\begin{definition}
    Let $X$ be a variety over $\bk$. A \textit{cylinder in $X$} is an open subset $U$ such that $U \isom Z \times_\bk \Aff^1_\bk$ for some affine variety $Z$. 
\end{definition}

\begin{definition}
    Let $H$ be an ample $\Rat$-divisor on a projective normal $\bk$-variety $X$. Let
$U \subseteq X$ be a cylinder of $X$. Then, the cylinder $U \subseteq  X$ is \textit{$H$-polar} if $U = X \setminus \Supp(D)$ for
some effective $\Rat$-Cartier $\Rat$-divisor $D \in \Div(X, \Rat)$ such that $D \sim sH$ where $s \in \Rat^+$.
\end{definition}

The following combines \cite[Theorem 4.1.14]{ChitayatThesis} (the aforementioned application of \cite[Theorem 0.6]{KishimotoProkhorovZaidenberg}) with \cite[Proposition 2.3.2]{chitayat2025rigid} which essentially says that a normal variety with mild enough singularities and pseudoeffective canonical divisor cannot contain a cylinder. 

\begin{theorem} \label{PBCanonical2}
	Let $n \geq 2$ and consider $(a_0,\dots, a_n) \in (\Nat^+)^{n+1}$ of cotype $0$.
	Let $B = B_{a_0,\dots, a_n}$, let $X = \Proj(B)$, and let $\alpha$ be the amplitude of $X$. Then,
	\begin{enumerate}[\rm(a)]
		
		\item $\omega_X \isom \OSheaf_X(\alpha)$.
		\item If $\alpha \geq 0$, then $K_X$ is pseudoeffective and $B$ is rigid.
		\item Assume that $\alpha < 0$. Then $-K_X$ is ample and the following are equivalent:
		\begin{enumerate}[\rm(i)]
			\item $B$ is not rigid;			
			\item there exists a $(-K_X)$-polar cylinder of $X$.
		\end{enumerate}
	\end{enumerate}
\end{theorem}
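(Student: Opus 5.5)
Part (a) comes straight from the results on weighted hypersurfaces. Since $\cotype(a_0,\dots,a_n)=0$, Proposition \ref{PBsatCodim1} shows that $X=X_f\subseteq\PPP(w_0,\dots,w_n)$ is quasismooth and well-formed. The theorem of Dolgachev then gives $\omega_X\isom\OSheaf_X(\alpha)$, where $\alpha=L-\sum w_i$ is the amplitude computed in Example \ref{pbGrading}. By Remark \ref{qswciProperties}, $X$ is also normal, $\Rat$-factorial, with at most cyclic quotient (hence rational, klt) singularities. This fixes the singularity hypotheses needed below.

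For (b), suppose $\alpha\ge 0$. Then $\OSheaf_X(\alpha)$ has a nonzero global section: by Lemma \ref{FletcherCohomology}, $H^0(X,\OSheaf_X(\alpha))\isom B_\alpha$, and $B_0=\bk\neq 0$ while $B_\alpha\neq0$ for $\alpha>0$ large enough in the semigroup. More robustly, some multiple $mK_X\sim m\alpha H$ is effective because $\OSheaf_X(1)$ is $\Rat$-ample. So $K_X$ is $\Rat$-linearly equivalent to an effective $\Rat$-divisor, and in particular is pseudoeffective. Next I would use the Kishimoto--Prokhorov--Zaidenberg criterion in the form of \cite[Theorem 4.1.14]{ChitayatThesis}: $B$ is non-rigid if and only if $X$ contains an $H$-polar cylinder, where $H$ is the ample $\Rat$-divisor with $\OSheaf_X(H)=\OSheaf_X(1)$. (In the cotype $0$ case $B$ is recovered as $\bigoplus_d H^0(X,\OSheaf_X(dH))$.) I would then invoke \cite[Proposition 2.3.2]{chitayat2025rigid}: a normal projective variety with mild singularities and pseudoeffective $K_X$ contains no cylinder at all. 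The underlying reason is that a cylinder $Z\times\Aff^1$ gives a covering family of rational curves $\bar C$ meeting the boundary once, and these force $K_X\cdot\bar C<0$, contradicting pseudoeffectivity. Hence there is no polar cylinder, and $B$ is rigid.

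For (c), suppose $\alpha<0$. Then $-K_X\sim_{\Rat}(-\alpha)H$ with $-\alpha>0$, so $-K_X$ is ample. An $H$-polar cylinder is the same as a $(-K_X)$-polar cylinder, since $D\sim sH$ with $s\in\Rat^+$ exactly when $D\sim s'(-K_X)$ with $s'=s/(-\alpha)$. The equivalence of (i) and (ii) is then the Kishimoto--Prokhorov--Zaidenberg criterion applied with this $H$.

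I expect the main obstacle to be checking that the hypotheses of \cite[Theorem 0.6]{KishimotoProkhorovZaidenberg} hold. That criterion needs $B$ to be a normal graded ring of the form $\bigoplus_{d\ge0}H^0(X,\OSheaf_X(dH))$ for an ample $\Rat$-divisor $H$ (the Demazure presentation). I would obtain this from Corollary \ref{PBNormal} together with the degree-$0$ part of Lemma \ref{FletcherCohomology}, which gives $H^0(X,\OSheaf_X(k))\isom B_k$. Well-formedness, which comes from cotype $0$, is what makes $\OSheaf_X(k)$ agree with the divisorial sheaf $\OSheaf_X(kH)$.
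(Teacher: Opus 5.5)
Your proposal is correct and takes essentially the same route as the paper. You get (a) from Proposition \ref{PBsatCodim1} together with Dolgachev's theorem, and you get (b) and (c) by combining the Kishimoto--Prokhorov--Zaidenberg criterion (in the form of \cite[Theorem 4.1.14]{ChitayatThesis}, with $H$ the divisor of $\OSheaf_X(1)$, so that $H$-polar and $(-K_X)$-polar coincide when $\alpha<0$) with \cite[Proposition 2.3.2]{chitayat2025rigid}, which excludes cylinders when $K_X$ is pseudoeffective. The only slip is your first justification of pseudoeffectivity, since $B_\alpha$ can vanish for $\alpha>0$ (e.g.\ $(3,3,7,7)$, where $\alpha=1$ and $B_1=0$); your second argument, $K_X\sim_{\Rat}\alpha H$ with $H$ ample, is the right one.
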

\begin{remark}\label{alphaEquivalence}
    Let $B = B_{a_0,\dots, a_n}$ and let $X = \Proj B$. Then it is easy to check that $\alpha < 0$ if and only if $\sum_{i = 0}^n \frac{1}{a_i} > 1$. 
\end{remark}

\begin{nothing}
Normal varieties with mild singularities (say klt singularities) such that $-K_X$ is $\Rat$-Cartier and ample are called \textit{Fano varieties}. If $B = B_{a_0, \dots, a_n}$, $X = \Proj B$ and $-K_X$ is ample, then $X$ is indeed a Fano variety.  By combining Theorem \ref{reduction} with Theorem \ref{PBCanonical2}, an easy induction argument shows that in order to prove Conjecture \ref{PBConjecture} for every $n$, it suffices to prove it in the special case where $\cotype(a_0, \dots, a_n) = 0$ and $\Proj B_{a_0, \dots, a_n}$ is a Fano variety.
\end{nothing}

\subsection{The $3$-dimensional case}

Because we know that the $n = 2$ case of Conjecture \ref{PBConjecture} is true by Proposition \ref{PB2d}, to prove the $n = 3$ case of Conjecture \ref{PBConjecture} it suffices to prove it in the special case where $\cotype(a_0, a_1, a_2, a_3) = 0$ and $\Proj B_{a_0, a_1, a_2, a_3}$ has ample anti-canonical divisor. The nice thing about the 3-dimensional case is that (up to a permutation) there are only 8 tuples $(a_0, a_1, a_2, a_3)$ that satisfy the three conditions
\begin{enumerate}[\rm(i)]
    \item no $a_i = 1$ and no two distinct $i,j$ satisfy $a_i = a_j = 2$,
    \item $\cotype(a_0, a_1,a_2,a_3) = 0$,
    \item $X = \Proj B_{a_0, a_1, a_2, a_3}$ has ample anti-canonical divisor (equivalently $\alpha < 0$),
\end{enumerate}
and it is an exercise in elementary number theory to show that this list of 8 tuples consists of the following elements:
\begin{itemize}
    \item $(2,3,3,6)$
    \item $(2,3,6,6)$
    \item $(2,4,4,4)$
    \item $(3,3,3,3)$
    \item $(3,3,4,4)$
    \item $(3,3,5,5)$
    \item $(2,3,4,12)$
    \item $(2,3,5,30)$.
\end{itemize}

By Theorem \ref{PBCanonical2}, it suffices to show that for each of the above tuples, $X = \Proj B_{a_0, a_1, a_2, a_3}$ doesn't contain a $-K_X$-polar cylinder. After classifying the singularity types of these 8 surfaces, it turns out that the first 6 tuples in the list are members of families that were already known to not contain $-K_X$-polar cylinders. (See \cite[Sections 4.5 and 4.6]{ChitayatThesis} for details.) The $(2,3,3,6)$, $(2,3,6,6)$,$(2,4,4,4)$ and $(3,3,3,3)$ cases can be shown to not have $-K_X$-polar cylinders by applying Theorem \ref{thm:CPWantiCanonical} below.

\begin{theorem}$($\cite[Theorem 1.5]{cheltsov_park_won_2016}$)$\label{thm:CPWantiCanonical}
Let $S$ be a del Pezzo surface of degree $d$ with at most Du Val singularities. The surface $S$ does not admit a $-K_{S}$-polar cylinder if and only if one of the following hold:
    \begin{enumerate}[\rm(1)]
		\item $d = 1$ and $S$ allows only singular points of types $A_1, A_2, A_3, D_4$ if any;
		\item $d = 2$ and $S$ allows only singular points of type $A_1$ if any;
		\item $d = 3$ and $S$ allows no singular point.
\end{enumerate}
\end{theorem}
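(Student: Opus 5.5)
The statement has two directions of a different nature. For the ``if'' direction (no $-K_S$-polar cylinder in cases (1)--(3)) I would argue by contradiction through log canonical thresholds. For the ``only if'' direction I would construct cylinders explicitly, case by case. Since $S$ has only Du Val singularities, I would pass freely to the minimal resolution $\pi\colon \tilde S \to S$, where $K_{\tilde S} = \pi^*K_S$. This lets me work with the configuration of $(-1)$- and $(-2)$-curves on a weak del Pezzo surface of the same degree $d$.

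For the ``if'' direction, suppose $U = S \setminus \Supp(D) \isom Z \times \Aff^1$ with $D$ an effective $\Rat$-divisor, $D \sim_\Rat -K_S$.

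\emph{Step 1: find a non-log-canonical point.} The projection $U \to Z$ extends to a rational map $S \dashrightarrow \PPP^1$ given by a pencil. Resolving its base locus, the fibres of $\Aff^1$ close up to curves meeting the boundary in a single point. I would first show that $(S,D)$ is not log canonical at some point $P$; this is a standard lemma in the cylinder literature. Its proof is a discrepancy computation on the resolution: the compactifying section at infinity, or a base point of the pencil, forces a divisor with discrepancy $< -1$. If the pencil has no base point, one instead gets a contradiction with $\Pic$ considerations.

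\emph{Step 2: reduce to a local inequality.} Let $T_P \in |-K_S|$ be the tangent (anticanonical) curve at $P$. Replacing $D$ by $(1+\mu)D - \mu T_P$ for a suitable $\mu \geq 0$ keeps $D$ effective and $\Rat$-linearly equivalent to $-K_S$, keeps $(S,D)$ non-lc at $P$, and removes at least one component of $T_P$ from $\Supp D$. Then for some component $C$ of $T_P$ through $P$ we get $D\cdot C \geq \operatorname{mult}_P(D\cdot C) > 1$ (or $> $ a smaller bound at an $A_n$ or $D_4$ point, computed on the resolution). On the other hand $D \cdot C \leq -K_S \cdot C \leq d \leq 3$.

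\emph{Step 3: case-check.} I would go through the possible $T_P$ (nodal or cuspidal cubic, line plus conic, three lines, and so on) in degrees $1,2,3$ under the singularity restrictions. The restrictions are exactly what keeps all anticanonical curves irreducible or mildly reducible, so that the local multiplicity estimates near $A_1, A_2, A_3, D_4$ points beat the global degree bound. This case analysis, especially in degree $1$ near $A_3$ and $D_4$ points, is the step I expect to be the main obstacle.

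For the ``only if'' direction, I would use that degree $\geq 4$ (and degree $3$ or $2$ with worse singularities) gives enough special curves to produce a cylinder.
\begin{itemize}
\item Project from a singular point, or contract $(-1)$-curves on $\tilde S$ to reach $\PPP^2$ or a Hirzebruch surface $\mathbb{F}_n$.
\item There take the complement of a line together with a tangent conic, or of a section and fibres, which is $\isom \Aff^1 \times Z$.
\item Pull back and check that the boundary, which consists of the images of the contracted curves plus a few extra curves, supports an effective $\Rat$-divisor $\Rat$-equivalent to $-K_S$ with positive coefficients on every boundary component. This last point is a linear system in $\Pic(\tilde S)\otimes\Rat$, solved using the explicit intersection matrix.
\end{itemize}
For degree $1$ with $A_4$, $D_5$, $E_6$, $E_7$, $E_8$ or higher $A_n$, degree $2$ with anything beyond $A_1$, and degree $3$ with any singular point, I would exhibit such a boundary configuration separately for each singularity type.
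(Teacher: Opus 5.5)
The paper gives no proof of this statement; it is quoted from \cite{cheltsov_park_won_2016}. Your architecture is the standard one used there: cylinder $\Rightarrow$ non-log-canonical point, tangent curve $T_P$, the reduction $D\mapsto (1+\mu)D-\mu T_P$, and explicit boundary configurations for the converse. However, the non-existence half has two genuine gaps.

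First, you assert without justification that $(1+\mu)D-\mu T_P$ is still not lc at $P$. Convexity gives this only when $(S,T_P)$ is lc at $P$. The relevant cases are exactly those where it is not: cuspidal $T_P$, a line plus a conic tangent to it at $P$, three concurrent lines, and their analogues through Du Val points. What makes the step work is a uniform form of your Step 1. Pass first to the minimal resolution, which is harmless because a cylinder lies in the smooth locus, and then resolve the pencil. Intersecting with a general fibre shows that the horizontal boundary curve has coefficient exactly $2$ in the crepant pullback of \emph{every} effective $D'\sim_{\Rat}-K_S$ with $\Supp D'\subseteq S\setminus U$. Hence all such $D'$, including $(1+\mu)D-\mu T_P$, fail to be lc at the same base point $P$, or along the same section curve when the pencil is base-point free. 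In that last case there is no ``Pic'' contradiction. You get non-lc along a curve and must run the local argument at a general point of it.

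Second, and more seriously, the inequality in Step 2 does not contradict anything in general. The bound $D\cdot C>1$ against $D\cdot C=-K_S\cdot C$ is contradictory only when $-K_S\cdot C=1$. For irreducible $T_P$ you get $(D\cdot T_P)_P>\mathrm{mult}_PT_P$. At smooth points this contradicts $D\cdot T_P=d$ in degrees $1$ and $2$, but not in degree $3$: for a cuspidal $T_P$ on a smooth cubic it only gives $2<3$. Similarly, take $T_P=L+C$ with $C$ a conic tangent to the line $L$ at $P$, suppose the component that drops out of $\Supp D$ is $C$, and write $D=aL+\Omega$. Then you only get $(\Omega\cdot C)_P>1-a$ against $\Omega\cdot C=2-2a$, which is consistent whenever $a<1$.

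The entire content of this direction is the local theorem ``if $(S,D)$ is not lc at $P$, then $\Supp D$ contains every component of $T_P$.'' This is proved for smooth cubics in \cite{affineCones} and for the listed Du Val cases in \cite{cheltsov_park_won_2016}. The proof blows up at $P$, applies inversion of adjunction to exceptional curves and strict transforms, and, at $A_n$ and $D_4$ points, analyses $\pi^*D$ against the $(-2)$-curves on the minimal resolution in detail. Your Step 3 defers exactly this, so the ``if'' direction is not yet proved.

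For the converse, your plan is the right one but is unexecuted. The boundary on $\tilde S$ must contain every $(-2)$-curve, and showing the solution of the linear system has positive coefficients is the real work. Your degree-$1$ list must also cover $D_6$, $D_7$ and $D_8$.
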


The $(3,3,4,4)$ and $(3,3,5,5)$ cases are a consequence of \cite[Lemmas 4.1 and 5.1]{Cheltsov2010}. Those results show that the log canonical thresholds of $\Proj B_{3,3,4,4}$ and $\Proj B_{3,3,5,5}$ are at least 1, and that condition together with \cite[Theorem 1.2.6]{cheltsov2020cylinders} implies that both of those surfaces cannot contain an anti-canonical polar cylinder. 

In \cite{chitayat2025rigid}, we show that the same is true for the $(2,3,4,12)$ and $(2,3,5,30)$ cases. The proofs that $\Proj B_{2,3,5,30}$ and $\Proj B_{2,3,4,12}$ do not contain $-K_X$-polar cylinders are given by Propositions 3.2.7 and 3.3.9 in \cite{chitayat2025rigid}. The method in the $(2,3,5,30)$ case (resp. $(2,3,4,12)$ case) is to assume that $X$ contains a $-K_X$-polar cylinder and to derive a contradiction by showing that after a sequence of blowing up and blowing down operations, we can produce an anti-canonical polar cylinder in a smooth del Pezzo surface of degree 1 (resp. degree 2), which contradicts the classification given in Theorem \ref{thm:CPWantiCanonical}.

In conclusion we have:
\begin{theorem}\label{RigidityTheorem}
Assume $\Rat(i) \subseteq \bk$ and let $B = B_{\bk; a_0, a_1, a_2, a_3}$ be a 3-dimensional Pham-Brieskorn ring. Then $B_{\bk; a_0, a_1, a_2, a_3}$ is non-rigid if and only if $a_i = 1$ for some $i$ or $a_i = a_j = 2$ for some $i \neq j$.
\end{theorem}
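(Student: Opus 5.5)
The ($\Leftarrow$) direction is Remark \ref{easyPart}, valid for any $\bk \supseteq \Rat(i)$. For ($\Rightarrow$) I argue by contrapositive. Assume no $a_i = 1$ and no two distinct $a_i, a_j$ equal $2$. I first treat $\bk = \Comp$ and then pass to an arbitrary $\bk \supseteq \Rat(i)$ by a Lefschetz-type argument modelled on Corollary \ref{generalCancellation}.

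For $\bk = \Comp$, I apply Theorem \ref{reduction} with $k = 3$. Hypothesis (1) is Proposition \ref{PB2d}, so it suffices to prove rigidity when $\cotype(a_0,\dots,a_3) = 0$. In that case $X = \Proj B$ is well-formed and quasismooth by Proposition \ref{PBsatCodim1}. If $\alpha \geq 0$, Theorem \ref{PBCanonical2}(b) gives rigidity. If $\alpha < 0$, i.e.\ $\sum 1/a_i > 1$ by Remark \ref{alphaEquivalence}, an elementary number-theoretic enumeration of tuples satisfying (i)--(iii) leaves exactly the eight listed tuples. By Theorem \ref{PBCanonical2}(c), it then suffices to show that each such $X$ has no $(-K_X)$-polar cylinder. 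I would do this case by case. For $(2,3,3,6)$, $(2,3,6,6)$, $(2,4,4,4)$ and $(3,3,3,3)$, I would identify $X$ as a Du Val del Pezzo surface of degree $1$, $2$ or $3$ with the allowed singularities and apply Theorem \ref{thm:CPWantiCanonical}; the singularity types come from Proposition \ref{singularPoints}, since $\Sing X = X \cap \Sing \PPP$. For $(3,3,4,4)$ and $(3,3,5,5)$, I would use the log canonical threshold bound $\geq 1$ from \cite{Cheltsov2010} together with \cite[Theorem 1.2.6]{cheltsov2020cylinders}. For $(2,3,4,12)$ and $(2,3,5,30)$, I would cite \cite[Propositions 3.2.7, 3.3.9]{chitayat2025rigid}. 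These last two are the genuine obstacle: supposing a polar cylinder exists, one blows up and down to produce an anticanonical polar cylinder on a smooth del Pezzo surface of degree $1$ or $2$, which contradicts Theorem \ref{thm:CPWantiCanonical}.

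To extend to general $\bk \supseteq \Rat(i)$, suppose $0 \neq D \in \lnd(B_{\bk;\ba})$. Let $K \subseteq \bk$ be the subfield generated over $\Rat(i)$ by the coefficients of representatives in $\bk[x_0,\dots,x_3]$ of $D(\bar x_0),\dots,D(\bar x_3)$. Then $K$ is finitely generated over $\Rat$, and $D$ restricts to a nonzero derivation $D_0$ of $B_{K;\ba}$. $D_0$ is locally nilpotent because it is a restriction of $D$. By Lemma \ref{fieldEmbedding}, embed $K \hookrightarrow \Comp$. Then $D_0 \otimes 1$ is a nonzero locally nilpotent derivation of $B_{K;\ba} \otimes_K \Comp \isom B_{\Comp;\ba}$: local nilpotence holds on generators $b \otimes c$, and nonzeroness follows by flatness. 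This contradicts the $\Comp$ case. The only care needed is that $D_0$ maps $B_{K;\ba}$ into itself, which holds because a derivation is determined by its values on the generators $\bar x_i$.
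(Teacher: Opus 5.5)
Your proposal is correct and follows the paper's route: you reduce via Theorem \ref{reduction} and Proposition \ref{PB2d} to cotype $0$, use Theorem \ref{PBCanonical2} to dispose of $\alpha \geq 0$ and to reduce to excluding $(-K_X)$-polar cylinders on the eight listed surfaces (with the same three groups of citations), and finish with the Lefschetz-type descent of Remark \ref{LefschetzRigidity}. Your descent step, which restricts a nonzero LND to $B_{K;\ba}$ and then base-changes to $\Comp$, is a valid instance of that argument; it implicitly uses that an LND of a characteristic-zero domain kills units, so it is $\bk$-linear and hence determined by the values $D(\bar x_i)$.
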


\begin{remark}\label{LefschetzRigidity}
    Note that Theorem \ref{RigidityTheorem} is stated for any $\bk$ containing $\Rat(i)$ but the original proof is given over $\Comp$. This more general case can be obtained by applying the Lefschetz type argument demonstrated in Corollary \ref{generalCancellation}. 
\end{remark}

\begin{problem}
    Extend the classification of which del Pezzo surfaces $S$ contain $-K_S$-polar cylinders to include surfaces with quotient singularities. 
\end{problem}

\begin{example}
    It is worth noting that Theorem \ref{thm:CPWantiCanonical} implies the existence of
    rational surfaces that contain cylinders but do not contain $-K_X$-polar cylinders.
    For example, let $X = \Proj B_{\Comp; 3, 3, 3, 3}$. Then $X$ is a smooth cubic surface in $\PPP^3$ and hence is a del Pezzo surface of degree $K_X^2 = 3$. By
    Theorem \ref{thm:CPWantiCanonical}, $X$ contains no $-K_X$-polar cylinder. We exhibit a cylinder in $X$.

    Being a smooth del Pezzo surface of degree $3$, $X$ is obtained from $\PPP^2$ by
    blowing up $6$ points $p_1, \dots, p_6$ in general position (no three on a line and
    not all six on a conic), giving a sequence
    \[
        X = X_6 \overset{\pi_6}{\longrightarrow} X_5 \overset{\pi_5}{\longrightarrow}
        \dots \overset{\pi_2}{\longrightarrow} X_1 = \mathbb{F}_1
        \overset{\pi_1}{\longrightarrow} \PPP^2 ,
    \]
    where each $\pi_i$ is the blowup at $p_i \in X_{i-1}$. Write $E_i$ for the total
    transform in $X$ of the exceptional curve of $\pi_i$. Since no three of the $p_i$ are
    collinear, under the ruling $\mathbb{F}_1 \to \PPP^1$ induced by the pencil of lines through $p_1$, the points $p_2, \dots, p_6$ lie on distinct fibers. Let
    $F_i \subseteq \mathbb{F}_1$ be the fiber containing $p_i$, and let $\tilde{F}_i \subseteq X$ denote its
    strict transform. Then 
    \[
        U = X \setminus \left( \bigcup_{i=1}^{6} E_i \cup \bigcup_{i=2}^{6} \tilde{F}_i \right)
    \]
    is a cylinder in $X$.
\end{example}

\subsection{The higher dimensional cases}

When $n \geq 4$, $B = B_{\bk; a_0, \dots, a_n}$ is a UFD by Proposition \ref{highDimUFD} and it can be shown that $\CaCl(\Proj B) = \Integ$. In view of the following (relatively straightforward) proposition, we obtain that every cylinder in $X = \Proj B$ is an $H$-polar cylinder for some ample $\Rat$-divisor $H$.

\begin{proposition}\cite[Proposition 4.9.5]{ChitayatThesis}
Let $X$ be a projective normal $\Rat$-factorial variety and suppose that
$\CaCl(X) \isom \Integ$. Let $H$ be any ample $\Rat$-divisor on $X$. Then every cylinder $U$ in X is an $H$-polar cylinder. 
\end{proposition}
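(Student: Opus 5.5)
Let $U \subseteq X$ be a cylinder, $U \isom Z \times \Aff^1$ with $Z$ affine, and let $H$ be an ample $\Rat$-divisor. The plan is to produce an effective $\Rat$-Cartier $\Rat$-divisor $D$ with $\Supp D = X \setminus U$ and $D \sim sH$ for some $s \in \Rat^+$.

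\textbf{Step 1: the complement is a divisor.} Since $U$ is affine (a product of affine varieties) and $X$ is normal and projective, the complement $X \setminus U$ has pure codimension one. This is the standard fact that the complement of an affine open subset of a normal (separated) variety is of pure codimension $1$; one can see it from Hartogs-type extension of regular functions across codimension $\geq 2$. Write $X \setminus U = \bigcup_{j=1}^m D_j$ with the $D_j$ prime divisors. The set is nonempty: otherwise $X = U$ would be both affine and projective of positive dimension, which is impossible.

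\textbf{Step 2: produce $D$ using rank one.} Because $X$ is $\Rat$-factorial, each $D_j$ has a multiple $N_j D_j$ that is Cartier. Since $\CaCl(X) \isom \Integ$, tensoring with $\Rat$ gives a one-dimensional space of classes of $\Rat$-Cartier $\Rat$-divisors. Ampleness of $H$ means its class is a nonzero element $h$ of this space, so we may identify the space with $\Rat$ with $h$ positive. Each effective nonzero divisor $D_j$ then has class $c_j h$ for some $c_j \in \Rat$. I would show $c_j > 0$ by intersecting with a curve: a suitable multiple of $H$ is very ample, and a general complete-intersection curve $C$ cut out by members of $|kH|$ meets $D_j$ in finitely many points but is not contained in it. Then $D_j \cdot C > 0$ while $H \cdot C > 0$, and the relation $D_j \equiv c_j H$ gives $c_j = (D_j\cdot C)/(H\cdot C) > 0$. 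Setting $D = \sum_j D_j$, we get $D \sim_\Rat sH$ with $s = \sum c_j > 0$, and $\Supp D = X \setminus U$, so $U$ is $H$-polar.

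\textbf{Main obstacle.} The delicate points are Step 1 and the meaning of $\sim$ for $\Rat$-divisors. For Step 1 I would cite EGA IV 21.12.7, or argue directly via normality. For the second point, linear equivalence of $\Rat$-divisors should be read as: some integer multiple of the difference is principal. With that reading, the identification $\CaCl(X)\otimes\Rat \isom \Rat$ gives exactly the needed relation, after clearing denominators with the $N_j$ and the denominator of $H$.
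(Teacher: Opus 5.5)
Your proof is correct and is essentially the standard argument behind this result. The survey gives no proof of its own; it only cites the thesis and calls the result relatively straightforward. The argument runs: the complement of the affine open set $U$ in the normal projective variety $X$ is a nonempty union of prime divisors $D_j$; $\Rat$-factoriality together with $\CaCl(X) \isom \Integ$ makes each $D_j$ $\Rat$-linearly equivalent to $c_j H$ for some $c_j \in \Rat$; and effectivity of $D_j$ against ampleness of $H$ forces $c_j > 0$.

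Your Bertini/intersection-number step for $c_j > 0$ is valid but can be shortened. If $c_j \leq 0$, take $H'$ an effective $\Rat$-divisor $\Rat$-linearly equivalent to $H$ (which exists since a multiple of $H$ is very ample). Then $D_j + (-c_j)H'$ would be a nonzero effective divisor $\Rat$-linearly equivalent to $0$, so some multiple of it would be the divisor of a regular function on the projective variety $X$ with nonzero divisor, which is impossible.
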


Thus, to prove Conjecture \ref{PBConjecture} for $n \geq 4$, it suffices to solve the following (more general) problem in the special case of Pham-Brieskorn hypersurfaces. 

\begin{problem}
    Let $X_f \subseteq \PPP(w_0, \dots, w_n)$ be a well-formed quasismooth weighted hypersurface of dimension at least 3 that is also a Fano variety. Describe when $X_f$ contains a cylinder. 
\end{problem}

The study of when a Fano variety of dimension at least 3 admits a cylinder is generally a difficult problem, and many papers have been written on the topic. For an excellent survey on the topic, see \cite{cheltsov2020cylinders}.


\section{Rationality}

In this section, we consider the following natural question; the $n = 3$ case of Question \ref{PBRationalQuestion} was asked to me by Rajendra V. Gurjar during my Ph.D. oral exam. 

\begin{question}\label{PBRationalQuestion}
    Let $\bk$ be an algebraically closed field of characteristic zero. For which tuples $(a_0, \dots, a_n)$ is $B_{\bk; a_0, \dots, a_n}$ rational?
\end{question}
Before addressing the above, we begin with a general Lemma which shows how with rationality questions in characteristic zero, it often (but not always) suffices to consider the special case $\bk = \Comp$. The spirit of the reduction is similar to the Lefschetz principle argument described before Corollary \ref{generalCancellation}.  

\begin{lemma}\label{lem:descent}
Let $\bk$ be an algebraically closed field and let $K/\bk$ be a field extension.
Suppose $X$ is an integral $\bk$-variety. Then $X_K = X \times_\bk \Spec K$ is $K$-rational if and
only if $X$ is $\bk$-rational.
\end{lemma}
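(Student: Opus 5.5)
The direction ($\Leftarrow$) is immediate. If $X$ is $\bk$-rational, choose a birational map $X \dashrightarrow \PPP^n_\bk$, i.e.\ an isomorphism of function fields $\bk(X) \isom \bk(t_1,\dots,t_n)$. Base change to $K$ gives a birational map $X_K \dashrightarrow \PPP^n_K$. To see that $X_K$ is an integral $K$-variety whose function field is $\Frac(\bk(X)\otimes_\bk K)$, I will use that $\bk$ is algebraically closed. Then $X$ is geometrically integral: $\bk(X)/\bk$ is a regular extension, so $\bk(X)\otimes_\bk K$ is a domain. Hence $K(X_K) \isom \Frac(\bk(t_1,\dots,t_n)\otimes_\bk K) = K(t_1,\dots,t_n)$.

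For ($\Rightarrow$) I will use the spreading-out technique of Corollary \ref{generalCancellation}, with $\bk$ playing the role of the base field. Work with an affine open $\Spec A \subseteq X$, where $A = \bk[b_1,\dots,b_m]$ is a $\bk$-domain, so that $K(X_K) = \Frac(A\otimes_\bk K)$. A $K$-isomorphism $\Frac(A \otimes_\bk K) \isom K(t_1,\dots,t_n)$ is given by finitely many rational functions: the images of the $b_i$ in $K(t)$, and the images of the $t_j$ as fractions in $A\otimes_\bk K$. Let $R \subseteq K$ be the $\bk$-subalgebra generated by all coefficients appearing in these expressions, in the verifying identities, and by the inverses of the relevant nonzero denominators. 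Then $R$ is a finitely generated $\bk$-domain, and the isomorphism is already defined over $L = \Frac R$. Better, after inverting one more element of $R$, it restricts to an isomorphism of $R$-algebras
\[
(A\otimes_\bk R)_g \isom R[t_1,\dots,t_n]_h
\]
for suitable nonzero $g$ and $h$. In particular I obtain mutually inverse rational maps over $R$ whose composites are the identity as identities of rational functions with coefficients in $R$.

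Next I specialize. Since $\bk$ is algebraically closed, the Nullstellensatz gives a $\bk$-point $\varepsilon: R \to \bk$. I choose it so that the finitely many chosen denominators and the relevant coefficients of $g$ and $h$ do not vanish under $\varepsilon$; this is possible because $\Spec R$ is an irreducible variety over an infinite field and its $\bk$-points are dense, as in Lemma \ref{rationalPointsAreDense}. Applying $\varepsilon$ to the two maps yields rational maps $X \dashrightarrow \Aff^n_\bk$ and $\Aff^n_\bk \dashrightarrow X$ over $\bk$. The identities expressing that they are mutually inverse specialize to identities in $\bk(X)$ and $\bk(t)$, so $X$ is $\bk$-rational.

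The main obstacle is the specialization step. The specialized maps must remain \emph{dominant} and well defined, which means the specialized denominators must be nonzero elements of $\bk(X)$ or $\bk(t)$, not merely nonzero in $R$. To handle this I will treat each denominator as a nonzero element of $A\otimes_\bk R$ or of $R[t]$, viewed as a free $R$-module on a $\bk$-basis of $A$ or on the monomials. Each such element has some nonzero $R$-coefficient, and I only require $\varepsilon$ to avoid the zero loci of these finitely many coefficients. Once the denominators survive, the composition identities specialize formally, and this gives the required birational inverse pair.
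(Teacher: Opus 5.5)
Your proposal is correct and follows essentially the same route as the paper: spread the isomorphism $(A\otimes_\bk K)_g\cong K[t_1,\dots,t_n]_h$ out over a finitely generated $\bk$-subalgebra $R\subseteq K$, then base change along a $\bk$-point $R\to\bk$ (Nullstellensatz) at which a nonzero coefficient of $g$ and of $h$ survives. One small fix: Lemma \ref{rationalPointsAreDense} does not apply to $R$, since you do not know that $\Frac R$ is purely transcendental over $\bk$; instead, as the paper does, invert the finitely many coefficients you need to keep nonzero and take any maximal ideal of the localized ring. Also, once you have the $R$-algebra isomorphism, base-changing it directly gives $A_{\bar g}\cong\bk[t_1,\dots,t_n]_{\bar h}$, so the extra denominator bookkeeping in your last paragraph is unnecessary.
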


\begin{proof}
The $(\Leftarrow)$ implication is clear. For $(\Rightarrow)$, let $n=\dim X$.
Since $\bk$ is algebraically closed, $X$ is geometrically integral, so $X_K$ is
integral of dimension $n$. Since there exists a birational map
$X_K\dashrightarrow\mathbb{A}^n_K$, there exists an isomorphism
$\varphi:(R_0\otimes_{\bk}K)_g\to K[t_1,\dots,t_n]_h$ for some affine open
$\Spec R_0\subseteq X$, some nonzero $g\in R_0\otimes_\bk K$ and some nonzero
$h\in K[t_1,\dots,t_n]$. Let $\psi$ denote the inverse of $\varphi$.

Fix a $\bk$-basis of $R_0$ and $\bk$-algebra generators $r_1,\dots,r_m$ of
$R_0$, so that $(R_0\otimes_\bk K)_g$ is generated as a $K$-algebra by
$y_0=1/g$ and $y_i=r_i\otimes1$ $(1\le i\le m)$, and every element of
$(R_0\otimes_\bk K)_g$ has well-defined coefficients in $K$ once written with
denominator a power of $g$. Let $A\subseteq K$ be the $\bk$-subalgebra
generated by the coefficients of $g$ and $h$, of $\varphi(y_0),\dots,
\varphi(y_m)$, of $\psi(t_1),\dots,\psi(t_n)$, and of $\psi(h)^{-1}$. Then $A$
is a finitely generated $\bk$-algebra, $g\in R_0\otimes_\bk A$,
$h\in A[t_1,\dots,t_n]$, and $\varphi$ and $\psi$ restrict to $A$-algebra
homomorphisms
\[
\varphi_0:(R_0\otimes_\bk A)_g\to A[t_1,\dots,t_n]_h,
\qquad
\psi_0:A[t_1,\dots,t_n]_h\to (R_0\otimes_\bk A)_g. 
\]
Since $R_0$ is a free $\bk$-module and $A \subseteq K$, 
$R_0\otimes_\bk A\subseteq R_0\otimes_\bk K$. Localization is exact, so the
vertical arrows in
\[
\begin{tikzcd}[column sep=large]
(R_0\otimes_\bk A)_g \arrow[r,"\varphi_0"] \arrow[d,hook, "i"]
  & A[t_1,\dots,t_n]_h \arrow[r,"\psi_0"] \arrow[d,hook]
  & (R_0\otimes_\bk A)_g \arrow[d,hook, "i"] \\
(R_0\otimes_\bk K)_g \arrow[r,"\varphi"']
  & K[t_1,\dots,t_n]_h \arrow[r,"\psi"']
  & (R_0\otimes_\bk K)_g
\end{tikzcd}
\]
are injective. Since $\psi \circ \varphi = \Id$, $\psi_0 \circ \varphi_0 = \Id$. The analogous diagram gives $\varphi_0 \circ \psi_0 = \Id$, so $\varphi_0$ and $\psi_0$ are isomorphisms.  

Write $g=\sum_\lambda e_\lambda\otimes a_\lambda$ in the chosen basis of $R_0$ over $\bk$. Fix
$a_{\lambda_0}\neq 0$; let $b\in A$ be a nonzero coefficient of $h$. Replacing
$A$ by $A[a_{\lambda_0}^{-1},b^{-1}]$ if necessary, we may assume $a_{\lambda_0}$ and $b$ are units of $A$. Since $\bk = \bar{\bk}$, every maximal ideal $\mathfrak{m} \lhd A$ satisfies $A/\mathfrak{m}=\bk$ (by Hilbert's Nullstellensatz). Let $\lambda:A\to A / \mgoth \isom \bk$ be the quotient map and let
$g_\lambda\in R_0$, $h_\lambda\in\bk[t_1,\dots,t_n]$ be the images of $g$ and
$h$; these are nonzero because $\lambda(a_{\lambda_0})\ne0$ and
$\lambda(b)\ne0$. Applying $-\otimes_{A,\lambda}\bk$ to $\varphi_0$, and using
that base change commutes with adjoining variables and with localization, gives
an isomorphism of $\bk$-algebras $(R_0)_{g_\lambda}\cong
\bk[t_1,\dots,t_n]_{h_\lambda}$. So 
$\Frac R_0\cong\bk(t_1,\dots,t_n)$. Since $\Spec R_0$ was an affine open in $X$, $X$ is $\bk$-rational.
\end{proof}

\begin{corollary}\label{PBdescent}
    Let $\bk$ be an algebraically closed field of characteristic zero. Then $\Proj B_{\bk; a_0, \dots, a_n}$ (resp. $\Spec B_{\bk; a_0, \dots, a_n}$) is $\bk$-rational if and only if $\Proj B_{\Comp; a_0, \dots, a_n}$ (resp. $\Spec B_{\Comp; a_0, \dots, a_n}$) is $\Comp$-rational.
\end{corollary}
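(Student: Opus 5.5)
The natural route passes through a common subfield. Let $\bk_0 = \bar{\Rat}$ be the algebraic closure of $\Rat$. Since $\bk$ is algebraically closed of characteristic zero, it contains a copy of $\bk_0$. Likewise $\bk_0 \subseteq \Comp$. The polynomial $f = x_0^{a_0} + \dots + x_n^{a_n}$ has coefficients in $\Rat \subseteq \bk_0$, and the grading of Example \ref{pbGrading} is defined over $\Rat$. So for any field extension $K/\bk_0$ we have
\[
B_{K; a_0, \dots, a_n} \isom B_{\bk_0; a_0, \dots, a_n} \otimes_{\bk_0} K
\]
as graded $K$-algebras. Passing to spectra,
\[
\Spec B_{K;\ba} \isom (\Spec B_{\bk_0;\ba}) \times_{\bk_0} \Spec K.
\]
Because $\Proj$ commutes with flat base change of the degree-zero field, we also get
\[
\Proj B_{K;\ba} \isom (\Proj B_{\bk_0;\ba}) \times_{\bk_0} \Spec K.
\]
This last fact can be checked on the standard affine charts $D_+(x_i)$: there $(B_{K;\ba})_{(x_i)} = (B_{\bk_0;\ba})_{(x_i)} \otimes_{\bk_0} K$, since localization and taking degree-zero parts commute with $-\otimes_{\bk_0} K$, and the gluing is compatible.

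Next I check the hypotheses of Lemma \ref{lem:descent}, applied over the algebraically closed base $\bk_0$ with $X = \Spec B_{\bk_0;\ba}$ (resp. $X = \Proj B_{\bk_0;\ba}$). I need $X$ to be an integral $\bk_0$-variety.
\begin{itemize}
\item For $\Spec$, integrality is Proposition \ref{integralDomain}, using $n \geq 2$.
\item For $\Proj$, integrality follows because $B$ is a graded domain, so $\Proj B$ is integral. It is also nonempty, since $B$ has positive-degree elements that are not nilpotent.
\end{itemize}
With these in place, Lemma \ref{lem:descent} applied to $K = \bk$ gives that $X_\bk$ is $\bk$-rational if and only if $X$ is $\bk_0$-rational. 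Applied to $K = \Comp$, it gives that $X_\Comp$ is $\Comp$-rational if and only if $X$ is $\bk_0$-rational. Combining the two equivalences proves the corollary for both $\Spec$ and $\Proj$.

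The only delicate point is the identification of $\Proj B_{K;\ba}$ with the base change of $\Proj B_{\bk_0;\ba}$. I expect this to be routine given the chart description above. One must also note that the embedding $\bk_0 \hookrightarrow \bk$ is not canonical. This is harmless, because $f$ has rational coefficients, so any embedding yields the same base change up to isomorphism.
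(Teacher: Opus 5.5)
Your proposal is correct and is essentially the paper's own argument. Both proofs route through the algebraic closure of $\Rat$ inside $\bk$, embedded in $\Comp$, identify the $\bk$- and $\Comp$-varieties as base changes from that field, and apply Lemma \ref{lem:descent} twice. You simply spell out the integrality hypotheses and the compatibility of $\Proj$ with base change, which the paper compresses into ``a standard base change.''
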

\begin{proof}
    Let $\kappa$ be the unique algebraic closure of $\Rat$ contained in $\bk$. Then note that we can embed $\kappa$ in $\Comp$ and so by a slight abuse of notation, we may also view $\kappa$ as a subfield of $\Comp$ after a suitable choice of embedding. Let $X_\bk = \Proj B_{\bk; a_0, \dots, a_n}$ and let $X_\Comp = \Proj B_{\Comp; a_0, \dots, a_n}$. Then by Lemma \ref{lem:descent} together with a standard base change, we have 
    $$X_\bk \text{ is $\bk$-rational} \iff X_\kappa \text{ is $\kappa$-rational} \iff X_\Comp \text{ is $\Comp$-rational.}$$
    The same argument holds for $\Spec$. 
\end{proof}

\begin{nothing}
    The $n = 1$ case of Question \ref{PBRationalQuestion} only makes sense if we assume $B_{\bk; a_0, a_1}$ is a domain. Furthermore, we saw in \ref{nequals2} that for $\bk = \bar{\bk}$, $B_{\bk; a_0, a_1}$ is rational whenever it is a domain, so this settles the $n = 1$ case. 
\end{nothing}

Given any $(a_0, \dots, a_n)$ where $n = 2$ or $n=3$ and $\bk = \bar{\bk}$, it is always possible to determine whether or not $B_{a_0, \dots, a_n}$ is rational. For the $n = 2$ case, there are numerical criteria based on the values of $(a_0, a_1, a_2)$. For the $n = 3$ case, there is an easy algorithm that determines whether or not $B_{a_0, a_1, a_2, a_3}$ is rational. The proof relies on the following easy application of Luroth's and Castelnuovo's theorems.

\begin{proposition}\label{CastelnuovoProp}
Let $\bk$ be a field and let $B=\bigoplus_{i\geq 0}B_i$ be an $\mathbb{N}$-graded finitely generated $\bk$-domain with $B_0=\bk$ and $B \neq \bk$. Assume $\dim B \leq 3$ and let $X = \Proj B$. \begin{enumerate}[\rm(a)]
    \item Then $K(\Spec B) = \operatorname{\Frac}(B)=K(X)^{(1)}$. 
    \item If $\bk$ is algebraically closed of characteristic zero, then $\Proj B$ is rational if and only if $\Spec B$ is rational. 
    \end{enumerate}
\end{proposition}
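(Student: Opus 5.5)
For (a), I would pick a homogeneous element $t \in B$ of positive degree $e$ and compare $\Frac(B)$ with the field $K(X) = \Frac(B)_0$. The latter is the field of degree-zero fractions $a/b$ with $a,b$ homogeneous of equal degree and $b \neq 0$. Let $N = \gcd\setspec{d}{B_d \neq 0}$. Since $B$ is a finitely generated domain, the semigroup $\setspec{d}{B_d\neq 0}$ generates $N\Integ$. So there exist homogeneous $u,v \in B\setminus\{0\}$ with $\deg u - \deg v = N$, and I set $s = u/v \in \Frac(B)$, which is homogeneous of degree $N$ in the graded field of fractions.

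The key step is to show $\Frac(B) = K(X)(s)$ with $s$ transcendental over $K(X)$. Containment: every homogeneous element $b$ of degree $kN$ satisfies $b/s^k \in K(X)$, so $b \in K(X)(s)$. Since homogeneous elements generate $B$, this gives $\Frac(B) \subseteq K(X)(s)$, and the reverse containment is clear. Transcendence: suppose $\sum c_i s^i = 0$ with $c_i \in K(X)$. Clearing denominators by a common homogeneous element, we get a relation among elements of pairwise distinct degrees $iN$ in the domain $B$ (or rather in the graded ring $B[v^{-1}]$-type localization, which is still a domain). Each homogeneous component must then vanish, forcing every $c_i = 0$.

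Hence $\Frac(B) = K(X)^{(1)}$. This part uses neither $\dim B \le 3$ nor any hypothesis on $\bk$. I would double-check that $B \ne \bk$ ensures $N$ exists and is positive.

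For (b), the direction $X$ rational $\Rightarrow$ $\Spec B$ rational is immediate from (a): $K(X) = \bk^{(n-1)}$ gives $\Frac B = \bk^{(n)}$. For the converse, suppose $\Frac B = \bk^{(m)}$ with $m = \dim B \le 3$, so $\trdeg_\bk K(X) = m-1 \le 2$. Then $K(X)$ is a subfield of the purely transcendental field $\bk^{(m)}$, and so $K(X)$ is unirational over $\bk$.

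I would split according to $m-1$. If $m-1 = 0$, then $K(X)$ is algebraic over $\bk = \bar\bk$, so $K(X) = \bk$. If $m-1 = 1$, Lüroth's theorem gives $K(X) = \bk^{(1)}$. If $m-1 = 2$, then $X$ is a unirational surface over an algebraically closed field of characteristic zero, and Castelnuovo's rationality criterion gives that $X$ is rational. I expect the main obstacle to be making the Castelnuovo step precise. To do so, I would take a smooth projective model $S$ of $K(X)$ and a dominant rational map $\PPP^2 \dashrightarrow S$. The pullback of differential forms then shows $q(S) = P_2(S) = 0$, and Castelnuovo's criterion concludes that $S$, hence $X$, is rational.
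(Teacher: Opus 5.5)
Your proposal is correct and takes essentially the paper's route: in (a) your degree-$N$ fraction $s=u/v$ plays the role of the paper's degree-$e$ unit $t$ in the homogeneous localization $B_S=(B_S)_0[t,t^{-1}]$ (you also check the transcendence of $s$ over $K(X)$, which the paper leaves implicit), and in (b) you make the same split by $\trdeg_\bk K(X)\in\{0,1,2\}$ into the trivial, L\"uroth and Castelnuovo cases. One small imprecision is that $K(X)\subseteq\bk^{(3)}$ directly gives a dominant map from $\PPP^3$, not $\PPP^2$, so either restrict to a general plane or run your pullback-of-forms argument on $\PPP^3$ itself, which works equally well since $H^0(\PPP^3,\Omega^1_{\PPP^3})=0$ and $H^0(\PPP^3,(\Omega^2_{\PPP^3})^{\otimes 2})=0$.
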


\begin{proof}
Let $S$ be the multiplicative set of nonzero homogeneous elements of $B$.
Then $B_S$ is a $\mathbb{Z}$-graded ring in which every nonzero homogeneous
element is a unit. The support of $B_S$, denoted by 
$\Supp(B_S) = \setspec{e \in \Integ}{(B_S)_e \neq 0}$ 
satisfies $\Supp(B_S) = e\Integ \subseteq \Integ$ for some $e \geq 1$ (since $B \neq B_0$). Choose some homogeneous $t\in B_S$ of degree $e$. We claim $B_S=(B_S)_0[t,t^{-1}]$. The $(\supseteq)$ direction is clear. For the converse, suppose $\frac{b}{s} \in B_S$ where $\frac{b}{s}$ is homogeneous of degree $me$ where $m \in \Integ$. Then $\frac{b}{st^{m}}t^{m} \in (B_S)_0[t,t^{-1}]$. Recall that by definition $(B_S)_0=K(\Proj B)$ and so $K(\Spec B) = \Frac(B) = \Frac B_S = \Frac \left( (B_S)_0[t,t^{-1} ] \right) = K(\Proj B)^{(1)}$. This proves (a). 

For (b), we need only prove $(\Leftarrow)$ since $(\Rightarrow)$ follows from (a). Let $d = \dim B =  \dim \Spec B$. Suppose $\Spec B$ is rational. Since $K(X) = (B_S)_0$ is a subfield of
$\Frac B \cong \bk(x_1,\dots,x_d)$ and $\trdeg_\bk K(X) = d-1$;
in particular $X$ is unirational. If $d=1$ then $X$ is a point and $K(X) = \bk$. If $d=2$ then Luroth's Theorem gives that $K(X) \isom \bk(t)$ for some $t \in \bk(x_1, x_2)$, so $K(\Proj B) \isom \bk(t) = \bk^{(1)}$ is rational.  If $d=3$ then
$X$ is a unirational surface and so Castelnuovo's
rationality criterion implies that $X$ is rational.
\end{proof}

Recall that by Corollary \ref{ProjIsoCorollary}, for every Pham-Brieskorn ring $B = B_{a_0, \dots, a_n}$, there exists  $B' = B_{b_0, \dots, b_n}$ such that $\Proj B \isom \Proj B'$ and $\cotype(b_0, \dots, b_n) = 0$. Furthermore, the tuple $(b_0, \dots, b_n)$ is easy to determine. So, in order to decide whether $\Proj B_{a_0, \dots, a_n}$ is rational, it suffices to consider the special case where $\cotype(a_0, \dots, a_n) = 0$.

\subsection{The 2-dimensional case} 

We recall the following characterization of $\PPP^1$. 

\begin{lemma}\label{genusZeroCurve}\cite[Example IV.1.3.5]{Hartshorne}
    Let $\bk$ be an algebraically closed field of characteristic zero and let $X$ be a smooth projective curve over $\bk$. Then $X \isom \PPP^1$ if and only if $p_g(X) = 0$. 
\end{lemma}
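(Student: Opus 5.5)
The lemma is the classical statement that a smooth projective curve $X$ over an algebraically closed field $\bk$ of characteristic zero is isomorphic to $\PPP^1$ if and only if its geometric genus vanishes. For a curve, $p_g(X)=\dim_\bk H^0(X,\omega_X)=\dim_\bk H^1(X,\OSheaf_X)$ is the arithmetic genus $g$. The plan is to prove each direction with Riemann--Roch.

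For $(\Rightarrow)$, I would compute directly. The canonical sheaf of $\PPP^1$ is $\omega_{\PPP^1}\isom\OSheaf_{\PPP^1}(-2)$, which has no nonzero global sections, so $p_g(\PPP^1)=0$. Alternatively, $H^1(\PPP^1,\OSheaf)=0$ by the standard cohomology of projective space.

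For $(\Leftarrow)$, suppose $g=0$.
\begin{enumerate}
\item Since $\bk=\bar\bk$, $X$ has a closed point $P$ with residue field $\bk$.
\item Riemann--Roch gives
\[
\ell(P)-\ell(K_X-P)=\deg P+1-g=2 .
\]
\item Since $\deg K_X=2g-2=-2<0$, the divisor $K_X-P$ has negative degree, so $\ell(K_X-P)=0$. Hence $\ell(P)=2$.
\item So there is a nonconstant rational function $f\in H^0(X,\OSheaf_X(P))$. It has at most a simple pole at $P$ and no other poles, so it defines a morphism $f\colon X\to\PPP^1$ with $f^{*}(\infty)=P$.
\item This morphism has degree $\deg f^{*}(\infty)=1$. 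It is therefore birational, i.e.\ $K(X)=\bk(f)$.
\item A birational morphism between smooth projective curves is an isomorphism, since a smooth projective curve is determined by its function field. Hence $X\isom\PPP^1$.
\end{enumerate}

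The only point needing care is step 6, passing from degree one to an isomorphism. I would cite the equivalence between smooth projective curves and function fields of transcendence degree one. Alternatively, I would argue directly: a finite morphism of degree one onto a normal curve is an isomorphism, because $\OSheaf_{\PPP^1}\to f_*\OSheaf_X$ is then an injection of normal domains with the same fraction field, with $f_*\OSheaf_X$ integral over $\OSheaf_{\PPP^1}$. Characteristic zero plays no real role here beyond the standing hypotheses.
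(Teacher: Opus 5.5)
Your proof is correct and is essentially the standard Riemann--Roch argument behind the cited Hartshorne Example IV.1.3.5; the paper gives no proof of its own beyond that citation. The only cosmetic difference is that Hartshorne applies Riemann--Roch to $P-Q$ for two distinct points to get $P\sim Q$ and then invokes II.6.10.1. You instead apply it to a single point $P$ to get $\ell(P)=2$ and construct the degree-one morphism to $\PPP^1$ directly.
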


For $\bk = \bar{\bk}$, determining whether or not $B_{a_0, a_1, a_2}$ is rational is straightforward. Applying Corollary \ref{ProjIsoCorollary}, find $(b_0, b_1, b_2)$ such that $\cotype(b_0, b_1, b_2) = 0$ and $\Proj B_{a_0, a_1, a_2} \isom \Proj B_{b_0, b_1, b_2}$. Let $X = \Proj B_{b_0, b_1, b_2}$. Since $X$ is quasismooth and well-formed it is normal; since $X$ is 1-dimensional it is smooth. By Lemma \ref{FletcherCohomology}, $p_g(X) = \dim_\bk (B_{b_0, b_1, b_2})_\alpha$ where $\alpha$ is the amplitude of $\Proj B_{b_0, b_1, b_2}$. Then 
\begin{align*}
B_{a_0, a_1, a_2} \text{ is rational } & \iff \Spec B_{a_0, a_1, a_2} \text{ is rational } \\
&  \overset{\ref{CastelnuovoProp}}{\iff} \Proj B_{a_0, a_1, a_2} \text{ is rational } \\
&  \overset{\ref{ProjIsoCorollary}}\iff \Proj B_{b_0, b_1, b_2} \text{ is rational } \\
& \overset{\ref{genusZeroCurve}}{\iff} p_g(\Proj B_{b_0, b_1, b_2}) = 0 \\
& \overset{\ref{FletcherCohomology}}{\iff} \dim_\bk (B_{b_0, b_1, b_2})_\alpha = 0 \\
& \iff (B_{b_0, b_1, b_2})_\alpha = 0.
\end{align*}
Not only is the question of whether $B_{a_0, a_1, a_2}$ is rational easily decidable for a particular $(a_0, a_1, a_2)$, the set of tuples such that $B_{a_0, a_1, a_2}$ is rational is described by the following, originally proved for $\bk = \Comp$. The general case follows from Corollary \ref{PBdescent}.  

\begin{proposition}\cite[Proposition 5.5, Pham-Brieskorn case]{arzhantsev2018log}\label{S0char}
    Let $\bk$ be an algebraically closed field of characteristic zero. The following are equivalent:
    \begin{enumerate}[\rm(a)]
        \item $\bk[x_0,x_1,x_2] / \lb x_0^{a_0} + x_1^{a_1} + x_2^{a_2} \rb$ is rational;
        \item Up to a permutation of $(a_0,a_1,a_2)$, one of the following holds:
        \begin{enumerate}[\rm(i)]
            \item $\gcd(a_0a_1, a_2) = 1$
            \item $(a_0,a_1,a_2) = (2a_0',2a_1',2a_2')$ where $a_0',a_1',a_2'$ are pairwise relatively prime.  
        \end{enumerate}
        
    \end{enumerate}
\end{proposition}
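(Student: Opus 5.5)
The plan is to reduce rationality of $B_{a_0,a_1,a_2}$ to the vanishing of the genus of the curve $X=\Proj B_{a_0,a_1,a_2}$, compute that genus by Riemann--Hurwitz for an explicit abelian cover of $\PPP^1$, and then finish with a short number-theoretic analysis. By Corollary \ref{PBdescent} it suffices to treat $\bk=\Comp$. By Proposition \ref{CastelnuovoProp}(b), $B_{a_0,a_1,a_2}$ is rational if and only if $X$ is rational.

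Since $X$ is a normal projective curve, we may work with it directly rather than passing to a cotype-$0$ tuple. By Lemma \ref{genusZeroCurve}, $X$ is rational if and only if its genus $g$ vanishes. One could read $g$ off from the Orlik--Wagreich formula (i) in the example after Theorem \ref{OWMainTheorem}, because $X=(\Spec B\setminus\{o\})/\Comp^*$ is the central curve of the resolution. I prefer a more transparent route.

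Consider the morphism $X\to\PPP^1$ given by $[x_0:x_1:x_2]\mapsto[x_0^{a_0}:x_1^{a_1}]$, whose image lies on the line $u_0+u_1+u_2=0$. Set $G=(\mu_{a_0}\times\mu_{a_1}\times\mu_{a_2})/\mu_L$, where $L=\lcm(a_0,a_1,a_2)$ and $\mu_L$ is embedded via the weights $w_i=L/a_i$. This finite abelian group acts on $X$ and the map above is the quotient by $G$, so $|G|=a_0a_1a_2/L$.

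The cover is branched only over the three points $u_j=0$. Write $g_{kl}=\gcd(a_k,a_l)$. The fibre over $u_2=0$ consists of the points of $X$ with $x_2=0$. In the cone, $x_0^{a_0}+x_1^{a_1}=0$ splits into $g_{01}$ lines (compare Proposition \ref{integralDomain}), each a single $\Comp^*$-orbit, so this fibre has exactly $g_{01}$ points. Symmetric statements hold over $u_0=0$ and $u_1=0$. Riemann--Hurwitz for a Galois cover then gives
\[
2g-2=-2|G|+\sum_{\{k,l\}}\bigl(|G|-g_{kl}\bigr)=|G|-(g_{01}+g_{02}+g_{12}).
\]
Hence $g=0$ if and only if
\[
\frac{a_0a_1a_2}{L}+2=g_{01}+g_{02}+g_{12}.
\]

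Sufficiency is a direct check. If $\gcd(a_0a_1,a_2)=1$, then $g_{02}=g_{12}=1$ and $|G|=g_{01}$, so the identity holds. In case (ii) all $g_{kl}=2$ and $|G|=8a_0'a_1'a_2'/(2a_0'a_1'a_2')=4$, and $4+2=6$.

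Necessity is the main obstacle and is purely arithmetic. I would work prime by prime. If $p$ appears in the $a_i$ with valuations $e_1\le e_2\le e_3$, then $p$ contributes $p^{e_1+e_2}$ to $|G|$, while the three gcds receive $p^{e_1},p^{e_1},p^{e_2}$ in the corresponding positions. This gives $|G|\ge g_{kl}g_{km}$ for every choice of indices, and more precisely $|G|=\gcd(a_0,a_1,a_2)\cdot g_{01}g_{02}g_{12}/\gcd(a_0,a_1,a_2)^{2}$ up to this bookkeeping. Write $x\le y\le z$ for the three gcds.
\begin{itemize}
\item If $x=y=1$, we are in case (i).
\item Otherwise $|G|\ge yz$, and the identity $|G|+2=x+y+z$ with $x,y\ge2$ forces $yz\le x+y+z-2\le 2z+y-2$, which gives $y\le2$, so $x=y=2$.
\item Then $z\le 2$ as well, and the case where the common gcd is $1$ is incompatible with pairwise gcds equal to $2$. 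This leaves exactly $\gcd(a_0,a_1,a_2)=2$ with all pairwise gcds equal to $2$, which is case (ii).
\end{itemize}
The delicate points I expect are making the valuation inequality precise and confirming the count of points in the branch fibres, in particular that no extra isotropy beyond these three fibres appears.
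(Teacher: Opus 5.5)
Your argument is correct, but it takes a different route from the paper. The paper does not derive the numerical criterion. It quotes \cite[Proposition 5.5]{arzhantsev2018log} over $\Comp$ and transfers it to arbitrary algebraically closed $\bk$ of characteristic zero via Corollary \ref{PBdescent}. The surrounding text only gives a decision procedure: pass to a cotype-$0$ tuple, then test whether $(B_{b_0,b_1,b_2})_\alpha=0$ using Lemma \ref{FletcherCohomology}. That settles individual tuples but does not by itself produce conditions (i)--(ii).

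You instead compute the genus from the Galois cover $X\to X/G\cong\PPP^1$. Your formula $2g-2=\frac{a_0a_1a_2}{L}-(g_{01}+g_{02}+g_{12})$ is right. It coincides with Orlik's formula (i) in the example after Theorem \ref{OWMainTheorem} once you substitute $q_i=L/a_i$, since $d\gcd(q_k,q_l)/(q_kq_l)=g_{kl}$ there.

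The two points you flagged as delicate both hold:
\begin{itemize}
\item $G$ acts faithfully with trivial stabilizers at points where all $x_i\neq0$. Indeed, $\zeta_i=t^{w_i}$ for all $i$ forces $t^L=1$.
\item The fibre over each $u_j=0$ has one point per irreducible factor. This is because each factor's zero locus minus the origin is a single $\bk^*$-orbit.
\end{itemize}
Your ``up to bookkeeping'' identity is in fact exact: $|G|=g_{01}g_{02}g_{12}/\gcd(a_0,a_1,a_2)$.

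What your route buys is a self-contained proof. It needs no cotype-$0$ reduction or well-formedness, and it works directly over any algebraically closed field of characteristic zero, so your appeal to Corollary \ref{PBdescent} is unnecessary.

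There is one slip in the necessity step. After excluding $x=y=1$ you assume $x,y\ge 2$, but the case $x=1<y$ has not yet been ruled out. It is harmless. In general, $|G|\ge yz$ and $|G|+2=x+y+z$ give $(y-1)(z-1)\le x-1$. If $x=1$, this forces $y=1$, which is case (i). If $x\ge2$, it gives $(y-1)(z-1)\le y-1$, hence $z\le 2$ and $x=y=z=2$. Then every $a_i$ is even and $\gcd(a_0,a_1,a_2)=2$, which is case (ii).
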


\subsection{The $3$-dimensional case}

For $\bk = \bar{\bk}$, determining whether or not the threefold $B_{\bk; a_0, a_1, a_2, a_3}$ is rational is once again feasible although it is harder than in the case of surfaces. Once again, by Corollary \ref{ProjIsoCorollary} together with Proposition \ref{CastelnuovoProp} and Corollary \ref{PBdescent} it suffices to answer the following question:
\begin{question}\label{cotypeZeroSurfaces}
    For which $(a_0, a_1, a_2, a_3)$ satisfying $\cotype(a_0, a_1, a_2, a_3) = 0$ is $\Proj B_{\Comp; a_0, a_1, a_2, a_3}$ rational?
\end{question}

Given a surface $X$, we let $\tilde{X}$ denote a resolution of singularites. The answer to Question \ref{cotypeZeroSurfaces} is obtained in 2 steps. 

\begin{enumerate}
\item Identify the varieties $X = \Proj B_{\Comp; a_0, a_1, a_2, a_3}$ of cotype 0 such that $p_g(\tilde{X}) = 0$.
\item Show that every variety identified in Step (1) is rational. 
\end{enumerate}

Suppose $X = \Proj  B_{\Comp; a_0, a_1, a_2, a_3}$ is such that $\tilde{X}$ satisfies $p_g(\tilde{X}) = \dim H^2(\tilde{X}, \OSheaf_{\tilde{X}}) =  0$. Since $X$ has rational singularities (by \ref{qswciProperties}), we obtain $H^2(\tilde{X}, \OSheaf_{\tilde{X}}) \isom H^2(X, \OSheaf_X) \overset{\ref{FletcherCohomology}}{=} (B_{\Comp; a_0, a_1, a_2, a_3})_\alpha$ where $\alpha$ is the amplitude of $X$. Thus to complete Step (1), we must identify those $(a_0, a_1, a_2, a_3)$ of cotype 0 such that $(B_{\Comp; a_0, a_1, a_2, a_3})_\alpha = 0$. Clearly, if $\alpha < 0$, then $\dim_\Comp (B_{\Comp; a_0, a_1, a_2, a_3})_\alpha = 0$. Contrary to the case of hypersurfaces in standard projective space, it is also possible for $\alpha > 0$ and $(B_{\Comp; a_0,a_1,a_2,a_3})_\alpha = 0$ to hold simultaneously. For example, when $(a_0, a_1, a_2, a_3) = (3,3,7,7)$, we obtain $(B_{\Comp; 3,3,7,7})_\alpha = (B_{\Comp; 3,3,7,7})_1 = 0$.  The difficulty occurs in characterizing such cases. 

Eight pages of elementary number theory (see \cite[Theorem A]{chitayat2025rationality}, but don't read the proof) allow us to complete Step (1). Let us assume without loss of generality that $a_0 \leq a_1 \leq a_2 \leq a_3$, equivalently $w_0 \geq w_1 \geq w_2 \geq w_3$. It can be checked using \cite[Theorem 3.14]{chitayat2025rationality} that
\begin{equation}\label{BalphaChar}
(B_{\Comp; a_0, a_1, a_2, a_3})_\alpha = 0 \iff \alpha < 0 \ \text{ or }\ \text{$a_0 = a_1$, $a_2 = a_3$ and $\gcd(a_0,a_2) = 1$}.
\end{equation}

\begin{proposition}\cite[Lemma 1.3]{alexeev2006pezzo}\label{ldp-rational}
Let $X$ be a normal projective surface over $\Comp$ with at most quotient
singularities such that $-K_X$ is ample. Then $X$ is rational.
\end{proposition}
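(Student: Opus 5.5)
We pass to the minimal resolution $\pi\colon \tilde{X} \to X$ and verify Castelnuovo's criterion, $q(\tilde{X}) = P_2(\tilde{X}) = 0$. Rationality is a birational property, so it suffices to show $\tilde{X}$ is rational. Quotient singularities are rational, so $R^1\pi_*\OSheaf_{\tilde{X}} = 0$ and the Leray spectral sequence gives $H^i(\tilde{X}, \OSheaf_{\tilde{X}}) \isom H^i(X, \OSheaf_X)$ for all $i$.

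\textbf{Step 1: $q = p_g = 0$.} Since $-K_X$ is ample, Kodaira vanishing in its $\Rat$-divisor form (Kawamata--Viehweg on the normal surface with quotient, hence klt, singularities) gives $H^i(X, \OSheaf_X) = H^i(X, \OSheaf_X(K_X - K_X)) = 0$ for $i > 0$. Therefore $q(\tilde{X}) = p_g(\tilde{X}) = 0$. Alternatively, $p_g(\tilde{X}) = h^0(\tilde{X}, K_{\tilde{X}})$ vanishes because $\pi_* K_{\tilde{X}}$ is contained in the reflexive sheaf $\OSheaf_X(K_X)$, and $H^0(X, \OSheaf_X(K_X)) = 0$ since $-K_X$ is ample.

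\textbf{Step 2: plurigenera.} We show $H^0(\tilde{X}, mK_{\tilde{X}}) = 0$ for all $m \geq 1$, in particular for $m = 2$. Pushing forward, any section of $mK_{\tilde{X}}$ restricts on the smooth locus $X \setminus \Sing X$ to a section of $\OSheaf_X(mK_X)$. This is a reflexive rank-one sheaf, so the section extends to a global section of $\OSheaf_X(mK_X)$, and this extension map is injective. Since $-K_X$ is ample and $\Rat$-Cartier (quotient singularities are $\Rat$-factorial), intersecting with an ample curve $H$ gives $mK_X \cdot H < 0$. Hence $\OSheaf_X(mK_X)$ has no nonzero sections, because an effective divisor $D \sim mK_X$ would satisfy $D \cdot H \geq 0$. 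It follows that $\kappa(\tilde{X}) = -\infty$, and in particular $P_2(\tilde{X}) = 0$.

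\textbf{Step 3: conclusion.} With $q(\tilde{X}) = 0$ and $P_2(\tilde{X}) = 0$, Castelnuovo's rationality criterion shows $\tilde{X}$ is rational, hence so is $X$. The main obstacle is being careful in Step 2 about negativity on a singular surface. That argument is handled by working with Weil divisors and $\Rat$-Cartier intersection numbers (Mumford's intersection theory), which requires only normality and $\Rat$-factoriality, both of which quotient singularities provide.
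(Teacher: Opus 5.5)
Your proof is correct. The survey gives no argument of its own for this statement (it only cites Alexeev--Nikulin), and yours is the standard proof of that lemma: pass to a resolution, obtain $q(\tilde{X})=0$ from the rationality of quotient singularities together with Kawamata--Viehweg vanishing for $-K_X$ ample on a klt surface, obtain $P_m(\tilde{X})=0$ from the injection $H^0(\tilde{X}, mK_{\tilde{X}})\hookrightarrow H^0(X,\OSheaf_X(mK_X))=0$, and conclude by Castelnuovo's criterion.
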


This proposition shows that the $\alpha < 0$ cases from \eqref{BalphaChar} are rational, as demonstrated by the following immediate example.
\begin{example}
    If $X = \Proj B_{\Comp; a_0, a_1, a_2, a_3}$ has cotype 0 and satisfies $\alpha < 0$, then $-K_X$ is ample and so by Proposition \ref{ldp-rational}, $X$ is rational. 
\end{example}
It turns out the case $a_0 = a_1$, $a_2 = a_3$ and $\gcd(a_0,a_2) = 1$ is handled by the following more general proposition.  

\begin{proposition}\cite[Proposition 3.6]{chitayat2025rationality}\label{rationalAlldim}
    Let $a,c \in \Nat^+$ and consider the graded polynomial ring $R = \bk_{c,\dots, c, a, \dots, a}[x_1, \dots, x_k, y_1, \dots, y_\ell]$ where $\bk$ is any field, $k \geq 1$, $\ell \geq 1$, $\deg(x_i) = c$ and $\deg(y_j) = a$ for all $i,j$. Suppose $f = g(x_1, \dots, x_k) + h(y_1, \dots, y_\ell)$ is irreducible and homogeneous of degree $L = \lcm(a,c)$ with $g(x_1, \dots, x_k) \neq 0$ and $h(y_1, \dots, y_\ell) \neq 0$. Then $\Proj(R / \lb f \rb)$ is rational over $\bk$. 
\end{proposition}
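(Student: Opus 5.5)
The plan is to compute the function field $K(X)$ of $X = \Proj(R/\lb f \rb)$ directly, on a suitable basic open affine, and show that it is purely transcendental over $\bk$. Set $A = R/\lb f \rb$; it is a domain because $f$ is irreducible. Put $p = L/c$ and $q = L/a$. Then $g$ is a standard-homogeneous polynomial of degree $p$ in the $x_i$, and $h$ is a standard-homogeneous polynomial of degree $q$ in the $y_j$. Also $\dim X = k+\ell-2$, since $\dim A = k+\ell-1$.

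\textbf{Step 1 (the open set).} I first check that $\bar{x}_1\bar{y}_1 \neq 0$ in $A$. If $x_1 \in \lb f \rb$, irreducibility would force $f$ to be a scalar multiple of $x_1$, which is impossible because $h \neq 0$ involves the $y_j$. The same argument with $g \neq 0$ handles $y_1$. So $U = D_+(x_1y_1) \subseteq X$ is a nonempty affine open set. Hence $K(X) = \Frac\big((A_{x_1y_1})_0\big)$.

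\textbf{Step 2 (generators of $(A_{x_1y_1})_0$).} Consider the degree-zero elements
$$s_i = \bar{x}_i/\bar{x}_1 \ (2\le i\le k), \qquad t_j = \bar{y}_j/\bar{y}_1 \ (2 \le j \le \ell), \qquad u = \bar{x}_1^{\,p}/\bar{y}_1^{\,q},$$
where $u$ has degree $pc - qa = L - L = 0$. A degree-zero element of $A_{x_1y_1}$ is a combination of terms $x^\alpha y^\beta/(x_1^m y_1^n)$ of degree $0$. Each such term equals a monomial in the $s_i, t_j$ times $x_1^{|\alpha|-m}y_1^{|\beta|-n}$, and this last factor has degree $0$. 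The equation $ec = e'a$ forces $(e,e') = r\cdot(a/\gcd(a,c),\, c/\gcd(a,c)) = r(p,q)$ for some $r \in \Integ$, so that factor is $u^r$. Hence $(A_{x_1y_1})_0 = \bk[s,t,u,u^{-1}]$.

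\textbf{Step 3 (the relation).} Dividing $f$ by $y_1^{q}$, which has degree $L$, gives
$$u\, g(1,s_2,\dots,s_k) + h(1,t_2,\dots,t_\ell) = 0 \quad\text{in } K(X).$$
The dehomogenization $g(1,s)$ of the nonzero homogeneous polynomial $g$ is a nonzero polynomial. So provided the $s_i$ are algebraically independent, $g(1,s) \neq 0$ in $K(X)$, and we get $u = -h(1,t)/g(1,s) \in \bk(s,t)$. It follows that $K(X) = \bk(s_2,\dots,s_k,t_2,\dots,t_\ell)$. This field is generated by $k+\ell-2$ elements and has transcendence degree $\dim X = k+\ell-2$, so these generators are algebraically independent and $X$ is rational over $\bk$.

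\textbf{The main obstacle} is the circularity in Step 3: we need $g(1,s) \neq 0$ in $K(X)$ before we can eliminate $u$. I would resolve it directly. The elements $s_2,\dots,s_k$ lie in the subring of $K(X)$ generated by $s$, $t$ and $u$, with the single relation above. If they satisfied a nontrivial polynomial relation $P(s) = 0$, then clearing denominators would give a homogeneous polynomial in $x_1,\dots,x_k$ alone lying in $\lb f \rb$ (after multiplying by powers of $x_1$). That is impossible, since every nonzero multiple of $f$ involves the $y_j$ through $h \neq 0$; one sees this by setting all $x_i = 0$ when needed, or by comparing $y$-degrees.
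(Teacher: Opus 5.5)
Your proof is correct. The survey gives no proof of this proposition; it only quotes it from \cite{chitayat2025rationality}, so there is no in-text argument to compare against. Your computation of $K(X)$ on $D_+(x_1y_1)$ is complete and characteristic-free, as the statement (``$\bk$ is any field'') requires. Geometrically, it shows that $[x:y]\mapsto([x],[y])$ is a birational map $X\dashrightarrow\PPP^{k-1}\times\PPP^{\ell-1}$. Besides $s$ and $t$, the only generator of the degree-zero ring is $u=\bar x_1^{\,p}/\bar y_1^{\,q}$, and the equation recovers it as $u=-h(1,t)/g(1,s)$.

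Two small repairs are needed.

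In Step 2, the degree-zero condition on $x_1^{e}y_1^{e'}$ is $ec+e'a=0$, not $ec=e'a$. Since $p=a/\gcd(a,c)$ and $q=c/\gcd(a,c)$ are coprime, this forces $(e,e')=r(p,-q)$, and the factor is then $x_1^{rp}y_1^{-rq}=u^r$. With your sign the factor would be $x_1^{rp}y_1^{rq}$, which has degree $2rL$, not $0$.

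In Step 3 there is no real circularity. In $A_{x_1y_1}$ you have $g(1,s)=\bar g/\bar x_1^{\,p}$, so $g(1,s)\neq 0$ in $K(X)$ exactly when $g\notin\lb f\rb$. This is immediate from total $y$-degrees: $\deg_y(fF)=q+\deg_y F\geq q\geq 1$ for every nonzero $F\in R$, while $\deg_y g=0$. Your detour through algebraic independence of the $s_i$ rests on this same $y$-degree comparison, so it is valid but unnecessary. The alternative of setting all $x_i=0$ does not by itself give a contradiction. The phrase ``with the single relation above'' is neither justified nor needed.
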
 
Putting these results together, we conclude:

\begin{theorem}\label{PBTHM}
    Let $\bk$ be an algebraically closed field of characteristic zero. Suppose $a_0 \leq a_1 \leq a_2 \leq a_3$ and  $\cotype(a_0,a_1,a_2,a_3) = 0$.  Then $\Proj B_{\bk; a_0,a_1,a_2,a_3}$ is rational if and only if one of the following holds:
    \begin{enumerate}[\rm(a)]
        \item $a_0 = a_1$, $a_2 = a_3$ and $\gcd(a_0,a_2) = 1$;
        \item $\frac{1}{a_0} + \frac{1}{a_1} + \frac{1}{a_2} + \frac{1}{a_3} > 1$.
    \end{enumerate}
\end{theorem}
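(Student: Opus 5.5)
First I would reduce to $\bk = \Comp$ using Corollary \ref{PBdescent}: $\Proj B_{\bk; a_0,a_1,a_2,a_3}$ is $\bk$-rational if and only if $\Proj B_{\Comp; a_0,a_1,a_2,a_3}$ is $\Comp$-rational. Conditions (a) and (b) are purely numerical, so they do not depend on the field. From now on let $X = \Proj B_{\Comp; a_0,a_1,a_2,a_3}$. Since $\cotype(a_0,\dots,a_3)=0$, Proposition \ref{PBsatCodim1} shows that $X$ is a well-formed quasismooth hypersurface. By Remark \ref{qswciProperties}, $X$ is then a normal projective surface with at most cyclic quotient singularities, and these are rational singularities.

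For the direction ($\Rightarrow$), suppose $X$ is rational and let $\tilde X \to X$ be a resolution. Then $\tilde X$ is a smooth rational surface, so $p_g(\tilde X) = 0$. Because $X$ has rational singularities, $H^2(\tilde X,\OSheaf_{\tilde X}) \isom H^2(X,\OSheaf_X)$. Lemma \ref{FletcherCohomology} with $k=0$ identifies $H^2(X,\OSheaf_X)$ with $(B_{\Comp;a_0,\dots,a_3})_\alpha$, where $\alpha$ is the amplitude. Hence $(B_{\Comp;a_0,\dots,a_3})_\alpha = 0$. Equation \eqref{BalphaChar} then gives two possibilities: either $\alpha<0$, or $a_0=a_1$, $a_2=a_3$ and $\gcd(a_0,a_2)=1$. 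The second is exactly (a). The first is equivalent to (b) by Remark \ref{alphaEquivalence}.

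For the direction ($\Leftarrow$), I treat the two conditions separately. If (b) holds, then $\alpha<0$ by Remark \ref{alphaEquivalence}. The canonical sheaf is $\omega_X \isom \OSheaf_X(\alpha)$, and $\OSheaf_X(-\alpha)$ with $-\alpha>0$ is ample on $\Proj$ of a positively graded ring, so $-K_X$ is ample; this is also recorded in Theorem \ref{PBCanonical2}(c). Since $X$ has only quotient singularities, Proposition \ref{ldp-rational} shows that $X$ is rational.

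If (a) holds, write $c=a_0=a_1$ and $a=a_2=a_3$ with $\gcd(a,c)=1$. Then $L = \lcm = ac$, and the weights are $w_0=w_1=a$ and $w_2=w_3=c$. The defining polynomial splits as $g(x_0,x_1)+h(x_2,x_3)$, where $g = x_0^{c}+x_1^{c}$ and $h = x_2^{a}+x_3^{a}$. Both pieces are nonzero and each is homogeneous of degree $ac=L$. The polynomial is irreducible by Proposition \ref{integralDomain}. These are exactly the hypotheses of Proposition \ref{rationalAlldim}, with $k=\ell=2$ and with the roles of $a$ and $c$ matched to the weights. That proposition gives rationality of $X$.

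The only step needing care is the identification $p_g(\tilde X) = \dim (B)_\alpha$ in the ($\Rightarrow$) direction. It requires that quasismooth well-formedness gives rational singularities, so that $H^2$ is unchanged under resolution, and it requires applying Lemma \ref{FletcherCohomology} at $k=0$. Both facts are quoted earlier in the paper, so the substantive number theory is entirely contained in \eqref{BalphaChar}, which I take as given.
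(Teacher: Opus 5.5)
Your proposal is correct and follows the paper's proof essentially step by step. You reduce to $\Comp$ via Corollary \ref{PBdescent}. For $(\Rightarrow)$ you use $p_g(\tilde X)=\dim H^2(X,\OSheaf_X)=\dim B_\alpha$, obtained from rational singularities and Lemma \ref{FletcherCohomology}, and then apply \eqref{BalphaChar}. For $(\Leftarrow)$ you use ampleness of $-K_X$ together with Proposition \ref{ldp-rational} in case (b), and Proposition \ref{rationalAlldim} in case (a).
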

\begin{proof}
    By Corollary \ref{PBdescent}, we may assume $\bk = \Comp$; write $X = \Proj B_{\Comp; a_0,a_1,a_2,a_3}$. By Remark \ref{alphaEquivalence}, condition {\rm(b)} is equivalent to $\alpha < 0$.

    $(\Rightarrow)$ Assume $X$ is rational. Then $p_g(\tilde{X}) = \dim H^2(\tilde{X}, \OSheaf_{\tilde{X}})= \dim H^2(X, \OSheaf_X) = \dim B_\alpha = 0$, so by \eqref{BalphaChar} either {\rm(a)} holds or $\alpha < 0$ and hence {\rm(b)} holds.

    $(\Leftarrow)$ If {\rm(b)} holds then $\alpha < 0$, so $-K_X$ is ample by Theorem \ref{PBCanonical2}; as $X$ has quotient singularities, Proposition \ref{ldp-rational} applies and so $X$ is rational. If {\rm(a)} holds, we can apply Proposition \ref{rationalAlldim}.
\end{proof}

\begin{remark}
    Note that condition (a) in Theorem \ref{PBTHM} implies rationality over any $\bk$, but condition (b) still requires $\bk = \bar\bk$. For example, $\Proj B_{\Reals; 2,2,2,2}$ is not rational. (Apply Lemma \ref{rationalPointsAreDense} to the affine open subset $D_+(x_0)$ for example.) 
\end{remark}

\begin{remark}
    Theorem \ref{PBTHM} shows that over $\bk = \bar{\bk}$, $X = \Proj B_{\bk; a_0, a_1, a_2, a_3}$ is rational if and only if $H^2(X, \OSheaf_X) = 0$. This can now be seen to be a special case of the following recent result of Esser and Li.
\end{remark}

\begin{theorem}\cite[Theorem 1.2]{esser2025weighted} Suppose $X$ is a well-formed quasismooth surface in $\PPP_\Comp(w_0, w_1, w_2, w_3)$.  If $H^2(X, \OSheaf_X) = 0$, then $X$ is rational.
\end{theorem}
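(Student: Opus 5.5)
The statement is the theorem of Esser and Li: a well-formed quasismooth surface $X \subseteq \PPP_\Comp(w_0,w_1,w_2,w_3)$ with $H^2(X,\OSheaf_X)=0$ is rational. The plan is to apply Castelnuovo's criterion to a minimal resolution $\pi:\tilde X\to X$. By Remark \ref{qswciProperties}, $X$ has only cyclic quotient singularities, which are rational. Hence $H^i(\tilde X,\OSheaf_{\tilde X})\isom H^i(X,\OSheaf_X)$ for all $i$. Lemma \ref{FletcherCohomology} gives $H^1(X,\OSheaf_X)=0$, since $1<\dim X=2$. So $q(\tilde X)=0$ and, by hypothesis, $p_g(\tilde X)=\dim H^2=0$. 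By Castelnuovo, it then suffices to show $P_2(\tilde X)=h^0(\tilde X,2K_{\tilde X})=0$.

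To compute $P_2$, I would use that quotient singularities are log terminal. Write $K_{\tilde X}=\pi^*K_X+\sum a_iE_i$ with $-1<a_i\le 0$ for a minimal resolution. Then $H^0(\tilde X,2K_{\tilde X})\subseteq H^0(X,\OSheaf_X(2K_X))$, and I would use the reflexive sheaf $\OSheaf_X(2\alpha)$ together with the isomorphism $\omega_X\isom\OSheaf_X(\alpha)$. The second plurigenus is then bounded by $\dim_\Comp A_{2\alpha}$, where $A=\Comp[x_0,\dots,x_3]/\lb f\rb$. In general the sections of $2K_{\tilde X}$ form a subspace of $A_{2\alpha}$, cut out by vanishing conditions at the singular points. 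If $\alpha<0$, then $-K_X$ is ample, and Proposition \ref{ldp-rational} already gives rationality. So the substantive case is $\alpha\ge 0$ with $A_\alpha=0$.

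The main obstacle is this combinatorial step: showing that $A_\alpha=0$ forces every element of $A_{2\alpha}$ to fail the local conditions needed to extend to $2K_{\tilde X}$. One could also try to show $A_{2\alpha}=0$ outright. I would first try to prove that $A_\alpha=0$ with $\alpha\geq0$ severely restricts the weights. The hope is that no monomial of degree $\alpha$ exists, and this leads to a structural description in which the weights split into two groups with coprime values, as in case (a) of Theorem \ref{PBTHM}. In such structured cases, I would aim to prove rationality directly, in the style of Proposition \ref{rationalAlldim}. One route is to exhibit a pencil of rational curves, for instance by projecting from a coordinate point to a weighted projective line. The other route is to check that any monomial of degree $2\alpha$ vanishes to high enough order along the exceptional curves over the cyclic quotient points. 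The local computation there uses the toric description of each $\frac{1}{r}(a,b)$ singularity and its discrepancies.

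If the direct plurigenus analysis becomes unmanageable, the fallback is to run a $K$-MMP on $\tilde X$. With $q=p_g=0$, either $\tilde X$ is rational, or its minimal model has $\kappa\ge0$, making it an Enriques surface, a properly elliptic surface, or a surface of general type. Each of these would then have to be excluded using the explicit canonical class $\OSheaf_X(\alpha)$ and the $\Comp^*$-structure of the affine cone.
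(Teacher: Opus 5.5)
\textbf{There is a genuine gap: the step that carries the whole theorem is not proved.} Your reduction is sound. Rational singularities give $q(\tilde X)=h^1(X,\OSheaf_X)=0$ and $p_g(\tilde X)=h^2(X,\OSheaf_X)=0$. The case $\alpha<0$ follows from Proposition~\ref{ldp-rational}, and Castelnuovo's criterion reduces everything to $P_2(\tilde X)=0$. But $q=p_g=0$ alone does not give rationality (Enriques and Godeaux surfaces), so proving $P_2(\tilde X)=0$ is the entire theorem, and you leave it as a hope. Since $A_0=\Comp$, the hypothesis rules out $\alpha=0$, so the remaining case is $\alpha>0$. There $K_X\sim\OSheaf_X(\alpha)$ is ample and $h^0(X,\OSheaf_X(mK_X))=\dim A_{m\alpha}$ grows quadratically in $m$. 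Any vanishing of the plurigenera of $\tilde X$ must therefore come from the negative discrepancies at the cyclic quotient points.

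Neither of your shortcuts works in general. Take a general $X=X_{24}\subset\PPP(2,3,7,11)$. It is well-formed, because the weights are pairwise coprime and so $\Sing\PPP$ is finite. It is quasismooth: every quasismooth member must contain $x_2^3x_1$ and $x_3^2x_0$, and together with $x_0^{12}$ and $x_1^{8}$ these satisfy the quasismoothness criterion, including for the pair $\{x_2,x_3\}$. Here $\alpha=24-23=1$, so $H^2(X,\OSheaf_X)\isom A_1=0$, yet $A_{2\alpha}=A_2=\Comp x_0\neq 0$. The weights are not of the form $(c,c,a,a)$ and the equation is not of Fermat type. So neither Theorem~\ref{PBTHM}(a) nor Proposition~\ref{rationalAlldim} is available, and the dichotomy \eqref{BalphaChar} is a feature of Pham--Brieskorn equations of cotype $0$, not of quasismooth surfaces in general.

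In this example $P_2(\tilde X)=0$ does hold, but only for local reasons. $X$ contains $[0:0:1:0]$, where $(x_0,x_3)$ are orbifold coordinates and the point has type $\frac17(1,2)$. The $(-4)$-curve $E$ of its minimal resolution has discrepancy $-\frac47$, while $\pi^*\{x_0=0\}$ has multiplicity only $\frac17<\frac87$ along $E$. Hence $x_0$ acquires a pole along $E$ as a section of $2K_{\tilde X}$. A proof along your lines must therefore supply two things:
\begin{enumerate}[\rm(i)]
\item a description of all pairs $(w,d)$ carrying a well-formed quasismooth surface with no monomial of degree $\alpha$ (a set that includes non-Fermat shapes like the one above);
\item a uniform argument that every element of $A_{2\alpha}$ violates the discrepancy bound at some singular point.
\end{enumerate}
Neither is present. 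The MMP fallback only names the surfaces to be excluded (Enriques, properly elliptic, general type) and gives no way of excluding them.

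For comparison, the paper gives no proof of this statement; it is quoted from Esser--Li. Its own argument in the Pham--Brieskorn case, Theorem~\ref{PBTHM}, never uses plurigenera. It relies on the explicit characterization \eqref{BalphaChar} of when $B_\alpha=0$, then proves rationality of each resulting family directly: log del Pezzo rationality for $\alpha<0$, and Proposition~\ref{rationalAlldim} for case (a).
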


\subsection{Higher dimensions}

It is probably quite a difficult problem to determine which (affine or weighted projective) Pham-Brieskorn hypersurfaces are rational given that this type of problem is generally difficult even for hypersurfaces in standard projective space $\PPP^n$. There are however some sporadic results on this question.  Proposition \ref{rationalAlldim} is one such result. Recently, Massarenti showed:

\begin{theorem}\cite[Corollary 3.3]{Massarenti2026}
    Let $ n \geq 1$. The $2n$-dimensional Fermat cubic $V_+(x_0^3 + \dots + x_{2n+1}^3) \subseteq \PPP^{2n+1}$ is rational over any field $\bk$ of characteristic not equal to 3. 
\end{theorem}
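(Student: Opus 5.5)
The plan is to use the classical construction of a rational parametrization of an even-dimensional cubic hypersurface from two disjoint $n$-planes on it. When those planes are not defined over $\bk$, we replace them by a Galois-conjugate pair, as Segre does for the diagonal cubic surface. Write $X = V_+(f)$ with $f = x_0^3 + \dots + x_{2n+1}^3$ and $\dim X = 2n$. Let $\omega$ be a primitive cube root of unity in $\bar\bk$; it differs from $1$ since $\mathrm{char}\,\bk \neq 3$. Pair the variables as $(x_{2i}, x_{2i+1})$ for $0 \le i \le n$ and set
\[
\Lambda = V_+\bigl(x_{2i} + \omega x_{2i+1} \ : \ 0 \le i \le n\bigr), \qquad \bar\Lambda = V_+\bigl(x_{2i} + \omega^2 x_{2i+1} \ : \ 0 \le i \le n\bigr).
\]
Since $x_{2i}^3 + x_{2i+1}^3 = \prod_{j=0}^{2}(x_{2i} + \omega^j x_{2i+1})$, both planes are $n$-planes contained in $X$. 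A common point would satisfy $(\omega - \omega^2)x_{2i+1} = 0$ for all $i$, hence all $x_j = 0$, so $\Lambda \cap \bar\Lambda = \emptyset$. If $\omega \notin \bk$, then $\bar\Lambda$ is the $\mathrm{Gal}(\bk(\omega)/\bk)$-conjugate of $\Lambda$ and $\Lambda \cup \bar\Lambda$ is defined over $\bk$. If $\omega \in \bk$, both planes are already defined over $\bk$.

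Next I would define the map geometrically over $\bar\bk$. Consider $\Phi : \Lambda \times \bar\Lambda \dashrightarrow X$ sending $(p,q)$ to the residual (third) intersection point of the line $\overline{pq}$ with $X$. The key steps are as follows.
\begin{enumerate}
\item For general $(p,q)$ the line $\overline{pq}$ is not contained in $X$. Restricting $f$ to $\overline{pq}$ gives a binary cubic vanishing at $p$ and $q$, so the residual root is a rational function of the coordinates of $p,q$. Explicitly, writing the line as $sp + tq$, the cubic is $st(\ell_1(p,q)s + \ell_2(p,q)t)$, where $\ell_1, \ell_2$ are the mixed terms of $f$. This gives an explicit formula for $\Phi$.
\item $\Phi$ is birational. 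For a general $x \in X$, the span $\langle x, \Lambda\rangle$ is an $(n+1)$-plane in $\PPP^{2n+1}$, so it meets $\bar\Lambda$ in exactly one point $q$ by the dimension count $(n+1) + n = 2n+1$ together with disjointness. Symmetrically, $\langle x, \bar\Lambda\rangle \cap \Lambda = \{p\}$, and $x, p, q$ are collinear. This gives a rational inverse $x \mapsto (p,q)$, so $\Phi$ is birational.
\end{enumerate}

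Then I would descend to $\bk$. When $\omega \in \bk$, $\Lambda \times \bar\Lambda \cong \PPP^n \times \PPP^n$ is $\bk$-rational, and $\Phi$ and its inverse are defined over $\bk$, which finishes the proof. When $\omega \notin \bk$, the Galois action on $\Lambda \times \bar\Lambda$ swaps the factors, and $\Phi$ commutes with this action because the construction is symmetric in $p$ and $q$. So $\Phi$ descends to a birational map $R_{\bk(\omega)/\bk}\Lambda \dashrightarrow X$ defined over $\bk$, where $R_{\bk(\omega)/\bk}\Lambda$ is the Weil restriction. Since $\Lambda \cong \PPP^n_{\bk(\omega)}$, its Weil restriction contains the open set $R_{\bk(\omega)/\bk}\Aff^n \cong \Aff^{2n}_\bk$, so it is $\bk$-rational. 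Concretely, I would parametrize $p \in \Lambda$ by the affine coordinates $(x_1, x_3, \dots, x_{2n+1})$, normalized with $x_1 = 1$, taking values in $\bk(\omega)$, and write $x_{2i+1} = u_i + \omega v_i$ with $u_i, v_i \in \bk$. The point $x = \Phi(p,\bar p)$ then has coordinates that are Galois-invariant rational functions of $(u,v)$, hence lie in $\bk(u,v)$.

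I expect the main obstacle to be the descent step. One must check that the explicit formula for the residual point, evaluated at $(p,\bar p)$, is Galois-invariant, and that the inverse $x \mapsto p = \langle x,\bar\Lambda\rangle \cap \Lambda$ is defined over $\bk$ in the sense that it gives $u,v$ as rational functions of $x$ over $\bk$. This should follow by taking trace and norm type combinations of the $\bk(\omega)$-valued coordinates. One also needs the genericity claim in step 1, that $\ell_1, \ell_2$ are not identically zero on $\Lambda \times \bar\Lambda$. I would verify this by exhibiting a single pair $(p,\bar p)$ for which $\overline{p\bar p} \not\subseteq X$.
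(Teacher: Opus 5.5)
The survey gives no proof of this statement; it only cites \cite[Corollary 3.3]{Massarenti2026}, so there is no in-paper argument to compare yours with. Your proof is correct. It is the classical construction of a birational map $\Lambda\times\bar\Lambda\dashrightarrow X$ from two disjoint $n$-planes on a $2n$-dimensional cubic. You apply it to the conjugate pair $x_{2i}+\omega x_{2i+1}=0$, $x_{2i}+\omega^2 x_{2i+1}=0$ and then descend via $R_{\bk(\omega)/\bk}\PPP^n$, which is $\bk$-rational. No step uses density of rational points, so the argument also covers finite fields, as the statement requires.

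The only loose end is the genericity check, and it is a one-line computation. Take $p=(-\omega a_0,a_0,\dots,-\omega a_n,a_n)\in\Lambda$ and $q=(-\omega^2 b_0,b_0,\dots,-\omega^2 b_n,b_n)\in\bar\Lambda$. Then
\[
\ell_1=3(1-\omega)\sum_{i=0}^n a_i^2 b_i,\qquad \ell_2=3(1-\omega^2)\sum_{i=0}^n a_i b_i^2,
\]
and both are nonzero polynomials because $\mathrm{char}\,\bk\neq 3$.

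You need each of $\ell_1,\ell_2$ to be nonzero, not merely $(\ell_1,\ell_2)\neq(0,0)$. On $\{\ell_1\ell_2\neq 0\}$ the residual point $\ell_2 p-\ell_1 q$ is neither $p$ nor $q$, so it avoids $\Lambda\cup\bar\Lambda$, and this is what makes your inverse $x\mapsto(p,q)$ undo $\Phi$. For conjugate pairs $(p,\bar p)$ this is automatic, since $\ell_2(p,\bar p)=\overline{\ell_1(p,\bar p)}$; so your proposed single-witness check does suffice in the Galois case.
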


It is worth noting that Pham-Brieskorn hypersurfaces play a nice role in giving an easy proof of the following classical proposition.

\begin{proposition}\label{quadricRational}
    Let $\bk$ be an algebraically closed field of characteristic zero, let $n \geq 2$ and assume $Q = V_+(q) \subseteq \PPP^n$ is an irreducible reduced quadric hypersurface. Then $Q$ is rational.
\end{proposition}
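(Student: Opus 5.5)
We would first diagonalize the quadric. Since $\operatorname{char}\bk = 0$ and $\bk = \bar{\bk}$, a linear change of coordinates on $\PPP^n$ puts $q$ in the form $x_0^2 + \dots + x_r^2$ for some $r \leq n$; this change of coordinates is an automorphism of $\PPP^n$, so it does not affect rationality. Irreducibility of $q$ forces $r \geq 2$: for $r = 0$ the form is a square, and for $r = 1$ it factors as $(x_0 + ix_1)(x_0 - ix_1)$. Thus $Q \isom V_+(x_0^2 + \dots + x_r^2) \subseteq \PPP^n$ with $2 \leq r \leq n$, which is (a cone over) a Pham-Brieskorn hypersurface with all exponents equal to $2$.

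Next we would use the Pham-Brieskorn structure to reduce to Proposition \ref{rationalAlldim}. Group the variables as $x_0, \dots, x_{r-1}$ and $x_r, \dots, x_n$ (both groups are nonempty because $r \geq 2$ and $r \leq n$), give every variable degree $1$ (so $a = c = 1$ and $L = \lcm(a,c)=1$), and set $g = x_0^2 + \dots + x_{r-1}^2$ and $h = x_r^2$. The form $f = g + h = q$ is homogeneous, though of degree $2$ rather than $L$, so the degree hypothesis of Proposition \ref{rationalAlldim} does not literally match. We would therefore try two adjustments. The first is to take $c = a = 2$, which does not help directly because $\Proj$ of $\bk_{2,\dots,2}[\dots]$ is the same as with weights $1$ (via Lemma \ref{projRd}), but the degree of $f$ then equals $2 = \lcm(2,2)$ when the weights are $2$ and each $x_i$ has degree $1$ in a Veronese sense. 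The cleaner alternative is to declare $\deg x_i = 1$ and note that with $a = c = 2$ in Proposition \ref{rationalAlldim} we would need $\deg x_i = 2$. We would use the observation that $\Proj \bk_{2,\dots,2}[x_0,\dots,x_n]/\lb q \rb \isom \Proj \bk[x_0,\dots,x_n]/\lb q\rb$, since rescaling all weights by $2$ gives the same $\Proj$, and with weights $2$ the form $q$ has degree $4 \neq \lcm(2,2)$.

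Since this degree mismatch is awkward, we would fall back on the direct classical argument, which is in the same spirit and is what we would actually write. Rewrite $x_0^2 + x_1^2 = uv$ with $u = x_0 + ix_1$ and $v = x_0 - ix_1$, so that $Q \isom V_+(uv + x_2^2 + \dots + x_r^2)$. On the affine chart $D_+(u)$, set $u = 1$; there $v = -(x_2^2 + \dots + x_r^2)$ is determined by the remaining coordinates, so $D_+(u) \cap Q \isom \Aff^{n-1}$, with free coordinates $x_2, \dots, x_n$. This chart is a nonempty open subset of the irreducible variety $Q$, hence dense, and so $Q$ is rational.

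The main obstacle is the case bookkeeping: justifying the diagonal normal form, confirming that irreducibility forces $r \geq 2$, and checking that $D_+(u) \cap Q$ is nonempty. That $D_+(u) \cap Q$ is nonempty is immediate, since the point $u = 1$ with all other coordinates $0$ except $v = 0$ lies on $Q$.
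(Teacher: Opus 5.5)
Your final argument is correct and is essentially the paper's proof. You diagonalize to $x_0^2+\dots+x_r^2$ with $2\le r\le n$, rewrite $x_0^2+x_1^2=uv$, and observe that $D_+(u)\cap Q\isom\Aff^{n-1}$ is a nonempty open subset. The abandoned detour through Proposition \ref{rationalAlldim} is muddled, since its degree hypothesis genuinely fails for a quadric, and you should simply omit it from a write-up.
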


\begin{proof}
By \cite[Exercise I.5.12]{Hartshorne}, we may assume $q = x_0^2 + \dots + x_r^2$ where $2 \leq r \leq n$ (since $Q$ is assumed to be irreducible and reduced). Since $\bk$ is algebraically closed, $V_+(q) \isom V_+(x_0x_1 +  x_2^2 + \dots + x_r^2) \subseteq \PPP^n$ and the non-empty open subset $D_+(x_0) \cap V_+(q) \isom \Spec \bk[x_1, \dots, x_n] / \lb x_1 + x_2^2 + \dots + x_r^2 \rb \isom \Aff^{n-1}$. This shows that $V_+(q)$ is birationally equivalent to $\PPP^{n-1}$. 
\end{proof}

\begin{problem}
    Let $X_f \subseteq \PPP(w_0, \dots, w_n)$ be a well-formed quasismooth weighted hypersurface of dimension at least 3 that is also a Fano variety. Describe when $X_f$ is rational. 
\end{problem}

\section{The well-formed Fano threefolds of Pham-Brieskorn type}\label{threefoldList}

\begin{nothing} \label{conditionsPB3}
In order to study the rationality and rigidity of $B_{\bk; a_0, a_1, a_2, a_3, a_4}$ or the rationality and (anti-canonical polar) cylindricity of $\Proj B_{\bk; a_0, a_1, a_2, a_3, a_4}$, we produce the list of Pham-Brieskorn rings $B = B_{\bk;a_0, a_1, a_2, a_3, a_4}$ such that $\Proj B$ is a well-formed quasismooth (possibly singular) Fano threefold. Assume without loss of generality that $a_0 \leq a_1 \leq a_2 \leq a_3 \leq a_4$. By Theorem \ref{PBCanonical2} together with Remark \ref{alphaEquivalence}, if we assume that $\Proj B_{a_0, a_1, a_2, a_3, a_4}$ is well-formed, then it is Fano if and only if $\sum_{i = 0}^4 \frac{1}{a_i} > 1$. Since all of the above questions are easily answered by the aforementioned results when $a_0 = 1$ or $a_0 = a_1 = 2$, we assume henceforth that $a_0 \geq 2$ and $a_1 \geq 3$. Thus we want to classify the tuples $(a_0, a_1, a_2, a_3, a_4)$ subject to the following conditions
\begin{enumerate}[\rm(i)]
\item $a_0 \leq a_1 \leq a_2 \leq a_3 \leq a_4$
\item $a_0 \geq 2$ and $a_1 \geq 3$
\item $\cotype(a_0, a_1, a_2, a_3, a_4) = 0$
\item $\sum_{i = 0}^4 \frac{1}{a_i} > 1$
\end{enumerate}
We will see below that there are 6 infinite families and 232 sporadic cases.

\end{nothing}
\begin{lemma}\label{lem:cot}
The following are equivalent: 
\begin{enumerate}[\rm(i)]
\item $\cotype(a_0, a_1, a_2, a_3, a_4)=0$; 
\item for every prime $p$, the maximum of $v_p(a_0),\dots,v_p(a_4)$ is attained at least twice;
\item $\gcd(w_0, \dots, w_{i-1}, \hat{w}_i, w_{i+1}, \dots,  w_4) = 1$ for all $i$.
\end{enumerate}
\end{lemma}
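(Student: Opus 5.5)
We will prove (i)$\Leftrightarrow$(ii) and (ii)$\Leftrightarrow$(iii) separately. Each reduces to a statement about $p$-adic valuations, using $v_p(\lcm(S)) = \max_j v_p(a_j)$ and $v_p(\gcd(T)) = \min_j v_p(t_j)$. Throughout, $S = (a_0, \dots, a_4)$, $L = \lcm(S)$ and $w_i = L/a_i$, as in Example \ref{PBGradingExample}.

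For (i)$\Leftrightarrow$(ii), recall from Definitions \ref{typeDef} that $\cotype(S) = 0$ means $\lcm(S_i) = \lcm(S)$ for every $i$. Since $\lcm(S_i) \mid \lcm(S)$, this equality holds if and only if $v_p(\lcm(S_i)) = v_p(\lcm(S))$ for every prime $p$. That is, removing $a_i$ does not lower $\max_j v_p(a_j)$. Now $\max_{j \neq i} v_p(a_j) < \max_j v_p(a_j)$ happens exactly when $a_i$ is the unique index attaining the maximum of $v_p$. So $\cotype(S)=0$ holds if and only if no prime has its maximal valuation attained by a unique index, which is (ii). When the maximum is $0$ it is attained five times, so only primes dividing $L$ matter.

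For (ii)$\Leftrightarrow$(iii), fix a prime $p$ and put $M = \max_j v_p(a_j) = v_p(L)$. Then $v_p(w_j) = M - v_p(a_j) \geq 0$, with equality exactly when $j$ attains the maximum. Hence
\[
v_p\bigl(\gcd(w_0, \dots, \hat{w}_i, \dots, w_4)\bigr) = \min_{j \neq i} \bigl(M - v_p(a_j)\bigr),
\]
and this is $0$ if and only if some $j \neq i$ attains $M$. So $\gcd$ of the $w_j$ with $j \neq i$ equals $1$ for all $i$ if and only if, for every $p$ and every $i$, some index other than $i$ attains the maximum. This says the maximum is attained at least twice, which is (ii).

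There is no real obstacle here. The only point needing care is the prime-by-prime reformulation in the first step: $\lcm(S_i)=\lcm(S)$ is equivalent to equality of all $p$-adic valuations, and a drop in the maximum occurs precisely when there is a unique maximizer. The argument does not use that there are five entries, so the lemma holds for any $n \geq 2$.
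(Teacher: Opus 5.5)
Your argument is correct and complete. Both equivalences reduce, prime by prime, to whether $\max_j v_p(a_j)$ has a unique maximizer, and the paper gives no proof of its own here (it says only ``We leave this to the reader''), so this valuation argument is exactly the routine verification left as an exercise; your remark that nothing depends on there being five entries is also right.
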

\begin{proof}
    We leave this to the reader. 
\end{proof}

\begin{nothing} Let $\sigma = \frac{1}{a_0} + \frac{1}{a_1} + \frac{1}{a_2}$. There are two cases to handle in order to produce the complete list of tuples satisfying conditions (i) - (iv) in \ref{conditionsPB3}:
\begin{enumerate}[\rm(1)]
    \item $\sigma \geq 1$,
    \item $\sigma < 1$.
\end{enumerate}
\end{nothing}

\noindent {\bf The $\sigma \geq 1$ case: 6 infinite families.} Assume $\sigma  \geq 1$. Since $a_0 \geq 2, a_1 \geq 3$, $(a_0, a_1, a_2)$ is one of 
\begin{equation}\label{6tuples}(2,3,3),\quad(2,3,4),\quad(2,3,5),\quad(2,3,6),\quad(2,4,4),\quad(3,3,3).
\end{equation}
Fix one of the six triples $T=(a_0,a_1,a_2)$ and let $\Pi(T)$ be the set of primes dividing $a_0a_1a_2$. For $p\in \Pi(T)$ set
\[
  e_p=\max_{0 \leq i \leq 2}v_p(a_i),\qquad n_p=\#\{\, i\in\{0,1,2\} : v_p(a_i)=e_p \,\},
\]
so that repeated entries of $T$ are counted with multiplicity (for instance $n_2=2$ when $T=(2,4,4)$, and $n_3=3$ when $T=(3,3,3)$).
 
\begin{proposition}\label{thm:I}
The following are equivalent:
\begin{enumerate}[\rm(a)]
\item Conditions {\rm(i)-(iv)} of \ref{conditionsPB3} are satisfied and $\sigma \geq 1$; 
\item $T = (a_0, a_1, a_2)$ is one of the tuples in \eqref{6tuples}, $a_2 \leq a_3 \leq a_4$ and
\begin{enumerate}
\item[\rm(1)] $v_p(a_3)=v_p(a_4)$ for every prime $p\notin \Pi(T)$;
\item[\rm(2)] for every $p\in \Pi(T)$:
\[
\begin{array}{ll}
  \text{if } n_p\ge2: & \big( v_p(a_3) \le e_p \text{ and } v_p(a_4)\le e_p\big)\ \text{ or }\ \big(v_p(a_3) = v_p(a_4)>e_p\big);\\[3pt]
  \text{if } n_p=1: & \big(v_p(a_3) =e_p,\ v_p(a_4)\le e_p\big)\ \text{ or }\ \big(v_p(a_4)=e_p,\ v_p(a_3) \le e_p\big)\ \text{ or }\ \big(v_p(a_3) = v_p(a_4)>e_p\big).
\end{array}
\]
\end{enumerate}
\end{enumerate}
\end{proposition}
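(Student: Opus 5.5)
The plan is to show that, once $\sigma \geq 1$ is imposed, conditions (i), (ii) and (iv) are automatic or reduce to the choice of $T$. Condition (iii) then becomes a prime-by-prime statement, which Lemma \ref{lem:cot}(ii) lets us analyse directly.

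First, under (i), (ii) and $\sigma\ge 1$, the triple $(a_0,a_1,a_2)$ satisfies $a_0\ge 2$, $a_1\ge 3$ and $\frac1{a_0}+\frac1{a_1}+\frac1{a_2}\ge 1$. An elementary enumeration then forces it into the list \eqref{6tuples}: if $a_0=2$ we need $\frac1{a_1}+\frac1{a_2}\ge \frac12$ with $3\le a_1\le a_2$, giving $(3,3),(3,4),(3,5),(3,6),(4,4)$; if $a_0\ge 3$ we need $a_0=a_1=a_2=3$. Conversely, for each triple in \eqref{6tuples} we have $\sigma\ge1$. Since $a_3,a_4$ are positive, $\sum_{i=0}^4 \frac1{a_i}>\sigma\ge 1$, so (iv) holds automatically. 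Also (ii) holds for every such $T$, and (i) is just $a_2\le a_3\le a_4$. Hence both (a) and (b) amount to: $T$ is in \eqref{6tuples}, $a_2\le a_3\le a_4$, and $\cotype(a_0,\dots,a_4)=0$.

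It remains to show that, for such $T$, $\cotype=0$ is equivalent to (1) and (2). By Lemma \ref{lem:cot}, $\cotype=0$ means that for every prime $p$ the maximum of $v_p(a_0),\dots,v_p(a_4)$ is attained at least twice. We check this prime by prime.

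For $p\notin\Pi(T)$ we have $v_p(a_0)=v_p(a_1)=v_p(a_2)=0$. If $\max(v_p(a_3),v_p(a_4))=0$ the maximum $0$ is attained at least three times. Otherwise the maximum is positive and is attained only among $a_3,a_4$, so it is attained twice exactly when $v_p(a_3)=v_p(a_4)$. In both situations the condition is equivalent to $v_p(a_3)=v_p(a_4)$, which is (1).

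For $p\in\Pi(T)$, write $x=v_p(a_3)$ and $y=v_p(a_4)$, and split according to $M=\max(x,y)$ compared with $e_p$.

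If $M>e_p$, the maximum is attained only among $a_3,a_4$, so the condition is $x=y>e_p$.

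If $M\le e_p$, the overall maximum is $e_p$, and it is attained $n_p+\#\{x=e_p\}+\#\{y=e_p\}$ times. When $n_p\ge 2$ this is automatically at least two, giving the clause $x\le e_p$ and $y\le e_p$. When $n_p=1$ we need at least one of $x,y$ to equal $e_p$, which gives exactly the two listed alternatives.

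Combining the cases $M>e_p$ and $M\le e_p$ reproduces (2) in both subcases $n_p\ge2$ and $n_p=1$. This completes the equivalence.

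The only real care needed is the bookkeeping in this last step: one must count multiplicities correctly, which is why $n_p$ counts repeated entries of $T$, and one must not forget the case $M\le e_p$ with $x=y=e_p$, which is covered by either clause. Nothing else seems to present any real obstacle.
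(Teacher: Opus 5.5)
Your proof is correct and takes the same approach as the paper. The paper simply cites Lemma \ref{lem:cot}(ii), after listing the six possible triples just before the proposition; you supply the enumeration and the prime-by-prime bookkeeping it leaves implicit, and your case split on whether $\max(v_p(a_3),v_p(a_4))$ exceeds $e_p$ reproduces conditions (1) and (2) exactly.
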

 
\begin{proof}
This follows from Lemma~\ref{lem:cot}(ii). 
\end{proof}
\begin{nothing}
Proposition~\ref{thm:I} gives the following 6 infinite families, 4 of which can be described relatively nicely:
\[
\begin{array}{l|l}
T & (a_3,a_4)\\\hline
(3,3,3) & (t,t)\ \text{any }t;\quad (t,3t)\ \text{with } 3\nmid t\\[2pt]
(2,4,4) & (t,t)\ \text{any }t;\quad (t,2t)\ \text{with } v_2(t)\le1;\quad (t,4t)\ \text{with } t \text{ odd}\\[2pt]
(2,3,6) & (t,t)\ \text{any }t;\quad (t,2t),\ 2\nmid t;\quad (t,3t),\ 3\nmid t;\quad (t,6t),\ \gcd(t,6)=1; \quad  (2t,3t),\ \gcd(t,6)=1\\[2pt]
(2,3,3) & (t,t),\ 2\mid t;\quad (t,2t),\ 2\nmid t;\quad (t,3t),\ 2\mid t,\ 3\nmid t; \quad (t,6t),\ \gcd(t,6)=1;\quad (2t,3t),\ \gcd(t,6)=1\\[2pt]
(2,3,4) & \text{as in Proposition~\ref{thm:I}}\\[2pt]
(2,3,5) & \text{as in Proposition~\ref{thm:I}}\\[2pt]
\end{array}
\]
\end{nothing}
\noindent {\bf The $\sigma < 1$ case: 232 sporadic cases.} Finally we list the finitely many cases that satisfy conditions (i)-(iv) of \ref{conditionsPB3} and $\sigma<1$. To deduce the finiteness, one can see that conditions (i) and (iv) imply  $a_0 \leq  4$, $a_1 < \frac{4}{1-\frac{1}{a_0}}$, $a_2 < \frac{3}{1 - \frac{1}{a_0}} - \frac{1}{a_1}$, $a_3 <\frac{2}{1-\sigma}$ and condition (iii) implies $a_4 \mid \lcm(a_0, a_1, a_2, a_3)$. 
 
\begin{longtable}{llll}
\caption{The $232$ tuples satisfying conditions (i)-(iv) of \ref{conditionsPB3} and $\sigma<1$.}
\label{tab:sporadic}\\
\hline\endfirsthead
\hline\endhead
\hline\endfoot
(2,3,7,7,42) & (2,3,8,12,24) & (2,4,5,10,20) & (2,6,6,7,21) \\
(2,3,7,8,168) & (2,3,8,13,312) & (2,4,5,11,220) & (2,6,6,8,8) \\
(2,3,7,9,126) & (2,3,8,14,168) & (2,4,5,12,15) & (2,6,6,9,9) \\
(2,3,7,10,105) & (2,3,8,15,40) & (2,4,5,12,30) & (2,6,6,10,10) \\
(2,3,7,10,210) & (2,3,8,15,120) & (2,4,5,12,60) & (2,6,6,11,11) \\
(2,3,7,11,462) & (2,3,8,16,48) & (2,4,5,13,260) & (2,7,7,8,8) \\
(2,3,7,12,28) & (2,3,8,17,408) & (2,4,5,14,140) & (3,3,4,4,4) \\
(2,3,7,12,84) & (2,3,8,18,72) & (2,4,5,15,60) & (3,3,4,4,6) \\
(2,3,7,13,546) & (2,3,8,19,456) & (2,4,5,16,80) & (3,3,4,4,12) \\
(2,3,7,14,21) & (2,3,8,20,120) & (2,4,5,17,340) & (3,3,4,5,20) \\
(2,3,7,14,42) & (2,3,8,21,56) & (2,4,5,18,180) & (3,3,4,5,60) \\
(2,3,7,15,70) & (2,3,8,21,168) & (2,4,5,19,380) & (3,3,4,6,12) \\
(2,3,7,15,210) & (2,3,8,22,264) & (2,4,5,20,20) & (3,3,4,7,28) \\
(2,3,7,16,336) & (2,3,8,23,552) & (2,4,5,28,35) & (3,3,4,7,84) \\
(2,3,7,17,714) & (2,3,8,24,24) & (2,4,6,6,12) & (3,3,4,8,8) \\
(2,3,7,18,63) & (2,3,8,30,40) & (2,4,6,7,84) & (3,3,4,8,24) \\
(2,3,7,18,126) & (2,3,9,9,18) & (2,4,6,8,24) & (3,3,4,9,36) \\
(2,3,7,19,798) & (2,3,9,10,45) & (2,4,6,9,36) & (3,3,4,10,20) \\
(2,3,7,20,420) & (2,3,9,10,90) & (2,4,6,10,60) & (3,3,4,10,60) \\
(2,3,7,21,42) & (2,3,9,11,198) & (2,4,6,11,132) & (3,3,4,11,44) \\
(2,3,7,22,231) & (2,3,9,12,36) & (2,4,6,12,12) & (3,3,4,11,132) \\
(2,3,7,22,462) & (2,3,9,13,234) & (2,4,6,15,20) & (3,3,4,12,12) \\
(2,3,7,23,966) & (2,3,9,14,63) & (2,4,7,7,28) & (3,3,4,13,52) \\
(2,3,7,24,56) & (2,3,9,14,126) & (2,4,7,8,56) & (3,3,4,14,28) \\
(2,3,7,24,168) & (2,3,9,15,90) & (2,4,7,9,252) & (3,3,4,15,20) \\
(2,3,7,25,1050) & (2,3,9,16,144) & (2,4,7,12,21) & (3,3,4,16,16) \\
(2,3,7,26,273) & (2,3,9,17,306) & (2,4,8,8,8) & (3,3,4,20,20) \\
(2,3,7,26,546) & (2,3,9,18,18) & (2,4,9,9,12) & (3,3,5,5,5) \\
(2,3,7,27,378) & (2,3,10,10,15) & (2,5,5,5,10) & (3,3,5,5,15) \\
(2,3,7,28,84) & (2,3,10,10,30) & (2,5,5,6,6) & (3,3,5,6,10) \\
(2,3,7,29,1218) & (2,3,10,11,165) & (2,5,5,6,15) & (3,3,5,6,30) \\
(2,3,7,30,35) & (2,3,10,11,330) & (2,5,5,6,30) & (3,3,5,7,35) \\
(2,3,7,30,70) & (2,3,10,12,20) & (2,5,5,7,14) & (3,3,5,7,105) \\
(2,3,7,30,105) & (2,3,10,12,60) & (2,5,5,7,70) & (3,3,5,8,40) \\
(2,3,7,30,210) & (2,3,10,13,195) & (2,5,5,8,8) & (3,3,5,10,10) \\
(2,3,7,31,1302) & (2,3,10,13,390) & (2,5,5,8,40) & (3,3,6,6,6) \\
(2,3,7,32,672) & (2,3,10,14,105) & (2,5,5,9,18) & (3,3,6,7,14) \\
(2,3,7,33,154) & (2,3,10,14,210) & (2,5,5,9,90) & (3,3,6,8,8) \\
(2,3,7,33,462) & (2,3,10,15,15) & (2,5,5,10,10) & (3,3,6,10,10) \\
(2,3,7,34,357) & (2,3,10,15,30) & (2,5,5,11,22) & (3,3,7,7,7) \\
(2,3,7,34,714) & (2,3,10,18,45) & (2,5,5,12,12) & (3,3,8,8,8) \\
(2,3,7,35,210) & (2,3,10,21,35) & (2,5,5,13,26) & (3,4,4,4,6) \\
(2,3,7,36,252) & (2,3,11,11,66) & (2,5,5,14,14) & (3,4,4,4,12) \\
(2,3,7,37,1554) & (2,3,11,12,44) & (2,5,5,16,16) & (3,4,4,5,15) \\
(2,3,7,38,399) & (2,3,11,12,132) & (2,5,5,18,18) & (3,4,4,5,30) \\
(2,3,7,38,798) & (2,3,11,13,858) & (2,5,6,6,10) & (3,4,4,5,60) \\
(2,3,7,39,182) & (2,3,12,12,12) & (2,5,6,6,15) & (3,4,4,6,6) \\
(2,3,7,39,546) & (2,3,12,13,52) & (2,5,6,6,30) & (3,4,4,6,12) \\
(2,3,7,40,840) & (2,3,12,14,28) & (2,5,6,7,105) & (3,4,4,7,21) \\
(2,3,7,41,1722) & (2,3,12,15,20) & (2,5,6,7,210) & (3,4,4,9,9) \\
(2,3,7,42,42) & (2,3,12,16,16) & (2,5,6,10,15) & (3,4,5,5,12) \\
(2,3,7,48,112) & (2,3,12,20,20) & (2,5,7,7,10) & (3,5,5,6,6) \\
(2,3,7,66,77) & (2,3,14,14,21) & (2,5,7,10,14) & (4,4,4,4,4) \\
(2,3,8,8,12) & (2,4,5,5,20) & (2,5,8,8,10) & (4,4,4,5,5) \\
(2,3,8,8,24) & (2,4,5,6,60) & (2,5,9,9,10) & (4,4,4,5,10) \\
(2,3,8,9,72) & (2,4,5,7,140) & (2,6,6,6,6) & (4,4,4,6,6) \\
(2,3,8,10,120) & (2,4,5,8,40) & (2,6,6,7,7) & (4,4,4,7,7) \\
(2,3,8,11,264) & (2,4,5,9,180) & (2,6,6,7,14) & (4,4,5,5,5) \\
\end{longtable}

\bibliographystyle{alpha}
\bibliography{bibliography}

@article{murthy1969vector,
	title={Vector bundles over affine surfaces birationally equivalent to a ruled surface},
	author={Murthy, M Pavaman},
	journal={Annals of Mathematics},
	pages={242--253},
	year={1969},
	publisher={JSTOR}
}

@article{WROBEL202043,
title = {Divisor class groups of rational trinomial varieties},
journal = {Journal of Algebra},
volume = {542},
pages = {43-64},
year = {2020},
issn = {0021-8693},
doi = {https://doi.org/10.1016/j.jalgebra.2019.09.021},
url = {https://www.sciencedirect.com/science/article/pii/S0021869319305241},
author = {Milena Wrobel}
}

@article{SGA2,
	title={SGA 2},
	author={Grothendieck, Alexander and others},
	journal={S{\'e}minaire de G{\'e}om{\'e}trie Alg{\'e}brique du Bois Marie-1962-Cohomologie locale des faisceaux coh{\'e}rents et th{\'e}oremes de Lefschetz locaux et globaux (North-Holland, Amsterdam)},
	year={1968}
}

@book{lang2012algebra,
  title={Algebra},
  author={Lang, Serge},
  volume={211},
  year={2012},
  publisher={Springer Science \& Business Media}
}

@book{Hartshorne,
	AUTHOR = {Hartshorne, Robin},
	TITLE = {Algebraic Geometry},
	NOTE = {Graduate Texts in Mathematics, No. 52},
	PUBLISHER = {Springer-Verlag},
	ADDRESS = {New York},
	YEAR = {1977},
	PAGES = {xvi+496},
	ISBN = {0-387-90244-9},
	MRCLASS = {14-01},
	MRNUMBER = {MR0463157 (57 \#3116)},
	MRREVIEWER = {Robert Speiser},
}

@article{Chitayat_Daigle_2019,
	author = {Michael Chitayat and Daniel Daigle},
	title = {On the rigidity of certain {P}ham-{B}rieskorn rings},
	journal = {Journal of Algebra},
	volume = {550},
	pages = {290-308},
	year = {2020},
	issn = {0021-8693},
	doi = {https://doi.org/10.1016/j.jalgebra.2020.01.005},
	url = {https://www.sciencedirect.com/science/article/pii/S0021869320300156}
	
}

@book{ishii2014introduction,
	title={Introduction to Singularities},
	author={Ishii, S.},
	isbn={9784431550822},
	url={https://books.google.ca/books?id=SKUijgEACAAJ},
	year={2014},
	publisher={Springer}
}

@inproceedings{DFM2017,
	address = "Tokyo, Japan",
	author = "Daigle, Daniel and Freudenburg, Gene and Moser-Jauslin, Lucy",
	booktitle = "Algebraic Varieties and Automorphism Groups",
	doi = "10.2969/aspm/07510029",
	pages = "29--48",
	publisher = "Mathematical Society of Japan",
	title = "Locally nilpotent derivations of rings graded by an abelian group",
	url = "https://doi.org/10.2969/aspm/07510029",
	year = "2017"
}

@article{KishimotoProkhorovZaidenberg,
	title={${G}_a$-actions on affine cones},
	author={Kishimoto, Takashi and Prokhorov, Yuri and Zaidenberg, Mikhail},
	journal={Transformation Groups},
	year={2012},
	volume={18},
	pages={1137-1153}
}

@book{kollar_mori_1998, place={Cambridge}, 					series={Cambridge Tracts in Mathematics}, 
	title={Birational Geometry of Algebraic Varieties}, DOI={10.1017/CBO9780511662560}, 
	publisher={Cambridge University Press}, 
	author={Kollar, Janos and Mori, Shigefumi}, year={1998}, 
	collection={Cambridge Tracts in Mathematics}
}

@book{przyjalkowski2026weighted,
  title={Weighted complete intersections},
  author={Przyjalkowski, Victor V and Shramov, Constantin},
  year={2026},
  publisher={Walter de Gruyter GmbH \& Co KG}
}

@misc{hajra2026generalizedzariskicancellationbrieskornpham,
      title={Generalized {Z}ariski cancellation for {B}rieskorn--{P}ham varieties}, 
      author={Buddhadev Hajra and Mohit Upmanyu},
      year={2026},
      eprint={2606.26890},
      archivePrefix={arXiv},
      primaryClass={math.AG},
      url={https://arxiv.org/abs/2606.26890}, 
}

@article{arzhantsev2018log,
  title={Log terminal singularities, platonic tuples and iteration of {C}ox rings},
  author={Arzhantsev, Ivan and Braun, Lukas and Hausen, J{\"u}rgen and Wrobel, Milena},
  journal={European Journal of Mathematics},
  volume={4},
  number={1},
  pages={242--312},
  year={2018},
  publisher={Springer}
}

@book{alexeev2006pezzo,
  title={Del Pezzo and K3 surfaces},
  author={Alexeev, V. and Nikulin, V.V.},
  isbn={9784931469341},
  series={G - Reference,Information and Interdisciplinary Subjects Series},
  url={https://books.google.it/books?id=FY8rAAAAYAAJ},
  year={2006},
  publisher={Mathematical Society of Japan}
}

@article{chitayat2025rationality,
  title={Rationality of weighted hypersurfaces of special degree},
  author={Chitayat, Michael},
  journal={Journal of Algebra},
  volume={665},
  pages={7--29},
  year={2025},
  publisher={Elsevier}
}

@article{chitayat2025rigid,
  title={The rigid {P}ham-{B}rieskorn threefolds},
  author={Chitayat, Michael and Dubouloz, Adrien},
  journal={Advances in Mathematics},
  volume={482},
  pages={110640},
  year={2025}
}

@misc{ChitayatThesis, 
    title={Rigidity of {P}ham-{B}rieskorn {T}hreefolds}, 
    author={Chitayat, Michael}, 
    howpublished = "\url{https://ruor.uottawa.ca/handle/10393/44886}",
    note = {University of Ottawa, Theses and Dissertations},
    year={2023}
}

@inbook{iano-fletcher_2000, 
	place={Cambridge}, 
	series={London Mathematical Society Lecture Note Series}, title={Working with weighted complete intersections}, DOI={10.1017/CBO9780511758942.005}, 
	booktitle={Explicit Birational Geometry of 3-folds}, publisher={Cambridge University Press}, 
	author={Iano-Fletcher, A. R.}, 
	year={2000}, 
	pages={101–174}, 
	collection={London Mathematical Society Lecture Note Series}
}

@article{Dimca1986,
	author = {Dimca, Alexandru},
	journal = {Journal für die reine und angewandte Mathematik},
	pages = {184-193},
	title = {Singularities and coverings of weighted complete intersections.},
	url = {http://eudml.org/doc/152819},
	volume = {366},
	year = {1986},
}

@book {MR1251956,
	AUTHOR = {Bruns, Winfried and Herzog, J\"{u}rgen},
	TITLE = {Cohen-{M}acaulay Rings},
	SERIES = {Cambridge Studies in Advanced Mathematics},
	VOLUME = {39},
	PUBLISHER = {Cambridge University Press, Cambridge},
	YEAR = {1993},
	PAGES = {xii+403},
	ISBN = {0-521-41068-1},
	MRCLASS = {13H10 (13-02)},
	MRNUMBER = {1251956},
	MRREVIEWER = {Matthew Miller},
}

@article{LiuSunExtensions,
	author = {Dayan Liu and Xiaosong Sun},
	title = {On the rigidity of some extensions of domains},
	journal = {Michigan Mathematical Journal},
	publisher = {University of Michigan, Department of Mathematics},
	pages = {1 -- 16},
	year = {2022},
	doi = {10.1307/mmj/20205957},
	URL = {https://doi.org/10.1307/mmj/20205957}
}

@article{freudenburg2013,
	author = "Freudenburg, Gene and Moser-Jauslin, Lucy",
	doi = "10.1307/mmj/1370870372",
	journal = "Michigan Mathematical Journal",
	number = "2",
	pages = "227--258",
	publisher = "University of Michigan, Department of Mathematics",
	title = "Locally nilpotent derivations of rings with roots adjoined",
	url = "https://doi.org/10.1307/mmj/1370870372",
	volume = "62",
	year = "2013"
}

@article{affineCones,
	title     = "AFFINE CONES OVER SMOOTH CUBIC SURFACES",
	author    = "Ivan Cheltsov and Jihun Park and Joonyeong Won",
	year      = "2016",
	doi       = "10.4171/JEMS/622",
	language  = "English",
	volume    = "18",
	pages     = "1537--1564",
	journal   = "Journal of the European Mathematical Society",
	issn      = "1435-9855",
	publisher = "European Mathematical Society Publishing House",
	number    = "7",
}

@article{CylindersInDelPezzoSurfaces,
	author = {Cheltsov, Ivan and Park, Jihun and Won, Joonyeong},
	title = {Cylinders in del {P}ezzo Surfaces},
	journal = {International Mathematics Research Notices},
	volume = {2017},
	number = {4},
	pages = {1179-1230},
	year = {2016},
	issn = {1073-7928},
	doi = {10.1093/imrn/rnw063},
	url = {https://doi.org/10.1093/imrn/rnw063},
	eprint = {https://academic.oup.com/imrn/article-pdf/2017/4/1179/10756009/rnw063.pdf},
}

@book{freudenburg2017algebraic,
	
	title={Algebraic Theory of Locally Nilpotent Derivations},
	author={Freudenburg, Gene},
	isbn={9783662553503},
	series={Encyclopaedia of Mathematical Sciences},
	year={2017},
	publisher={Springer Berlin Heidelberg}
}

@InProceedings{dolgachev,
	author="Dolgachev, Igor",
	editor="Carrell, James B.",
	title="Weighted projective varieties",
	booktitle="Group Actions and Vector Fields",
	year="1982",
	publisher="Springer Berlin Heidelberg",
	address="Berlin, Heidelberg",
	pages="34--71",
	isbn="978-3-540-39528-7"
}

@Article{Cheltsov2010,
	author="Cheltsov, Ivan
	and Park, Jihun
	and Shramov, Constantin",
	title="Exceptional del {P}ezzo Hypersurfaces",
	journal="Journal of Geometric Analysis",
	year="2010",
	volume="20",
	number="4",
	pages="787--816",
}

@article{cheltsov_park_won_2016, 
	title={Cylinders in singular del {P}ezzo surfaces}, 
	volume={152}, 
	DOI={10.1112/S0010437X16007284}, 
	number={6}, 
	journal={Compositio Mathematica}, 
	publisher={London Mathematical Society}, 
	author={Cheltsov, Ivan and Park, Jihun and Won, Joonyeong}, 
	year={2016}, 
	pages={1198–1224}
}

@article {Kali-Zaid_2000,
    AUTHOR = {Kaliman, Shulim and Zaidenberg, Mikhail},
     TITLE = {Miyanishi's characterization of the affine 3-space does not
              hold in higher dimensions},
   JOURNAL = {Annales de l'Institut Fourier (Grenoble)},
    VOLUME = {50},
      YEAR = {2000},
    NUMBER = {6},
     PAGES = {1649--1669},
}

@article{Orlik1977AlgebraicSW,
  title={Algebraic surfaces with k*-action},
  author={Peter Orlik and Philip Wagreich},
  journal={Acta Mathematica},
  year={1977},
  volume={138},
  pages={43-81},
  url={https://api.semanticscholar.org/CorpusID:122368936}
}

@article{orlik1979,
	author = "Orlik, Peter",
	fjournal = "Bulletin (New Series) of the American Mathematical Society",
	journal = "Bulletin of the  American Mathematical Society (N.S.)",
	number = "5",
	pages = "703--720",
	publisher = "American Mathematical Society",
	title = "Singularities and group actions",
	url = "https://projecteuclid.org:443/euclid.bams/1183544714",
	volume = "1",
	year = "1979"
}

@misc{stacks-project,
	shorthand    = {Stacks},
	author       = {The {Stacks Project Authors}},
	title        = {\textit{Stacks Project}},
	howpublished = {\url{https://stacks.math.columbia.edu}},
	year         = {2022},
}

@article{beltramettiRobbiano,
	author = {Beltrametti, Mauro and Robbiano, Lorenzo},
	year = {1986},
	title = {Introduction to the theory of weighted projective spaces},
	volume = {4},
	journal = {Expositiones Mathematicae}
}

@article{Massarenti2026,
author = {Massarenti, Alex},
title = {Rational points on even-dimensional {F}ermat cubics},
journal = {Transactions of the London Mathematical Society},
volume = {13},
number = {1},
pages = {e70028},
doi = {https://doi.org/10.1112/tlm3.70028},
url = {https://londmathsoc.onlinelibrary.wiley.com/doi/abs/10.1112/tlm3.70028},
year = {2026}
}

@article{daigle2004locallyDanielewski,
  title={Locally nilpotent derivations and {D}anielewski surfaces},
  author={Daigle, Daniel},
  year={2004},
  journal = {Osaka Journal of Mathematics}
}

@article{russell1970forms,
  title={Forms of the affine line and its additive group},
  author={Russell, Peter},
  journal={Pacific journal of mathematics},
  volume={32},
  number={2},
  pages={527--539},
  year={1970},
  publisher={Mathematical Sciences Publishers}
}

@book{gortz2020algebraic,
  title={Algebraic geometry I: schemes},
  author={G{\"o}rtz, Ulrich and Wedhorn, Torsten},
  year={2020},
  publisher={Springer}
}

@book{samuel1964ufd,
  author    = {Samuel, Pierre},
  title     = {Lectures on Unique Factorization Domains},
  series    = {Tata Institute of Fundamental Research Lectures on Mathematics},
  number    = {30},
  publisher = {Tata Institute of Fundamental Research},
  address   = {Bombay},
  year      = {1964},
  note      = {Notes by M. Pavman Murthy},
}

@article{cheltsov2020cylinders,
	title={Cylinders in {F}ano varieties}, 
	author={Ivan Cheltsov and Jihun Park and Yuri Prokhorov and Mikhail Zaidenberg},
	year={2021},
	journal={EMS Surveys in Mathematical Sciences},
	volume={8},
	number={1},
	pages={39-105},
	URL={https://ems.press/journals/emss/articles/2504061}
	
}

@article{storch1984picard,
	title={Die {P}icard-{Z}ahlen der {S}ingularit{\"a}ten $t_1^{r_1} + t_2^{r_2} + t_3^{r_3} + t_4^{r_4} = 0$},
	author={Storch, Uwe},
	journal={Journal f{\"u}r die reine und angewandte Mathematik},
	volume={350},
	pages={188--202},
	year={1984}
}

@article{chitayat2026isomorphism,
  title={The Isomorphism Classes of the Surfaces $ x_1^{a_1}+ x_2^{a_2}+ x_3^{a_3}+ 1= 0$},
  author={Chitayat, Michael and Hajra, Buddhadev},
  journal={arXiv preprint arXiv:2605.07617},
  year={2026}
}

@article{gurjar2025classification,
  title={Classification of certain affine plane curves},
  author={Gurjar, S.R. and Pokale, P.},
  journal={Journal of the Ramanujan Mathematical Society},
  volume={40},
  number={1},
  year={2025},
  publisher={RAMANUJAN MATHEMATICAL SOC C/O MNG ED, UNIV MYSORE, DEPT MATHEMATICS, MYSORE~…}
}

@article{esser2025weighted,
  title={Weighted surfaces with maximal {P}icard number},
  author={Esser, Louis and Li, Jennifer},
  journal={arXiv preprint arXiv:2506.14037},
  year={2025}
}

\end{document}